\let\svthefootnote\thefootnote
\newcommand\blankfootnote[1]{%
\let\thefootnote\relax\footnotetext{#1}%
\let\thefootnote\svthefootnote}
\newcommand{\be}{\begin{enumerate}}
\newcommand{\ee}{\end{enumerate}}
\newtheorem{cor}[subsubsection]{Corollaire}
\newtheorem{sousprop}[paragraph]{Proposition}
\newtheorem{sousth}[paragraph]{Th\'eor\`eme}
\newtheorem{prop}[subsubsection]{Proposition}
\newtheorem{defi}[subsubsection]{D\'efinition}
\newtheorem{sousdef}[paragraph]{D\'efinition}
\newtheorem{surlem}[subsection]{Lemme}
\newtheorem{lem}[subsubsection]{Lemme}
\newtheorem{souslem}[paragraph]{Lemme}
\newtheorem{thm}[subsubsection]{Th\'eor\`eme}
\newcommand{\inft}{{\scriptstyle\infty}}
\newcommand{\what}{\widehat}
\newcommand{\ot}{\otimes}
\newcommand{\rig}{\rightarrow}
\newcommand{\trig}{\twoheadrightarrow}
\newcommand{\thrig}{\twoheadrightarrow}
\newcommand{\lorig}{\longrightarrow}
\newcommand{\hrig}{\hookrightarrow}
\newcommand{\sta}{\stackrel}
\newcommand{\lrig}{\leftarrow}
\newcommand{\gr}{{\rm gr}}
\newcommand{\pg}{_{\bullet}}
\newcommand{\crofrac}[2]{\genfrac{\langle}{\rangle}{0pt}{}{#1}{#2}}
\newcommand{\parfrac}[2]{\genfrac{(}{)}{0pt}{}{#1}{#2}}
\newcommand{\acfrac}[2]{\genfrac{\{}{\}}{0pt}{}{#1}{#2}}
\newcommand{\der}{\partial}
\newcommand{\la}{\langle}
\newcommand{\ra}{\rangle}
\newcommand{\spf}{{\rm Spf}\,}
\newcommand{\Qr}{{\bf Q}}
\newcommand{\Ne}{{\bf N}}
\newcommand{\Ze}{{\bf Z}}
\newcommand{\Cc}{{\bf C}}
\newcommand{\Rr}{{\bf R}}
\newcommand{\vp}{{\rm v}_{p}}
\newcommand{\Zp}{{\bf Z}_p}
\newcommand{\Sg}{{\bf S}}
\newcommand{\varep}{\varepsilon}
\newcommand{\lam}{\lambda}
\newcommand{\Ga}{\Gamma}
\newcommand{\ga}{\gamma}
\renewcommand{\AA}{{\cal A}}
\newcommand{\DD}{{\cal D}}
\newcommand{\EE}{{\cal E}}
\newcommand{\FF}{{\cal F}}
\newcommand{\GG}{{\cal G}}
\newcommand{\II}{{\cal I}}
\newcommand{\HH}{{\cal H}}
\newcommand{\LL}{{\cal L}}
\newcommand{\MM}{{\cal M}}
\newcommand{\NN}{{\cal N}}
\newcommand{\OO}{{\cal O}}
\newcommand{\PP}{{\cal P}}
\newcommand{\QQ}{{\cal Q}}
\renewcommand{\SS}{{\cal S}}
\newcommand{\TT}{{\cal T}}
\newcommand{\UU}{{\cal U}}
\newcommand{\XX}{{\cal X}}
\newcommand{\ui}{\underline{i}}
\newcommand{\uj}{\underline{j}}
\newcommand{\uk}{\underline{k}}
\newcommand{\ul}{\underline{l}}
\newcommand{\ut}{\underline{t}}
\newcommand{\uu}{\underline{u}}
\newcommand{\ur}{\underline{r}}\newcommand{\uq}{\underline{q}}
\newcommand{\uder}{\underline{\der}}
\newcommand{\ual}{\underline{\alpha}}
\newcommand{\uxi}{\underline{\xi}}
\newcommand{\ueta}{\underline{\eta}}
\newcommand{\uV}{\underline{V}}
\newcommand{\utau}{\underline{\tau}}
\newcommand{\ovu}{\overline{u}}
\newcommand{\ovA}{\overline{A}}
\newcommand{\ovB}{\overline{B}}
\newcommand{\ovot}{\overline{\otimes}}
\newcommand{\Dm}{\DD^{(m)}}
\newcommand{\Dcm}{\what{\DD}^{(m)}}
\newcommand{\Ddag}{\DD^{\dagger}}
\newcommand{\spec}{{\rm spec}\,}
\begin{document}
%
%Ceci change la police.
%\fontencoding{OT1}%
%\fontfamily{ptm}%
%\fontseries{m}%
%\selectfont
\title{$\DD$-modules arithmétiques, distributions et localisation}
\author{Christine~Huyghe et Tobias~Schmidt}
\maketitle
\selectlanguage{francais}
\selectlanguage{english}
\begin{abstract}
Let $p$ be a prime number, $V$ a complete discrete valuation ring of unequal caracteristics $(0,p)$, $G$ a smooth affine
algebraic group over $Spec \,V$. Using partial divided powers techniques of Berthelot, we construct arithmetic
distribution algebras, with level $m$, generalizing the classical construction of the distribution algebra.
We also construct the weak completion
of the classical distribution algebra. We then show that these distribution algebras can be identified with
invariant arithmetic differential operators over $G$. We apply these constructions in the case of a reductive
group and obtain a localization
theorem for the sheaf of arithmetic differential operators on the formal flag variety obtained by $p$-adic completion.
We give an application to the rigid cohomology of open subsets in the characteristic $p$ flag variety.
\end{abstract}
\selectlanguage{francais}
\begin{abstract} Soient $p$ un nombre premier, $V$ un anneau de valuation discrète complet d'inégales caractéristiques
$(0,p)$, $G$ un groupe algébrique affine lisse sur $Spec \,V$. En utilisant les techniques de puissances divisées de
niveau $m$ de Berthelot, nous construisons dans ce cadre des algèbres de
distributions arithmétiques, avec des niveaux $m$, généralisant la construction classique. Nous construisons
aussi la complétion faible de l'algèbre des distributions classique.
Nous montrons alors que ces algèbres de distribution s'identifient aux opérateurs différentiels arithmétiques invariants sur $G$. Nous
appliquons finalement ces constructions au cas d'un groupe réductif, pour obtenir un théorème de localisation pour le faisceau des opérateurs
différentiels arithmétiques sur la variété de drapeaux formelle obtenue par complétion $p$-adique. Nous donnons une application à la cohomologie rigide pour des ouverts dans la variéte de drapeaux en charactéristique $p$.
\end{abstract}
\blankfootnote{MSC classification 2010 : 20G05, 20G25, 13N10, 16S32}
\tableofcontents
%}}}
\section{Introduction}
%{{{

 Soient $p$ un nombre premier, $V$ une $\Ze_{(p)}$-algèbre noetherienne, $S=Spec \,V$, $G$ un schéma en groupes affine et lisse sur $S$.

\vskip5pt

En utilisant les techniques de puissances divisées de
niveau $m$ de Berthelot \cite{Be1}, nous construisons dans ce cadre des algèbres de
distributions arithmétiques $D^{(m)}(G)$, de niveau $m$, généralisant la construction classique de l'algèbre
de distributions sur $G$. Rappelons que l'algèbre des distributions classiques $Dist(G)$ joue un rôle central
dans la théorie des représentations de $G$ et est définie en dualisant les fonctions sur les voisinages infinitésimaux
 de la section unité de $G$ \cite{Demazure_gr_alg}. En suivant la construction de Berthelot des opérateurs différentiels arithmétiques,
nous définissons $D^{(m)}(G)$ en dualisant les fonctions sur les $m$-PD voisinages infinitésimaux de la
section unité de $G$. En particulier, on a $Dist(G)=\cup_m D^{(m)}(G)$. Nous montrons de plus que 
si $X$ est un schéma lisse sur $S$ avec une action de $G$, alors l'algèbre $D^{(m)}(G)$ agit sur $X$ par des opérateurs
différentiels arithmétiques globaux de niveau $m$.

\vskip5pt

La principale motivation pour l'introduction des algèbres $D^{(m)}(G)$ réside dans le cas où $V$ est un anneau de valuation discrète complet d'inégales
caractéristiques $(0,p)$ et $G$ un groupe réductif connexe déployé sur $V$. Dans ce contexte, nous utilisons 
ces algèbres pour montrer une version arithmétique, i.e.
une version pour les $\DD$-modules arithmétiques de Berthelot, du théorème de localisation classique de Beilinson-Bernstein \cite{BeBe}. Rappelons plus
précisément de quoi il s'agit. Soient
$X_K$ la fibre générique de la variété de drapeaux de $G$ et $\DD_{X_K}$ le faisceau des opérateurs différentiels sur $X_K$. Soient $\mathfrak{g}_K$ l'algèbre de Lie
de $G_K$, $U(\mathfrak{g}_K)$ son algèbre enveloppante et $Z^+_K$ la partie positive du centre de $U(\mathfrak{g}_K)$.

Le théorème de localisation de loc. cit. se décompose en deux parties\footnote{Dans loc.cit. le théorème est formulé pour le corps des nombres complexes mais il est bien
connu qu'il se généralise à n'importe quel corps de caractéristique zéro sur lequel l'algèbre de Lie réductive est
scindée.}
:
premièrement, l'action naturelle de $G_K$ sur $X_K$ induit un isomorphisme d'algèbres $Q_K$ entre la réduction centrale
$U(\mathfrak{g}_K)/\langle Z_K^+\rangle $ et les opérateurs différentiels globaux sur $X_K$. Deuxièmement, l'extension
des scalaires par $Q_K$ et le passage aux sections globales induisent des équivalences de catégories quasi-inverses entre les $\DD_{X_K}$-modules sur $X_K$ et les
modules sur $U(\mathfrak{g}_K)/\langle Z_K^+\rangle $. Dans cette équivalence, les $\DD_{X_K}$-modules cohérents correspondent aux modules de type fini. En réalité,
ces énoncés sont démontrés dans loc. cit. dans le contexte plus général des opérateurs différentiels twistés et des caractères centraux arbitraires de $U(\mathfrak{g}_K)$.

\vskip5pt

Pour décrire la version arithmétique de ces résultats de localisation, nous introduisons la complétion formelle $\GG$ de $G$ le long
de sa fibre spéciale, les complétions
$\what{D}^{(m)}(\GG)_{\Qr}$ des algèbres de distributions $D^{(m)}(G)$, ainsi que leur limite inductive sur $m$,
$D^{\dagger}(\cal G)_{\Qr}$. Nous travaillons d'autre part sur la variété de drapeaux formelle $\XX$ de $\GG$. On dispose sur cet espace des faisceaux
$\what{\DD}^{(m)}_{\XX,\Qr}$, obtenus par complétion à partir des opérateurs différentiels de niveau fini $m$ sur $\XX$ et de $\DD^{\dagger}_{\XX,\Qr}$ obtenu
par passage à la limite sur $m$ de ces faisceaux. Rappelons que la cohomologie rigide ou cristalline des schémas sur un corps de caractéristique finie fournit
naturellement des coefficients qui sont des $\DD^{\dagger}_{\XX,\Qr}$-modules cohérents \cite{Be-Trento}. Le principal résultat de cet article (\ref{thm-glob_sections})
est qu'il existe un isomorphisme $Q$
entre la réduction centrale $D^{\dagger}({\cal G})_{\Qr}/\langle Z_K^+\rangle$ et les sections globales
de $\DD^{\dagger}_{\XX,\Qr}$, tel que le diagramme suivant soit commutatif
$$\xymatrix {  U(\mathfrak{g}_K)/\langle Z_K^+\rangle\ar@{->}[r]_{\simeq}^{{}Q_K}\ar@{^{(}->}[d]& \Ga(X_K,\DD_{X_K})\ar@{^{(}->}[d]\\
            D^{\dagger}({\cal G})_{\Qr}/\langle Z_K^+\rangle  \ar@{->}[r]_{\simeq}^{Q} & \Ga(\XX,\DD^{\dagger}_{\XX,\Qr}).
}$$
Les flèches verticales de ce diagramme sont des injections plates et l'isomorphisme $Q$ peut être
 vu comme une complétion faible, au sens de Monsky-Washnitzer
\cite{MonskyWashI}, de l'isomorphisme classique $Q_K$. Ces résultats, joints avec le résultat
de $\DD$-affinité de \cite{Hu1} impliquent que les foncteurs sections globales et l'extension des scalaires via
$Q$ sont des équivalences de catégories quasi-inverses entre les $\DD^{\dagger}_{\XX,\Qr}$-modules cohérents et les
$D^{\dagger}({\cal G})_{\Qr}/\langle Z_K^+\rangle$-modules de présentation finie. En fait, tous les énoncés sont vrais
à chaque niveau $m$ fixé et dans le contexte plus général des opérateurs différentiels twistés et de caractères centraux arbitraires.

\vskip5pt

Une conséquence importante de ce théorème concerne la cohomologie rigide des ouverts de la variété de drapeaux obtenus comme
complémentaires de diviseurs à croisements normaux. Soit $Z\subset \XX_k$ un diviseur à croisements normaux dans la fibre spéciale
$\XX_k$ de $\XX$.  Soient $Y$ l'ouvert complémentaire, $v: Y\hookrightarrow\XX_k$ l'immersion correspondante et $H^{\bullet}_{\rm rig}(Y/K)$ la cohomologie rigide
de $Y$. Soient $\XX^{\rm rig}$ l'espace analytique rigide associé à $\XX$, $v^\dagger\OO_{\XX^{\rm rig}}$ le faisceau des fonctions surconvergentes le long
de $Z$. L'action de $\GG$ sur $\XX$ induit une action de $U(\mathfrak{g}_K)/\langle Z_K^+\rangle$ sur $v^\dagger\OO_{\XX^{\rm rig}}$ par des opérateurs différentiels.
Notons en outre $H_{\rm res}^{\bullet}(\mathfrak{g}_K,.):={\rm Ext}^{\bullet}_{U(\mathfrak{g}_K)/\langle Z_K^+\rangle}(K,.)$ les foncteurs dérivés de
$M\rightarrow M^{\mathfrak{g}_K}$ sur la catégorie des $U(\mathfrak{g}_K)/\langle Z_K^+\rangle$-modules.
Ces groupes de cohomologie peuvent être vus comme un analogue en caractéristique zéro de la cohomologie d'algèbre de Lie restreinte en caractéristique
positive  \cite{Jantzen-Res}. Nous démontrons alors en~\ref{coh_rig_and_lie} que le foncteur sections globales induit un isomorphisme canonique des algèbres de cohomologie
$$ H^{\bullet}_{\rm rig}(Y/K)\sta{\simeq}{\longrightarrow} H_{\rm res}^{\bullet}(\mathfrak{g}_K, \Gamma(\XX^{\rm rig}, v^\dagger\OO_{\XX^{\rm rig}})).$$

\vskip5pt
Après cette présentation de nos principaux résultats, indiquons maintenant brièvement la structure de notre article.

Dans la section~\ref{sec_rapdiff}, nous rappelons la théorie des puissances divisées partielles et la construction des opérateurs différentiels arithmétiques. Nous
expliquons le formalisme des opérateurs différentiels twistés par un faisceau inversible dans le cadre des
$\DD$-modules arithmétiques de Berthelot et donnons quelques propriétés de base.

Dans la section~\ref{desc_actions_G}, nous rappelons quelques généralités sur les faisceaux équivariants sur les schémas munis
d'une action du schéma en groupes $G$, qui sont éparpillées dans la littérature. Nous appliquons ces considérations
aux versions arithmétiques des faisceaux de parties principales, aux algèbres symétriques et aux opérateurs différentiels dans le cas d'un schéma en groupes $G$
lisse sur $S$. Nous montrons en particulier que ces faisceaux sont $G$-équivariants.

\vskip5pt

Dans la section~\ref{sect-distribution-algebras}, nous étudions systématiquement les algèbres de distributions $D^{(m)}(G)$, ainsi que les modules
sur ces algèbres, en toute généralité, c'est-à-dire pour un schéma en groupes affine et lisse sur $S=Spec \,V$
où $V$ est une $\Ze_{(p)}$-algèbre. Dans ce cadre, il nous faut introduire des faisceaux de distributions
 de niveau $m$, $\DD^{(m)}(G)$, 
qui sont des $\OO_S$-modules localement libres, et
dont les sections globales constituent les algèbres de distributions de niveau $m$, $D^{(m)}(G)$.
Nous montrons ainsi que la construction des algèbres $D^{(m)}(G)$ est fonctorielle
en $G$. D'autre part, ces algèbres sont filtrées par la filtration par l'ordre et le gradué associé à cette filtration
s'identifie à l'algèbre symétrique de niveau $m$ de $Lie(G)$, l'algèbre de Lie du groupe $G$. On dispose en particulier
d'un théorème du type Poincaré-Birkhoff-Witt pour $D^{(m)}(G)$ en termes d'une $V$-base de $Lie(G)$.
 De plus, les anneaux $D^{(m)}(G)$ sont noetheriens à gauche et à droite. Nous donnons une description explicite de
ces algèbres dans le cas du groupe additif et du groupe multiplicatif, et introduisons la notion de
PD stratification de niveau $m$ en ~\ref{strat_m}, permettant de décrire plus facilement les
$D^{(m)}(G)$-modules.

 En outre, si $X$ est un $S$-schéma muni d'une
action de $G$ (à droite), nous donnons une interprétation géométrique des algèbres $D^{(m)}(G)$
en montrant (\ref{liens-faisdiffG} et \ref{op_diff_inv})
qu'il existe un anti-homomorphisme d'anneaux de $D^{(m)}(G)$ vers l'algèbre
des sections globales sur $X$ de $\Dm_X$. Dans le cas où $X=G$, ceci nous permet d'identifier $D^{(m)}(G)$ avec
les opérateurs différentiels sur $G$, $G$-invariants. Insistons enfin sur le fait que tous ces résultats sont aussi
valables en caractéristique finie, ou si $V$ possède de la torsion.

Dans la section~\ref{alg_dist_comp}, nous nous plaçons dans le cas où $V$ est un anneau de valuation discrète complet
(AVDC) d'inégales caractéristiques $(0,p)$. Soit alors $\GG$ un schéma formel en groupes sur $\SS=\spf V$. Nous
construisons des algèbres complétées de niveau $m$,
$\what{D}^{(m)}(\cal G)$, ainsi que leur limite $D^{\dagger}(\cal G)_{\Qr}$, tensorisée par $\Qr$. Cette algèbre
peut être vue comme la complétée faible, en sens noncommutatif de Monsky-Washnitzer \cite{MonskyWashI}, de l'algèbre des distributions classiques $Dist(G)$ et elle joue un rôle central dans notre application aux $\DD$-modules arithmétiques.

En ~\ref{dist_an_rig} et ~\ref{rep_an_rig}, nous relions ces constructions à la théorie des représentations $p$-adiques et introduisons le groupe
analytique rigide $G^\circ$. Le groupe des points rationnels $G^\circ(K)$ est égal au 'premier groupe de congruence'
de $G(V)$. Si $V$ est l'anneau des entiers d'un corps local $p$-adique, ce groupe donne lieu à différentes
théories des représentations intéressantes
(analytique-rigides, localement analytiques, Banach, etc ...). Dans notre cas, la théorie des représentations analytiques
rigides fournira le lien avec l'algèbre de distributions $D^{\dagger}(\cal G)_{\Qr}$. A cette fin,
 nous introduisons l'algèbre des distributions analytiques rigides $D^{an}(G^\circ)$ qui est le dual continu de
l'espace des sections globales de $G^\circ$, et qui est munie du produit de convolution. Nous montrons que
l'inclusion de $Lie(G^\circ)$ dans $D^{an}(G^\circ)$ s'étend naturellement en un isomorphisme entre
$D^{\dagger}(\cal G)_{\Qr}$ et $D^{an}(G^\circ)$, ce qui généralise
\cite[2.3.3.]{patel_schmidt_strauch_dist_algebra_GL2} dans le cas $V=\Ze_p$ and ${\rm GL}(2)$. Nous introduisons ensuite
les catégories des représentations analytiques rigides des groupes de Lie $G^\circ(K)$ et $G(V)$
ainsi que leurs sous-catégories de représentations admissibles. Par construction, ces dernières sont anti-équivalentes
aux catégories de modules de présentation finie sur $D^{an}(G^\circ)$.

En ~\ref{cas_reduct}, nous analysons en détail la structure des anneaux $D^{(m)}(G)$ dans le cas où $G$ est égal à un groupe
réductif connexe déployé sur $V$. En utilisant la grosse cellule de $G$, nous en déduisons une décomposition
triangulaire des algèbres $D^{(m)}(G)$ et $D^{\dagger}(\cal G)_{\Qr}$. En utilisant cette description et en
reprenant des arguments de Berthelot et de Emerton, nous démontrons que l'anneau $D^{\dagger}(\cal G)_{\Qr}$ est
cohérent. Nous espérons que ce résultat reste vrai pour un groupe $G$ général \cite[5.3.12]{Emerton}.
Nous terminons par l'application à la cohomologie rigide décrite ci-dessus. En ~\ref{op-diff-inv-comp} nous étendons
les résultats obtenus à un niveau fini au cas suivant. Soit $\XX$ un $\SS$-schéma formel muni d'une
action à droite de $\GG$, alors on dispose d'un morphisme  de $D^{\dagger}(\cal G)_{\Qr}$
vers l'algèbre des sections globales $\Ga(\XX,\DD^{\dagger}_{\XX,\Qr})$. Dans le cas particulier
où $\XX=\GG$, ceci nous permet d'identifier l'algèbre de distributions $D^{\dagger}(\cal G)_{\Qr}$
avec les opérateurs différentiels $\Ga(\GG,\DD^{\dagger}_{\GG,\Qr})$ qui sont $\GG$-invariants. Nous donnons
aussi une version twistée de ces résultats.

\vskip5pt

Toutes ces constructions
sont utilisées dans l'appendice dans le cas où $G$ est un groupe réductif connexe deployé sur $S$ où nous calculons les sections globales du faisceau des
algèbres d'opérateurs différentiels arithmétiques sur le complété formel $\XX$ de la variété de drapeaux
de $G$. Elles sont obtenues, en niveau $m$, comme quotient de $\hat{D}^{(m)}(\cal G)_{\Qr}$ par des éléments de la partie positive
 de son centre. Ce résultat, joint au résultat
d'acyclicité de \cite{huy-beil_ber}, donne le théorème de localisation en sens de Beilinson-Bernstein \cite{BeBe} pour les algèbres
de distributions $p$-adiques associées aux $\DD$-modules arithmétiques sur $\XX$.

Remarquons que pour le niveau $m=0$, dans le cas où le groupe $G$ est semi-simple, et pour $p$ 'très bon' (condition liée au système de racines de $G$), le théorème de localisation etait établi par Ardakov et Wadsley dans
\cite{ardakov_p_lie_group_rep}. D'autre part, les algèbres $D^{(m)}(G)$ avaient été définies ad hoc par Kaneda et Ye
(\cite{kaneda-ye-equivariant_loc_symp_4}), sans utiliser un formalisme conceptuel et fonctoriel comme ici, et qui ne les ont pas étudiées systématiquement. Remarquons aussi que, dans le cas ${\rm GL}(2)$ et $V=\Ze_p$, les algèbres $D^{(m)}(G)$  sont définies ad hoc par Patel, Schmidt et Strauch \cite{patel_schmidt_strauch_dist_algebra_GL2} pour calculer des sections globales des opérateurs log-différentiels
arithmétiques sur certains modèles formels semistables de la droite projective. Notons aussi que
Le Stum et Quiros ont introduit les $m$-PD-enveloppes de la section unité d'un groupe $G$ dans
\cite{stum_quir_poinc_high}.

\vskip5pt

Le premier auteur remercie Michel Gros, et Adriano Marmora pour leurs encouragements à rédiger ce travail, ainsi 
que King Fai Lai pour l'avoir initiée à ces problèmes. Nous remercions d'autre part
 Pierre Berthelot pour ses réponses à nos questions.

Le second auteur remercie Deepam Patel et Matthias Strauch pour quelques conversations intéressantes sur certains points de ce travail.

Le second auteur a accompli une partie de ce travail alors qu'il était lauréat d'une bourse Heisenberg attribuée
par la Deutsche Forschungsgemeinschaft. Il tient à remercier cette institution pour son soutien.
\vskip5pt
{\it Notations:} \; $V$ désigne toujours une $\Ze_{(p)}$-algèbre noetherienne et $S=Spec\, V$. Si, plus
particulièrement, $V$ est un anneau de valuation discrète complet d'inégales caractéristiques $(0,p)$ (noté AVDC dans la
suite), nous noterons $\pi$ une uniformisante de $V$, $K=Frac(V)$ et $k$ le corps résiduel de $V$. Si $V$ est un AVDC,
on écrit $\SS=Spf\, V$. Un $\SS$-schéma formel est un schéma formel localement noethérien $\XX$ sur $\SS$, tel que
$\pi\OO_{\XX}$ soit un idéal de definition. Si $\XX$ est un $\SS$-schéma formel, alors $X_i$ sera le $S$-schéma
$\XX\times_{\SS}Spec(V/\pi^{i+1}V)$. Si $X$ est un $S$-schéma quelconque, le $\SS$-schéma formel obtenu par compléter $X$ le long de l'idéal $\pi\OO_X$, sera toujours noté $\XX$.

Tous les schémas considérés dans cet article sont localement noetheriens.

\section{Rappels sur les opérateurs différentiels arithmétiques}
\label{sec_rapdiff}

\subsection[Enveloppes à puissances divisées partielles]{Enveloppes à puissances divisées partielles}
%{{{
%
Nous utiliserons dans la suite le formulaire (1.1.3 de~\cite{Be1}).
\subsubsection{Définitions}
\label{subsubsection-formules}
Fixons un entier $m$. Pour un entier $k\in\Ne$, $q_k$ est le quotient de la division
euclidienne de $k$ par $p^m$. Berthelot introduit les coefficients suivants pour deux
entiers $k$, $k'$ avec $k\geq k'$

$$\acfrac{k}{k'}=\frac{q_k!}{q_{k'}!q_{k''}!}\,\in\mathbf{N},\quad
                \crofrac{k}{k'}=\parfrac{k}{k'}{\acfrac{k}{k'}}^{-1} \,\in\mathbf{Z}_p  $$
                où $k''=k-k'$.
On peut généraliser ces coefficients pour des multi-indices en posant
pour $\uk=(k_1,\ldots,k_N)\in\Ne^N$, $$q_{\uk}!=q_{k_1}!\cdots q_{k_N}!,$$
et
$$ \acfrac{\uk}{\uk'}=\frac{q_{\uk}!}{q_{_{\uk'}}!q_{{\uk''}}!}\,\in\mathbf{N},\quad
                \crofrac{\uk}{\uk'}=\parfrac{\uk}{\uk'}{\acfrac{\uk}{\uk'}}^{-1} \,\in\mathbf{Z}_p.  $$
S'il y a lieu, on précisera dans ces notations le niveau $m$ en indice.

On se réfère ici à 1.3.5 de \cite{Be1}. Soit $A$ une $\mathbf{Z}_{(p)}$-algèbre,
$(I,J,\gamma)$ un $m$-PD-idéal. Par définition, cela signifie que
$(J,\gamma)$ est un idéal à puissances divisées (un PD-idéal) et que $I$ est muni
de puissances divisées partielles, c'est-à-dire que, pour tout
entier $k$ qui se décompose $k=p^m q_k+r$ (avec $ r<p^m $), il existe une opération définie
pour tout $x$ de $I$ par $$x^{\{k\}}=x^r \gamma_k(x^{p^m}).$$
Quand nous voudrons préciser le niveau $m$, nous noterons $q_k^{(m)}$ l'entier $q_k$ que
nous venons de définir. On a ainsi la relation
$$q_k!x^{\{k\}}=x^k,$$
et le formulaire
\begin{gather*}\label{formulesmPD}
\forall x\in I,\, x^{\{0\}}=1, x^{\{1\}}=x,\, \forall k\geq1, \quad x^{\{k\}}\in I,\,
\forall k\geq p^m, x^{\{k\}}\in J \\
\forall x\in I, \forall a\in A, \forall k\in\Ne,\quad (ax)^{\{k\}}=a^kx^{\{k\}},\\
\forall x,y\in I, \forall k\in\Ne,\quad (x+y)^{\{k\}}=\sum_{k'+k''=k}\crofrac{k}{k'}
x^{\{k'\}}y^{\{k''\}},\\
\forall x\in I,\,\forall k',k''\in\Ne,\quad x^{\{k'\}}x^{\{k''\}}=\acfrac{k'+k''}{k'}
x^{\{k'+k''\}}.
\end{gather*}
Un homomorphisme d'algèbres $\varphi$ entre deux $m$-PD-algèbres $(A,I,J,\gamma)$ et
$(A',I',J',\gamma')$ est un $m$-PD-morphisme si $\varphi(I)\subset I'$ et si
$\varphi$: $(A,J,\gamma)\rig (A',J',\gamma')$ est un $PD$-morphisme, autrement
dit $\gamma'(\varphi(x))=\varphi(\gamma(x))$ pour tout $x$ de $J$. Dans cette situation,
on a donc, pour tout $x\in I$,
$$\varphi\left(x^{\{k\}}\right)=\left(\varphi(x)\right)^{\{k\}}.$$
Ces notations seront aussi utilisées avec des multi-indices $\uk=(k_1,\ldots,k_N)\in\Ne^N$
et les conventions habituelles, pour des multi-indices $\uk$ et $\uk'$ de longueur $N$ avec  $\uk\geq \uk'$
$$ \crofrac{\uk}{\uk'}=\prod_{i=1}^N \crofrac{k_i}{k'_i},\quad
\acfrac{\uk}{\uk'}=\prod_{i=1}^N \acfrac{k_i}{k'_i}.$$

Enfin, on notera $|\uk|=k_1+\cdots +k_N$.
\subsubsection{Enveloppes à puissances divisées d'un faisceau d'idéaux}
\label{descript_loc_pn}

Avec ce formalisme, les $m$-PD-enveloppes à puissances divisées sont construites
de façon analogue aux PD-enveloppes à puissances divisées (1.4.2 de \cite{Be1}). Ces
constructions se faisceautisent. Suivant 1.5.3 de \cite{Be1} (que nous appliquons avec
$R=V$ et $A/I=V$), si $I$ est un idéal régulier
d'une $V$-algèbre $A$ et $(t_1,\ldots,t_N)$ est une suite régulière de paramètres de $I$,
alors la $m$-PD-enveloppe de $I$, notée $P_{(m)}(I)$, est un $V$-module
libre de base les éléments $$\ut^{\{\uk\}}=t_1^{\{k_1\}}\cdots t_N^{\{k_N\}},$$
où $q_i!t_i^{\{k_i\}}=t_i^{k_i}$.
Sous ces hypothèses, ces algèbres sont indépendantes du choix de la $m$-PD-structure
compatible sur la base $V$. Ces algèbres sont munies d'une filtration décroissante par
les idéaux $I^{\{n\}}$,
 tels que
$$ I^{\{n\}}=\bigoplus_{|\uk|\geq n} V \cdot \ut^{\{\uk\}}.$$

 Les quotients $P^n_{(m)}(I):= P_{(m)}(I)/I^{\{n+1\}}$ sont donc engendrés comme $V$-module par les éléments $\ut^{\{k\}}$ pour
$|\uk|\leq n$.

Une base duale des $\ut^{\{\uk\}}$ sera notée $\ueta^{\la \uk\ra}$. Ces
éléments vérifient formellement
\begin{gather}
\ueta^{\la \uk\ra}=\prod_i \eta_i^{\la k_i\ra}, \text{ où } \frac{k_i!}{q_{k_i}!}\eta_i^{\la k_i\ra}=\eta_i^{k_i}.
\end{gather}

\subsection{Faisceaux d'opérateurs différentiels}

\subsubsection{Faisceaux gradués de parties principales}
\label{faisceauxpp_grad}
Nous reprenons ici des constructions de \cite{Hu1}. Soient $Y$ un $S$-schéma,
$\EE$ un faisceau localement libre sur $Y$, $\Sg_{Y}(\EE)$ l'algèbre
symétrique associée au faisceau $\EE$, $\II$ l'idéal des éléments homogènes de
degré $1$ de cette algèbre, on définit
\begin{gather} \Ga_{Y,(m)}(\EE)=P_{\Sg_{Y}(\EE)}(\II), \\
                \Ga_{Y,(m)}^{n}(\EE)=\Ga_{Y,(m)}(\EE)/\II^{\{n+1\}}.
\end{gather}
Ces algèbres sont graduées
\begin{gather}\Ga_{Y,(m)}^{n}(\EE) =\bigoplus_{n' \leq n}  \Ga_{Y,(m),n'}(\EE).
\end{gather}
 Si $\EE$ admet localement pour base $\xi_1,\ldots,\xi_N$, on a la description
locale suivante
 \begin{gather}\label{baseGam}
\Ga_{Y,(m),n'}(\EE)\simeq \bigoplus_{|\uk|=n'}\OO_Y \uxi ^{\{\uk\}}, \text{ où }
q_i!\xi_i^{\{k_i\}}=\xi_i^{k_i}.
\end{gather}

Définissons maintenant par dualité
\begin{gather} \Sg_{Y}^{(m)}(\EE)=\bigcup_n \HH om_{\OO_Y}(\Ga_{Y,(m)}^{n}(\EE^{\vee}),\OO_Y).
\end{gather}
On obtient ainsi une algèbre commutative graduée
$$\Sg_{Y}^{(m)}(\EE)=\bigoplus_n \Sg_{Y,n}^{(m)}(\EE).$$
Soient $\xi_1,\ldots, \xi_N$ une base locale de $\EE$, $\uxi ^{\la\uk\ra}$ la
base duale des éléments $\uxi ^{\{\uk\}}$ de~\ref{baseGam}. On a alors
la description suivante
\begin{gather}\Sg_{Y,n}^{(m)}(\EE)\simeq \bigoplus_{|\uk|=n}\OO_Y \uxi ^{\la\uk\ra},
 \text{ où } \frac{k_i!}{q_{k_i}!}\xi^{\la k_i\ra}=\xi^{k_i}.
\end{gather}

\vspace{+2mm}
Appliquons maitenant la définition d'enveloppe à puissances divisées partielles dans le cas des faisceaux de parties principales (1.5.3 et 2.1 \cite {Be1})
de niveau $m$ sur un $S$-schéma lisse $Y$.

\subsubsection{Faisceaux de parties principales}

\label{parties_principales}
 On note $\PP_{Y,(m)}^{n}$
le faisceau des parties principales de niveau $m$ et d'ordre $n$ de $Y$ relativement à
$S$: ce sont les enveloppes à puissance divisées partielles de niveau $m$ de l'idéal $\II$
de l'immersion diagonale $Y\hrig Y\times Y$. Par construction, il existe un morphisme canonique de $\OO_S$-algèbres
\begin{gather}\label{def_rm}
 r_m \,\colon\, \OO_{Y\times Y} \rig \PP_{Y,(m)}^{n}.
\end{gather}
On dispose de $2$ morphismes canoniques induits
par $a\mapsto a\ot 1$, noté $\OO_{Y,g}$ et par $a\mapsto 1\ot a$ noté $\OO_{Y,d}$ sur
$\PP_{Y,(m)}^{n}$, qui munissent ce faisceau d'algèbres de $2$ structures de
$\OO_Y$-module. Lorsque le contexte sera clair les mentions $d$ et $g$ ne
figureront pas dans les notations. L'application $r_m$ est $\OO_{Y}$-linéaire pour la
structure de $\OO_{Y,g}$-module de $\PP_{Y,(m)}^{n}$.

 Si $(y_1,\ldots,y_N)$ est un système de coordonnées locales
sur $Y$, et si $\tau_i=1\ot y_i -y_i\ot 1\in\OO_{Y\times Y}$,
on dispose localement d'un isomorphisme de $\OO_Y$-modules
\begin{gather}\label{basetau}\PP_{Y,(m)}^{n}\simeq \bigoplus_{|\uk|\leq n}\OO_Y \tau_1^{\{u_1\}}\cdots\tau_N^{\{u_N\}},
\text{ où } q_i!\tau_i^{\{k_i\}}=\tau_i^{k_i}.
\end{gather}
L'idéal $\II^{\{k\}}$ est l'idéal engendré par les éléments $\utau^{\{\uu\}}$ avec
$|\uu|\geq k$.

Ces définitions ont un sens pour $m=\infty$ où l'on trouve
que $\PP_{Y,(\infty)}^{n}=\OO_Y/\II^{n+1}$. Le cas $m=0$ correspond à l'algèbre
à puissances divisées classique de l'idéal d'une $V$-algèbre $A$.

En appliquant les résultats de 1.1.5 de ~\cite{Hu1}, on voit que l'algèbre graduée associée
à la filtration $m$-PD-adique de $\PP_{Y,(m)}$, s'identifie à la $m$-PD-algèbre
graduée $\Ga_{Y,(m)}(\II/\II^2)=\Ga_{Y,(m)}(\Omega^1_Y)$ (cf~\ref{faisceauxpp_grad}). Plus précisément, on dispose d'un isomorphisme canonique de $m$-PD-algèbres
\begin{gather} \label{isomgrP}d_m^*\,\colon\,\Ga_{Y,(m)}(\Omega^1_Y) \sta{\sim}{\rig} \gr\pg \PP_{Y,(m)}.
\end{gather}

 Donnons maintenant une description en coordonnées locales de ces algèbres. Avec les
notations précédentes, notons $\xi_i$ la classe de $\tau_i$ dans $\II/\II^2$, alors
localement les faisceaux $\Ga_{Y,(m)}^n$ sont des $\OO_Y$-modules libres de base les
éléments $\uxi^{\{\uu\}}= \xi_1^{\{u_1\}}\cdots\xi_N^{\{u_N\}}$ avec $\sum u_i\leq n$.

\vspace{+2mm}
Soit $f$ un morphisme de $S$-schémas lisses: $Y\rig X$, alors il existe un homomorphisme
canonique $\OO_Y$-linéaire noté $df$: $f^*\PP_{X,(m)}^{n}\rig \PP_{Y,(m)}^{n}$ (2.1.4 de
\cite{Be1}). De plus, on a $df(1\ot h -h \ot 1)=1\ot f^{-1}(h) -f^{-1}(h)\ot 1$ si $h$ est
une section locale de $\OO_X$. On peut ainsi vérifier que si $g$ est un
morphisme de $S$-schémas lisses: $Z\rig Y$, alors le morphisme $d(f\circ g)$:
$(f\circ g)^*\PP_{X,(m)}^{n}\rig \PP_{Z,(m)}^{n}$ coïncide avec
$dg\circ g^*df$.
Considérons le cas où $Y=Z\times X$ et où $p_1$ et $p_2$ sont les deux projections.
On dispose aussi d'une projection $r_2$ : $Y\times Y \rig X \times X$. L'application
canonique $r_2^{-1}\OO_{X\times X}\rig \OO_{Y\times Y}$, envoie l'idéal diagonal
de $X$ vers l'idéal diagonal de $Y$. Par la propriété universelle des $m$-PD enveloppes,
on a donc un morphisme canonique, pour tout $n$
$$ dp_2 \,\colon \, p_2^*\PP_{X,(m)}^{n} \rig \PP_{Y,(m)}^{n}.$$
De même on dispose de
$ dp_1 \,\colon \, p_1^*\PP^{n}_{Z,(m)} \rig \PP^{n}_{Y,(m)},$ et si $J_Z$ est le $m$-PD-idéal
de la $m$-PD-algèbre $\PP^{n}_{Z,(m)}$, on dispose d'un $m$-PD-morphisme
\begin{gather}\label{projP1} s_2 \, \colon \, \PP_{Y,(m)}^{n} \trig \PP_{Y,(m)}^{n}/p_1^*(J_Z) .\end{gather}

Décrivons localement ces morphismes.
 Supposons que $t'_1,\ldots,t'_M$, $t_1,\ldots,t_N$ soient des
coordonnées locales sur $Z$ et $X$ respectivement, notons $\tau'_i=1\ot t'_i -t'_i\ot 1$,
resp. $\tau_i=1\ot t_i -t_i\ot 1$, de sorte que les élements $p_1^*(t'_1),\ldots, p_1^*(t'_M),
p_2^*(t_1),\ldots,p_2^*(t_N)$ forment un système de coordonnées locales de $Y$.
Localement, on peut donc identifier
$$ \PP_{Y,(m)}^{n} \simeq \oplus_{|\ul_1|+|\ul_2|\leq n} \OO_Y
p_1^*({\utau'}_1^{\{\ul_1\}})p_2^*(\utau_2^{\{\ul_2\}}),$$
et $p_1^*(J_Z)$ avec le sous $\OO_X$-module engendré par les
éléments $p_1^*({\utau'}_1^{\{\ul_1\}})p_2^*(\utau_2^{\{\ul_2\}})$ pour lesquels
$|\ul_1|\geq 1$, ce qui donne
$$ \PP_{Y,(m)}^{n}/p_1^*(J_Z) \simeq \oplus_{|\ul_2|\leq n} \OO_Y
p_2^*(\utau_2^{\{\ul_2\}}).$$ Or, avec ce choix de coordonnées locales, on a aussi
$$ \PP_{X,(m)}^{n}\simeq \oplus_{|\ul_2|\leq n} \OO_X
\utau_2^{\{\ul_2\}},$$ en d'autres termes, $s_2 \circ dp_2$ est un $m$-PD-isomorphisme
d'algèbres, noté $\lam_2$. On pose donc
\begin{gather}\label{projP}
  q_2=\lam_2^{-1}\circ s_2 \,\colon \,  \PP_{Y,(m)}^{n} \rig p_2^*\PP_{X,(m)}^{n}.
\end{gather}
Avec les identifications précédentes, on a
$$q_2\left(\sum_{\ul_1,\ul_2}a_{\ul_1,\ul_2}p_1^*({\utau'}_1^{\{\ul_1\}})p_2^*(\utau_2^{\{\ul_2\}})\right)=
\sum_{\ul_2}a_{0,\ul_2}p_2^*(\utau_2^{\{\ul_2\}}).$$ On vérifie ainsi localement que
$q_2$ est une section canonique de $dp_2$.

Le raisonnement qui précède peut aussi s'appliquer aux faisceaux d'algèbres $\Ga_{X,(m)}^n$ et
 $\Ga_{Y,(m)}^n$ si $Y=Z \times X$. Comme on a un isomorphisme
$\Omega^1_{Y}\simeq p_1^*\Omega^1_{Z} \bigoplus p_2^*\Omega^1_{X} $,
 la proposition 1.2.2 de \cite{Hu1}, implique que l'on a un
isomorphisme canonique
$\Ga_{Z\times X,(m)}\simeq p_1^*\Ga_{Z,(m)}\otimes_{\OO_Y} p_2^*\Ga_{X,(m)}.$
En procédant exactement comme ci-dessus, on voit qu'il existe un $m$-PD-morphisme canonique
$$  p_2^*\Ga_{X,(m)}^{n} \rig \Ga_{Y,(m)}^{n},$$
et une section à ce morphisme, qui est aussi un $m$-PD-morphisme
$$q'_2\,\colon\, \Ga_{Y,(m)}^{n} \rig p_2^*\Ga_{X,(m)}^{n}.$$
En coordonnées locales, et avec les notations précédentes, notons
$\xi'_i$ la classe de $1\ot t'_i- t'_i\ot 1$ dans
$ \in \Ga_{Z,(m)}$ (resp. $\xi_i$ la classe de $1\ot t_i- t_i\ot 1$ dans
$ \Ga_{X,(m)}$). On peut alors identifier
$$\Ga_{Y,(m)}^{n} \simeq \oplus_{|\ul_1|+|\ul_2|\leq n} \OO_Y
p_1^*({\uxi'}_1^{\{\ul_1\}})p_2^*(\uxi_2^{\{\ul_2\}}),\;
\Ga_{X,(m)}^{n} \simeq \oplus_{|\ul|\leq n} \OO_X \uxi_2^{\{\ul\}}),$$
et avec ces identifications
$$q'_2\left(\sum_{\ul_1,\ul_2}a_{\ul_1,\ul_2}p_1^*({\uxi'}_1^{\{\ul_1\}})p_2^*(\uxi_2^{\{\ul_2\}})\right)=
\sum_{\ul_2}a_{0,\ul_2}p_2^*(\uxi_2^{\{\ul_2\}}).$$
\subsubsection{Faisceaux d'opérateurs différentiels}
\label{operateur-diff}
Soit $X$ un $S$-schéma lisse, on définit le faisceau $\Dm_{X,n}$ des opérateurs
différentiels d'ordre $n$ et de niveau $m$, comme
$$\Dm_{X,n}=\HH om_{\OO_X}(\PP_{X,(m)}^{n},\OO_X),$$
le dual étant pris pour la structure gauche de $\OO_X$-module sur le
faisceau $\PP_{X,(m)}^{n}$. Le faisceau des opérateurs différentiels de niveau
$m$ est $\Dm_X=\varinjlim_n \Dm_{X,n}$.

Le faisceau $\Dm_X$ est muni d'une structure d'anneau de la façon suivante (2.2.1 de \cite{Be1}).
L'application $\OO_{X\times X}\rig \OO_{X\times X}\ot_{\OO_X}\OO_{X\times X}$,
qui envoie $a\ot b $ sur $a\ot 1 \ot 1 \ot b$,
 induit un unique $m$-PD morphisme de $m$-PD-algèbres
\begin{gather}\label{def-deltaD}
\delta^{n,n'} \,\colon \,
\PP_{X,(m)}^{n+n'}\sta{\delta^{n,n'}}{\rig}\PP_{X,(m)}^{n}\ot_{\OO_X} \PP_{X,(m)}^{n'},
\end{gather}
où le produit tensoriel est donné par la structure de $\OO_X$-module à gauche de $\PP^{n'}_{X,(m)}$ et
par la structure de $\OO_X$-module à droite de $\PP^{n}_{X,(m)}$.

Soient $P\in \Dm_{X,n}$, $P'\in \Dm_{X,n'}$, l'opérateur $PP'$ est obtenu comme composé
\begin{gather}\label{produitD}
\PP_{X,(m)}^{n+n'}\sta{\delta^{n,n'}}{\rig}\PP_{X,(m)}^{n}\ot \PP_{X,(m)}^{n'}
\sta{Id \ot P'}{\rig}\PP_{X,(m)}^{n} \sta{P}{\rig} \OO_X.
\end{gather}

Nous renvoyons à 2.3 de \cite{Be1} pour le fait que se donner sur un $\OO_X$-module
$\EE$ une structure de $\Dm_X$-module revient à se donner une $m$-PD stratification. En dualisant,
on déduit facilement de l'isomorphisme $d_m^*$ de \ref{isomgrP}, comme en 1.3.7.3 de \cite{Hu1}, l'isomorphisme canonique
d'algèbres commutatives
\begin{gather} \label{grdiffDm}
d_m  \,\colon\,\gr\pg\Dm_X \sta{\sim}{\rig} \Sg^{(m)}(\TT_X)
\end{gather}
où $\TT_X$ désigne le faisceau tangent de $X$. En particulier, l'algèbre $\gr\pg\Dm_X(U)$ est noetherienne sur les ouverts affines $U$ de $X$,
et donc aussi $\Dm_X(U)$ par un argument classique.

Sur $\XX$ le complété formel de $X$, nous introduirons
$$\Dcm_{\XX}=\varprojlim_i  \Dm_X/\pi^{i+1} \Dm_X,\textrm{ et }\, \Ddag_{\XX,\Qr}=\varinjlim_m\Dcm_{\XX,\Qr}.$$

Si $x_1,\ldots,x_N$ est un système de coordonnées locales sur $X$ sur un ouvert $U$, $\tau_i=1\ot x_i -x_i \ot 1$, on note
$\uder^{\la \uk \ra}$ la base duale des $\utau^{\{\uk\}}$ de ~\ref{basetau}, et $\der_{i}^{[k_i]}=\der_i^{k_i}/k_i!$.
Alors, $\der_{i}^{\la k_i \ra}=q_{k_i}!\der_{i}^{[k_i]}$.
On a la description locale suivante sur l'ouvert $U$
\begin{gather}\label{desclocD} {\Dm_X}_{|U} \simeq \oplus_{\uk}\OO_U \uder^{\la \uk \ra} \end{gather}
La structure de $\Dm_X$-module de $\OO_X$ est donné par le composé suivant, pour $P$ une section locale de $\Dm_{X,n}$,
\begin{gather}\label{action-D}
\xymatrix @R=0mm { \OO_X \ar@{->}[r]^{u_d} & \PP_{X,(m)}^{n} \ar@{->}[r]^{P} & \OO_X \\
                  f\ar@{{|}->}[r] & 1\ot f .&  }
\end{gather}

Pour les opérations cohomologiques pour les $\Dm_X$ (resp. $\Dcm_{\XX}$,
$\Ddag_{\XX,\Qr}$)-modules cohérents, nous nous référons aux chapitres 2 et 4 de
\cite{Be-survey}. Nous utiliserons ces opérations pour des complexes de la catégorie
dérivée des $\Dm_X$-modules, à cohomologie bornée et cohérente, que nous noterons
$D^b_{c}(\Dm_X)$. Notons que si $f$ : $Y \rig X$ est un morphisme de $S$-schémas lisses,
le foncteur image inverse $f^!$ va de la catégorie dérivée $D^b_{c}(\Dm_X)$ vers
$D^b_{c}(\Dm_Y)$.

Soit $\EE$ un $\Dm_X$-module cohérent (vu comme complexe en degré
$0$), alors, comme $\OO_Y$-module,
$f^!\EE$ s'identifie à $\OO_Y\ot_{f^{-1}\OO_X}^{{\bf L}}f^{-1}\EE$ (4.2 de chap.VI de
\cite{Bo}). Mais comme $f$ est
plat, la cohomologie de ce complexe est concentrée en degré $0$, où il est isomorphe à
$f^*\EE$. Dans la suite de ce texte la notation $\HH^0f^!\EE$ désignera donc, si $f$ est lisse
et $\EE$ est un $\Dm_X$-module cohérent, un $\Dm_Y$-module cohérent isomorphe, comme
$\OO_Y$-module, à $f^*\EE$.

Nous aurons aussi besoin du fait suivant. Soit $\II$ l'idéal de l'immersion diagonale
$X\times X\times X$, alors, on a une suite exacte
$$ 0 \rig \II^{\{1\}}/\II^{\{2\}} \rig \OO_{X\times X}/ \II^{\{2\}} \rig \OO_X,$$
où on identifie $\OO_X $ à l'algèbre $\PP_{X,(m)}^{0}$. Or, comme
$\II^2\subset \II^{\{2\}}$, on a une flèche canonique $\II/\II^2 \rig
\II^{\{1\}}/\II^{\{2\}}$, dont on vérifie en coordonnées locales que c'est un
isomorphisme.
 D'autre part,
l'application canonique $\OO_X \rig \PP_{X,(m)}^{1}$ donne un scindage de la suite exacte
ci-dessus,
qui fournit un isomorphisme
\begin{gather} \label{desc-P1diff}   \PP_{X,(m)}^{1}\sta{\sim}{\lrig} \OO_X \bigoplus \Omega^1_X.
\end{gather}
En dualisant, on trouve donc des isomorphismes
\begin{gather} \label{desc-D1diff} B_m\,\colon\, \Dm_{X,1} \sta{\sim}{\rig} \OO_X \bigoplus \TT_X,\,
 \textrm{ et } \overline{B}_m \,\colon\, \Dm_{X,1}/\Dm_{X,0} \sta{\sim}{\rig} \TT_X \textrm{ et }
 \overline{B}_m^* \,\colon\,\II/\II^2 \sta{\sim}{\rig} \gr_1 \PP_{X,(m)},
\end{gather}
où $\overline{B}_m^*$ est obtenu en dualisant $\overline{B}_m$.

Si $V$ est un AVDC, $\SS=Spf\,V$ et si $\XX$ est un schéma formel lisse sur $\SS$,
 nous introduirons
$$\Dcm_{\XX}=\varprojlim_i  \Dm_{X_i},\textrm{ et }\, \Ddag_{\XX,\Qr}=\varinjlim_m\Dcm_{\XX,\Qr}.$$
Si $x_1,\ldots,x_N$ est un système de coordonnées locales sur $\XX$ sur un ouvert formel $\UU$, $\tau_i=1\ot x_i -x_i \ot 1$, on note
$\uder^{\la \uk \ra}$ la base duale des $\utau^{\{\uk\}}$ de ~\ref{basetau}, et $\der_{i}^{[k_i]}=\der_i^{k_i}/k_i!$.
Alors, $\der_{i}^{\la k_i \ra}=q_{k_i}!\der_{i}^{[k_i]}$.
On a la description locale suivante sur l'ouvert affine $\UU$
\begin{gather}\label{desclocDdag}
                     \Ga(\UU,\Dcm_{\XX})=\left\{\sum_{\uk\in \Ne^{N}}a_{\uk}\uder^{\la \uk \ra}, \, a_{\uk}\in
\OO_{\XX}(\UU) \; | \;  \vp(a_{\uk})\rig +\infty \; \text{si} \;|\uk|\rig +\infty \right\} \\
            \Ga(\UU,\Ddag_{\XX,\Qr})=\left\{\sum_{\uk\in \Ne^{N}}a_{\uk}\uder^{[ \uk ]} , \, a_{\uk}\in\OO_{\XX,\Qr}(\UU)
\; | \; \exists c\in\Rr, \eta >0,  \text{tels que } \vp(a_{\uk})>\eta |\uk| + c \right\}.
\end{gather}
Ici, $\vp$ designe la valuation $p$-adique sur $V$-algèbre plat $\OO_{\XX}(\UU)$ et aussi la valuation induit sur $\Qr$-algèbre $\OO_{\XX,\Qr}(\UU)=\OO_{\XX}(\UU)\ot\Qr$.
\subsubsection{Faisceaux d'opérateurs différentiels twistés}
\label{operateur-difft}
%{{{
Nous aurons besoin d'une version twistée par un faisceau inversible des
constructions précédentes. Nous commencerons par
définir les faisceaux de parties principales twistés.
Reprenons ici les hypothèses de la sous-section
précédente~\ref{operateur-diff}. Soit $\LL$ un $\OO_X$-module inversible sur
$X$. Introduisons le faisceau twisté des parties principales
$$ {}^t\PP_{X,(m)}^{n} = \LL^{-1}\ot_{\OO_{X,g}}\PP_{X,(m)}^{n}\ot_{\OO_{X,d}}\LL.$$
La notation $\OO_{X,d}$ (resp. $\OO_{X,g}$) signifie que l'on prend la structure de $\OO_X$-module
à droite (resp. à gauche) sur $\PP_{X,(m)}^{n}$ décrite en ~\ref{parties_principales}.

\vskip4pt
Remarque: les faisceaux ${}^t\PP_{X,(m)}^{n}$ ne sont pas des faisceaux d'algèbres, sauf si $\LL$ est trivial.

Définissons
sur $X$ les faisceaux d'opérateurs différentiels twistés d'ordre inférieur à $n$
${}^t\DD^{(m)}_{X,n}$ par
\begin{align*}{}^t{}\DD^{(m)}_{X,n}&=\HH om_{\OO_{X,g}}({}^t\PP_{X,(m)}^{n},\OO_X) \\
                                  &\simeq \HH om_{\OO_{X,g}}(\PP_{X,(m)}^{n}\ot_{\OO_{X,d}}\LL,\LL).
\end{align*}
Identifions $\LL^{-1}$ à $\HH om_{\OO_X}(\LL,\OO_X)$. Alors on dispose d'un isomorphisme canonique
$$
\xymatrix @R=0mm {  \HH om_{\OO_{X,g}}(\PP_{X,(m)}^{n},\OO_X)\ot_{\OO_{X,d}} \ar@{->}[r]^{\sim}\LL^{-1} &
        \HH om_{\OO_{X,g}}(\PP_{X,(m)}^{n}\ot_{\OO_{X,d}}\LL,\OO_X)\\
                  Q\ot \varphi \ar@{{|}->}[r] & (T\ot l \mapsto Q(T\cdot(1\ot \varphi(l))) }, $$
qui nous donne finalement, après tensorisation sur $\OO_{X,g}$ par $\LL$ un isomorphisme canonique
$$ \HH om_{\OO_{X,g}}(\PP_{X,(m)}^{n}\ot_{\OO_{X,d}}\LL,\LL) \simeq \LL\ot_{\OO_{X,g}}\DD^{(m)}_{X,n}\ot_{\OO_{X,d}}\LL^{-1},$$
et donc
\begin{gather}
 ^t{}\DD^{(m)}_{X,n}\simeq \LL\ot_{\OO_{X,g}}\DD^{(m)}_{X,n}\ot_{\OO_{X,d}}\LL^{-1}.
\end{gather}
On définit aussi
   \begin{align*} {}^t\DD^{(m)}_X&=\varinjlim_n {}^t\DD^{(m)}_{X,n} \\
   &\simeq \LL\ot_{\OO_{X,g}}\DD^{(m)}_X\ot_{\OO_{X,d}}\LL^{-1}.
    \end{align*}
Observons que le $m$-PD-morphisme $\delta^{n,n'}$ de ~\ref{def-deltaD} est
$\OO_{X,g}\times \OO_{X,d}$-linéaire pour
la structure de $\OO_{X,d}$-module sur $\PP_{X,(m)}^{n'}$ (resp. $\PP_{X,(m)}^{n+n'}$),
et la structure de $\OO_{X,g}$ -module sur $\PP_{X,(m)}^{n}$ (resp. $\PP_{X,(m)}^{n+n'}$),
de sorte que l'on peut considérer l'application
\begin{gather*}
{}^t\delta^{n,n'} \,\colon \,\xymatrix{
\LL^{-1}\ot_{\OO_{X,g}}\PP_{X,(m)}^{n+n'}\ot_{\OO_{X,d}}\LL
\ar@{->}[r]& \LL^{-1}\ot_{\OO_{X,g}} \PP_{X,(m)}^{n}
\ot \PP_{X,(m)}^{n'}\ot_{\OO_{X,d}}\LL}
\end{gather*}
defini par $Id\ot \delta^{n,n'}\ot Id$.
Autrement dit, on dispose d'un $m$-PD-morphisme
\begin{gather}\label{def-deltaDt}
{}^t\delta^{n,n'} \,\colon \,\xymatrix{
{}^t\PP_{X,(m)}^{n+n'}
\ar@{->}[r]^(0.32){{}^t\delta^{n,n'}}&{}^t\PP_{X,(m)}^{n} \ot {}^t\PP_{X,(m)}^{n'}.}
\end{gather}
Le faisceau ${}^t\DD^{(m)}_X$ est alors un faisceau d'anneaux comme dans le cas classique.
Soient $P\in {}^t\Dm_{X,n}$, $P'\in {}^t\Dm_{X,n'}$, l'opérateur $PP'$ est obtenu comme composé
\begin{gather}\label{produitDt}\xymatrix{
{}^t\PP_{X,(m)}^{n+n'}\ar@{->}[r]^(0.4){{}^t\delta^{n,n'}}&{}^t\PP_{X,(m)}^{n}\ot {}^t\PP_{X,(m)}^{n'}
\ar@{->}[r]^<<<<<{Id \ot P'}&{}^t\PP_{X,(m)}^{n} \ar@{->}[r]^{P}& \OO_X.}
\end{gather}
Soient $a$ une section locale de $\OO_X$, $\tau$ une section locale de l'idéal diagonal $\II$,
alors $(a\ot 1)\tau=\tau  (1\ot a)$ et donc les structures de $\OO_{X,g}$ et $\OO_{X,d}$-modules
co\"\i ncident sur $\II/\II^{2}$. En particulier, on dispose d'un isomorphisme
$$ \LL^{-1}\ot_{\OO_{X,g}} \Ga_{X,(m)}(\Omega^1_X)\ot_{\OO_{X,d}}\LL \simeq \Ga_{X,(m)}(\Omega^1_X).$$

D'après ~\ref{isomgrP}, l'algèbre graduée pour la filtration $\II$-adique de
$^t\PP_{X,(m)}^{n}$ s'identifie à
$$ \LL^{-1}\ot_{\OO_{X,g}} \Ga_{X,(m)}(\Omega^1_X)\ot_{\OO_{X,d}}\LL \simeq \gr\pg^t\PP_{X,(m)}^{n},$$
ce qui, compte tenu de la remarque précédente, donne un isomorphisme canonique
 \begin{gather} \label{isomgrPt}\Ga_{X,(m)}(\Omega^1_X) \sta{\sim}{\rig} \gr\pg {}^t\PP_{X,(m)}.
\end{gather}
En dualisant, cet isomorphisme donne un isomorphisme canonique pour l'algèbre graduée des
opérateurs différentiels twistés
 \begin{gather} \label{isomgrDt}  \gr\pg {}^t\DD_{X,(m)}\sta{\sim}{\rig} \Sg^{(m)}(\TT_X).
\end{gather}
De ce fait le gradué associé à la filtration par l'ordre des opérateurs différentiels est
commutatif et noetherien sur les ouverts affines. Comme dans le cas classique, on en déduit que le
faisceau ${}^t\Dm_X$ est cohérent et à sections noethériennes sur les ouverts affines.

Soient $x_1,\ldots,x_N$ un système de coordonnées locales sur un ouvert $U$ de $X$, $\tau_i=1\ot x_i -x_i \ot 1$.
On suppose de plus que $\LL_{|U}$ est engendré par un élément $u$. En reprenant les notations
de ~\ref{basetau} et ~\ref{desclocD}, on dispose des isomorphismes
suivants sur cet ouvert $U$
\begin{gather}\label{basetw}^t\PP_{X,(m)}^{n}\simeq \bigoplus_{|\uk|\leq n}\OO_X \, u^{-1}\ot
\utau^{\{\uk\}}\ot u,\\
{}^t\DD_{X,(m)}\simeq \bigoplus_{|\uk|}\OO_X \, u \ot \uder^{\la \uk \ra} \ot u^{-1}.
\end{gather}
De plus, les éléments $u \ot \uder^{\la \uk \ra} \ot u^{-1}$ forment la base duale de la base
constituée des éléments $u^{-1}\ot\utau^{\{\uk\}} \ot u $.

Si $V$ est un AVDC, $\SS=Spf\,V$, $\XX$ un schéma formel lisse sur $\SS$,
nous introduisons
$${}^t\Dcm_{\XX}=\varprojlim_i  {}^t\Dm_{X_i},\textrm{ et }\, {}^t\Ddag_{\XX,\Qr}=\varinjlim_m{}^t\Dcm_{\XX,\Qr}.$$
On dispose alors de la
\begin{sousprop} \be \item[(i)] Les faisceaux complétés ${}^t\Dcm_{\XX}$ sont cohérents, à sections noethériennes
sur les ouverts affines.
\item[(ii)] Le faisceau ${}^t\Ddag_{\XX,\Qr}$ est cohérent.
\ee
\end{sousprop}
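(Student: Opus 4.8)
The plan is to reduce everything to Berthelot's results in the untwisted case, using that $\LL$ is locally trivial and that coherence of a sheaf of rings is a local property; only the noetherianity of the sections in (i) over an \emph{arbitrary} affine open will require a separate, direct argument.

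\medskip

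\emph{Local reduction.} Cover $\XX$ by formal opens $\UU$ over which $\LL$ is trivial, say generated by a section $u$. By~\ref{basetw}, conjugation $P\mapsto u\ot P\ot u^{-1}$ defines, for each $i$ and each $m$, an isomorphism of sheaves of rings ${}^t\Dm_{X_i}|_{\UU}\sta{\sim}{\rig}\Dm_{X_i}|_{\UU}$, and these are compatible with all the transition morphisms (in $i$, in $m$, and with $\Dm\rig\DD^{(m+1)}$), since both the twisting construction and the conjugation are functorial. Passing to the projective limit over $i$, then tensoring by $\Qr$ and passing to the inductive limit over $m$, one obtains isomorphisms of sheaves of rings ${}^t\Dcm_{\XX}|_{\UU}\sta{\sim}{\rig}\Dcm_{\XX}|_{\UU}$ and ${}^t\Ddag_{\XX,\Qr}|_{\UU}\sta{\sim}{\rig}\Ddag_{\XX,\Qr}|_{\UU}$. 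Berthelot has shown (\cite{Be-survey}, see also \cite{Be1}) that $\Dcm_{\XX}$ is coherent and that $\Ddag_{\XX,\Qr}$ is coherent, the latter relying on the flatness of the transition maps $\Dcm_{\XX,\Qr}\rig\what{\DD}^{(m+1)}_{\XX,\Qr}$; this flatness transports through the isomorphisms above, hence the transition maps ${}^t\Dcm_{\XX,\Qr}\rig{}^t\what{\DD}^{(m+1)}_{\XX,\Qr}$ are flat as well. Since coherence is local, ${}^t\Dcm_{\XX}$ and ${}^t\Ddag_{\XX,\Qr}$ are coherent. This yields (ii) and the coherence assertion in (i).

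\medskip

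\emph{Noetherianity of affine sections.} It remains to show that $\Ga(\UU,{}^t\Dcm_{\XX})$ is noetherian for an arbitrary affine open $\UU\subset\XX$, where now $\LL_{|\UU}$ need not be trivial, so that the global conjugation above is unavailable. Let $U\subset X$ be the underlying affine open of the scheme $X$ and set $A=\Ga(U,{}^t\Dm_X)$. Since ${}^t\Dm_X$ is quasi-coherent and $\OO_X$-flat (it is locally free by~\ref{basetw}), one has ${}^t\Dm_{X_i}={}^t\Dm_X/\pi^{i+1}{}^t\Dm_X$, hence $\Ga(\UU_i,{}^t\Dm_{X_i})=A/\pi^{i+1}A$; as the global sections functor commutes with projective limits, $\Ga(\UU,{}^t\Dcm_{\XX})=\varprojlim_i A/\pi^{i+1}A$ is the $\pi$-adic completion $\what{A}$ of $A$. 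Now $A$ is noetherian: this is the noetherianity of the sections of ${}^t\Dm_X$ over affine opens recalled earlier, which follows from the fact that the graded ring of $A$ for the order filtration is $\Ga(U,\Sg^{(m)}(\TT_X))$ (by~\ref{isomgrDt}), commutative and noetherian because $U$ is affine and $X$ is locally noetherian. Since $\pi$ is central and $A$ is $\pi$-torsion-free ($X$ being smooth over $S$), the graded ring of $\what{A}$ for the $\pi$-adic filtration is the polynomial ring $(A/\pi A)[t]$, which is noetherian; $\what{A}$ being separated and complete for this filtration, it is noetherian by the standard lemma on complete filtered rings with noetherian associated graded. This proves (i).

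\medskip

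The statement is a routine adaptation of the untwisted case, so there is no serious obstacle; the only point requiring genuine care is the last one, the noetherianity of $\Ga(\UU,{}^t\Dcm_{\XX})$ over an affine open on which $\LL$ is not trivial. There one cannot conjugate away the twist and must instead run the filtration argument above — first $\pi$-adic, then by order — which relies on the already-established noetherianity of $\gr\pg{}^t\Dm_X$ over affine opens together with the completion lemma. The remaining verifications — compatibility of the isomorphisms of~\ref{basetw} with the transition maps, locality of coherence, transport of flatness, and $\pi$-torsion-freeness — are straightforward.
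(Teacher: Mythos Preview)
Your proof is correct and follows essentially the same approach as the paper: the paper handles (i) by invoking the noetherianity of ${}^t\Dm_X$ on affine opens (via~\ref{isomgrDt}) together with the standard passage-to-completion argument, and handles (ii) by the local conjugation $P\mapsto u\ot P\ot u^{-1}$ reducing to Berthelot's untwisted result --- remarking at the end that the same trick would also work for (i). You have simply made explicit the details the paper leaves to the reader, and been careful to separate the noetherianity argument (which must work on an arbitrary affine where $\LL$ is not assumed trivial) from the coherence argument (which is local and may use the trivialisation).
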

%}}
\begin{proof} Le (i) se démontre comme dans le cas classique, à partir du fait
que les faisceaux $ {}^t\DD_{X,(m)}$ sont à sections
noetheriennes sur les ouverts affines, et des propriétés du passage au
complété $p$-adique.

Pour le (ii), on peut en outre remarquer que si $u$ est un générateur local de
$\LL$ sur un ouvert affine $U$, on dispose d'un isomorphisme d'algèbres défini
localement sur $\UU$ par
\begin{gather}\xymatrix @R=0mm { {\Ddag_{\XX,\Qr}}_{|\UU} \ar@{->}[r] &  \LL\ot
\Ddag_{\XX,\Qr}\ot \LL^{-1}_{|\UU} \\
          P \ar@{{|}->}[r] & u \ot P \ot u^{-1}.}
\end{gather}
De cette façon, on voit que ${}^t\Ddag_{\XX,\Qr}$ est cohérent puisque c'est
le cas de $\Ddag_{\XX,\Qr}$. Nous aurions aussi pu utiliser un argument du
même type pour le (i).
\end{proof}
%}}}
%
\subsubsection{Compléments sur les faisceaux d'opérateurs différentiels twistés}
\label{comp_opdifft}
Reprenons les notations et hypothèses de la sous-section précédente~\ref{operateur-difft}.
Définissons en suivant les notations de ~\ref{faisceauxpp_grad} le fibré vectoriel associé à $\LL$
$$Y=Spec~ \Sg_{X}(\LL),$$ et $q$ le morphisme canonique : $Y\rig X$. Nous identifierons dans la suite
$\PP_{X,(m)}^{n}\ot_{\OO_{X,d}} \LL$ au sous-$\PP_{X,(m)}^{n}$-module de $q_*\PP_{Y,(m)}^{n}$ engendré
par $1 \ot \LL$ via $T\ot f \mapsto T\cdot (1\ot f)$, pour $f\in\LL$ et $T\in \PP_{X,(m)}^{n}$.

Introduisons aussi  $\Dm_Y(\LL)$, le sous-faisceau de $\OO_X$-modules constitué des
opérateurs différentiels sur $Y$ qui se restreignent en des opérateurs
différentiels agissant sur $X$ et $\LL$, c'est-à-dire
\begin{gather} \label{def_DYL}
\Dm_{Y,n}(\LL)=\{P\in q_*\Dm_Y\, | \, P(\PP_{X,(m)}^{n})\subset \OO_X \text{ et }
P(\PP_{X,(m)}^{n}\ot_{\OO_{X,d}} \LL)\subset \LL \},
\end{gather}
et $$\Dm_{Y}(\LL)=\varinjlim_n \Dm_{Y,n}(\LL).$$
Si $P\in \Dm_{Y,n}(\LL)$, on définit $$r_{\LL}(P)=id_{\LL^{-1}}\ot P_{|\PP_{X,(m)}^{n}\ot \LL} \, \in
{}^t\DD^{(m)}_{X,n},$$ qu'on appelle morphisme de restriction à $\LL$.

Soient $x_1,\ldots,x_N$ des coordonnées locales sur un ouvert $U$ de $X$,
tel que $\LL_{|U}$ soit un $\OO_U$-module libre engendré par $u$. Alors
$x_1,\ldots,x_N,u$ forment un système de coordonnées sur l'ouvert $q^{-1}(U)\subset Y$.
Notons $\uder^{\la \uk \ra}_{x}$ les opérateurs sur $U$ correspondant aux coordonnées
$x_1,\ldots,x_N$ sur $X$, et $\der_u^{\la l_u \ra}$ les opérateurs de $\Dm_Y$ correspondant à la coordonnée
$u$ sur $q^{-1}(U)$. Reprenons les notations de ~\ref{basetw}. Le faisceau $\Dm_Y$ est muni de la filtration
par l'ordre des opérateurs différentiels. On dispose alors d'isomorphismes canoniques~\ref{grdiffDm}
$\gr\pg\Dm_Y\simeq \Sg^{(m)}(\TT_Y)$ (resp. sur $X$). Comme $Y$ est un fibré vectoriel sur $X$, on a un projecteur
$q_*\TT_Y \rig \TT_X$, qui est une section du morphisme canonique $\TT_X \rig q_*\TT_Y$. On en déduit un projecteur
$q_*\Sg^{(m)}(\TT_Y)\rig \Sg^{(m)}(\TT_X)$, et donc via les isomorphismes canoniques précédents un projecteur
$\lambda_q$ : $\gr\pg\Dm_Y\rig \gr\pg\Dm_X$.
On a la description locale suivante
\begin{sousprop} \label{rL}\begin{enumerate} \item[(i)] $P$ est dans $ \Dm_Y(\LL)(U)$ si et seulement si il existe
$ a_{\uk},b_{\uk}\in \OO_X(U)$, $c_{\uk,l_u}\in q_*\OO_Y(U)$ tels que
           $$P=\sum_{\uk\in\Ne^N} a_{\uk}\uder^{\la \uk \ra}_{x}+\sum_{\uk\in\Ne^N} b_{\uk}\uder^{\la \uk \ra}_{x}u\der_u
          + \sum_{\uk\in\Ne^N,l_u\geq 2}c_{\uk,l_u}\uder^{\la \uk \ra}_{x}\der_u^{\la l_u \ra}.$$
           \item[(ii)] Si $P$ est dans $\Dm_Y(\LL)(U)$ et s'écrit comme précédemment, alors
             $$r_{\LL}(P)=\sum_{\uk\in\Ne^N} a_{\uk}\,u\ot \uder^{\la \uk \ra}_{x} \ot u^{-1} +
                          \sum_{\uk\in\Ne^N} b_{\uk}\, u\ot \uder^{\la \uk\ra}_{x} \ot u^{-1}.$$
         \item[(iii)] L'application $r_{\LL}$ est filtrée, surjective, et n'est pas injective.
         \item[(iv)] L'application graduée $\gr\pg r_{\LL}$ induite par $r_{\LL}$ sur $ \gr\pg\Dm_Y(\LL)$ est égale au
projecteur $\lambda_q$ (restreint à $
\gr\pg\Dm_Y(\LL)$).
          \end{enumerate}
\end{sousprop}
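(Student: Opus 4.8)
The proof is essentially a local computation. First I would fix coordinates $x_1,\ldots,x_N$ on an affine open $U\subset X$ over which $\LL$ is free, generated by $u$, so that $x_1,\ldots,x_N,u$ are coordinates on $q^{-1}(U)\subset Y$. Every operator $P\in q_*\Dm_Y(U)$ then has a unique expansion $P=\sum_{\uk,l_u} c_{\uk,l_u}\,\uder^{\la\uk\ra}_x\,\der_u^{\la l_u\ra}$ with $c_{\uk,l_u}\in q_*\OO_Y(U)=\OO_X(U)[u]$. The two constraints defining $\Dm_Y(\LL)$ in~\ref{def_DYL} are: $P$ maps $\PP_{X,(m)}^n$ into $\OO_X$, and $P$ maps $\PP_{X,(m)}^n\otimes_{\OO_{X,d}}\LL$ into $\LL$. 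I would compute how $\der_u^{\la l_u\ra}$ acts, via~\ref{action-D}, on the sections $1\otimes 1$ (giving $\OO_X$) and on $1\otimes u$ (the generator of $\PP_{X,(m)}^n\otimes\LL$ inside $q_*\PP_{Y,(m)}^n$): one has $\der_u(u)=1$ and $\der_u^{\la l_u\ra}(u)=0$ for $l_u\geq 2$, while $\der_u^{\la l_u\ra}(1)=0$ for $l_u\geq 1$. Feeding this in, the first condition forces $c_{\uk,l_u}\in\OO_X(U)$ whenever the operator is to land in $\OO_X$ after evaluation (the $u$-dependence of the coefficients must not survive), and more precisely kills all terms with $l_u\geq 1$ that would produce a nonzero $u$-monomial; the second condition, evaluated on $1\otimes u$, constrains the $l_u=0$ and $l_u=1$ parts. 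Matching these gives exactly the shape in (i): the $l_u=0$ part has coefficients $a_{\uk}\in\OO_X(U)$, the $l_u=1$ part must be of the form $b_{\uk}\uder_x^{\la\uk\ra}u\der_u$ with $b_{\uk}\in\OO_X(U)$ (the Euler operator $u\der_u$ preserves $\LL$), and the $l_u\geq 2$ part may have arbitrary coefficients $c_{\uk,l_u}\in q_*\OO_Y(U)$ since $\der_u^{\la l_u\ra}$ annihilates both $1$ and $u$ for $l_u\geq 2$.

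For (ii), I would compute $r_{\LL}(P)=\mathrm{id}_{\LL^{-1}}\otimes P|_{\PP_{X,(m)}^n\otimes\LL}$ directly on the basis elements $u^{-1}\otimes\utau^{\{\uk\}}\otimes u$ of ${}^t\PP_{X,(m)}^n$ described in~\ref{basetw}. Only the $l_u=0$ and $l_u=1$ parts of $P$ act nontrivially on $1\otimes u$: the $l_u=0$ part acts as $\sum a_{\uk}\uder_x^{\la\uk\ra}$ on $\OO_X$ tensored with the identity on $\LL$, and $u\der_u$ acts on $1\otimes u$ by returning $1\otimes u$ (i.e.\ as the identity), so contributes $\sum b_{\uk}\uder_x^{\la\uk\ra}$. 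Re-tensoring by $u$ and $u^{-1}$ on the two sides yields precisely the formula claimed, with coefficients $a_{\uk}+b_{\uk}$ distributed as stated in the proposition (the two sums with coefficients $a_{\uk}$ and $b_{\uk}$). For (iii), filteredness is immediate from the explicit descriptions (the order of $\uder_x^{\la\uk\ra}$, $\uder_x^{\la\uk\ra}u\der_u$ and $\uder_x^{\la\uk\ra}\der_u^{\la l_u\ra}$ are $|\uk|$, $|\uk|+1$, $|\uk|+l_u$ respectively, and $r_{\LL}$ does not raise order); surjectivity follows since any twisted operator $\sum a_{\uk}\,u\otimes\uder_x^{\la\uk\ra}\otimes u^{-1}$ in ${}^t\DD^{(m)}_X(U)$ is hit by $\sum a_{\uk}\uder_x^{\la\uk\ra}\in\Dm_Y(\LL)(U)$; non-injectivity is visible because the entire $l_u\geq 2$ part of $\Dm_Y(\LL)$, and also the difference between $\uder_x^{\la\uk\ra}u\der_u$ and nothing (the kernel contains e.g.\ $\der_u^{\la 2\ra}$), lies in $\ker r_{\LL}$.

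For (iv), I would argue that $\gr\pg\Dm_Y(\LL)$ identifies, via~\ref{grdiffDm}, with a graded subalgebra of $\gr\pg\Dm_Y\simeq\Sg^{(m)}(\TT_Y)$, and that on symbols the class of $\der_u$ (the ``vertical'' tangent direction) is exactly what the projector $\lambda_q:\Sg^{(m)}(\TT_Y)\to\Sg^{(m)}(\TT_X)$ kills, while horizontal symbols are preserved. Concretely, the symbol of a term $\uder_x^{\la\uk\ra}u\der_u$ of order $|\uk|+1$ involves the vertical direction once, so $\lambda_q$ sends it to $0$; and $r_{\LL}$ by (ii) also drops the $b_{\uk}$-term's contribution to the top symbol (it lands one order lower), matching $\lambda_q$; the $l_u\geq 2$ symbols are killed by both. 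On the purely horizontal part $\gr r_{\LL}$ is the identity, as is $\lambda_q$. Comparing the two graded maps on the generating symbols therefore gives the equality. I expect the only mildly delicate point to be bookkeeping the $l_u=1$ case carefully — making sure that $u\der_u$ (rather than $\der_u$) is the operator that both preserves $\LL$ and survives in $r_{\LL}$ but drops in order — and checking that the filtration indices line up so that (iv) is a statement about top symbols; everything else is a routine unwinding of~\ref{action-D} and~\ref{basetw}.
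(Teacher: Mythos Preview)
Your approach is correct and essentially the same as the paper's. The only packaging difference is that for (i) the paper first isolates a sublemma: $P\in\Dm_Y(\LL)(U)$ if and only if $P(\PP_{X,(m)}^{n})\subset\OO_X$ and $P(\utau^{\{\ul\}}\tau_u)\in\LL$ for all $\ul$ (reducing the second condition from all of $\PP_{X,(m)}^{n}\ot\LL$ to the elements $\utau^{\{\ul\}}\tau_u$, via $\utau^{\{\ul\}}(1\ot u)=(u\ot 1)\utau^{\{\ul\}}+\utau^{\{\ul\}}\tau_u$); from there one reads off directly $c_{\uk,0}=P(\utau^{\{\uk\}})\in\OO_X$ and $c_{\uk,1}=P(\utau^{\{\uk\}}\tau_u)\in\LL$, with no constraint on the $l_u\ge 2$ coefficients. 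This is exactly what your direct evaluation on $\utau^{\{\uk\}}\ot u$ amounts to once you expand $1\ot u=\tau_u+u\ot 1$. For (ii)--(iv) your computations match the paper's (the paper simply notes that (iv) is immediate from (ii), which is your symbol-by-symbol check phrased more tersely).
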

\begin{proof} Soient $\tau_i=1\ot x_i - x_i \ot 1$, $\tau_u=1\ot u - u \ot 1$. Nous utilisons les notations de
~\ref{parties_principales}. Soit $P\in q_*\Dm_Y(\LL)$. Nous avons alors le
\begin{souslem} $P$ est dans $\Dm_Y(\LL)(U)$ si et seulement si
                 $$P(\PP_{X,(m)}^{n})\subset \OO_X \text{ et }
    \forall \ul\in\Ne^N,\,P(\utau^{\{\ul\}}\tau_u)\in \LL.$$
\end{souslem}
Démontrons ce lemme. Soient $a \in \OO_X(U)$, $\ul\in\Ne^N$ et $P$ vérifiant les conditions du lemme.
Comme $P$ est $q_*\OO_Y$-linéaire à gauche,
la condition du lemme implique
		$$P(\PP_{X,(m)}^{n}\tau_u)\in \LL.$$ Notons $T=\utau^{\{\ul\}}(1\ot a)\in \PP_{X,(m)}^{n}$,
alors
\begin{align*} P(\utau^{\{\ul\}}(1\ot a)) &=P(T(1\ot u)) \\
                                           &=P((u\ot 1)T+T\tau_u) \\
                                           &=P(T)u+P(T\tau_u)\,\in \LL,
\end{align*}
de sorte que $P\in\Dm_Y(\LL)$. Réciproquement, si $P\in\Dm_Y(\LL)$, alors
$$P(\utau^{\{\ul\}}\tau_u)=P(\utau^{\{\ul\}}(1\ot u))- P(\utau^{\{\ul\}})u\,\in \LL,$$
de sorte que $P$ vérifie les conditions du lemme.

Revenons à la démonstration de la proposition. Soit $P\in q_*\Dm_{Y}$. Ecrivons
$$P=\sum_{\uk,l_u} d_{\uk,l_u}\uder^{\la \uk \ra}_{x}\der_u^{\la lu \ra},$$ avec $d_{\uk,l_u}\in\OO_Y$.
Alors $d_{\uk,0}=P(\utau^{\{\uk\}})\in\OO_X$ et $d_{\uk,1}=P(\utau^{\{\uk\}}\tau_u)\in\LL$, ce qui donne le (i).

Pour le (ii), il suffit d'utiliser la base locale de $^t\PP_{X,(m)}^{n}$ constituée des éléments
$u^{-1}\ot \utau^{\{\uk\}}\ot u$ donnée en~\ref{basetw}. Or, on a
\begin{align*} \uder^{\la \uk \ra}_{x}(\utau^{\{\ul\}}\ot u)
             &=\uder^{\la \uk \ra}_{x}(\utau^{\{\ul\}}\tau_u+(u\ot 1)\utau^{\{\ul\}}) \\
             &=u \uder^{\la \uk \ra}_{x}(\utau^{\{\ul\}})\\
             &=u \delta_{\uk,\ul},
\end{align*}
où $\delta_{\uk,\ul}$ désigne le symbôle de Kronecker. De façon analogue,
\begin{align*} u\uder^{\la \uk \ra}_{x}\der_u(\utau^{\{\ul\}}\ot u)
             &=u\uder^{\la \uk \ra}_{x}\der_u(\utau^{\{\ul\}}\tau_u+(u\ot 1)\utau^{\{\ul\}}) \\
             &=u \delta_{\uk,\ul},
\end{align*}
d'où le (ii) du lemme. Passons à (iii). L'application $r_{\LL}$ est filtrée par définition.
 Le reste se vérifie localement. Or on a une section locale de $r_{\LL}$ en posant,
pour $P\in\Dm_X(U)$, $r_{\LL}(u\ot P \ot u^{-1})=P\in \Dm_Y(\LL)(U)$. De plus $r_{\LL}(\der_u^{\la 2 \ra})=0$,
de sorte que $r_{\LL}$ n'est pas injectif. L'affirmation (iv) est une conséquence immédiate de (ii) .\end{proof}
Sur la description locale de $\Dm_Y(\LL)$, on voit que c'est un sous-faisceau
de $\OO_S$-algèbres (et de $\OO_X$-modules) de $q_*\Dm_Y$. On a plus précisément la
\begin{sousprop} \be \item[(i)] Le faisceau $\Dm_Y(\LL)$ est un sous-faisceau
d'algèbres du faisceau $q_*\Dm_Y$.
\item[(ii)] La flèche de restriction $r_{\LL}$ : $\Dm_Y(\LL)\rig {}^t\DD^{(m)}_{X}$ est un homomorphisme surjectif de
faisceaux d'algèbres.
\ee
\end{sousprop}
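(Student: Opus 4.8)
The plan is to deduce both~(i) and~(ii) from a single compatibility of the comultiplication $\delta^{n,n'}$ of~\ref{def-deltaD} with the inclusions $\PP_{X,(m)}^{\bullet}\hookrightarrow q_*\PP_{Y,(m)}^{\bullet}$ and $\PP_{X,(m)}^{\bullet}\ot_{\OO_{X,d}}\LL\hookrightarrow q_*\PP_{Y,(m)}^{\bullet}$. First one observes that $\Dm_Y(\LL)$ is visibly an $\OO_S$-submodule of $q_*\Dm_Y$ containing the identity operator $1\in\Dm_{Y,0}(\LL)$, with $r_\LL(1)=1$; so, by the description~\ref{produitD} of the product in $\Dm_Y$, everything comes down to showing that for $P\in\Dm_{Y,n}(\LL)$ and $P'\in\Dm_{Y,n'}(\LL)$ the composite $P\circ(Id\ot P')\circ\delta^{n,n'}_Y\colon\PP_{Y,(m)}^{n+n'}\rig\OO_Y$ again lies in $\Dm_{Y,n+n'}(\LL)$ and satisfies $r_\LL(PP')=r_\LL(P)r_\LL(P')$; here $\delta^{n,n'}_Y$, resp.\ $\delta^{n,n'}_X$, is the morphism~\ref{def-deltaD} formed on $Y$, resp.\ on $X$.

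The key claim I would establish is: \emph{$\delta^{n,n'}_Y$ restricts on $\PP_{X,(m)}^{n+n'}\subset q_*\PP_{Y,(m)}^{n+n'}$ to $\delta^{n,n'}_X$, and carries the sub-$\PP_{X,(m)}^{n+n'}$-module $\PP_{X,(m)}^{n+n'}\ot_{\OO_{X,d}}\LL$ into $\PP_{X,(m)}^{n}\ot_{\OO_X}\big(\PP_{X,(m)}^{n'}\ot_{\OO_{X,d}}\LL\big)$.} The first half is functoriality of the $m$-PD-envelope: $\delta_X$ and $\delta_Y$ are both induced, by the universal property, from the split comultiplication $a\ot b\mapsto a\ot1\ot1\ot b$ on the respective diagonal ideals, and $q\times q\colon Y\times Y\rig X\times X$ intertwines these while carrying the diagonal ideal of $X$ into that of $Y$; hence $\delta_Y$, restricted to the $m$-PD-subalgebra $\PP_{X,(m)}^{n+n'}$, is $\delta_X$ followed by the evident inclusion $\PP_{X,(m)}^{n}\ot_{\OO_X}\PP_{X,(m)}^{n'}\hookrightarrow q_*(\PP_{Y,(m)}^{n}\ot_{\OO_Y}\PP_{Y,(m)}^{n'})$ ($q$ being affine and the $\PP^{\bullet}$ locally free). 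For the second half, $\delta^{n,n'}_Y$ is a ring homomorphism and, directly from its definition, sends the element $1\ot s\in\PP_{Y,(m)}^{n+n'}$ attached to a local section $s$ of $\LL\subset\OO_Y$ to $1\ot(1\ot s)$; so if $T\in\PP_{X,(m)}^{n+n'}$ and $\delta^{n,n'}_X(T)=\sum_i T_i\ot T'_i$ with $T_i\in\PP_{X,(m)}^{n}$, $T'_i\in\PP_{X,(m)}^{n'}$, then $\delta^{n,n'}_Y(T\cdot(1\ot s))=\sum_i T_i\ot(T'_i\cdot(1\ot s))$.

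Granting the claim, the rest is bookkeeping. For $T\in\PP_{X,(m)}^{n+n'}$ one has $(Id\ot P')\delta^{n,n'}_Y(T)=\sum_i T_i\cdot P'(T'_i)\in\PP_{X,(m)}^{n}$ because $P'(T'_i)\in\OO_X$ (as $P'\in\Dm_Y(\LL)$ and $T'_i\in\PP_{X,(m)}^{n'}$), so $(PP')(T)=P\big(\sum_i T_i\cdot P'(T'_i)\big)\in\OO_X$; likewise $(PP')(T\cdot(1\ot s))=P\big(\sum_i T_i\cdot P'(T'_i\cdot(1\ot s))\big)$ with each $P'(T'_i\cdot(1\ot s))\in\LL$, hence lies in $\LL$. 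This gives $PP'\in\Dm_{Y,n+n'}(\LL)$, i.e.~(i). Reading off the same computation in the identification ${}^t\Dm_{X,n}\simeq\HH om_{\OO_{X,g}}(\PP_{X,(m)}^{n}\ot_{\OO_{X,d}}\LL,\LL)$, under which $r_\LL(P)$ corresponds to the restriction $P|_{\PP_{X,(m)}^{n}\ot_{\OO_{X,d}}\LL}$, it becomes $r_\LL(PP')(T\ot s)=\sum_i r_\LL(P)\big(T_i\ot r_\LL(P')(T'_i\ot s)\big)$; comparing with the definition~\ref{produitDt} of the product on ${}^t\Dm_X$ — which through the morphism ${}^t\delta^{n,n'}=Id_{\LL^{-1}}\ot\delta^{n,n'}_X\ot Id_{\LL}$ of~\ref{def-deltaDt} yields exactly this formula — one concludes $r_\LL(PP')=r_\LL(P)r_\LL(P')$. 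Since $r_\LL$ is moreover additive, $\OO_S$-linear, unital and, by~\ref{rL}(iii), surjective, this proves~(ii).

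The step I expect to be the main obstacle is the key claim itself, that is, making the compatibility of $\delta_Y$ with $\delta_X$ through $q$ precise and checking that $\PP_{X,(m)}^{n}\ot_{\OO_X}\PP_{X,(m)}^{n'}$ and its $\LL$-twist really embed in $q_*(\PP_{Y,(m)}^{n}\ot_{\OO_Y}\PP_{Y,(m)}^{n'})$; once that is in place everything else is formal. A completely elementary alternative — the one the sentence preceding the statement points to — is to argue in the local basis of~\ref{rL}(i): check by direct computation that the local $\OO_X$-span of the $\uder^{\la\uk\ra}_{x}$, the $\uder^{\la\uk\ra}_{x}u\der_u$ and the $\uder^{\la\uk\ra}_{x}\der_u^{\la l_u\ra}$ with $l_u\geq2$ (these last with coefficients allowed in $q_*\OO_Y$) is stable under composition — the essential inputs being that the $x$- and $u$-directions commute, that $u\der_u$ satisfies $(u\der_u)^k\in u\der_u+\sum_{l\geq2}q_*\OO_Y\cdot\der_u^{\la l\ra}$, and that when a degree-lowering operator $\der_u^{\la l_u\ra}$ absorbs enough powers of $u$ to drop to order $\leq1$ in $\der_u$ the resulting terms fall back among the three listed shapes — and then verify, via the formula~\ref{rL}(ii) for $r_\LL$ on this basis, that $r_\LL$ respects these products.
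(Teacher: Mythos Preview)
Your proposal is correct and follows essentially the same line as the paper's proof: the paper, too, reduces (i) and (ii) to the compatibility of $\delta^{n,n'}_Y$ with the sub-objects $\PP_{X,(m)}^{\bullet}$ and $\PP_{X,(m)}^{\bullet}\ot_{\OO_{X,d}}\LL$ inside $q_*\PP_{Y,(m)}^{\bullet}$, observing that $\delta^{n,n'}_Y(1\ot\LL)\subset 1\ot(1\ot\LL)$ and that $\PP_{X,(m)}^{n}\ot_{\OO_X}\PP_{X,(m)}^{n'}\ot_{\OO_{X,d}}\LL$ embeds in $q_*(\PP_{Y,(m)}^{n}\ot_{\OO_Y}\PP_{Y,(m)}^{n'})$, then tensoring the resulting restricted diagram by $\LL^{-1}$ on the left to recover~\ref{produitDt}. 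Your write-up is somewhat more explicit about the functoriality of $\delta$ under $q$ and about the identification $r_\LL(P)=P|_{\PP_{X,(m)}^{n}\ot_{\OO_{X,d}}\LL}$, but the argument is the same; the local alternative you sketch at the end is also exactly what the paper alludes to in the sentence preceding the proposition.
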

\begin{proof} Commençons par (i). Soient $P,P'\in \Dm_Y(\LL)$. Rappelons que
le produit $PP'$ est donné par le composé suivant \ref{produitD}
$$ q_*\PP_{Y,(m)}^{n+n'}\sta{q_*\delta^{n,n'}}{\rig}q_*\PP_{Y,(m)}^{n}\ot q_*\PP_{Y,(m)}^{n'}
\sta{Id \ot P'}{\rig}q_*\PP_{Y,(m)}^{n} \sta{P}{\rig} q_*\OO_Y,  $$
où $\delta^{n,n'}$ est induit par $a\ot b \mapsto a\ot 1 \ot 1 \ot b$. Ainsi,
on a
$q_*\delta^{n,n'}(1\ot \LL)\subset 1 \ot 1 \ot 1 \ot \LL,$
et $(id\ot P')(1 \ot 1 \ot 1 \ot \LL)\subset 1 \ot \LL$, si bien que
$PP'(1\ot \LL)\subset \LL$. De plus, les faisceaux
$\PP_{X,(m)}^{n}\ot \PP_{X,(m)}^{n'}\ot_{\OO_{X,d}}\LL$ sont des sous-faisceaux de
$q_*(\PP_{Y,(m)}^{n}\ot \PP_{Y,(m)}^{n'})$, et les morphismes
$\delta^{n,n'}$ sont des $m$-PD-morphismes. Le fait que
$PP'(1\ot \LL)\subset \LL$ entraîne donc formellement que $PP'(\PP_{X,(m)}^{n}\ot_{\OO_{X,d}} \LL)\subset \LL.$
Il est d'autre part clair que $PP'(\PP_{X,(m)}^{n})\subset \OO_X$ et ceci montre (i).

D'après le (i), le composé $PP'$ est donné en restriction à $(1\ot \LL)$ par
$$ \PP_{X,(m)}^{n+n'}\ot_{\OO_{X,d}}\LL\sta{\delta^{n,n'}}{\rig}\PP_{X,(m)}^{n}\ot \PP_{X,(m)}^{n'}\ot_{\OO_{X,d}}\LL
\sta{Id \ot P'}{\rig}\PP_{X,(m)}^{n}\ot_{\OO_{X,d}}\LL \sta{P}{\rig} \LL.$$
En tensorisant ce diagramme à gauche (sur $\OO_{X,g}$) par $\LL^{-1}$, on
retrouve exactement le diagramme \ref{produitDt} définissant le produit dans
${}^t\DD^{(m)}_{X}$, ce qui montre que $r_{\LL}$ est un homomorphisme
d'anneaux. La surjectivité a été établie dans la proposition précédente ~\ref{rL}.
\end{proof}

\section{Faisceaux équivariants}
\label{desc_actions_G}

\subsection{Notations-Rappels}
\label{section-rappels}

Soient $D$ une $V$-algèbre commutative, $A$, $B$ des $D$-modules,
$C$ une $D$-algèbre, $u$ (resp. $v$) un homomorphisme $D$-linéaire: $A\rig C$,
(resp. $v$:$B\rig C$),
$m$ l'application produit: $C\ot_D C\rig C$ telle que $m(a\ot b)=ab$. On note
alors $u\overline{\ot }v=m\circ (u\ot v)$: $A\ot_D B\rig C$.

Soient $G$ un schéma en groupes affine et lisse sur $S$, $\mu$: $G\times G \rig G$ l'application produit,
$e$: $S \hrig G$ l'élément neutre. Les applications déduites de $\mu$ et $e$ au niveau des
faisceaux structuraux seront notées $\mu^{\sharp}$ et $\varep_G$. %On travaillera plus
%particulièrement dans cet article avec des schémas en groupes réductifs déployés sur $S$.

Si $H$ est un sous-groupe fermé plat de $G$, le quotient $X=G/H$ existe
alors dans la catégorie des $S$-schémas. On dispose du morphisme structural $st_X$ :
$X\rig S$. Nous considérerons le quotient comme l'ensemble
des $H$-classes à droite dans $G$. En outre, l'application canonique
$\pi$: $G\rig X=G/H$ est affine, fppf, et ouverte. De plus,
le quotient $X$ est  lisse. Le groupe $G$ agit donc à gauche sur $X$. On
utilisera l'isomorphisme classique suivant (I 5.6 de \cite{Jantzen})
$G\times H\simeq G\times_{X}G,$ correspondant à
$(g,h)\mapsto (g,gh)$. Ce morphisme induit un isomorphisme
de faisceaux $$\sigma: \OO_G\ot_{\pi^{-1}\OO_X}\OO_G\simeq \OO_G\ot_{\OO_S}\OO_H.$$
Dans le cas plus simple où $G=H$ est affine, on notera $\sigma_H$ l'homomorphisme
d'algèbres obtenu, i.e. $\sigma_H: V[H]\ot_V V[H]\simeq V[H]\ot_V V[H].$

En général, et sauf mention du contraire, les actions d'un schéma en groupes $G$ sur un
schéma $X$ seront des actions à gauche.

Un $G$-comodule $M$ est un $V$-module $M$ muni d'une action à droite de $G$, c'est-à-dire
que pour toute $V$-algèbre $R$, $R\ot_V M$ est un $G(R)$-module à droite. Comme $G$ est affine, cela revient à se donner une
application de comodule dual $\Delta_M$: $M\rig V[G]\ot_V M$, vérifiant les deux égalités
suivantes:
\begin{gather}\label{comodule}
(id_{V[G]}\ot \Delta_M)\circ \Delta_M=(\mu^{\sharp}\ot id_M)\circ \Delta_M \\
\label{comodule2} (\varep_G\overline{\ot }id_M)\circ \Delta_M=id_M .
\end{gather}
Notons que cette définition diffère de celle de $G$-module de Jantzen (I 2.8 de \cite{Jantzen})
qui décrit la relation de comodule sur un $V$-module $M$, pour laquelle $ M\ot_V R $ est un
$G(R)$-module à gauche pour toute $R$-algèbre $V$.

On considérera aussi l'action d'un schéma en groupes $G$ lisse, sur un
schéma lisse $X$, donnée
par un homomorphisme de schémas $\sigma$ : $G\times_S X \rig X$. En général il s'agira
d'actions à gauche, i.e. l'application $\sigma^{\sharp}$ : $\sigma^{-1}\OO_X\rig \OO_{G\times X}$  vérifie
deux égalités analogues aux égalités \ref{comodule} et \ref{comodule2} ci-dessus. Autrement dit, le faisceau
$\OO_X$ est alors un faisceau de $G$-comodules. Nous utiliserons le résultat bien connu suivant, pour
$X$ un quotient de $G$ par un sous-groupe fermé plat $H$.
\begin{prop} Le morphisme $\sigma$ : $G\times_S X \rig X$ est lisse.
\end{prop}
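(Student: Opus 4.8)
The plan is to avoid checking smoothness of $\sigma$ directly, and instead to realise it as the composite of an automorphism of $G\times_S X$ with a projection that is smooth for trivial reasons.

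First I would introduce the shear morphism
$$\phi\,\colon\, G\times_S X \lorig G\times_S X,\qquad (g,x)\longmapsto (g,\sigma(g,x)),$$
most conveniently described on functors of points (since $G$ is affine one could equally work with the comodule picture $\sigma^{\sharp}\colon\sigma^{-1}\OO_X\rig\OO_{G\times X}$, but the functorial description is lighter). The content of this step is that $\phi$ is an isomorphism of $S$-schemes: its inverse is $(g,y)\mapsto(g,g^{-1}\cdot y)$, and both composites collapse to the identity by associativity of the action together with the unit axiom — that is, by the analogues for $\sigma^{\sharp}$ of identities \ref{comodule} and \ref{comodule2}, using that $G$ is a group scheme so that $g\mapsto g^{-1}$ is at our disposal. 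Note that $\phi$ is a morphism over $G$ via the first projection but \emph{not} over $X$; only the fact that it is an automorphism of the bare scheme $G\times_S X$ will be used.

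Second, I would record the identity $\sigma={\rm pr}_2\circ\phi$, where ${\rm pr}_2\colon G\times_S X\rig X$ is the second projection. Since ${\rm pr}_2$ is exactly the base change of the structural morphism $G\rig S$ along $st_X\colon X\rig S$, and $G$ is smooth over $S$ by hypothesis while smoothness is stable under base change, ${\rm pr}_2$ is smooth. Hence $\sigma$, being the composite of the isomorphism $\phi$ with the smooth morphism ${\rm pr}_2$, is smooth, which is the assertion.

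I do not expect a real obstacle here: this is the group-scheme analogue of the fact that the action of a group on itself trivialises the product, and the only points demanding a little care are the left/right bookkeeping (here $G$ acts on $X=G/H$ on the left) and writing the inverse of $\phi$ in the correct order. If one prefers to bypass functor-of-points language, one can instead check, by the same shear argument, that $\sigma$ is flat and of finite presentation with all geometric fibres (after base change) isomorphic to $G$, hence smooth, and then invoke the fibrewise criterion for smoothness; but the composition argument above is the shortest route and the one I would write down.
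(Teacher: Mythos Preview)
Your proof is correct and takes a genuinely different route from the paper. You factor $\sigma = {\rm pr}_2 \circ \phi$ with $\phi$ the shear automorphism, then deduce smoothness of $\sigma$ from that of ${\rm pr}_2$, which is the base change of the smooth structure morphism $G \to S$. This needs only stability of smoothness under base change and composition, and no appeal to EGA.

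The paper instead applies the fibrewise criterion for smoothness (EGA~IV, 17.5.2 together with 6.7.8), reducing to regularity of the geometric fibres of $\sigma$. It asserts these fibres are isomorphic to $H$ over the algebraically closed field $l$; in fact, exactly as your final paragraph anticipates, they are isomorphic to $G$ over $l$ via the restriction of your shear map, and it is smoothness of $G$ (not of $H$, which is only assumed flat) that makes them regular. Your composition argument is both shorter and sidesteps the identification of the fibres entirely.
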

\begin{proof} Il suffit d'appliquer le corollaire 17.5.2 de \cite{EGA4-4},
puis le corollaire 6.7.8 de \cite{EGA4-2} pour
voir que la lissité se teste sur les fibres géométriques de $\sigma$ qui sont toutes
isomorphes sur une extension algébriquement close $l$ de $k$ ou de $K$, à $H\times_S
Spec\,l$ et sont donc toutes régulières.
 Comme les fibres géométriques de $\sigma$ sont
régulières, le morphisme $\sigma$ est lisse.
\end{proof}

\vspace{+2mm}
Nous aurons besoin de petits lemmes techniques qui font l'objet de la sous-section
suivante.
%}}}
\subsection{Lemmes techniques}
\label{subsection-lem_techniques}
%{{{
On rappelle quelques faits classiques.
\begin{lem} Soient $f,g$ deux morphismes de $\OO_X$-modules cohérents
$\FF \rig \GG$
sur un $S$-schéma $X$ % intègre,
localement noethérien. Pour $x$ un point fermé de $X$,
on note $i_x$ l'immersion : $\{x\}\hrig X$. On suppose que pour tout point fermé de $X$,
les morphismes induits $f(x),g(x)$: $i_x^*\FF \rig i_x^*\GG$ coïncident. Alors $f=g$.
\end{lem}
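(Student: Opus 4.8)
Write $h := f - g$; this is a morphism of coherent $\OO_X$-modules $\FF \rig \GG$ all of whose fibres $h(x)\colon i_x^*\FF \rig i_x^*\GG$ at closed points $x$ vanish, and we must prove $h = 0$. Equivalently, letting $\II := \operatorname{im}(h)\subset\GG$ denote the (coherent) image subsheaf, we must prove $\II = 0$.

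The plan is to test this at closed points. Suppose $\II\neq 0$; then $Z := \operatorname{Supp}(\II)$ is a nonempty closed subset of $X$, so — the schemes occurring here being locally noetherian and having their closed points dense in every closed subset (Jacobson property, valid e.g. for schemes of finite type over a field) — there is a closed point $x\in Z$, whence $\II_x\neq 0$. Unwinding the definitions, $h(x)=0$ says exactly that the composite $\FF_x\rig\GG_x\rig\GG_x\ot_{\OO_{X,x}}\kappa(x)$ is zero, i.e. $\II_x\subset\mathfrak m_x\GG_x$ where $\mathfrak m_x$ is the maximal ideal of $\OO_{X,x}$; and the same holds at every closed point. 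Choosing an affine neighbourhood $U=\spec B$ of $x$ over which $\GG$ is free, say $\GG|_U\simeq\OO_U^{\,r}$ — which is the relevant situation, the sheaves to which the lemma is applied (faisceaux de parties principales, algèbres symétriques, opérateurs différentiels) being locally free — the submodule $\II(U)\subset B^{\,r}$ then lies in $\bigcap_{\mathfrak n}\mathfrak n B^{\,r}=\bigl(\bigcap_{\mathfrak n}\mathfrak n\bigr)B^{\,r}$, the intersection running over all maximal ideals $\mathfrak n$ of $B$. Since $B$ is reduced with dense closed points, $\bigcap_{\mathfrak n}\mathfrak n$ is the nilradical, hence $0$; so $\II(U)=0$, contradicting $\II_x\neq 0$. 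Therefore $\II=0$ and $f=g$.

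The main obstacle — indeed the only ingredient that is not pure bookkeeping — is the implication ``$h(x)=0$ for every closed $x$ $\Longrightarrow$ $h=0$''. For an arbitrary coherent target this can fail (take $X$ the affine line over a field and $\GG=\OO_X/\II^2$ with $\II$ the ideal sheaf of a closed point, and $h$ sending $1$ to a generator of the square-zero ideal), so the argument genuinely uses both that $\GG$ is locally free and that closed points are dense; equivalently one may package this step through Nakayama's lemma in the local rings $\OO_{X,x}$ together with Krull's intersection theorem. The remaining steps — reduction to an affine base, to finitely generated modules over a noetherian ring, the identification of the hypothesis with $\II_x\subset\mathfrak m_x\GG_x$, and the passage to the image subsheaf — are routine.
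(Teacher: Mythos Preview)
You are right that the lemma as stated is false, and your counterexample is valid. The paper's own proof has exactly the gap you diagnose: it applies its Nakayama remark to the image sheaf $\II=\operatorname{im}(f-g)$, tacitly assuming that $i_x^*\II=0$ follows from $h(x)=0$; but $h(x)=0$ only says the composite $i_x^*\FF\twoheadrightarrow i_x^*\II\to i_x^*\GG$ vanishes, and since the second map need not be injective this does not force $i_x^*\II=0$ --- precisely what your example with $\GG=\OO_X/(t^2)$ exhibits.

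Your repair, though, does not cover the paper's setting. You appeal to the Jacobson property and reducedness (``valid e.g.\ for schemes of finite type over a field''), but here $V$ is an arbitrary noetherian $\Ze_{(p)}$-algebra, and in the principal case of interest a complete DVR. Already $X=S$, $\FF=\GG=\OO_X$, $h=\pi$ is a counterexample with $\GG$ free: the unique closed point is the special point, $h$ vanishes on that fibre, yet $h\neq 0$. So local freeness of $\GG$ alone is not enough, and your step $\bigcap_{\mathfrak n}\mathfrak n=0$ genuinely fails here (and the alternative via ``Krull's intersection theorem'' you allude to does not help either: one has $\II_x\subset\mathfrak m_x\GG_x$, not $\II_x\subset\mathfrak m_x\II_x$). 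What rescues the paper's actual application (the proof of Prop.~\ref{gequiv_diffprop}) is that one only needs to check that a morphism between locally free sheaves of the \emph{same rank} is an \emph{isomorphism}: then $f(x)$ invertible at every closed $x$ forces $\det(f)$ to avoid every maximal ideal, hence to be a unit, and this works over any locally noetherian scheme without Jacobson hypotheses. The equality version you were asked about, by contrast, genuinely needs an extra hypothesis of Jacobson type.
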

\begin{proof} On peut supposer que $X$ est affine et noethérien. Soit $\HH$ est un $\OO_X$-module
cohérent sur $X$ tel que $i_x^*\HH=0$ pour tout point fermé $x$ de $X$. Alors le support
de $\HH$ est fermé et s'il est non vide, il correspond à un sous-schéma fermé reduit de $X$
contenant un point fermé $x$. Par hypothèse,
$i_x^*\HH$ est nul, donc $\HH_x$ est nul par le lemme de Nakayama, ce qui est absurde:
cela montre que $\HH$ est nul.  Soit $\II$ le faisceau cohérent
image de $f-g$. La remarque précédente appliquée à $\II$ donne le résultat.
\end{proof}

Si on applique cette même remarque aux faisceaux cohérents $Ker(f)$ et $\GG/Im(f)$
on obtient aussi le résultat suivant.
\begin{lem} Sous les hypothèses et notations précédentes, soit $f$ un morphisme de $\OO_X$-modules
cohérents : $\FF \rig \GG$. Alors $f$ est surjectif (resp. un
isomorphisme, resp. injectif) si et seulement si en tout point fermé $x$ de $X$, l'homorphisme induit
$f(x)$ est surjectif (resp. un isomorphisme, resp. injectif). \end{lem}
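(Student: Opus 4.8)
Le plan est d'appliquer aux faisceaux cohérents $\GG/\mathrm{Im}(f)$ et $\ker(f)$ le critère d'annulation dégagé dans la preuve du lemme précédent, à savoir qu'un $\OO_X$-module cohérent $\HH$ tel que $i_x^*\HH=0$ en tout point fermé $x$ de $X$ est nul. Comme dans cette preuve, je me ramène d'abord au cas où $X$ est affine et noethérien ; les faisceaux $\mathrm{Im}(f)$, $\ker(f)$ et $\GG/\mathrm{Im}(f)$ sont alors cohérents, et $i_x^*$ s'identifie au foncteur $(\,\cdot\,)\ot_{\OO_X}k(x)$, exact à droite.

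Je traite d'abord la surjectivité, qui est le cas le plus propre. L'exactitude à droite de $i_x^*$ fournit, pour chaque point fermé $x$, un isomorphisme naturel $i_x^*\bigl(\GG/\mathrm{Im}(f)\bigr)\simeq\mathrm{coker}\bigl(f(x)\bigr)$. Ainsi $f$ est surjectif si et seulement si $\GG/\mathrm{Im}(f)=0$, ce qui, d'après le critère ci-dessus, équivaut à la nullité de $\mathrm{coker}\bigl(f(x)\bigr)$ pour tout $x$, c'est-à-dire à la surjectivité de tous les $f(x)$ ; les deux implications sont immédiates.

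Pour l'injectivité --- et, par conjonction avec le cas précédent, pour le cas d'un isomorphisme --- je voudrais de même déduire $\ker(f)=0$ de la nullité des $i_x^*\ker(f)$. C'est ici que réside l'obstacle principal : le foncteur $i_x^*$ n'étant qu'exact à droite, la suite exacte $0\rig\ker(f)\rig\FF\rig\mathrm{Im}(f)\rig 0$ ne donne, après restriction en $x$, que la suite exacte $\mathrm{Tor}_1^{\OO_X}\bigl(\mathrm{Im}(f),k(x)\bigr)\rig i_x^*\ker(f)\rig i_x^*\FF\sta{f(x)}{\rig} i_x^*\mathrm{Im}(f)\rig 0$, de sorte que $i_x^*\ker(f)$ ne coïncide pas en général avec $\ker\bigl(f(x)\bigr)$. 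Je compte lever ce point dans le cadre où le lemme est utilisé, celui de faisceaux localement libres de rang fini : si $\GG$ est localement libre, l'hypothèse sur les $f(x)$ entraîne déjà, par le cas précédent, la surjectivité de $f$, la suite $0\rig\ker(f)\rig\FF\rig\GG\rig 0$ est alors localement scindée, d'où $i_x^*\ker(f)=\ker\bigl(f(x)\bigr)$, et le critère d'annulation appliqué à $\ker(f)$ permet de conclure. La réciproque est triviale, $i_x^*$ étant fonctoriel.
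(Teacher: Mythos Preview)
Ton approche est la même que celle de l'article : appliquer le critère d'annulation du lemme précédent aux faisceaux $\GG/\mathrm{Im}(f)$ et $\ker(f)$. Pour la surjectivité, ton argument est complet et coïncide avec celui de l'article.

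Tu as cependant raison de relever une difficulté que la preuve d'une ligne de l'article passe sous silence : comme $i_x^*$ n'est qu'exact à droite, on ne peut pas identifier $i_x^*\ker(f)$ à $\ker(f(x))$ en général. Le problème est réel --- le lemme tel qu'énoncé est faux pour l'injectivité, et même pour l'isomorphisme. Par exemple, sur $X=\mathrm{Spec}(k[t]/(t^2))$, la projection $f:\OO_X\to\OO_X/(t)$ a $f(x)$ bijectif en l'unique point fermé alors que $\ker(f)=(t)\neq 0$.

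Ta parade (supposer $\GG$ localement libre, en déduire d'abord la surjectivité de $f$, puis scinder localement la suite pour obtenir $i_x^*\ker(f)=\ker(f(x))$) est correcte et règle le cas \emph{isomorphisme}, qui est le seul utilisé dans l'article (preuve de la proposition~\ref{prop-equiv}). Deux points de rédaction à rectifier toutefois. D'une part, ta phrase \og l'hypothèse sur les $f(x)$ entraîne déjà la surjectivité de $f$ \fg\ ne vaut que si l'hypothèse inclut la surjectivité des $f(x)$ : ton argument traite donc bien le cas isomorphisme, mais pas l'injectivité seule. D'autre part, \og la réciproque est triviale par fonctorialité \fg\ est juste pour les isomorphismes, mais faux pour l'injectivité même avec $\GG$ localement libre : l'inclusion $(t)\hookrightarrow k[t]/(t^2)$ est injective avec $\GG=\OO_X$ libre, mais $f(x)=0$. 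Il vaudrait mieux énoncer explicitement que tu établis la surjectivité en général et l'équivalence pour les isomorphismes sous l'hypothèse $\GG$ localement libre, et laisser de côté l'injectivité seule.
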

%
%}}}

\subsection{Faisceaux $G$-équivariants}
\label{FaisceauxGequiv}
%{{{
Il s'agit ici essentiellement de faire quelques rappels.
\subsubsection{Définitions}
\label{O-mod-equiv}
Dans cette section, $X$ est un $S$-schéma, muni d'une action à gauche $\sigma_X$:
$G\times_S X \rig X$. On note $\sigma^{\sharp}:\sigma^{-1}\OO_X\rig \OO_{G\times X}$
l'application qu'on en déduit au niveau des faisceaux structuraux. L'application
$\sigma_X$ sera tout simplement notée $\sigma$ lorsqu'il n'y aura pas ambiguité. De plus, le produit fibré $\times_S$ sera notée $\times$.
 On note
$p_2$ la deuxième projection: $G\times X \rig X$, $t_1,t_2,t_3$:
$G\times G\times X\rig G\times X$ définies respectivement par $t_1(g_1,g_2,x)=(g_1,g_2x)$,
$t_2(g_1,g_2,x)=(g_1g_2,x)$, $t_3(g_1,g_2,x)=(g_2,x)$.
Suivant \cite{GIT}, un faisceau de $\OO_X$-modules
$\EE$ est $G$-équivariant, s'il existe un isomorphisme $\Phi$ : $\sigma^*\EE\simeq
p_{2}^*\EE$, qui vérifie les conditions de cocycles (cf \cite{Kashi-D_flag}), c'est-à-dire
tel que le diagramme suivant commute:
\begin{gather}\label{cond_cocycles}
 \xymatrix{ t_2^*\sigma^*\EE \ar@{->}[rr]^{t_2^*(\Phi)}\ar@{->}[d]^{\wr} &&
               t_2^*p_2^*\EE\ar@{}[d]\ar@{->}[d]^{\wr} \\
                 t_1^*\sigma^*\EE  \ar@{->}[r]^(.4){t_1^*(\Phi)}  &
t_1^*p_2^*\EE=t_3^*\sigma^*\EE\ar@{->}[r]^(.6){t_3^*(\Phi)} &t_3^*p_2^*\EE .      }
\end{gather}

Par exemple, le faisceau $\OO_X$ est $G$-équivariant, ainsi que tous les faisceaux
``différentiels'' sur $X$, comme nous le vérifierons en ~\ref{desc_actions_G}.
Si $\LL$ est un faisceau de $\OO_X$-modules localement libres, $\LL$ est
$G$-équivariant si et seulement si le fibré vectoriel associé à $\LL$ est
muni d'une action de $G$ compatible à l'action de $G$ sur $X$.

\subsubsection{Propriétés}
On a la description suivante des faisceaux de $\OO_X$-modules $G$-équivariants.
Soient $R$ une $V$-algèbre, $g,g'\in G(R)$, $X_R=Spec(R)\times X$,
$\sigma_R$ l'application déduite de $\sigma$ par ce changement de base,
$\LL_R$ le tiré en arrière de $\LL$ sur $X_R$. L'opérateur de translation  $T_g$ :
$X_R\rig X$ donné par $T_g=\sigma\circ (g\times id_{X})$ s'étend canoniquement
en un opérateur toujours noté $T_g$: $X_R\rig X_R$. On dispose d'une
famille d'isomorphismes $\OO_X$-linéaires $\Phi_g$: $T_g^*\LL_R\simeq \LL_R$, vérifiant la
condition
$\Phi_{gg'}=\Phi_{g'}\circ T_{g'}^*(\Phi_g),$
provenant de la condition de cocycle. Ces applications
induisent pour tout ouvert $U$ des applications
$\Phi_{g,U}\,\colon \, \LL_R(g U)\rig \LL_R(U),$
semi-linéaires par rapport aux applications:
$T_g^{-1}:\OO_{X_R}(g U)\simeq \OO_{X_R}(U).$
Par définition, on a l'égalité $T_{g'}^*(\Phi_{g,U})=\Phi_{g,g'U},$
de sorte que
la relation de cocyle se traduit
par $\Phi_{gg',U}=\Phi_{g',U}\circ \Phi_{g,g' U}.$
Si $U$ est le schéma $X_R$, notons $\Phi_g=\Phi_{g,X_R}$, pour
$e\in\Ga(X_R,\LL_R)$, on définit une action à droite de $G(R)$ sur
$\Ga(X_R,\LL_R)$ en posant $g\cdot e=\Phi_g(e)$, grâce à la relation
$\Phi_{gg'}=\Phi_{g}\circ \Phi_{g'}$. Nous verrons en ~\ref{op_diff_inv} que cette action
à droite correspond à une structure de comodule dual sur $\Ga(X_R,\LL_R)$.

Enfin, nous aurons besoin des propositions classiques suivantes.
\begin{sousprop}\label{equiv-dualite} Soit $\LL$ un faisceau de $\OO_X$-modules localement
libres de rang fini $G$-équivariant,
alors $\HH om_{\OO_X}(\LL,\OO_X)$ est un faisceau de $\OO_X$-modules localement libres
de rang fini $G$-équivariant.
\end{sousprop}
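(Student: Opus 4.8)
The plan is to deduce the statement from the single fact that forming the $\OO$-linear dual of a locally free module of finite rank commutes with arbitrary pullbacks. First I would record the preliminary (purely local on $X$, no equivariance needed) observation that $\LL^{\vee}:=\HH om_{\OO_X}(\LL,\OO_X)$ is again locally free of finite rank, indeed of the same rank as $\LL$; so only the $G$-equivariant structure remains to be produced.

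Next I would build the equivariance isomorphism for $\LL^{\vee}$. For any morphism $f\colon Z\rig X$ and any locally free $\OO_X$-module $\MM$ of finite rank, the natural evaluation map gives a canonical isomorphism $c_{f}\colon f^{*}(\MM^{\vee})\sta{\sim}{\rig}(f^{*}\MM)^{\vee}$, where $(f^{*}\MM)^{\vee}=\HH om_{\OO_Z}(f^{*}\MM,\OO_Z)$, and these isomorphisms are compatible with composition of morphisms. Applying this to $f=\sigma$ and $f=p_{2}$, and dualizing the structural isomorphism $\Phi\colon \sigma^{*}\LL\sta{\sim}{\rig}p_{2}^{*}\LL$ of $G$-equivariance of $\LL$ (a contravariant operation, so one uses the inverse transpose $(\Phi^{\vee})^{-1}$), I would set
$$
\Psi \;=\; c_{p_{2}}^{-1}\circ(\Phi^{\vee})^{-1}\circ c_{\sigma}\,\colon\;\sigma^{*}(\LL^{\vee})\;\sta{\sim}{\rig}\;p_{2}^{*}(\LL^{\vee}).
$$

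Then I would verify that $\Psi$ satisfies the cocycle condition \ref{cond_cocycles}. The idea is to take the commutative triangle expressing the cocycle condition for $\Phi$ over $G\times G\times X$, apply the contravariant functor $\HH om_{\OO_{G\times G\times X}}(-,\OO_{G\times G\times X})$ to it — which reverses every arrow while preserving commutativity — then invert all the resulting isomorphisms to restore the original orientation, and finally rewrite each term $t_{i}^{*}\sigma^{*}(\LL^{\vee})$, $t_{i}^{*}p_{2}^{*}(\LL^{\vee})$ as $(t_{i}^{*}\sigma^{*}\LL)^{\vee}$, $(t_{i}^{*}p_{2}^{*}\LL)^{\vee}$ using the canonical isomorphisms $c$. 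What emerges is exactly the cocycle diagram for $\Psi$; the only ingredient making the identifications coherent is the compatibility of the $c_{f}$'s with composition of pullbacks (e.g.\ $t_{i}^{*}\sigma^{*}\simeq(\sigma\circ t_{i})^{*}$).

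The main obstacle is precisely this last bookkeeping step: chasing the base-change isomorphisms $c_{f}$ through the threefold product $G\times G\times X$ and checking that no orientation subtlety spoils the cocycle identity. Should the diagram chase become unwieldy, an alternative is to invoke the first technical lemma of \ref{subsection-lem_techniques}: since the cocycle condition is an equality of two morphisms of coherent sheaves on $G\times G\times X$, it suffices to check it after restriction to each closed point, where $\LL$, $\LL^{\vee}$ and all the pullbacks become finite free modules and the statement reduces to elementary linear algebra.
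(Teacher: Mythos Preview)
Your proposal is correct and follows essentially the same approach as the paper: both use the canonical base-change isomorphism $f^{*}(\LL^{\vee})\simeq(f^{*}\LL)^{\vee}$ for locally free sheaves of finite rank, and both define the equivariance structure on $\LL^{\vee}$ via the inverse transpose $^{t}\Phi^{-1}$. The paper's proof is considerably more terse, leaving the cocycle verification implicit in the word ``fonctoriel'', whereas you spell out the diagram chase and even offer a fallback via restriction to closed points; but the underlying idea is identical.
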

\begin{proof} Il suffit d'utiliser le fait que pour un faisceau localement libre de
rang fini, on
a un isomorphisme canonique et fonctoriel:
$$\sigma^*\HH om_{\OO_X}(\LL,\OO_X)
\sta{\sim}{\rig}\HH om_{\OO_{G\times X}}(\sigma^*\LL,\OO_{G\times X}),$$
de sorte que $^t\Phi^{-1}$, c'est-à-dire l'application transposée de l'isomorphisme $\Phi^{-1}$, définit la structure $G$-équivariante sur
$\HH om_{\OO_X}(\LL,\OO_X)$. L'action à droite de $G(R)$ sur les sections globales
de $\HH om_{\OO_{X_R}}(\LL_R,\OO_{X_R})$ est donc aussi donnée par $^t\Phi_{g}^{-1}$ et donc,
si $e$ est une section globale de $\LL_R$, $\varphi $ une section locale de
$\HH om_{\OO_{X_R}}(\LL_R,\OO_{X_R})$, alors pour tout $g $ de $G(R)$ on a
$ (\varphi\cdot g)(e)=\varphi(e\cdot g^{-1}).$
\end{proof}
\begin{sousprop} Soient $\LL$ et $\LL'$ deux faisceaux de $\OO_X$-modules quasi-cohérents,
$G$-équivariant, alors $\LL\ot_{\OO_X}\LL'$ est un faisceau de $\OO_X$-modules
quasi-cohérent, $G$-équivariant.
\end{sousprop}
\begin{proof} On a un isomorphisme fonctoriel:
$\sigma^*(\LL \ot_{\OO_X}\LL') \sta{~}{\rig}\sigma^*\LL \ot_{\OO_{G\times X}}
\sigma^*\LL' .$
\end{proof}
Considérons maintenant un schéma formel $\XX$ tel que pour tout $i\in\Ne$, le
schéma $X_i$ soit muni d'une action du schéma en groupes $G_i$. Nous poserons alors
$\alpha_i: X_i\hrig \XX$, $\gamma_i: G_i\times X_i\hrig G_{i+1}\times X_{i+1}$.
\begin{sousdef} Un faisceau $\EE$ de $\OO_{\XX}$-modules complets pour la topologie
$p$-adique sera dit $G$-équivariant si,
pour tout $i\in\Ne$, le faisceau $\EE_i=\alpha_i^*\EE$ est un faisceau de
$\OO_{X_i}$-modules $G$-équivariants tel que les diagrammes suivants soient commutatifs
$$\xymatrix {p_{2,i}^*\EE_i \ar@{->}[r]^{\sim}_{\Phi_i}&\sigma_{2,i}^*\EE_i \\
 \ga_i^*p_{2,i+1}^*\EE_{i+1}\ar@{->}[u]_{\wr}\ar@{->}[r]^{\sim}_{\ga_i^*\left(\Phi_{i+1}\right)}&
\ga_i^*\sigma_{2,i+1}^*\EE_{i+1}\ar@{->}[u]_{\wr},}.$$
\end{sousdef}
Dans le cas où $\EE$ est un faisceau de $\OO_{\XX}$-modules cohérents,
cette définition équivaut au fait que l'on se donne un isomorphisme $\Phi$:
$\sigma^*\EE\simeq p_2^*\EE$ vérifiant les conditions de cocycles énoncées en
~\ref{cond_cocycles}, ce qui nous amène à
\begin{sousdef} Un faisceau $\EE$ de $\OO_{\XX,\Qr}$-modules cohérents sera dit
$G$-équivariant s'il existe un isomorphisme $\Phi$: $\sigma^*\EE\simeq p_2^*\EE$ vérifiant les conditions de cocycles énoncées en
~\ref{cond_cocycles}.
\end{sousdef}
Si $\EE$ est un faisceau de $\OO_{\XX}$-modules cohérents $G$-équivariant, alors $\EE_{\Qr}:=\EE\otimes\Qr$
est un faisceau de $\OO_{\XX,\Qr}$-modules cohérents $G$-équivariant.

Pour les faisceaux équivariants de $\DD$-modules nous demandons en outre que les
applications soient compatibles avec la structure de $\DD$-module, ce qui nous amène
à la définition suivante,
pour laquelle nous remplaçons les morphismes images inverses au
sens des faisceaux de $\OO_{G\times X}$-modules par les images inverses au sens des
$\Dm_{G\times X}$, suivant les explications qui seront données en ~\ref{operateur-diff}.
\begin{sousdef} Un faisceau $\EE$ de $\Dcm_{\XX,\Qr}$-modules cohérents (resp.
$\Ddag_{\XX,\Qr}$-modules) sera dit
$G$-équivariant s'il existe un isomorphisme $\Phi$: $\sigma^!\EE\simeq p_2^!\EE$ vérifiant
les conditions de cocycles énoncées en
~\ref{cond_cocycles} (où l'on remplace tous les morphismes $s^*$ par $\HH^0 s^!$,
$s^!$ étant l'image inverse au sens des $\DD$-modules).
\end{sousdef}
\subsubsection{Structures $G$-équivariantes à droite}
On considère ici un $S$-schéma $X$ muni d'une action à droite d'un schéma en groupes $G$
donnée par un morphisme de schémas $\tau$: $X\times G \rig X$. Une structure
$G$-équivariante à droite sur un faisceau de $\OO_X$-modules $\EE$ consiste en la donnée d'un
isomorphisme de $\OO_{X\times G}$-modules $\Phi$:$\tau^*\EE \sta{\sim}{\rig} p_1^*\EE$,
vérifiant la condition de cocyle suivante. Soient $t_1,t_2,t_3$: $X\times G \times G \rig
X\times G$ définis par $t_1(x,g_1,g_2)=(xg_1,g_2)$, $t_2(x,g_1,g_2)=(x,g_1g_2)$,
$t_3(x,g_1,g_2)=(x,g_1)$. On demande que $\Psi$ vérifie $t_2^*(\Psi)=t_3^*(\Psi)\circ
t_1^*(\Psi)$. Ainsi, pour toute $V$-algèbre $R$, pour tout $g\in G(R)$ on a un opérateur
de translation $T_g$:$X_R\rig X_R$. Pour tout ouvert
$U$ de $X_R$, on dispose d'applications $\Psi_{U,g}$: $\EE_R(Ug)\rig \EE_R(U)$ semi-linéaires
par rapport aux applications $T_g^{-1}$: $\OO_{X_R}(Ug)\rig \OO_{X_R}(U)$ telles que
$\Psi_{U,g_1g_2}=\Psi_{g_1}\circ \Psi_{g_2,Ug_1}$. Ainsi, l'application
$g\in G(S)\mapsto \Psi_g$ définit une action à gauche de $G(S)$ sur $\Ga(X,\EE)$ pour
tout faisceau $G$-équivariant. Soit enfin l'involution $inv\times id_X$: $G\times X\rig G\times X$,
où $inv$ est l'application de passage à l'inverse $G\rig G$. L'application
$\sigma=inv \times id_X \circ \tau$ définit une action à gauche de $G$ sur $X$. Un faisceau
$\EE$ est $G$-équivariant (pour $\tau$) si et seulement si $\EE $ est $G$-équivariant pour
$\sigma$. L'isomorphisme $\Phi$ définissant cette équivariance est alors donné par
$\Phi=(inv \times id_X)^* (\Psi)$ (et $\Psi=(inv \times id_X)^* (\Phi)$).
%}}}

%{{{
%}}}
\subsection{Action de $G$ sur les faisceaux de parties principales}
%{{{
Dans cette partie, on suppose que $X$ est un $S$-schéma lisse, muni d'une action à gauche de
$G$. Nous allons montrer d'abord que les faisceaux de parties principales
$\PP_{X,(m)}^{n}$ sont $G$-équivariants, puis par dualité, que les faisceaux
$\DD^{(m)}_{X,n}$ sont $G$-équivariants comme en~\ref{O-mod-equiv}, et enfin que les faisceaux complétés $\what{\DD}^{(m)}_{\XX}$
sont $G$-équivariants.
\label{gequiv_diff}

Enonçons à présent le résultat de cette sous-section. Pour l'algèbre symétrique $\Sg^{(m)}_{X}$ de niveau $m$ et les propriétés de base de cette algèbre, nous
nous référons à \cite{Hu1}.
\begin{prop}\label{prop-equiv} \item[(i)]  Les faisceaux $\PP_{X,(m)}^{n}$ et $\Ga_{X,(m)}^n$
\label{gequiv_diffprop}
sont des faisceaux de $\OO_X$-modules $G$-équivariants.
\item[(ii)]Les faisceaux $\DD^{(m)}_{X,n}$ et $\Sg^{(m)}_{X}$
sont des faisceaux de $\OO_X$-modules $G$-équivariants.
\item[(iii)]Les faisceaux $\what{\DD}^{(m)}_{\XX}$ sont des faisceaux de
$\OO_{\XX}$-modules $G$-équivariants.
\end{prop}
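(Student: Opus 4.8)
The strategy is to establish the three statements in sequence, each one feeding into the next. For (i), the key observation is that the diagonal immersion $X \hookrightarrow X \times X$ is $G$-equivariant for the action of $G$ on $X \times X$ induced diagonally from $\sigma_X$; more precisely, one has a commutative diagram relating $\sigma_X$, $p_2$, the two projections $X \times X \to X$, and the diagonal, and the ideal $\II$ of the diagonal is respected by pullback. Since the formation of the $m$-PD-envelope $P_{(m)}(-)$ of a regular ideal is functorial and compatible with flat base change (here $\sigma$ is smooth by the proposition in~\ref{section-rappels}, and $p_2$ is smooth), one gets canonical isomorphisms $\sigma^*\PP_{X,(m)}^{n} \simeq \PP_{G\times X,(m)}^{n}$ along the first factor and $p_2^*\PP_{X,(m)}^{n} \simeq \PP_{G\times X,(m)}^{n}$ along the second; composing these produces the desired isomorphism $\Phi\colon \sigma^*\PP_{X,(m)}^{n} \simeq p_2^*\PP_{X,(m)}^{n}$. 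The cocycle condition~\ref{cond_cocycles} is then a formal consequence of the associativity of the $G$-action together with the functoriality of the $m$-PD-envelope construction; one checks that the two ways of identifying the relevant pullbacks over $G \times G \times X$ agree because both come from the uniqueness in the universal property of $m$-PD-envelopes. The argument for $\Ga_{X,(m)}^n = P_{\Sg_X(\Omega^1_X)}(\II)$ is identical, using that $\Omega^1_X$ is $G$-equivariant (as the diagonal is $G$-equivariant and $\Omega^1$ is functorial), hence so is its symmetric algebra and the graded $m$-PD-envelope.

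For (ii), the point is that $\DD^{(m)}_{X,n} = \HH om_{\OO_{X,g}}(\PP_{X,(m)}^{n},\OO_X)$ and $\Sg^{(m)}_X = \bigcup_n \HH om_{\OO_X}(\Ga^n_{X,(m)}(\Omega^1_X),\OO_X)$ are obtained by applying $\HH om_{\OO_X}(-,\OO_X)$ to the locally free sheaves of finite rank treated in (i). Since $\OO_X$ is tautologically $G$-equivariant and, by~\ref{equiv-dualite}, the dual of a $G$-equivariant locally free sheaf of finite rank is again $G$-equivariant, the sheaves $\DD^{(m)}_{X,n}$ and $\Sg^{(m)}_{X,n}$ inherit $G$-equivariant structures. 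One should note that the dual is taken for the left $\OO_X$-module structure on $\PP_{X,(m)}^{n}$, and that the isomorphism $\Phi$ from (i) is $\OO_X$-linear for that structure, so transposing $\Phi^{-1}$ as in the proof of~\ref{equiv-dualite} indeed yields a well-defined equivariant structure; passing to the inductive limit over $n$ gives the equivariance of $\DD^{(m)}_X$ and $\Sg^{(m)}_X$ (the transition maps being $G$-equivariant by functoriality).

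For (iii), the completed sheaf $\Dcm_{\XX} = \varprojlim_i \DD^{(m)}_{X_i}$ is handled by applying (ii) at each finite level. Since the $G$-action on $\XX$ restricts to an action of $G_i$ on $X_i$ compatibly with the closed immersions $\alpha_i, \gamma_i$, the equivariant structures on the $\DD^{(m)}_{X_i}$ constructed in (ii) are compatible in the sense required by the definition of a $G$-equivariant sheaf of $\OO_{\XX}$-modules complete for the $p$-adic topology — this compatibility is again a functoriality statement, following from the fact that the isomorphism $\Phi$ of (i)–(ii) is canonical and hence commutes with the reduction-mod-$\pi^{i+1}$ maps. Taking the inverse limit then produces the $G$-equivariant structure on $\Dcm_{\XX}$. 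The main obstacle, such as it is, lies in verifying the cocycle condition in (i): one must be careful that the three pullback identifications over $G \times G \times X$ are set up so that the diagram genuinely commutes, rather than commutes up to some sign or scalar; this is where one invokes the uniqueness clause of the universal property of $m$-PD-envelopes (both composites are $m$-PD-morphisms out of $t_2^*\sigma^*\PP^n_{X,(m)}$ extending the same map on the underlying ideal), together with the associativity diagram for the $G$-action. Everything else is bookkeeping with functorial constructions.
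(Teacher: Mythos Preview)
There is a genuine gap in your argument for (i). You claim that flat (smooth) base change along $\sigma$ and $p_2$ yields canonical isomorphisms
\[
\sigma^*\PP_{X,(m)}^{n}\;\simeq\;\PP_{G\times X,(m)}^{n}\;\simeq\;p_2^*\PP_{X,(m)}^{n},
\]
but this is false: the two outer sheaves are locally free of rank governed by $\dim X$, whereas the middle one has rank governed by $\dim G+\dim X$. The base-change compatibility of $m$-PD-envelopes from 1.5.3 of \cite{Be1} concerns base change in $S$, not pullback along an $S$-morphism $Y\to X$; along such a morphism one only has the functorial $m$-PD-morphism $df\colon f^*\PP_{X,(m)}^{n}\to\PP_{Y,(m)}^{n}$ of~\ref{parties_principales}, which is not an isomorphism unless $f$ is \'etale. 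So your construction of $\Phi$ does not produce a map between the correct objects, and there is nothing to which the universal property can be applied to verify the cocycle condition.

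The paper's approach fixes exactly this point: one composes the (non-iso) map $d\sigma\colon\sigma^*\PP_{X,(m)}^{n}\to\PP_{G\times X,(m)}^{n}$ with the canonical projection $q_2\colon\PP_{G\times X,(m)}^{n}\to p_2^*\PP_{X,(m)}^{n}$ constructed in~\ref{projP}, and sets $\Phi=q_2\circ d\sigma$. One then proves $\Phi$ is an isomorphism by restricting to the fibres $\{g\}\times X$ via $i_g$ and checking that $i_g^*\Phi=d\sigma_g$, which has inverse $d\sigma_{g^{-1}}$; the technical lemmas of~\ref{subsection-lem_techniques} let this fibrewise statement pass to $G\times X$. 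The cocycle condition is likewise verified on fibres, where it reduces to the functoriality $d(\sigma_{g_1}\circ\sigma_{g_2})=d\sigma_{g_2}\circ\sigma_{g_2}^*d\sigma_{g_1}$. Your treatments of (ii) and (iii) are fine once (i) is established, and indeed agree with the paper's.
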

\begin{proof} L'assertion (ii) provient de (i) par passage à la dualité. Ensuite, par (ii) les faisceaux
$\DD^{(m)}_{X_i}$ sont $G$-équivariants de façon compatible pour différentes valeurs de $i$, de sorte
que les faisceaux $\what{\DD}^{(m)}_{\XX}$ sont $G$-équivariants par passage à la limite projective sur $i$.
Donc (ii) entraîne (iii). Dans la suite nous montrons donc (i).
Le cas des algèbres
graduées $\Ga_{X,(m)}^n$ se traitera comme celui des faisceaux $\PP_{X,(m)}^n$
(en utilisant $q'_2$ au lieu de $q_2$) ou encore la structure de $G$-equivariance
de $\Omega^1_X$. Dans la suite, on se restreint donc à la $G$-linéarité des
faisceaux $\PP_{X,(m)}^n$.

Soit $d\sigma$ le morphisme obtenu par fonctorialité: $\sigma^*\PP_{X,(m)}^n\rig
\PP_{G\times X,(m)}^n$. On définit alors $\Phi=q_2\circ d\sigma$:
$\sigma^*\PP_{X,(m)}^n\rig p_2^*\PP_{X,(m)}^n$. Le
morphisme $\Phi$ est un $m$-PD-morphisme.
L'algèbre $\PP_{X,(m)}^0$ est isomorphe à $\OO_X$. Pour voir que $\Phi$
est un isomorphisme, il suffit de le vérifier
en les points fermés de $G\times X$ d'après~\ref{subsection-lem_techniques}.

Soient $g$ un point de $G$, de corps résiduel $k(g)$, $e_g$ l'évaluation:
$\OO_G\rig k(g)$ (qui à $l\in\OO_G$ associe $l(g)$), $i_g$ l'immersion fermée:
$g\times X \hrig G\times X$, $\sigma_g=\sigma \circ i_g$,
$p_{2,g}=p_2\circ i_g$. Le morphisme $i_g^*\Phi$ est obtenu comme morphisme composé
$$\sigma_g^* \PP_{X,(m)}^n \sta{i_g^*d\sigma}{\rig}i_g^*\PP_{G\times X,(m)}^n
\sta{i_g^*q_2}{\rig}p_{2,g}^*\PP_{X,(m)}^n\simeq k(g)\ot_V\PP_{X,(m)}^n.$$

Il suffit alors de constater que le morphisme $i_g^*\Phi$
 co\"\i ncide avec le morphisme
$d\sigma_g$, qui est un isomorphisme d'inverse
$d{\sigma}_{g^{-1}}$. Vérifions pour cela que le diagramme ci-dessous
est commutatif
$$\xymatrix{\sigma^*_g\PP^n_{X,(m)} \ar@{->}[r]^{i_g^*d{\sigma}}\ar@{=}[d]^{}&
i_g^*\PP^n_{G\times X,(m)} \ar@{->}[r]^{i_g^*q_2} &
i_g^*q_2^*\PP^n_{X,(m)} \ar@{=}[d]\\
      \sigma^*_g\PP^n_{X,(m)} \ar@{->}[rr]^{d\sigma_g}
&  & \PP^n_{ X,(m)}\ot_V k(g) .}$$
Soient $u\in \OO_X$, $\tau=1\ot u - u\ot 1\in \PP^n_{ X,(m)}$.
Ecrivons
$\sigma^{\sharp}(u)=\sum_l k_l\ot f_l$, avec $k_l\in\OO_G$ et $f_l\in\sigma^{-1}\OO_X$.
Alors
$$ d\sigma_g(1\ot\tau)=\sum_l k_l(g)(1\ot f_l -f_l \ot 1),$$
et
\begin{align*} d\sigma(1\ot \tau) &= \sum_l (1\ot f_l)(1\ot k_l-k_l\ot 1)+ \sum_l (k_l\ot 1)
(1 \ot f_l -f_l \ot 1) \,\in \PP_{G\times X,(m)}^{n}, \\
 i_g^*d\sigma(1\ot \tau) &= \sum_l f_l (1\ot k_l-k_l\ot 1) + \sum_l k_l(g) (1 \ot f_l -f_l \ot 1), \\
i_g^*q_2  \circ i_g^*d\sigma(1\ot \tau) &= \sum_l k_l(g) (1 \ot f_l -f_l \ot 1) \\
                                  &= d\sigma_g(1\ot\tau).
\end{align*}
Les morphismes $d\sigma_g$ et $i_g^*q_2  \circ i_g^*d\sigma$ sont des $m$-PD-morphismes
$\OO_X\ot_V k(g)$-linéaires qui co\"\i ncident sur les éléments $1\ot \tau$.
Comme les éléments $1\ot\tau$ engendrent $\sigma^*_g\PP^n_{X,(m)}$ comme
$m$-PD-algèbre, on voit ainsi que $d\sigma_g$ et $i_g^*q_2  \circ i_g^*d\sigma$ sont égaux.

La condition de cocycle $t_2^*(\Phi)=t_3^*(\Phi)\circ t_1^*(\Phi)$ se vérifie
aussi sur les fibres $g_1\times g_2 \times X$ où $g_1,g_2$ sont deux points de $G$,
définis sur une certaine algèbre $A$, et revient
à montrer que $\Phi_{g_1g_2}=\Phi_{g_2}\circ \sigma_{g_2}^*\Phi_{g_1}$, ce qui résulte de
la fonctorialité rappelée en ~\ref{parties_principales} puisque $\Phi_{g_1}=d\sigma_{g_1}$.
\end{proof}
\section{Algèbres de distributions arithmétiques à un niveau fini}
\label{sect-distribution-algebras}
%{{{
\subsection{Définition, propriétés à un niveau fini}\label{subsection-def_prop}
%Dans cette partie on suppose que le schéma de base $S$ est égal à $Spec(V)$
%et que le schéma en groupes $G$ affine et lisse est
%défini sur $Spec(V)$. Tous les résultats et les définitions restent valides
%pour le completé formel $\GG $ de $G$ et les schémas $G_i=\GG \times V/\pi^{i+1}V$, mais, pour alléger la rédaction, nous nous placerons dans le premier cas.
Reprenons les notations de l'introduction : $V$ est une $\Ze_{(p)}$-algèbre noetherienne et $G$ est un schéma en groupes affine et lisse
sur $S=Spec \,V$ de dimension relative $N$. Notons $\II$ le faisceau d'idéaux noyau du morphisme de $V$-algèbres
$\varep_G: \OO_G\rig e_*\OO_S$,
et $I=\Ga(G,\II)$. Soient $\PP^n_{(m)}(G)$ \footnote{Nous insistons ici sur la différence de notation
avec la partie ~\ref{FaisceauxGequiv}: le
$\varep_G$ indique que l'idéal par rapport auquel on prend les puissances divisées
partielles est celui de l'immersion fermée $\varep_G$.} le faisceau de $m$-PD-enveloppes
à puissances divisées partielles de niveau $m$ et d'ordre $n$, du faisceau d'idéaux
$\II\subset\OO_G$. Comme $G$ est lisse sur
$S$, l'idéal
$\II$ est localement régulier sur $S$ et les faisceaux $\PP^n_{(m)}(G)$ sont à support
dans $S$ (1.5.3 de \cite{Be1}). Dans la suite, on considère ces faisceaux comme des faisceaux de
$\OO_S$-modules quasi-cohérents.

\begin{defi} On définit le $m$-PD voisinage de l'élément neutre de $G$ comme
$$P^n_{(m)}(G)=\Ga(S,\PP^n_{(m)}(G)).$$
\end{defi}
En particulier, comme $S$ est affine, on dispose de surjections canoniques $P^{n+1}_{(m)}(G)\trig P^n_{(m)}(G)$.
D'autre part, du fait que la formation des $m$-PD-enveloppes commute aux extensions plates de la base,
on voit que $P^n_{(m)}(G)$ est la $m$-PD-enveloppe de l'idéal $I$ de $V[G]$.

Par 1.5.3 de \cite{Be1}, les faisceaux d'algèbres $\PP^n_{(m)}(G)$ sont localement libres sur $S$
Soient $t_1,\ldots,t_N$ une suite régulière de paramètres de $\II$ sur
un ouvert $U$ de $G$, et reprenons les notations de ~\ref{formulesmPD}. On dispose alors au-dessus de
$U\bigcap S$ d'un isomorphisme
\begin{gather}\label{prop-local} \PP^n_{(m)}(G)\simeq
\bigoplus_{|\uk|=n} \OO_S \ut^{\{\uk\}}.
\end{gather}
 Dans le cas d'un AVCD (ou d'un quotient), les algèbres $P^n_{(m)}(G)$ sont des $V$-modules libres
de rang fini. Précisément, on dispose de la
\begin{prop}
 Si $V$ est un AVDC, ou un quotient d'un AVDC, de point fermé $\kappa$,
alors l'algèbre
$P^n_{(m)}(G)$ est la $m$-PD-enveloppe de $\II_{\kappa}\subset\OO_{G,\kappa}$. En particulier
ces algèbres sont des $V$-modules libres et en utilisant des paramètres $t_1,\ldots,t_N$ de $\II_{\kappa}$,
on retrouve la description de ~\ref{prop-local}.
\end{prop}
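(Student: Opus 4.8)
The plan is to reduce the assertion to a computation in the local ring $\OO_{G,\kappa}$, using that the sheaf $\PP^n_{(m)}(G)$ is supported on $e(S)$ (as recalled above) and that $V$ is local. Write $\mathfrak m$ for the maximal ideal of $V$, so that $\kappa=e(\mathfrak m)$ is the unique closed point of $e(S)\cong S={\rm Spec}\,V$. The first observation is that every open $U$ of $G$ containing $\kappa$ in fact contains all of $e(S)$: indeed $U\cap e(S)$ is an open of ${\rm Spec}\,V$ containing the closed point, hence equals ${\rm Spec}\,V$. Since $\PP^n_{(m)}(G)$ is supported on $e(S)$, the gluing axiom applied to the cover $G=U\cup(G\setminus e(S))$ — on which the module is arbitrary, resp. zero, with zero overlap — shows that the restriction $\Ga(G,\PP^n_{(m)}(G))\sta{\sim}{\rig}\Ga(U,\PP^n_{(m)}(G))$ is an isomorphism for every such $U$. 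Passing to the colimit over the affine neighbourhoods $U$ of $\kappa$ then gives
$$P^n_{(m)}(G)=\Ga(G,\PP^n_{(m)}(G))\;\sta{\sim}{\rig}\;\big(\PP^n_{(m)}(G)\big)_{\kappa}.$$

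Next I would invoke that the formation of $m$-PD envelopes, together with their filtration by the ideals $\II^{\{k\}}$, commutes with flat base change, and in particular with localization — this is the very compatibility already used above to identify $P^n_{(m)}(G)$ with the $m$-PD envelope of $I\subset V[G]$. Hence the right-hand side is exactly $P_{(m)}(\II_\kappa)/\II_\kappa^{\{n+1\}}$, i.e. the order-$n$ $m$-PD envelope of $\II_\kappa\subset\OO_{G,\kappa}$; this is the first assertion. For the remaining ones: $\OO_{G,\kappa}$ is regular local (as $G$ is smooth over $V$), one has $\OO_{G,\kappa}/\II_\kappa=(e_*\OO_S)_\kappa=V$ since $V$ is local, and $\II_\kappa$, being the stalk at $\kappa$ of the (locally regular) ideal $\II$ of the unit section, is generated by a regular sequence $t_1,\ldots,t_N$, with $N$ the relative dimension. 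Applying 1.5.3 of \cite{Be1} with $R=V$, $A=\OO_{G,\kappa}$ and $A/\II_\kappa=V$ then yields that $P_{(m)}(\II_\kappa)$ is $V$-free on the $\ut^{\{\uk\}}$, so $P^n_{(m)}(G)=P_{(m)}(\II_\kappa)/\II_\kappa^{\{n+1\}}$ is free over $V$ on the $\ut^{\{\uk\}}$ with $|\uk|\leq n$, which is the description \ref{prop-local}.

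The one delicate point is the first step: $\Ga(G,\FF)=\FF_\kappa$ is false for a general quasi-coherent sheaf $\FF$, and it holds here only thanks to the conjunction of the support property ${\rm Supp}\,\PP^n_{(m)}(G)\subseteq e(S)$, the fact that $e(S)$ sits inside every Zariski neighbourhood of $\kappa$ — which is where the hypothesis that $V$ is an AVDC or a quotient of one, hence local, is genuinely used — and the elementary gluing argument. After this reduction everything is a direct appeal to Berthelot's constructions, and no property of $G$ beyond the regularity of its unit section intervenes.
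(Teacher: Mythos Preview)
Your argument is correct and follows the same strategy as the paper: reduce to the local ring at $\kappa$ by exploiting that $\PP^n_{(m)}(G)$ is supported on $e(S)$ together with the fact that any open neighbourhood of $\kappa$ already contains all of $e(S)$, and then invoke flat base change for $m$-PD-envelopes (1.4.6 of \cite{Be1}). Your justification of the containment $e(S)\subset U$ via the locality of $V$ alone is in fact more direct than the paper's, which appeals to formal smoothness of $\Ga(U,\OO_G)$ over $V$ and completeness of $V$ to reach the same conclusion.
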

\begin{proof} Comme l'algèbre $\Ga(U,\OO_G)$ est une $V$-algèbre formellement lisse
et que $V$ est complet, tout ouvert affine contenant
$\kappa$ contient $e(S)$. Cette propriété reste vraie si $V$ est un quotient d'un AVDC.
L'énoncé provient alors du fait que la formation
des $m$-PD-enveloppes commute aux changements de base plats (1.4.6 de \cite{Be1}).\end{proof}
Soit $\uV_S$ le faisceau constant égal à $V$ sur $S$. La description locale des
faisceaux $\PP^n_{(m)}(G)$ montre l'isomorphisme canonique $$\uV_S \simeq \PP^0_{(m)}(G).$$ Si
$c(S)$ est le nombre de composantes connexes de $S$,
on dispose donc d'un isomorphisme canonique $$V^{c(S)} \sta{\sim}{\rig }P^0_{(m)}(G).$$
Dans la suite, on écrit plus simplement $V$ au lieu de $\uV_S$.
On note le faisceau sur $S$,
$\LL ie(G)=\HH om_V(\II/\II^2,V),$ et son module de sections globales,
$Lie(G)=Hom_V(I/I^2,V).$ Sur un ouvert de $S$ sur lequel $\II$ admet une suite régulière de
générateurs $t_1,\ldots,t_N$, le faisceau $\LL ie(G)$ est libre de base $\xi_1,\ldots,\xi_N$,
que l'on définit comme la base duale
des $t_1,\ldots,t_N.$

En particulier dans le cas où $V$ est un AVCD (ou un quotient), l'algèbre $Lie(G)$ est un $V$-module libre
de base $\xi_1,\ldots,\xi_N$.

En procédant comme pour
~\ref{desc-P1diff}, on voit qu'on a
 des isomorphismes
\begin{gather} \PP^1_{(m)}(G) \sta{\sim}{\lrig} V \bigoplus \II/\II^2. \\
P^1_{(m)}(G) \sta{\sim}{\lrig} V \bigoplus I/I^2 \label{desc-P1}.
\end{gather}
En effet, le premier isomorphisme se montre par un calcul local sur un ouvert sur lequel $\II$ est muni d'une
suite régulière de générateurs. Le deuxième s'obtient par passage aux sections globales sur $S$.

Nous introduirons le niveau $m$ en indice de l'exposant dans cette notation, lorsque nous
voudrons préciser le niveau.

Enfin, on note $\rho_m$ l'application canonique
\begin{gather}\label{def_rho}
\rho_m\,\colon\,\OO_G \rig e_*\PP^n_{(m)}(G),
\end{gather}
(on notera aussi $\rho_m$ les applications canoniques obtenues en passant aux sections globales :
$\rho_m$ : $V[G]\rig P^n_{(m)}(G)$).
Cette application $\rho_m$ munit $P_{(m)}^{n}(G)$ d'une structure de $V[G]$-module.

\vskip5pt
Pour $m'\geq m$, d'après la propriété universelle des algèbres à puissances divisées,
il existe des homomorphismes  de faisceaux d'algèbres filtrées
\begin{equation}\label{psi_f}\psi_{m,m'}\,\colon \, \PP_{(m')}(G)\rig \PP_{(m)}(G),\end{equation}
qui donnent par passage au quotient
des homomorphismes d'algèbre
$$\psi_{m,m'}^n \,\colon \,\PP^n_{(m')}(G)\rig \PP^n_{(m)}(G).$$
En coordonnées locales, on a:
$$\psi_{m,m'}^n(\ut^{\{\uk\}_{(m')}})=
\frac{q_{\uk}^{(m)}!}{q_{\uk}^{(m')}!}\ut^{\{\uk\}_{(m)}}.$$
Par passage aux sections globales, on trouve des homomorphismes d'algèbres filtrées toujours notés
$\psi_{m,m'}$
\begin{equation}\label{psi}\psi_{m,m'}\,\colon \, P_{(m')}(G)\rig P_{(m)}(G),\end{equation} qui donnent par passage au quotient
des homomorphismes d'algèbre
$$\psi_{m,m'}^n \,\colon \,P^n_{(m')}(G)\rig P^n_{(m)}(G).$$
Dans le cas où $V$ est un AVCD ou un quotient, les applications $\psi_{m,m'}^n$ sont décrites par les formules
ci-dessus sur un système de paramètres régulier de $\II$ au voisinage de $e(S)$.

\vskip5pt
On définit maintenant les algèbres de distributions de niveau $m$: la structure
d'algèbre est expliquée après la définition et provient, comme dans le cas classique,
de la structure de groupe. Commençons par définir les faisceaux de distributions sur $S$.
\begin{defi} Les faisceaux de distributions de niveau $m$ et
d'ordre $n$ de $G$ sont:
$$\DD_n^{(m)}(G):=Hom_V(\PP^n_{(m)}(G),V)$$
et le faisceau des distributions de niveau $m$ est:
$$\DD^{(m)}(G):=\varinjlim_n \DD_n^{(m)}(G).$$
\end{defi}

\begin{defi} Les distributions de niveau $m$ et
d'ordre $n$ de $G$ sont:
$$D_n^{(m)}(G):=Hom_V(P^n_{(m)}(G),V)$$
et le $V$-module des distributions de niveau $m$ est:
$$D^{(m)}(G):=\varinjlim_n D_n^{(m)}(G).$$
\end{defi}
Comme $S$ est affine, il est clair que $D_n^{(m)}(G)=\Ga(S,\DD_n^{(m)}(G))$
(resp. $D^{(m)}(G)=\Ga(S,\DD^{(m)}(G))$).
Remarquons que la relation~\ref{desc-P1} nous donne les isomorphismes
\begin{gather}\label{desc-D1} A_m\,\colon\, D_1^{(m)}(G) \sta{\sim}{\rig} V \bigoplus Lie(G),
\overline{A}_m \,\colon\, \gr_1 D^{(m)}(G) \sta{\sim}{\rig} Lie(G),
%\textrm{ et }
%\\ \overline{A}_m^*  \,\colon\, \II_{\kappa}/ \II_{\kappa}^2  \sta{\sim}{\rig} \gr_1 P_{(m)}(G),
\end{gather}
obtenue par dualité à partir de $\overline{A}_m$. Ces isomorphismes proviennent par ailleurs
d'isomorphismes analogues au niveau des algèbres de distributions que nous n'explicitons pas.

On dispose en outre d'une action de $\DD^{(m)}(G)$ sur $\OO_G$. Si $P\in \DD^{(m)}(G)$,
l'action de $P$ est définie par
\begin{gather}\label{action_VG} \xymatrix @R=0mm{ \OO_G \ar@{->}[r]^{\rho_m}& \PP^n_{(m)}(G) \ar@{->}[r]^{P}& V \\
f \ar@{{|}->}[rr] &  & P(\rho_m(f)).}
\end{gather}
En passant aux sections globales, on a une action de $D^{(m)}(G)$ sur $V[G]$.

Pour un entier $m'\geq m$, les morphismes d'algèbres $\psi_{m,m'}^n$ de \ref{psi} donnent par passage à
la dualité des applications linéaires $\Phi_{m,m'}^n$ : $D^{(m)}_n(G)\rig D^{(m')}_n(G)$ et
$$\Phi_{m,m'}: D^{(m)}(G)\rig D^{(m')}(G).$$ On a $\Phi_{m',m''}\circ \Phi_{m,m'}=\Phi_{m,m''}$ pour $m''\geq m'\geq m$.
Pour comparer les algèbres $D^{(m)}(G)$ avec des distributions classiques on définit
\begin{gather}
\DD ist_n(G):=\DD_n^{(\infty)}(G):=\HH om_V(\Ga(S,\OO_G/\II^{n+1}),V),\\
\;Dist_n(G):=D_n^{(\infty)}(G):=Hom_V(\Ga(S,\OO_G/\II^{n+1}),V).
\end{gather}
Alors, $$Dist(G):=\varinjlim_n Dist_n(G)$$ est
l'algèbre de distributions classiques d'un schéma en groupes sur
$S$, définie en II.\S 4.6.1 de \cite{Demazure_gr_alg} et $Dist_n(G)$ est l'espace de
distributions d'ordre $n$. De plus $Dist_n(G)=\Ga(S,\DD ist_n(G))$. Il sera commode dans la suite d'introduire le
faisceau des distributions de $G$, $\DD ist(G):=\varinjlim_n \DD ist_n(G)$.

On voit, en procédant comme en \ref{desc-D1} qu'on a
l'isomorphisme

$$Dist_1(G)\simeq Lie(G)\bigoplus V \,\textrm{ et }\, Dist(G)\otimes K \simeq
U(Lie(G)\otimes K)$$ où
$U(Lie(G)\otimes K)$ est l'algèbre enveloppante universelle de $Lie(G)\otimes K$. L'application canonique
$\OO_G/\II^{n+1} \rig \PP^n_{(m)}(G)$ induit une application linéaire $$\Phi_{m,\infty}: D^{(m)}(G)\rig Dist(G)$$
compatible avec les applications $\Phi_{m,m'}$.
\vskip5pt

Etant donnée une suite régulière $t_1,\ldots,t_N$ de
générateurs de $\II$ sur un ouvert $U$ contenant $e(S)$, et en notant $\uxi^{\la\uk\ra}$ les éléments de la base duale de la famille
$\ut^{\{\uk\}}=t_1^{\{k_1\}}\cdots t_N^{\{k_N\}}, |\uk|\leq N$, on obtient par construction la
proposition suivante. Nous posons  $\uxi^{[\uk]}:=\uxi^{\la\uk\ra_{\inft}}$, i.e. les éléments $\uxi^{[\uk]}$
forment la base duale de la famille $t_1^{k_1}\cdots t_N^{k_N}, |\uk|\leq N$. Soit $m\in\Ne\cup\{\infty\}$.
\begin{prop}\label{prop-basiselementsII_f}
 \be \item[(i)] Comme $V$-module, en restriction à $U\bigcap S$, $\DD_n^{(m)}(G)$ est libre de base les éléments
$\uxi^{\la\uk\ra}$ avec $|\uk|\leq n$.
\item[(ii)] On a les relations
\begin{gather} \label{relations_xi}\uk!\uxi^{[\uk]}=\uxi^{\la\uk\ra_{0}}\\
 \uxi^{\la\uk\ra_{m}}=\frac{q_{\uk}^{(m)}!}{q_{\uk}^{(m+1)}!} \uxi^{\la\uk\ra_{m+1}}\\
\text{si }\uk=(k_1,\ldots,k_N),\text{ et,}\forall i\leq N,\;k_i\leq p^m,\;
\uxi^{\la\uk\ra_{m}}=\uxi^{[\uk]}.
\end{gather}
\ee
\end{prop}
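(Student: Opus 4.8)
The plan is to read off both assertions from the explicit local descriptions already recorded, so the argument is short. Throughout I would fix an open $U\subset G$ with $e(S)\subset U$ over which $\II$ admits a regular sequence of parameters $t_1,\dots,t_N$, and work on $U\cap S$; there, by \ref{prop-local} and the freeness statement preceding it, $\PP^n_{(m)}(G)$ is a free $\OO_S$-module with basis $\{\ut^{\{\uk\}}:|\uk|\le n\}$.

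For (i) there is essentially nothing to do: since on $U\cap S$ the $\OO_S$-module $\PP^n_{(m)}(G)$ is free of finite rank, its dual $\DD_n^{(m)}(G)$ is free of finite rank with basis the corresponding dual family, and this dual family is by definition $\{\uxi^{\la\uk\ra}:|\uk|\le n\}$. As $S$ is affine, passing to global sections gives the same for $D_n^{(m)}(G)$ over $V$. This also makes precise that the elements $\uxi^{\la\uk\ra_m}$, $m\in\Ne\cup\{\infty\}$, appearing in (ii) are well defined.

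For (ii) I would obtain all three relations by dualizing the transition maps $\psi_{m,m'}^n$, whose local form was recorded after \ref{psi_f}: for $\infty\ge m'\ge m$,
$$\psi_{m,m'}^n(\ut^{\{\uk\}_{(m')}})=\frac{q_{\uk}^{(m)}!}{q_{\uk}^{(m')}!}\,\ut^{\{\uk\}_{(m)}},$$
with the evident conventions at $m'=\infty$ (so $\ut^{\{\uk\}_{(\infty)}}=\ut^{\uk}$, $q_{\uk}^{(\infty)}!=1$, and $\psi_{m,\infty}^n$ is the canonical map $\OO_G/\II^{n+1}\rig\PP^n_{(m)}(G)$, $\ut^{\uk}\mapsto q_{\uk}^{(m)}!\,\ut^{\{\uk\}_{(m)}}$). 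Evaluating the dual map $\Phi_{m,m'}^n$ on the basis $\{\ut^{\{\uj\}_{(m')}}\}$ and using that $\uxi^{\la\uj\ra_{m'}}$ is dual to $\ut^{\{\uj\}_{(m')}}$, one finds
$$\Phi_{m,m'}^n(\uxi^{\la\uk\ra_m})=\frac{q_{\uk}^{(m)}!}{q_{\uk}^{(m')}!}\,\uxi^{\la\uk\ra_{m'}}\qquad(|\uk|\le n,\ \uxi^{\la\uk\ra_\infty}:=\uxi^{[\uk]}).$$
Specializing then gives the three relations: $m'=m+1$ yields the second; $m=0$, $m'=\infty$, together with $q_k^{(0)}=k$ and hence $q_{\uk}^{(0)}!=\uk!$, yields the first, $\Phi_{0,\infty}^n(\uxi^{\la\uk\ra_0})=\uk!\,\uxi^{[\uk]}$; and if $k_i\le p^m$ for all $i$ then each $q_{k_i}^{(m)}\in\{0,1\}$, so $q_{\uk}^{(m)}!=1$, i.e. $\ut^{\{\uk\}_{(m)}}=\ut^{\uk}$ already in $\PP^n_{(m)}(G)$ and the matching dual basis vectors coincide, $\uxi^{\la\uk\ra_m}=\uxi^{[\uk]}$, which is the third.

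I do not expect a genuine obstacle here — the statement holds, as the text says, ``par construction''. The only point requiring care is that $\uxi^{\la\uk\ra_m}$ and $\uxi^{\la\uk\ra_{m'}}$ a priori live in the distinct modules $\DD_n^{(m)}(G)$ and $\DD_n^{(m')}(G)$, so in (ii) the equalities should be read after applying the transition map $\Phi_{m,m'}^n$ (equivalently, inside a common $K$-vector space, where in the cases at hand $\Phi_{m,m'}^n\otimes K$ is an isomorphism), which is the customary convention for these distribution algebras.
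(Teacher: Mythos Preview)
Your proposal is correct and matches the paper's approach: the paper states only that the proposition is obtained ``par construction'' from the definition of the dual basis, and you have simply spelled out what that entails --- (i) is immediate duality of a free module of finite rank, and (ii) follows by dualizing the explicit local formula for $\psi_{m,m'}^n$ recorded after \eqref{psi_f}. Your closing remark about interpreting the equalities in (ii) via $\Phi_{m,m'}^n$ is exactly the right caveat.
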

On notera tout simplement $\xi_l=\xi^{[1]}_l=\xi^{\la 1\ra_{m}}_l$.

En particulier, si $\II$ admet une suite régulière de générateurs
sur un ouvert $U$ contenant $e(S)$ (ce qui est le cas d'un AVCD ou d'un quotient d'un AVCD), on a la
\begin{prop}\label{prop-basiselementsII}
Sous l'hypothèse ci-dessus, le module $D_n^{(m)}(G)$
  est libre de base les éléments
$\uxi^{\la\uk\ra}$ avec $|\uk|\leq n$. Ces éléments satisfont la relation (ii) de la proposition précédente.
%~\ref{relations_xi}
\end{prop}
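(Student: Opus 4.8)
The plan is to deduce the statement from Proposition~\ref{prop-basiselementsII_f}: under the stated hypothesis, computing the global $V$-module $D_n^{(m)}(G)=\Ga(S,\DD_n^{(m)}(G))$ reduces to the local computation over $U\cap S$ already carried out there, because the relevant sheaves are all supported on the unit section. Concretely, recall that $\PP^n_{(m)}(G)$, and hence its $V$-linear dual $\DD_n^{(m)}(G)$, is supported in the image $e(S)$ of the unit section of $G$ (this is part of the setup, following 1.5.3 of~\cite{Be1}). The hypothesis furnishes a \emph{single} open $U\subseteq G$ containing $e(S)$ on which $\II$ admits a regular sequence of generators $t_1,\dots,t_N$; since $e(S)\subseteq U$, pulling back along $e$ identifies $U\cap S$ with $S$, and since $\DD_n^{(m)}(G)$ is supported in $e(S)\subseteq U$, the restriction map $\Ga(S,\DD_n^{(m)}(G))\to\Ga(U\cap S,\DD_n^{(m)}(G))$ is an isomorphism. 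Applying Proposition~\ref{prop-basiselementsII_f} gives at once that $D_n^{(m)}(G)$ is $V$-free on the family $\uxi^{\la\uk\ra}$ with $|\uk|\leq n$, and that these satisfy the relations~(ii).

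Equivalently, and this is how I would write it up directly: by the local description of $\PP^n_{(m)}(G)$ recalled in~\ref{prop-local}, over $U\cap S=S$ one has $\PP^n_{(m)}(G)\simeq\bigoplus_{|\uk|\leq n}\OO_S\,\ut^{\{\uk\}}$, so that $P^n_{(m)}(G)=\bigoplus_{|\uk|\leq n}V\,\ut^{\{\uk\}}$ is a free $V$-module of finite rank; dualizing over $V$ then gives $D_n^{(m)}(G)=\bigoplus_{|\uk|\leq n}V\,\uxi^{\la\uk\ra}$ with $\uxi^{\la\uk\ra}$ the dual basis. The relations~(ii) follow by dualizing the explicit formulas for the $\psi^n_{m,m'}$ and for the transition to $Dist_n(G)$ displayed above, together with $q_{\uk}!\,\ut^{\{\uk\}}=\ut^{\uk}$, exactly as in the proof of Proposition~\ref{prop-basiselementsII_f}.

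I do not expect any genuine difficulty. The only point deserving a word is that a single open $U$ as in the hypothesis can be chosen to contain the whole section $e(S)$ and not merely its closed point; for $V$ an AVDC (or a quotient of one) this is the Proposition established earlier in this subsection, the mechanism being that $\Ga(U,\OO_G)$ is formally smooth over the complete ring $V$, so that any affine open of $G$ meeting $e(\kappa)$ already contains $e(S)$, and on such a $U$ the ideal $\II$ is generated by a regular sequence of parameters of $\II_\kappa$. Granting that, the proposition is immediate.
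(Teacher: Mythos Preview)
Your proposal is correct and matches the paper's approach exactly: the paper does not give a separate proof but introduces this proposition with ``En particulier'', treating it as the immediate specialization of Proposition~\ref{prop-basiselementsII_f} obtained when a single open $U$ containing $e(S)$ exists, so that $U\cap S=S$ and global sections over $S$ coincide with the local description. Your write-up is, if anything, more detailed than the paper's own treatment.
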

On déduit de la dernière formule de (ii) que les applications $\Phi_{m,\infty}$ induisent
un isomorphisme linéaire
$$ \varinjlim_m \DD^{(m)}(G)\stackrel{\simeq}{\longrightarrow} \DD ist(G).$$
En passant aux sections globales, on trouve donc un isomorphisme linéaire
$$ \varinjlim_m D^{(m)}(G)\stackrel{\simeq}{\longrightarrow} D ist(G).$$

Il existe un morphisme canonique $\DD^{(0)}(G) \rig \DD ist(G)$.
Sur un ouvert $U$ sur lequel $\II$ admet une suite régulière de
paramètres $t_1,\ldots, t_N$, ce morphisme envoie $\uxi^{\la\uk\ra_{m}}$ sur $q_{\uk}^{(m)}!\uxi^{[\uk]}$.
Comme
$\LL ie(G)=\HH {om}_V(\II/\II^2,V)$ les éléments $\xi_l$ forment une
$V$-base de $\LL ie(G)$ sur $U\bigcap S$.  Il suit de la première formule de (ii) de \ref{prop-basiselementsII} que $\DD^{(0)}(G)\otimes K=
\DD ist(G)\otimes K$ et donc $\DD^{(m)}(G)\otimes K=\DD ist(G)\otimes K$ pour tout $m$. En passant aux sections
globales, on obtient donc $D^{(m)}(G)\otimes K=Dist(G)\otimes K$ pour tout $m$ et
 donc que $D^{(m)}(G)$ est une forme entière de $Dist(G)\otimes K= U(Lie(G)\otimes K)$.

Sur l'ouvert $U$, sur lequel $\II$ est muni d'une suite régulière de paramètres,
la base des $\uxi^{\la\uk'\ra}$ est duale de
celle des $\ut^{\{\uk'\}}$, et par définition de l'action de $\DD^{(m)}(G)$ sur $\OO_{G}$, on a la
\begin{prop} Soit $f\in \OO_{G}$, on a la formule de Taylor
\begin{gather}  \rho_m(f)=\sum_{|\uk|\leq n} \label{f-taylor1}
\uxi^{\la\uk\ra}(f)\ot \ut^{\{\uk\}}.
\end{gather}
\end{prop}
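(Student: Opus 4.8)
The plan is to read the formula off directly from two ingredients that are already in place just above the statement. The first is the local description \ref{prop-local}, which identifies $\PP^n_{(m)}(G)$ over $U\cap S$ with the free $\OO_S$-module on the basis $(\ut^{\{\uk\}})_{|\uk|\leq n}$, together with the fact (Proposition \ref{prop-basiselementsII_f}(i), recalled in the paragraph immediately preceding the statement) that $(\uxi^{\la\uk\ra})_{|\uk|\leq n}$ is exactly the dual basis inside $\DD^{(m)}_n(G)$ restricted to $U\cap S$. The second is the definition \ref{action_VG} of the action of $\DD^{(m)}(G)$ on $\OO_G$, namely $P(f)=P(\rho_m(f))$ where $\rho_m$ is the canonical map \ref{def_rho}.

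First I would work locally over the affine open $U\cap S$. Using \ref{prop-local} I write $\rho_m(f)=\sum_{|\uk|\leq n} a_{\uk}\,\ut^{\{\uk\}}$ with uniquely determined coefficients $a_{\uk}\in\OO_S(U\cap S)$; the content of the proposition is precisely that $a_{\uk}=\uxi^{\la\uk\ra}(f)$. Applying the ($\OO_S$-linear sheafification of the) functional $\uxi^{\la\uk\ra}$ to this expansion and using duality of the two bases, i.e.\ $\uxi^{\la\uk\ra}(\ut^{\{\uk'\}})=\delta_{\uk,\uk'}$, yields $\uxi^{\la\uk\ra}(\rho_m(f))=a_{\uk}$. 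On the other hand, by the very definition \ref{action_VG} of the action on $\OO_G$ one has $\uxi^{\la\uk\ra}(f)=\uxi^{\la\uk\ra}(\rho_m(f))$. Combining the two equalities gives $a_{\uk}=\uxi^{\la\uk\ra}(f)$, and re-substituting into the expansion of $\rho_m(f)$ produces the claimed Taylor formula.

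There is essentially no obstacle: the statement is purely formal once the dual bases and the action have been set up, being nothing more than the tautology that the coefficients of a vector in a basis are computed by pairing with the dual basis. The only small points deserving a word are that the assertion is local on $S$, so it suffices to argue over the open $U\cap S$ on which the regular sequence $t_1,\dots,t_N$ of generators of $\II$ is available, and that the pairing $\DD^{(m)}_n(G)\times\PP^n_{(m)}(G)\rig\OO_S$ used here is the $\OO_S$-linear extension of the $V$-linear duality recorded in Proposition \ref{prop-basiselementsII_f}, which is immediate from the constructions.
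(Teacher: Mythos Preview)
Your proof is correct and follows precisely the paper's own approach: the paper justifies the formula in a single sentence, noting that the $\uxi^{\la\uk\ra}$ form the dual basis to the $\ut^{\{\uk\}}$ and invoking the definition~\ref{action_VG} of the action of $\DD^{(m)}(G)$ on $\OO_G$. You have simply unpacked this tautology explicitly.
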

Dans le cas où $\II$ admet une suite régulière de paramètres sur un ouvert contenant $e(S)$, on dispose alors d'une
formule analogue pour $f\in V[G]$.

Nous donnons maintenant un énoncé de changement de base. Soient $V'$ un anneau de Dedekind,
qui est une $V$-algèbre, $S'=Spec\, V'$, $G_{S'}=G\times_S S'$. Alors on a la
\begin{prop} \label{prop-chbase} On dispose d'isomorphismes canoniques
\be \item[(i)] $\DD_n^{(m)}(G_{S'}) \simeq \DD_n^{(m)}(G)\ot_V V', \textrm{ resp. } \DD^{(m)}(G_{S'}) \simeq
\DD^{(m)}(G)\ot_V V'.$
\item[(ii)]
$D_n^{(m)}(G_{S'}) \simeq D_n^{(m)}(G)\ot_V V', \textrm{ resp. } D^{(m)}(G_{S'}) \simeq
D^{(m)}(G)\ot_V V'.$
\ee
\end{prop}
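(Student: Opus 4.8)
The plan is to reduce the whole statement to the base-change behaviour of the $m$-PD-envelopes $\PP^n_{(m)}(G)$, from which (i) follows by $\OO_S$-duality, (ii) by passing to global sections, and the inductive-limit versions by passing to $\varinjlim_n$. First I would record that $G_{S'}=G\times_S S'$ is again an affine smooth $S'$-group scheme of relative dimension $N$, that its unit section is $e\times_S S'$, and that the ideal of the latter is the pull-back of $\II$: indeed the sequence $0\rig I\rig V[G]\rig V\rig 0$ is split as $V$-modules (the unit of $G$ splits $\varep_G$), hence stays exact after $\ot_V V'$, so that $\II_{S'}$ identifies with $\II\ot_{\OO_S}\OO_{S'}$. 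Next, choosing locally on $G$ around $e(S)$ a regular sequence of parameters $t_1,\ldots,t_N$ of $\II$, I would note that, since the quotient $\OO_G/\II=\OO_S$ is $V$-flat, the $t_i$ remain a regular sequence of parameters of the pulled-back ideal after the base change $V\rig V'$ (a standard base-change property of regular sequences whose quotient is flat).

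Then by 1.5.3 of \cite{Be1} the $m$-PD-envelope $\PP^n_{(m)}(G)$ is, over the corresponding open of $S$, the free $\OO_S$-module on the $\ut^{\{\uk\}}$ with $|\uk|\leq n$, and likewise $\PP^n_{(m)}(G_{S'})$ is the free $\OO_{S'}$-module on the same symbols. The functoriality of the $m$-PD-envelope applied to the base-change morphism $G_{S'}\rig G$ provides a canonical $\OO_{S'}$-linear $m$-PD-morphism $\PP^n_{(m)}(G)\ot_{\OO_S}\OO_{S'}\rig \PP^n_{(m)}(G_{S'})$ sending $\ut^{\{\uk\}}\ot 1$ to $\ut^{\{\uk\}}$; the two local descriptions show it is an isomorphism. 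Taking global sections over the affine schemes $S,S'$ gives a canonical isomorphism $P^n_{(m)}(G)\ot_V V'\sta{\sim}{\rig}P^n_{(m)}(G_{S'})$, compatible with the surjections $P^{n+1}_{(m)}\trig P^n_{(m)}$, with $\psi_{m,m'}$ and with $\rho_m$. For (i) I would then apply $\HH om_{\OO_S}(-,\OO_S)$: since $\PP^n_{(m)}(G)$ is locally free of finite rank, the natural map $\HH om_{\OO_S}(\PP^n_{(m)}(G),\OO_S)\ot_{\OO_S}\OO_{S'}\rig \HH om_{\OO_{S'}}(\PP^n_{(m)}(G)\ot_{\OO_S}\OO_{S'},\OO_{S'})$ is an isomorphism, which combined with the previous step yields $\DD_n^{(m)}(G)\ot_V V'\sta{\sim}{\rig}\DD_n^{(m)}(G_{S'})$; for (ii) I would use that $P^n_{(m)}(G)=\Ga(S,\PP^n_{(m)}(G))$ is a finitely generated projective $V$-module, for which $Hom_V(-,V)$ commutes with the base change $V\rig V'$. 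Passing to $\varinjlim_n$, which commutes with $\ot_V V'$, gives the statements for $\DD^{(m)}(G)$ and $D^{(m)}(G)$.

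Canonicity of all these isomorphisms is built in: they are induced by the functoriality of the construction — the $m$-PD-envelope, its $\OO_S$-dual, and the inductive limit — applied to $G_{S'}\rig G$, and hence are automatically compatible with the transition maps $\Phi_{m,m'}$, with $\Phi_{m,\infty}$ and with the actions on $V[G]$ and $V[G_{S'}]$. I expect the only genuinely delicate point to be the preservation of the regular-sequence (equivalently, free-basis) description of $\PP^n_{(m)}(G)$ under $V\rig V'$: this is exactly what makes the comparison map an isomorphism without having to assume $V'$ flat over $V$, and it rests only on the flatness of $\OO_G/\II$ over $V$. Once that is in hand, the rest is formal.
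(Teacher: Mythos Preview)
Your proof is correct and follows essentially the same approach as the paper's: invoke 1.5.3 of \cite{Be1} for the base-change behaviour of the $m$-PD-envelopes $\PP^n_{(m)}(G)$ (which are locally free of finite rank on $S$), then dualise to get (i), take global sections for (ii), and pass to $\varinjlim_n$. The paper's own proof is one short paragraph that simply cites 1.5.3 of \cite{Be1} for both the local freeness and the base-change compatibility of $\PP^n_{(m)}(G)$, whereas you unpack the mechanism behind that citation --- namely, that the regular sequence $(t_1,\ldots,t_N)$ stays regular after an arbitrary base change $V\rig V'$ because both $\OO_G$ and $\OO_G/\II\simeq\OO_S$ are $V$-flat (so the Koszul complex on the $t_i$ is a flat resolution of $\OO_S$ and remains exact after $\ot_V V'$), and hence the free-basis description \ref{prop-local} persists over $S'$.
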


\begin{proof} Le (ii) s'obtient par passage aux sections globales à partir de (i). Montrons donc (i).
Sous nos hypothèses, d'après 1.5.3 de \cite{Be1}, la formation des algèbres
$\PP^n_{(m)}(G)$ commute aux changements de base $S$ et sont des $V$-modules localement libres de rang
fini. En passant au dual on en déduit la même chose pour les algèbres $\DD_n^{(m)}(G)$ et
donc aussi pour les algèbres $\DD^{(m)}(G)$ par passage à la limite inductive.
\end{proof}
\begin{prop} \label{prop-commutativite}
\be \item[(i)] Le faisceau $\DD^{(m)}(G)$ est un faisceau d'algèbres filtrés par les modules $\DD_n^{(m)}(G)$.
L'algèbre graduée associée à cette filtration est commutative.
 \item[(ii)] Le module $D^{(m)}(G)$ est une $V$-algèbre filtrée par les sous-modules $D_n^{(m)}(G)$.
L'algèbre graduée associée à cette filtration est commutative.
\ee
\end{prop}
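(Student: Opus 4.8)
Le plan est de munir $\DD^{(m)}(G)$ d'une multiplication en dualisant la loi de groupe, de la même manière que le produit sur $\Dm_X$ de~\ref{operateur-diff} provient de la diagonale $X\hrig X\times X$. On vérifie d'abord que $\mu^{\sharp}$ envoie l'idéal d'augmentation $\II$ de $\OO_G$ dans l'idéal d'augmentation $\JJ:=\ker\varep_{G\times G}$ de $\OO_{G\times G}=\OO_G\ot_{\OO_S}\OO_G$, ce qui résulte des axiomes de counité~\ref{comodule2} (i.e. du fait que $e$ est une unité bilatère pour $\mu$). La propriété universelle des $m$-PD-enveloppes fournit alors un morphisme de faisceaux de $m$-PD-algèbres filtrées $\PP_{(m)}(G)\rig\PP_{(m)}(G\times G)$. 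Comme $Lie(G\times G)=Lie(G)^{\oplus 2}$, l'argument de~\ref{parties_principales} (et la proposition 1.2.2 de~\cite{Hu1}) donne une identification canonique de $m$-PD-algèbres $\PP^{\bullet}_{(m)}(G\times G)\simeq\PP^{\bullet}_{(m)}(G)\ot_{\OO_S}\PP^{\bullet}_{(m)}(G)$ compatible aux filtrations, sous laquelle $\JJ^{\{k\}}$ correspond à $\sum_{i+j\geq k}\II^{\{i\}}\ot\II^{\{j\}}$. Pour $k=n+n'+1$ cette somme est contenue dans $\II^{\{n+1\}}\ot\PP_{(m)}(G)+\PP_{(m)}(G)\ot\II^{\{n'+1\}}$, de sorte que la composée de $\mu^{\sharp}$ avec la troncation se descend en un $m$-PD-morphisme de faisceaux
$$\delta^{n,n'}\,\colon\,\PP^{n+n'}_{(m)}(G)\rig\PP^{n}_{(m)}(G)\ot_{\OO_S}\PP^{n'}_{(m)}(G).$$
Pour $P\in\DD_n^{(m)}(G)$ et $P'\in\DD_{n'}^{(m)}(G)$ on pose $PP'=(P\ot P')\circ\delta^{n,n'}\in\DD_{n+n'}^{(m)}(G)$, en identifiant $\HH om_{\OO_S}(\PP^{n}_{(m)}(G),\OO_S)\ot\HH om_{\OO_S}(\PP^{n'}_{(m)}(G),\OO_S)$ à $\HH om_{\OO_S}(\PP^{n}_{(m)}(G)\ot\PP^{n'}_{(m)}(G),\OO_S)$, ce qui est licite puisque les $\PP^{n}_{(m)}(G)$ sont localement libres de rang fini sur $S$. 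L'associativité résulte de la coassociativité des $\delta^{n,n'}$ (conséquence de l'associativité de $\mu$ et de l'unicité dans la propriété universelle), et l'unité est l'image de $id\in\HH om_{\OO_S}(\PP^{0}_{(m)}(G),\OO_S)$ (la distribution de Dirac en $e$), toujours en vertu de~\ref{comodule2}. En passant à la limite inductive sur $n$ on obtient le faisceau d'algèbres $\DD^{(m)}(G)$ ; par construction $\DD_n^{(m)}(G)\cdot\DD_{n'}^{(m)}(G)\subset\DD_{n+n'}^{(m)}(G)$, ce qui montre que $\DD^{(m)}(G)$ est filtré par les $\DD_n^{(m)}(G)$ comme dans (i).

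Pour la commutativité de $\gr\pg\DD^{(m)}(G)$, le point décisif est le calcul de $\gr\pg\delta^{n,n'}$. L'analogue, pour l'idéal d'augmentation, de l'isomorphisme~\ref{isomgrP} (cf.~\ref{faisceauxpp_grad}) donne $\gr\pg\PP_{(m)}(G)\simeq\Ga_{S,(m)}(\II/\II^2)$ ; en dualisant, de façon compatible aux bases locales de~\ref{prop-basiselementsII_f}, on en tire un isomorphisme $\gr\pg\DD^{(m)}(G)\simeq\Sg^{(m)}_S(\LL ie(G))$, algèbre qui est commutative par sa construction même (cf.~\ref{faisceauxpp_grad}). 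Il suffit donc de vérifier que le produit induit sur $\gr\pg\DD^{(m)}(G)$ par la convolution coïncide avec celui de l'algèbre symétrique, autrement dit que $\gr\pg\delta$ est la comultiplication cocommutative de la $m$-PD-algèbre à puissances divisées $\Ga_{S,(m)}(\II/\II^2)$. Comme cette algèbre graduée est engendrée comme $m$-PD-algèbre par sa composante de degré $1$ (à savoir $\II/\II^2$), et que $\gr\pg\delta$ est un $m$-PD-morphisme, il suffit de calculer $\gr_1\delta$ : sur $\II/\II^2$, $\mu^{\sharp}$ induit $x\mapsto x\ot 1+1\ot x$ à valeurs dans $\gr_1\left(\PP_{(m)}(G)\ot\PP_{(m)}(G)\right)=(\II/\II^2)\ot V\oplus V\ot(\II/\II^2)$, ce qui est l'énoncé classique selon lequel $Lie(G)$ est formé d'éléments primitifs et découle à nouveau des axiomes de counité. Étant un $m$-PD-morphisme engendré en degré $1$ où il est symétrique, $\gr\pg\delta$ est invariant par la transposition $\tau$ des deux facteurs (deux $m$-PD-morphismes gradués de $\Ga_{S,(m)}(\II/\II^2)$ coïncidant en degré $1$ sont égaux) ; comme $(P\ot P')\circ\tau$ n'est autre que $P'\ot P$, cette $\tau$-invariance entraîne aussitôt que le produit induit sur $\gr\pg\DD^{(m)}(G)$ est commutatif.

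Enfin, (ii) se déduit de (i) par passage aux sections globales sur le schéma affine $S$ : on sait déjà que $D_n^{(m)}(G)=\Ga(S,\DD_n^{(m)}(G))$, et comme les $\DD_n^{(m)}(G)$ sont localement libres de rang fini l'application naturelle $\Ga(S,\DD_n^{(m)}(G))\ot_V\Ga(S,\DD_{n'}^{(m)}(G))\rig\Ga(S,\DD_n^{(m)}(G)\ot_{\OO_S}\DD_{n'}^{(m)}(G))$ est un isomorphisme, de sorte que la multiplication filtrée et la commutativité du gradué descendent à l'identique. Le seul travail réel est la compatibilité de la formation des $m$-PD-enveloppes avec le produit $G\times G$, c'est-à-dire l'identification $\PP^{\bullet}_{(m)}(G\times G)\simeq\PP^{\bullet}_{(m)}(G)\ot_{\OO_S}\PP^{\bullet}_{(m)}(G)$ respectant les $m$-PD-filtrations, jointe à la vérification que $\gr\pg\delta$ est la comultiplication primitive ; tout le reste (associativité, unité, passage aux sections globales) est formel. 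La subtilité, dans ces deux points, est que la $m$-PD-structure est authentiquement non linéaire (elle met en jeu les opérations $\gamma_k$ et les coefficients de Berthelot de~\ref{subsubsection-formules}), de sorte qu'il faut s'assurer que $\delta^{n,n'}$ est bien un $m$-PD-morphisme et que sa réduction modulo la filtration par l'ordre ne voit que la partie linéaire et primitive ; pour cela les clauses d'unicité de la propriété universelle des $m$-PD-enveloppes, appliquées localement sur un système régulier de paramètres de $\II$ au voisinage de $e(S)$, suffisent.
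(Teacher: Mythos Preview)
Your construction of the multiplication is the paper's: dualize $\mu^{\sharp}$ via the universal property of $m$-PD-enveloppes to obtain $\delta^{n,n'}$, then set $PP'=(P\ot P')\circ\delta^{n,n'}$; the only cosmetic difference is that the paper applies the universal property directly to the composite $\OO_G\sta{\mu^{\sharp}}{\rig}\OO_G\ot\OO_G\sta{\rho_m\ot\rho_m}{\rig}\PP^n_{(m)}(G)\ot\PP^{n'}_{(m)}(G)$ rather than passing through $\PP_{(m)}(G\times G)$, and then checks by a local computation that $\delta^{n,n'}$ kills $\II^{\{n+n'+1\}}$. The genuine divergence is in the commutativity of the graded. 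The paper works locally with a regular sequence $t_1,\ldots,t_N$ of $\II$, uses $\mu^{\sharp}(t_i)=1\ot t_i+t_i\ot1+\sum_sa_{i,s}\ot b_{i,s}$ with $a_{i,s},b_{i,s}\in\II$ to obtain the explicit formula
$\delta^{n,n'}(\ut^{\{\uk\}})=\sum_{\ual\leq\uk}\crofrac{\uk}{\ual}\ut^{\{\ual\}}\ot\ut^{\{\uk-\ual\}}+\gamma_{\uk}$ with $\gamma_{\uk}\in\sum_{s+t\geq|\uk|+1}\II^{\{s\}}\ot\II^{\{t\}}$, and checks by hand that $(uv-vu)(\ut^{\{\uk\}})=0$ for $|\uk|=n+n'$: the leading sum is a symmetric tensor and $\gamma_{\uk}$ is killed by both $u\ot v$ and $v\ot u$. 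Your route is more conceptual --- $\gr\pg\delta$ is a graded $m$-PD-morphism out of an algebra $m$-PD-generated in degree $1$, hence determined by $\gr_1\delta$, which is the primitive map $x\mapsto x\ot1+1\ot x$ and therefore $\tau$-invariant --- and it delivers the identification $\gr\pg\DD^{(m)}(G)\simeq\Sg^{(m)}_S(\LL ie(G))$ of~\ref{grDm_f} essentially for free. Conversely, the paper's explicit formula for $\delta^{n,n'}(\ut^{\{\uk\}})$ is precisely what produces the product rule $\uxi^{\la\uk'\ra}\uxi^{\la\uk''\ra}=\crofrac{\uk'+\uk''}{\uk'}\uxi^{\la\uk'+\uk''\ra}+(\text{termes d'ordre}<|\uk'|+|\uk''|)$ needed in (iii) of~\ref{grDm_f} and in~\ref{compat-Ox}, which your argument does not make visible without further work.
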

\begin{proof} Le (ii) s'obtient à partir du (i) par passage aux sections globales. Il nous suffit donc de montrer (i).

L'application $\mu^{\sharp}$ induit une application $\OO_{G}\rig
\OO_{G}\ot_V \OO_{G}$, qui envoie l'idéal $\II$
dans $\II\ot \OO_{G}+ \OO_{G}\ot_V \II$
d'après I 7.7 de \cite{Jantzen}. D'autre part, en procédant comme en
2.1.3 de \cite{Be1}, on voit qu'il existe une unique $m$-PD-structure sur
$\PP^n_{(m)}(G)\ot \PP^{n'}_{(m)}(G)$ telle que $\II\ot \PP^{n'}_{(m)}(G)+
\PP^n_{(m)}\ot \II$ soit un $m$-PD-idéal. On considère alors l'application
$\delta\circ\mu^{\sharp}$:
$$\OO_{G}\sta{\mu^{\sharp}}{\rig} \OO_{G}\ot_V\OO_{G} \sta{\delta}{\rig }
\PP^n_{(m)}(G)\ot_V \PP^{n'}_{(m)}(G),$$
où $\delta$ est l'application canonique $\rho_m\ot\rho_{m}$:
$\OO_{G}\ot_V \OO_{G} \rig \PP^n_{(m)}(G)\ot_V \PP^{n'}_{(m)}(G)$.

Comme $\mu^{\sharp}$ applique tout élement de $\II$ dans un $m$-PD-idéal, cette application se factorise d'une
unique façon par un $m$-PD-morphisme
\begin{gather}\label{def-delta_f}
\delta^{n,n'} \,\colon \,
\PP_{(m)}(G)\sta{\delta^{n,n'}}{\rig}\PP^n_{(m)}(G)\ot \PP^{n'}_{(m)}(G),
\end{gather}
qu'on notera éventuellement $\delta^{n,n'}_{(m)}$ lorsqu'on voudra préciser le niveau
$m$ de ce morphisme.

On notera aussi $\delta^{n,n'}$ l'application induite par $\delta$ sur les sections globales
sur $S$ des faisceaux précédents
\begin{gather}\label{def-delta}
\delta^{n,n'} \,\colon \,
P_{(m)}(G)\sta{\delta^{n,n'}}{\rig}P^n_{(m)}(G)\ot P^{n'}_{(m)}(G).
\end{gather}

Soient $(u,v)\in D^{(m)}_n(G)\times D^{(m)}_{n'}(G)$, on définit
$u\cdot v$ comme la composée
$$ u\cdot v\colon \, \PP_{(m)}(G)\sta{\delta^{n,n'}}{\rig}
\PP^n_{(m)}(G)\ot \PP^{n'}_{(m)}(G)\sta{u\ot v}{\rig}V.$$

Il nous reste à montrer que $u\cdot v$ s'annule sur $\II^{\{n+n'+1\}}$ et définit donc une
application : $\PP^{n+n'}_{(m)}(G)\rig V$. La vérification est locale et
nous nous plaçons pour la démonstration sur un ouvert $U$ sur lequel l'idéal
$\II$ est muni d'une suite régulière de générateurs $t_1,\ldots,t_N$ en adoptant les notations de
~\ref{prop-basiselementsII_f}.
Il nous faut montrer que
$\delta^{n,n'}$ s'annule sur
les éléments $\ut^{\{\uk\}}=t_1^{\{k_1\}}\cdots t_r^{\{k_r\}}$ avec $|\uk|\geq n+n'+1$, en utilisant
les notations ~\ref{descript_loc_pn}, et passe au quotient en
une application: $\PP^{n+n'}_{(m)}(G)\rig \PP^{n}_{(m)}(G)\ot \PP^{n'}_{(m)}(G)$. Toujours
d'après I 7.7 de \cite{Jantzen}, on peut écrire
$$\mu^{\sharp}(t_i)=1\ot t_i+t_i \ot 1 +\sum_{s=1}^{h_i} a_{i,s} \ot b_{i,s},$$
avec $a_{i,s}$ et $b_{i,s}$ des éléments de $\II$. Dans le cas du groupe additif
${\bf G}_a$, on a $\mu^{\sharp}(t_i)=1\ot t_i+t_i \ot 1$.

En appliquant les formules~\ref{subsubsection-formules}, on trouve
%\begin{multline*}
$$ \delta^{n,n'}(t_i^{\{k_i\}})=\sum_{\alpha_{i}=0}^{k_i}
\crofrac{k_i}{\alpha_{i}} t_i^{\{\alpha_{i}\}}\ot t_i^{\{k_i-\alpha_{i}\}} +
  \gamma_{k_i} \quad {\rm avec} \quad \gamma_{k_i}\in \sum_{s+t\geq k_i+1}
\II^{\{s\}}\ot \II^{\{t\}}.$$
Dans cette somme, les termes correspondant à des $\alpha_i\geq n+1$
ou tel que $k_i-\alpha_i\geq n'+1$ ou à des éléments $\gamma_{k_i}$ de
$\II^{\{s\}}\ot \II^{\{t\}}$ avec $s\geq n+1 $ ou $t\geq n'+1$ sont en
fait nuls.
%\end{multline*}
De plus, la première somme est un tenseur symétrique.
En effectuant le produit, cela donne
$$\delta^{n,n'}(\ut^{\{\uk\}})= \sum_{\ual\leq \uk}\crofrac{\uk}{\ual}
\ut^{\{\ual\}}\ot \ut^{\{\uk-\ual\}} +\gamma_{\uk}\quad {\rm avec} \quad \gamma_{\uk}\in
\sum_{s+t\geq |\uk|+1}
\II^{\{s\}}\ot \II^{\{t\}}.$$
Dans cette somme, les termes correspondant à des $|\ual|\geq n+1$
ou tels que $|\uk-\ual|\geq n'+1$ ou à des éléments $\gamma_{\uk}$ de
$\II^{\{s\}}\ot \II^{\{t\}}$ avec ($s\geq n+1 $ ou $t\geq n'+1$) sont
nuls.

Montrons maintenant que $uv$ est d'ordre $\leq n+n'$. Il suffit pour cela de montrer
que $uv(\ut^{\{\uk\}})=(u \ot v)\circ \delta^{n,n'}(\ut^{\{\uk\}})=0$ pour tout $\uk$ tel que $|\uk|=n+n'+1$.
Reprenons la formule précédente. L'élément $\delta^{n,n'}(\ut^{\{\uk\}})$ s'écrit comme
somme d'éléments des idéaux $\II^{\{s\}}\ot \II^{\{t\}}$ avec
$s+t\geq n+n'+1$, i.e. $s\geq n+1$ ou $t\geq n'+1$. Le résultat est donc bien nul puisque
$u$ s'annule sur $\II^{\{n+1\}}$ et $v$ sur $\II^{\{n'+1\}}$.

Montrons que $uv-vu$ est d'ordre $\leq n+n'-1$. Il faut montrer que le résultat
du calcul suivant est nul pour un $\uk$ fixé tel que $|\uk|=n+n',$
\begin{multline}(uv-vu)(\ut^{\{\uk\}})=(u \ot v)\circ \delta^{n,n'}(\ut^{\{\uk\}})-
(v \ot u)\circ \delta^{n',n}(\ut^{\{\uk\}})\\
=(u \ot v)\left( \sum_{\ual\leq \uk}\crofrac{\uk}{\ual}
\ut^{\{\ual\}}\ot \ut^{\{\uk-\ual\}} +\gamma_{\uk}\right) -
(v \ot u) \left( \sum_{\ual\leq \uk}\crofrac{\uk}{\ual}
\ut^{\{\ual\}}\ot \ut^{\{\uk-\ual\}} +\gamma'_{\uk} \right).
\end{multline}
On remarque que tous les éléments $(u\ot v)(\gamma_{\uk})$, $=(u\ot v)(\gamma_{\uk'})$
sont nuls pour tous les
multi-indices $k$ et $k'$ intervenant dans les sommes précédentes car chaque élément
$\gamma_{\uk}$ ou $\gamma_{\uk'}$ est dans un idéal du type
$\II^{\{s\}}\ot\II^{\{t\}}$, avec $s\geq n+1$ ou $t\geq n'+1$. Il vient
donc finalement
$$  (uv-vu)(\ut^{\{\uk\}})=\sum_{\ual\leq \uk}\crofrac{\uk}{\ual}u\left(\ut^{\{\uk\}}\right)
v\left(\ut^{\{\uk\}-\{\ual\}}\right)-v\left(\ut^{\{\ual\}}\right)u\left(\ut^{\{\uk\}-\{\ual\}}\right)=0.$$

L'associativité du produit provient, comme dans le cas classique, de l'associativité de
la loi de groupe et de la propriété universelle des algèbres à puissances divisées. \end{proof}
\begin{prop} \be \item[(i)]  L'application $\Phi_{m,m'}: \DD^{(m)}(G)\rig \DD^{(m')}(G)$ est un homomorphisme
de faisceaux d'algèbres filtrées pour $m,m'\in\Ne\cup\{\infty\}$. De plus, on a l'égalité
$\varinjlim_m \DD^{(m)}(G) = \DD ist(G)$ comme
faisceaux d'algèbres filtrées.
\item[(ii)]
L'application $\Phi_{m,m'}: D^{(m)}(G)\rig D^{(m')}(G)$ est un homomorphisme d'algèbres filtrées pour
$m,m'\in\Ne\cup\{\infty\}$. De plus, on a l'égalité $\varinjlim_m D^{(m)}(G) = Dist(G)$ comme algèbres filtrées.
\ee
\end{prop}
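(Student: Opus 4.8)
The plan is to reduce (ii) to (i) by applying $\Ga(S,-)$: as $S$ is affine, $D^{(m)}_n(G)=\Ga(S,\DD^{(m)}_n(G))$, the functor $\Ga(S,-)$ is exact, commutes with the finite tensor products $\PP^n_{(m)}(G)\ot\PP^{n'}_{(m)}(G)$ occurring in the definition of the product, and with the colimits over $n$ and over $m$; hence every assertion of (i) descends to global sections. I therefore work with faisceaux over $S$ throughout.

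The heart of the matter is the compatibility of the transition maps $\psi_{m,m'}$ of~\ref{psi_f} with the comultiplications $\delta^{n,n'}_{(m)}$ of~\ref{def-delta_f}: for $m\leq m'$ in $\Ne\cup\{\infty\}$ and all $n,n'$, I claim
\[ \delta^{n,n'}_{(m)}\circ\psi_{m,m'}=(\psi^n_{m,m'}\ot\psi^{n'}_{m,m'})\circ\delta^{n,n'}_{(m')} \]
as morphisms $\PP_{(m')}(G)\rig\PP^n_{(m)}(G)\ot\PP^{n'}_{(m)}(G)$. I would prove this exactly as $\delta^{n,n'}$ itself was constructed in the proof of~\ref{prop-commutativite}, by the universal property of the $m'$-PD-enveloppe $\PP_{(m')}(G)$. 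Both sides are $m'$-PD-morphisms into $\PP^n_{(m)}(G)\ot\PP^{n'}_{(m)}(G)$ — which, carrying an $m$-PD-structure, is a fortiori an $m'$-PD-algebra (this is precisely what makes the maps $\psi_{m,m'}$ exist) — and precomposing either side with $\rho_{m'}\colon\OO_G\rig\PP_{(m')}(G)$ of~\ref{def_rho} yields the same map $\OO_G\rig\PP^n_{(m)}(G)\ot\PP^{n'}_{(m)}(G)$: indeed $\psi_{m,m'}\circ\rho_{m'}=\rho_m$, so both composites equal the canonical map $\delta\circ\mu^{\sharp}$ (with $\delta=\rho_m\ot\rho_m$) that $\delta^{n,n'}_{(m)}$ factorises. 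Uniqueness in the universal property then forces equality. Alternatively, one verifies the identity directly on a local system of regular parameters $t_1,\ldots,t_N$ of $\II$ near $e(S)$, combining $\psi^n_{m,m'}(\ut^{\{\uk\}_{(m')}})=\frac{q_{\uk}^{(m)}!}{q_{\uk}^{(m')}!}\ut^{\{\uk\}_{(m)}}$ with the expression for $\delta^{n,n'}(\ut^{\{\uk\}})$ obtained in~\ref{prop-commutativite}. Keeping track of the two levels $m\leq m'$ together with the order truncations is the one delicate — though purely formal — point; this is where I expect the main obstacle.

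Granting the compatibility, (i) follows. By construction $\Phi^n_{m,m'}$ is the $V$-dual of $\psi^n_{m,m'}\colon\PP^n_{(m')}(G)\rig\PP^n_{(m)}(G)$, so $\Phi_{m,m'}$ carries $\DD^{(m)}_n(G)$ into $\DD^{(m')}_n(G)$, is filtered, and is unital since $\psi^0_{m,m'}=\mathrm{id}$. For multiplicativity, take $u\in\DD^{(m)}_n(G)$ and $v\in\DD^{(m)}_{n'}(G)$; unwinding the definition of the product in~\ref{prop-commutativite} gives $\Phi_{m,m'}(u\cdot v)=(u\ot v)\circ\delta^{n,n'}_{(m)}\circ\psi_{m,m'}$ while $\Phi_{m,m'}(u)\cdot\Phi_{m,m'}(v)=(u\ot v)\circ(\psi^n_{m,m'}\ot\psi^{n'}_{m,m'})\circ\delta^{n,n'}_{(m')}$, and these coincide by the displayed identity. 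Hence each $\Phi_{m,m'}$ ($m\leq m'$ in $\Ne\cup\{\infty\}$) is a morphism of filtered faisceaux of algebras, and the relations $\Phi_{m',m''}\circ\Phi_{m,m'}=\Phi_{m,m''}$ make $(\DD^{(m)}(G))_m$ a direct system of such.

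For the final identification, the case $m'=\infty$ — where $\PP^n_{(\infty)}(G)=\OO_G/\II^{n+1}$ and $\psi^n_{m,\infty}$ is the canonical map $\OO_G/\II^{n+1}\rig\PP^n_{(m)}(G)$, well defined since $\II^{n+1}\subseteq\II^{\{n+1\}}$ — shows likewise that $\Phi_{m,\infty}\colon\DD^{(m)}(G)\rig\DD ist(G)$ is a morphism of filtered algebras, compatible with the $\Phi_{m,m'}$. It therefore induces a morphism of filtered faisceaux of algebras $\varinjlim_m\DD^{(m)}(G)\rig\DD ist(G)$. By the last relation of~\ref{prop-basiselementsII_f}(ii) — whereby $\uxi^{\la\uk\ra_m}=\uxi^{[\uk]}$ on a local system of regular parameters as soon as each $k_i\leq p^m$ — on each filtration step $\varinjlim_m\DD^{(m)}_n(G)\rig\DD ist_n(G)$ this morphism is the linear isomorphism already recorded in the text. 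A morphism of exhaustively filtered algebras that is bijective on each filtration step is an isomorphism of filtered algebras; thus $\varinjlim_m\DD^{(m)}(G)=\DD ist(G)$ as faisceaux of filtered algebras, and (ii) follows by passing to global sections.
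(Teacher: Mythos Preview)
Your proof is correct and follows essentially the same route as the paper: reduce (ii) to (i) by taking global sections, establish the compatibility square $\delta^{n,n'}_{(m)}\circ\psi_{m,m'}=(\psi^n_{m,m'}\ot\psi^{n'}_{m,m'})\circ\delta^{n,n'}_{(m')}$ via the universal property of $m'$-PD-envelopes, and deduce the rest formally. The paper's own proof is terser --- it records the commutative square and then simply says ``On en tire formellement la proposition'' --- whereas you spell out the multiplicativity computation and the colimit identification explicitly, but the substance is identical.
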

\begin{proof}
Le (ii) provient de (i) par passage aux sections globales. Montrons donc (i).
Par la propriété universelle des $m$-PD-enveloppes d'un idéal, il existe une unique flèche
$\PP_{(m')}(G)\rig \PP_{(m)}(G)$ pour $m'\geq m$ qui rend commutatif le diagramme
suivant
$$\xymatrix{ \PP_{(m')}(G)\ar@{->}[d]\ar@{->}[r]^(.4){\delta^{n,n'}_{(m')}} & \PP^n_{(m')}(G)\ot
\PP^{n'}_{(m')}(G)\ar@{->}[d]\\
\PP_{(m)}(G)\ar@{->}[r]^(.4){\delta^{n,n'}_{(m)}} & \PP^n_{(m)}(G)\ot \PP^{n'}_{(m)}(G).}$$
On en tire formellement la proposition.
\end{proof}
\vspace{+2mm}
Dans l'énoncé suivant nous donnons quelques propriétés de fonctorialité de $\DD^{(m)}$ et de
$D^{(m)}(\cdot)$.
\begin{prop}\label{prop-functoriality} Soient $H,G$ deux schémas en groupes sur $S$.

\be

\item[(i)] Soit $f: H\rig G$ un morphisme de $S$-schémas qui est compatible avec les
sections unité $\varepsilon_H,\varepsilon_G$. Il induit un homomorphisme $\DD^{(m)}(f)$
(resp. $D^{(m)}(f)$) de
$V$-modules filtrés $\DD^{(m)}(H)\rig \DD^{(m)}(G)$ (resp. $D^{(m)}(H)\rig D^{(m)}(G)$). Si $f$ est un morphisme de schémas en
groupes, alors $\DD^{(m)}(f)$ (resp. $D^{(m)}(f)$) est multiplicatif.
    \item[(ii)] Il existe un isomorphisme de $V$-algèbres filtrées
$\DD^{(m)}(H)\otimes_V \DD^{(m)}(G)\simeq \DD^{(m)}(H\times G)$ (resp.
$D^{(m)}(H)\otimes_V D^{(m)}(G)\simeq D^{(m)}(H\times G)$) où le terme de gauche est muni
de la filtration produit tensoriel.
\ee

\end{prop}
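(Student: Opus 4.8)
The plan is to reduce both statements to the corresponding statements for the sheaves $\PP^n_{(m)}$ of $m$-PD-neighbourhoods of the unit section, then pass to $V$-linear duals, and finally to global sections on the affine scheme $S$. I will use throughout that $S$ is affine, so that it suffices to treat the faisceau versions, and that the $\PP^n_{(m)}$ are locally free $\OO_S$-modules of finite rank, so that $V$-linear duality is exact and compatible with tensor products.

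Pour (i), on part de ce que la compatibilité de $f$ aux sections unité ($f\circ e_H=e_G$) entraîne que $f^{\sharp}\colon\OO_G\rig\OO_H$ envoie l'idéal $\II_G=\ker\varep_G$ dans $\II_H=\ker\varep_H$. Par la propriété universelle des $m$-PD-enveloppes (cf.~1.4 de \cite{Be1}), $f^{\sharp}$ s'étend alors de façon unique en un $m$-PD-morphisme de faisceaux d'algèbres filtrées $\PP_{(m)}(G)\rig\PP_{(m)}(H)$, d'où des morphismes $\PP^n_{(m)}(G)\rig\PP^n_{(m)}(H)$ pour tout $n$. En dualisant et en passant à la limite inductive sur $n$, on obtient un homomorphisme de $V$-modules filtrés $\DD^{(m)}(f)\colon\DD^{(m)}(H)\rig\DD^{(m)}(G)$ (et, sur les sections globales, $D^{(m)}(f)$). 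Si de plus $f$ est un morphisme de schémas en groupes, alors $\mu_G\circ(f\times f)=f\circ\mu_H$ donne $(f^{\sharp}\ot f^{\sharp})\circ\mu_G^{\sharp}=\mu_H^{\sharp}\circ f^{\sharp}$; en combinant ceci avec l'unicité dans la propriété universelle et la construction de $\delta^{n,n'}$ en~\ref{def-delta_f}, on voit que le carré
$$\xymatrix{\PP_{(m)}(G)\ar@{->}[d]\ar@{->}[r]^(.37){\delta^{n,n'}_{G}}&\PP^n_{(m)}(G)\ot\PP^{n'}_{(m)}(G)\ar@{->}[d]\\ \PP_{(m)}(H)\ar@{->}[r]^(.37){\delta^{n,n'}_{H}}&\PP^n_{(m)}(H)\ot\PP^{n'}_{(m)}(H)}$$
commute, les flèches verticales étant celles construites ci-dessus; en dualisant et en se rappelant la définition du produit de convolution donnée dans la preuve de~\ref{prop-commutativite}, on en déduit que $\DD^{(m)}(f)$ est multiplicatif, la compatibilité aux unités résultant de ce que $\PP^0_{(m)}(G)\rig\PP^0_{(m)}(H)$ est l'identification canonique.

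Pour (ii), via $\OO_{H\times G}\simeq\OO_H\ot_V\OO_G$ la section unité de $H\times G$ est $e_H\times e_G$ et l'idéal d'augmentation de $H\times G$ s'identifie à $\II_H\ot\OO_G+\OO_H\ot\II_G$. L'étape clé est l'isomorphisme canonique de $m$-PD-algèbres filtrées $\PP_{(m)}(H\times G)\simeq\PP_{(m)}(H)\ot_V\PP_{(m)}(G)$, le terme de droite étant muni de la $m$-PD-structure produit et de la filtration produit tensoriel; on le démontrera soit par la propriété universelle, soit localement, en constatant que si $t_1,\dots,t_N$ et $s_1,\dots,s_M$ sont des suites régulières de générateurs de $\II_H$ et $\II_G$ au voisinage des unités respectives, alors $t_1\ot 1,\dots,1\ot s_M$ en est une pour $\II_{H\times G}$, la $m$-PD-enveloppe admettant pour base les $\ut^{\{\uk\}}\us^{\{\ul\}}$, et que l'idéal $\II_{H\times G}^{\{n\}}$ se décompose en conséquence, d'où $\PP^n_{(m)}(H\times G)\simeq\bigoplus_{p+q=n}\PP^p_{(m)}(H)\ot_V\PP^q_{(m)}(G)$. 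En dualisant et en passant à la limite inductive, on obtient l'isomorphisme de $V$-modules filtrés $\DD^{(m)}(H\times G)\simeq\DD^{(m)}(H)\ot_V\DD^{(m)}(G)$ pour la filtration produit tensoriel. Enfin, la loi de groupe de $H\times G$ étant composante par composante, $\mu_{H\times G}^{\sharp}$ se décompose, via l'identification précédente et une permutation des facteurs tensoriels, en $\mu_H^{\sharp}\ot\mu_G^{\sharp}$; en reportant ceci dans la définition de $\delta^{n,n'}$, on voit que l'isomorphisme ci-dessus transporte le produit de convolution sur $\DD^{(m)}(H\times G)$ sur le produit tensoriel des produits de convolution sur $\DD^{(m)}(H)$ et $\DD^{(m)}(G)$, ce qui termine la preuve.

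L'obstacle principal sera double: d'une part donner une justification propre de l'isomorphisme de $m$-PD-algèbres $\PP_{(m)}(H\times G)\simeq\PP_{(m)}(H)\ot_V\PP_{(m)}(G)$ — y compris le fait que les $m$-PD-structures et les filtrations coïncident bien — et d'autre part suivre soigneusement la permutation des facteurs tensoriels dans la vérification de la multiplicativité au (ii); tout le reste relève de la dualité et d'arguments de limite formels.
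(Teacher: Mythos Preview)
Your proposal is correct. For (i) you follow the same route as the paper --- dualize the map induced on $m$-PD-envelopes by $f^\sharp$ --- but you are more careful than the paper about the multiplicativity clause: the paper's proof stops after constructing the linear map and does not spell out the commutative $\delta^{n,n'}$-square you draw, so your version is in fact more complete here.

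For (ii) the two arguments diverge. The paper does not start from the $m$-PD-envelope side. Instead it applies (i) to the group homomorphisms $H\times\varepsilon_G\colon H\rightarrow H\times G$ and $\varepsilon_H\times G\colon G\rightarrow H\times G$ to obtain algebra maps $\DD^{(m)}(H)\rightarrow\DD^{(m)}(H\times G)$ and $\DD^{(m)}(G)\rightarrow\DD^{(m)}(H\times G)$, combines them into a map $h\colon\DD^{(m)}(H)\otimes_V\DD^{(m)}(G)\rightarrow\DD^{(m)}(H\times G)$, and then checks on the local bases $\uxi^{\la\uk\ra}\ueta^{\la\ul\ra}$ (via Proposition~\ref{prop-basiselementsII}) that $h$ is a filtered isomorphism. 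This buys multiplicativity essentially for free from (i), modulo the (easy, but unstated in the paper) fact that the two images commute. Your approach --- first establishing the $m$-PD-isomorphism $\PP_{(m)}(H\times G)\simeq\PP_{(m)}(H)\otimes_V\PP_{(m)}(G)$ and then dualizing --- is a little more work but is self-contained and makes the compatibility with $\mu_{H\times G}^\sharp$ explicit; this is exactly the check the paper leaves implicit.

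One small slip to fix: your displayed formula $\PP^n_{(m)}(H\times G)\simeq\bigoplus_{p+q=n}\PP^p_{(m)}(H)\ot_V\PP^q_{(m)}(G)$ is not correct as written, since the $\PP^p_{(m)}$ are quotients rather than graded pieces and the right-hand side over-counts. What your local-basis argument actually gives (and what you need) is that $\PP^n_{(m)}(H\times G)$ is free on the $\ut^{\{\uk\}}\us^{\{\ul\}}$ with $|\uk|+|\ul|\le n$, which matches the $n$-th step of the tensor-product filtration on the dual side; equivalently, the graded pieces satisfy $\gr_n\PP_{(m)}(H\times G)\simeq\bigoplus_{p+q=n}\gr_p\PP_{(m)}(H)\ot_V\gr_q\PP_{(m)}(G)$.
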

\begin{proof} L'énoncé respé s'obtient par composition avec le foncteur section globale. Il suffit donc de montrer
la partie non respée de l'énoncé.
Soit $f^\sharp: f^{-1}\OO_G\rig \OO_H$ le morphisme de faisceaux associé. Il induit un
morphisme $f^{-1}\PP^n_{(m)}(G)\rig \PP^n_{(m)}(H)$. En passant aux
sections sur $S$ et en dualisant on trouve une application
$V$-lineaire $\DD_n^{(m)}(H)\rig \DD_n^{(m)}(G)$. On obtient $\DD^{(m)}(f)$ en passant à la
limite sur $n$. Cela montre (i).
Maintenant, d'après (i), les morphismes $H\times \varepsilon_G$ and $\varepsilon_H\times
G$ induisent des homomorphismes $\DD^{(m)}(G)\rig \DD^{(m)}(H\times G)$ et $\DD^{(m)}(H)\rig
\DD^{(m)}(H\times G)$. Ces derniers homomorphismes induisent un homomorphisme
$h: \DD^{(m)}(H)\otimes_S \DD^{(m)}(G)\rig \DD^{(m)}(H\times G)$. La question de savoir si ce morphisme est 
un isomorphisme filtré est locale. Plaçons-nous sur un ouvert $U \times V$ où $U$ est tel que $\II_G$ restreint à $U$
est engendré par une suite régulière de paramètres (resp. $\II_H$ est engendré par une suité régulière de
paramètres sur $H$).
Choisissons une suite
régulière de générateurs $t_1,...,t_N$ de l'ideal $\II_{H\times G}$ constituée de
suites regulières pour les idéaux $\II_{H}$ et $\II_{G}$. Alors la
proposition \ref{prop-basiselementsII} implique que $h$ est un isomorphisme filtré.
\end{proof}

Donnons maintenant deux exemples. D'abord dans le cas du groupe
additif $\mathbf{G}_a^N$, muni des coordonnées
$t_1,\ldots, t_N$, dont la loi de groupe est définie par $\mu^{\sharp}(t_i)=1\ot t_i +t_i
\ot 1,$ pour tout $1\leq i \leq N$. On note $\uder^{\langle \uk \rangle}$ les opérateurs
différentiels relatifs au choix des $t_i$.
On a alors
\begin{prop}\label{DmGa}Dans $D^{(m)}(\mathbf{G}_a^N)$, on a la relation
$$\uxi^{\langle \uk' \rangle}\cdot \uxi^{\langle \uk'' \rangle}=
 \crofrac{\uk'+\uk''}{\uk'}\uxi^{\langle \uk'+\uk'' \rangle}.$$
\end{prop}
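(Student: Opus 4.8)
The plan is to compute the product directly from its definition via the comultiplication $\delta^{n,n'}$ of~\ref{def-delta}, using that for $\mathbf{G}_a^N$ the group law is affine-linear in the coordinates, so that the ``error terms'' $\gamma$ appearing in the proof of~\ref{prop-commutativite} simply vanish.

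First I would recall that, since the ideal $\II$ of $e(S)$ is generated globally on $\mathbf{G}_a^N$ by the regular sequence $t_1,\ldots,t_N$ and $S$ is affine, Proposition~\ref{prop-basiselementsII} applies: the $V$-module $D_n^{(m)}(\mathbf{G}_a^N)$ is free with basis $\{\uxi^{\langle\uk\rangle}\mid |\uk|\leq n\}$, dual to the basis $\{\ut^{\{\uk\}}\mid |\uk|\leq n\}$ of $P^n_{(m)}(\mathbf{G}_a^N)$. Hence it is enough to check the asserted identity after evaluation on each $\ut^{\{\uk\}}$. Next, because $\mu^{\sharp}(t_i)=1\ot t_i+t_i\ot 1$ the extra summands $a_{i,s}\ot b_{i,s}$ of the proof of~\ref{prop-commutativite} are absent, so there $\gamma_{\uk}=0$; concretely, since $\delta^{n,n'}$ is an $m$-PD-morphism of algebras, one has $\delta^{n,n'}(t_i^{\{k_i\}})=\big(1\ot t_i+t_i\ot 1\big)^{\{k_i\}}$, and applying the addition formula of~\ref{subsubsection-formules} and multiplying over $i$ yields
$$\delta^{n,n'}\big(\ut^{\{\uk\}}\big)=\sum_{\ual\leq\uk}\crofrac{\uk}{\ual}\,\ut^{\{\ual\}}\ot\ut^{\{\uk-\ual\}}$$
in $P^n_{(m)}(\mathbf{G}_a^N)\ot P^{n'}_{(m)}(\mathbf{G}_a^N)$ (the terms with $|\ual|>n$ or $|\uk-\ual|>n'$ being zero there).

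Then, taking $n=|\uk'|$ and $n'=|\uk''|$ and applying the definition $u\cdot v=(u\ot v)\circ\delta^{n,n'}$ with $u=\uxi^{\langle\uk'\rangle}$ and $v=\uxi^{\langle\uk''\rangle}$, I would evaluate on $\ut^{\{\uk\}}$ for $|\uk|\leq n+n'$:
$$\big(\uxi^{\langle\uk'\rangle}\cdot\uxi^{\langle\uk''\rangle}\big)\big(\ut^{\{\uk\}}\big)=\sum_{\ual\leq\uk}\crofrac{\uk}{\ual}\,\uxi^{\langle\uk'\rangle}\big(\ut^{\{\ual\}}\big)\,\uxi^{\langle\uk''\rangle}\big(\ut^{\{\uk-\ual\}}\big).$$
Since $\uxi^{\langle\uk'\rangle}$ and $\uxi^{\langle\uk''\rangle}$ are dual basis vectors, $\uxi^{\langle\uk'\rangle}(\ut^{\{\ual\}})=\delta_{\uk',\ual}$ and $\uxi^{\langle\uk''\rangle}(\ut^{\{\uk-\ual\}})=\delta_{\uk'',\uk-\ual}$, so the only surviving term is $\ual=\uk'$, which forces $\uk=\uk'+\uk''$, and there the coefficient is $\crofrac{\uk'+\uk''}{\uk'}$. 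Thus $\uxi^{\langle\uk'\rangle}\cdot\uxi^{\langle\uk''\rangle}$ and $\crofrac{\uk'+\uk''}{\uk'}\,\uxi^{\langle\uk'+\uk''\rangle}$ agree on every basis element $\ut^{\{\uk\}}$, which proves the identity.

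There is no real obstacle here; the only point that needs a little care is the passage from the one-variable addition formula to the multi-index formula for $\delta^{n,n'}$ — one uses that $\delta^{n,n'}$ is an algebra homomorphism, that $\crofrac{\uk}{\ual}=\prod_i\crofrac{k_i}{\alpha_i}$, and that truncating the target at order $n+n'$ is harmless because the discarded terms are in any case annihilated by $\uxi^{\langle\uk'\rangle}\ot\uxi^{\langle\uk''\rangle}$. Everything else is an unravelling of definitions.
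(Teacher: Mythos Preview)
Your proof is correct and follows essentially the same approach as the paper: compute $\delta^{n,n'}(\ut^{\{\ur\}})$ using the affine-linear group law of $\mathbf{G}_a^N$ (so that the error terms $\gamma_{\uk}$ from the proof of~\ref{prop-commutativite} vanish), then evaluate $(\uxi^{\langle\uk'\rangle}\ot\uxi^{\langle\uk''\rangle})$ on this sum and use the dual-basis relations to obtain $\delta_{\ur,\uk'+\uk''}\crofrac{\uk'+\uk''}{\uk'}$. The paper's proof is simply a more compressed version of yours.
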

\begin{proof} Il nous faut en effet calculer
\begin{multline} (\uxi^{\langle \uk' \rangle}\cdot \uxi^{\langle \uk'' \rangle})(\ut^{\{\ur\}})=
(\uxi^{\langle \uk' \rangle}\ot \uxi^{\langle \uk'' \rangle})
\left( \sum_{\ual\leq \ur}\crofrac{\ur}{\ual}
\ut^{\{\ual\}}\ot \ut^{\{\ur-\ual\}}\right)
=\delta_{\ur,\uk'+\uk''}\crofrac{\uk'+\uk''}{\uk'},
\end{multline}
ce qui donne l'énoncé recherché.
\end{proof}

Dans le cas $N=1$ on en déduit les formules suivantes pour un entier $k\geq 1$
\begin{align*} \text{si}\:k\neq 0\:[p^m],\quad & k\xi^{\langle k \rangle}=\xi\xi^{\langle
k-1 \rangle}\\
\text{si}\:k= 0\:[p^m],\quad & p^m\xi^{\langle k \rangle}=\xi\xi^{\langle
k-1 \rangle},
\end{align*}
dont on déduit la formule suivante pour tout $k\geq 0$
$$\frac{k!}{q_k!}\xi^{\langle k \rangle}=\xi^k.$$
On vérifie au passage, que si $f\in V[T_1,\ldots,T_N]$,
\begin{gather}\label{formulDGA}
\xi^{\langle k \rangle}(f)=\uder^{\langle k \rangle}(f)(0).
\end{gather}
En particulier, dans le case $m=0$, on a $\xi^{\langle k \rangle_{(0)} }=\xi^k$ pour tout $k$.

\vskip5pt

Maintenant, dans $\mathbf{G}_m$, qu'on identifie à $Spec\,V[T,T^{-1}]$, le faisceau
d'idéaux $\II$ est engendré par $\tau=T-1$. La loi de groupe
est donnée par $\mu^{\sharp}(\tau)=\tau\ot 1+1\ot \tau+\tau\ot\tau.$
\begin{prop}\label{DmGm} Dans $D^{(m)}(\mathbf{G}_m)$, on a les relations
\begin{gather*}\xi^{\langle k' \rangle}\cdot \xi^{\langle k'' \rangle}=
\sum_{ max\{k',k''\}\leq l\leq k'+k''}\frac{q_{k'}!\,q_{k''}!\,l!\,}{q_l!\,(k'+k''-l)!\,(l-k')!\,(l-k'')!\,}
\xi^{\langle l\rangle}.\\
\end{gather*}
\end{prop}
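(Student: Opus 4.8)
The plan is to compute the product directly from its definition in the proof of Proposition~\ref{prop-commutativite}. Since $\mathbf{G}_m=Spec\,V[T,T^{-1}]$ and the augmentation ideal is generated by the regular element $\tau=T-1$, the algebras $P^{n}_{(m)}(\mathbf{G}_m)$ are free on $\tau^{\{0\}},\dots,\tau^{\{n\}}$ and $(\xi^{\langle k\rangle})_{k}$ is the dual basis. By~\ref{prop-commutativite} the element $\xi^{\langle k'\rangle}\cdot\xi^{\langle k''\rangle}$ has order $\leq k'+k''$, so, writing $\xi^{\langle k'\rangle}\cdot\xi^{\langle k''\rangle}=\sum_{r}\lambda_{r}\,\xi^{\langle r\rangle}$, one has $\lambda_{r}=(\xi^{\langle k'\rangle}\cdot\xi^{\langle k''\rangle})(\tau^{\{r\}})=(\xi^{\langle k'\rangle}\ot\xi^{\langle k''\rangle})(\delta^{k',k''}(\tau^{\{r\}}))$; thus $\lambda_{r}$ is the coefficient of $\tau^{\{k'\}}\ot\tau^{\{k''\}}$ in $\delta^{k',k''}(\tau^{\{r\}})$, and everything reduces to computing $\delta(\tau^{\{r\}})$.

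First I would compute $\delta(\tau)$: by its construction~\ref{def-delta} out of $\mu^{\sharp}$ and the identity $\mu^{\sharp}(\tau)=\tau\ot1+1\ot\tau+\tau\ot\tau$, setting $\alpha=\tau\ot1$ and $\beta=1\ot\tau$ (the images of $\tau$ in the two factors), the $m$-PD-morphism $\delta$ sends $\tau$ to $\alpha+\beta+\alpha\beta=\alpha+(1+\alpha)\beta$. As $\delta$ is an $m$-PD-morphism, $\delta(\tau^{\{r\}})=(\alpha+(1+\alpha)\beta)^{\{r\}}$, which I would expand formally with the formulary of~\ref{subsubsection-formules}: the rule for $(x+y)^{\{r\}}$ with $x=\alpha$, $y=(1+\alpha)\beta$; then $((1+\alpha)\beta)^{\{c\}}=(1+\alpha)^{c}\beta^{\{c\}}$ by the rule $(a x)^{\{c\}}=a^{c}x^{\{c\}}$; then the ordinary binomial expansion of $(1+\alpha)^{c}$ together with $\alpha^{s}=q_{s}!\,\alpha^{\{s\}}$ and $\alpha^{\{a\}}\alpha^{\{s\}}=\acfrac{a+s}{a}\alpha^{\{a+s\}}$. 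This yields
$$\delta(\tau^{\{r\}})=\sum_{a+c=r}\;\sum_{s=0}^{c}\crofrac{r}{a}\binom{c}{s}q_{s}!\,\acfrac{a+s}{a}\;\alpha^{\{a+s\}}\ot\beta^{\{c\}}.$$

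Reading off the coefficient of $\alpha^{\{k'\}}\ot\beta^{\{k''\}}$ forces the unique choice $c=k''$, $a=r-k''$, $s=k'+k''-r$, possible exactly when $\max\{k',k''\}\leq r\leq k'+k''$ (from $a\geq0$ and $0\leq s\leq c$), and gives the value $\crofrac{r}{r-k''}\binom{k''}{r-k'}q_{k'+k''-r}!\,\acfrac{k'}{r-k''}$. I would then simplify this using $\crofrac{r}{r-k''}=\binom{r}{k''}\,q_{r-k''}!\,q_{k''}!/q_{r}!$, $\acfrac{k'}{r-k''}=q_{k'}!/(q_{r-k''}!\,q_{k'+k''-r}!)$ and $\binom{r}{k''}\binom{k''}{r-k'}=r!/((r-k')!\,(r-k'')!\,(k'+k''-r)!)$: the factors $q_{r-k''}!$ and $q_{k'+k''-r}!$ cancel, leaving $q_{k'}!\,q_{k''}!\,r!\,/\,(q_{r}!\,(k'+k''-r)!\,(r-k')!\,(r-k'')!)$, which after relabelling $r$ as $l$ is exactly the claimed coefficient of $\xi^{\langle l\rangle}$. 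I expect the only mildly delicate step to be this last combinatorial bookkeeping of binomial coefficients and factorials $q_{k}!$ (and the extraction of the range $\max\{k',k''\}\leq l\leq k'+k''$); the rest is a mechanical use of the $m$-PD-calculus and of the explicit comultiplication of $\mathbf{G}_m$. As a consistency check, running the same argument for $\mathbf{G}_a$, where the term $\alpha\beta$ is absent, collapses the double sum to the single term $(a,c,s)=(k',k'',0)$ with $r=k'+k''$ and recovers Proposition~\ref{DmGa}.
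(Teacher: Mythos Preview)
Your proof is correct and follows essentially the same approach as the paper: a direct expansion of $\delta(\tau^{\{r\}})$ via the $m$-PD formulary of~\ref{subsubsection-formules} together with the explicit comultiplication $\mu^{\sharp}(\tau)=\tau\ot1+1\ot\tau+\tau\ot\tau$ of $\mathbf{G}_m$, followed by extraction of the coefficient of $\tau^{\{k'\}}\ot\tau^{\{k''\}}$. The only difference is cosmetic: you group $\delta(\tau)=\alpha+(1+\alpha)\beta$ whereas the paper groups it as $(\alpha+\beta)+\alpha\beta$; your choice isolates the $\beta$-degree immediately (forcing $c=k''$) and makes the bookkeeping a touch cleaner, but the computation is the same in substance.
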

\begin{proof}
Calculons, en appliquant ~\ref{subsubsection-formules},
\begin{eqnarray*}\mu^{\sharp}(\tau^{(l)})&=&\sum_{r=0}^l \acfrac{l}{r}(\tau\ot 1+1\ot
\tau)^{\{r\}}\tau^{\{l-r\}}\ot \tau^{l-r}\\
  & = & \sum_{r=0}^l\sum_{s=0}^r q_{l-r}!\acfrac{l}{r}\acfrac{r}{s}\acfrac{s+l-r}{s}\acfrac{l-s}{r-s}.\end{eqnarray*}
Cela nous permet de calculer $\xi^{\langle k' \rangle}\ot \xi^{\langle k''
\rangle}\delta^{k',k''}(\mu^{\sharp}(\tau^{(l)}))$ et nous donne l'énoncé.
\end{proof}
On en déduit les formules suivantes pour un entier $k\geq 1$
\begin{align*} \text{si}\:k\neq 0\:[p^m],\quad & k\xi^{\langle k \rangle}=(\xi-k+1)\xi^{\langle
k-1 \rangle}\\
\text{si}\:k= 0\:[p^m],\quad & p^m\xi^{\langle k \rangle}=(\xi-k+1)\xi^{\langle
k-1 \rangle},
\end{align*}
dont on déduit la formule suivante pour tout $k\geq 0$
\begin{gather}\label{formGm}
\frac{k!}{q_k!}\xi^{\langle k \rangle}=(\xi-k+1)(\xi-k+2)\cdots (\xi-1)\xi.
\end{gather}
En particulier, dans le cas $m=0$,
on a $\xi^{\langle k \rangle_{(0)} }=(\xi-k+1)(\xi-k+2)\cdots (\xi-1)\xi$ pour tout $k$ et alors
% $$\xi^2=\xi+\frac{2!}{q_2!}\xi^{\langle 2 \rangle},$$
%et en général,
$\xi^{{\langle k \rangle}_{(0)}}\neq \xi^k$ pour $k\geq 2$.
On constate aussi dans ce cas pour $f\in V[T,T^{-1}]$
\begin{gather}\label{formulDGm}
\xi^{\langle k \rangle}(f)=\der_T^{\langle k \rangle}(f)(1).
\end{gather}

\vspace{+2mm}
Revenons maintenant au cas d'un schéma en groupes général $G$, et rappelons que $\LL ie(G)^*:=\II/\II^2$.
\begin{prop} \begin{enumerate}\label{grDm_f}
\item[(i)] Il existe un isomorphisme canonique de faisceaux de $V$-algèbres graduées
               $$c_m\,\colon\,\gr\pg \DD^{(m)}(G)\simeq \Sg^{(m)}_V(\LL ie(G)).$$
\item[(ii)] Les faisceaux d'algèbres $\gr\pg \DD^{(m)}(G)$ et $\DD^{(m)}(G)$ sont à sections noetheriennes sur les
affines.
\item[(iii)] Etant donnée une suite régulière de paramètres $t_1,\ldots,t_N$ de $\II$ sur un ouvert $U$,
l'algèbre $\DD^{(m)}(G)$ est engendrée sur $U\bigcap S$ par les éléments $\xi^{\langle p^i
\rangle}$ pour $i\leq m$. On a la relation suivante
$$\uxi^{\langle \uk' \rangle}\cdot \uxi^{\langle \uk' \rangle}=
\crofrac{\uk'+\uk''}{\uk'}\uxi^{\langle \uk'+\uk'' \rangle} + \textrm{termes d'ordre } < |\uk'|+|\uk''|.$$
\end{enumerate}
\end{prop}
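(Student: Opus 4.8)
The strategy is to obtain everything by dualizing the graded structure on the $m$-PD-envelopes that is already recorded in the excerpt. Recall from \ref{isomgrP} (applied to $Y=G$, i.e. to the diagonal of $G$, or directly in the form used in \ref{prop-basiselementsII_f}) that the associated graded of $\PP_{(m)}(G)$ for the $m$-PD-adic filtration is canonically the $m$-PD-algebra $\Ga_{S,(m)}(\LL ie(G)^*)=\Ga_{S,(m)}(\II/\II^2)$. Taking $\OO_S$-linear duals degree by degree and passing to the inductive limit over $n$, the definition of $\Sg^{(m)}_V$ from \ref{faisceauxpp_grad} gives a canonical isomorphism of graded $\OO_S$-modules $\gr\pg\DD^{(m)}(G)\simeq\Sg^{(m)}_V(\LL ie(G))$. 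To upgrade this to an isomorphism of graded algebras I have to check that the multiplication on $\gr\pg\DD^{(m)}(G)$ induced by the convolution product $\delta^{n,n'}$ of \ref{def-delta_f} matches the algebra structure on $\Sg^{(m)}_V$. This is local on $S$, so I may assume $\II$ has a regular sequence of generators $t_1,\ldots,t_N$ and work with the basis $\uxi^{\la\uk\ra}$ dual to $\ut^{\{\uk\}}$. Then the computation already carried out in the proof of \ref{prop-commutativite} shows $\delta^{n,n'}(\ut^{\{\uk\}})=\sum_{\ual\le\uk}\crofrac{\uk}{\ual}\ut^{\{\ual\}}\ot\ut^{\{\uk-\ual\}}+\ga_{\uk}$ with $\ga_{\uk}\in\sum_{s+t\ge|\uk|+1}\II^{\{s\}}\ot\II^{\{t\}}$; dualizing, the leading term of $\uxi^{\la\uk'\ra}\cdot\uxi^{\la\uk''\ra}$ is exactly $\crofrac{\uk'+\uk''}{\uk'}\uxi^{\la\uk'+\uk''\ra}$ modulo lower order, which is precisely the product rule in $\Sg^{(m)}_V$ (compare Proposition \ref{DmGa}, which is the additive-group instance of this identity). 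Since the $\uxi^{\la\uk\ra}$ generate and these structure constants determine the algebra, $c_m$ is an algebra isomorphism. This proves (i) and, as a byproduct, the displayed relation in (iii).

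For (ii): on an affine open $U\bigcap S$ the algebra $\Sg^{(m)}_V(\LL ie(G))(U)$ is, by the local description in \ref{faisceauxpp_grad} together with the explicit presentation of the level-$m$ symmetric algebra recalled from \cite{Hu1}, a finitely generated $\OO_S(U)$-algebra (it is generated by the finitely many divided-power generators $\xi_l^{\la p^i\ra}$, $i\le m$, $1\le l\le N$, with polynomial relations), hence noetherian. By part (i), $\gr\pg\DD^{(m)}(G)(U)$ is therefore noetherian, and then $\DD^{(m)}(G)(U)$ is noetherian by the standard argument that a filtered ring whose associated graded is (left and right) noetherian is itself (left and right) noetherian — exactly as invoked in \ref{operateur-diff} for $\Dm_X(U)$.

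For the generation statement in (iii): working over $U\bigcap S$ with the basis above, the one-variable relations computed just after Proposition \ref{DmGa}, namely $\frac{k!}{q_k!}\xi^{\la k\ra}=\xi^k$ for $\Gm_a$ and their multi-index analogues obtained from the product rule of \ref{DmGa}, show by an induction on $|\uk|$ that every $\uxi^{\la\uk\ra}$ is an $\OO_S(U)$-polynomial in the elements $\xi_l^{\la p^i\ra}$ with $i\le m$: writing $k_l=p^m q + r$ one peels off a factor using $\xi_l\cdot\xi_l^{\la k-1\ra}$ (or $\xi_l^{\la p^m\ra}\cdot\xi_l^{\la k-p^m\ra}$ in the divisible case), reducing to strictly smaller multi-indices; the divided powers $\xi_l^{\la p^i\ra}$, $i<m$, are needed to reach the residues $r<p^m$ in the first $m$ "digits''. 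Hence these finitely many elements generate $\DD^{(m)}(G)$ over $U\bigcap S$ as an algebra, which completes (iii).

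The only genuine subtlety is the algebra-compatibility in (i): one must be careful that the convolution product is defined via $\mu^{\sharp}$ and not via the diagonal of $G$, so the relevant formula for $\mu^{\sharp}(t_i)$ is the group-law one $\mu^{\sharp}(t_i)=1\ot t_i+t_i\ot 1+\sum_s a_{i,s}\ot b_{i,s}$ with $a_{i,s},b_{i,s}\in\II$ (\cite[I 7.7]{Jantzen}); the extra terms $\sum_s a_{i,s}\ot b_{i,s}$ lie in $\II\ot\II$ and therefore contribute only to strictly lower order after dualizing, so they do not affect the leading symbol and the graded product is still the $\Sg^{(m)}_V$-product. This is the same point that made the lower-order correction terms $\ga_{\uk}$ harmless in the proof of \ref{prop-commutativite}, so no new work is required beyond reading off leading terms.
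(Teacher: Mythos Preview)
Your proposal is correct and follows essentially the same route as the paper: construct $c_m$ by dualizing the canonical $m$-PD-isomorphism $\Ga_{(m)}(\II/\II^2)\simeq\gr\pg\PP^{(m)}(G)$, verify the algebra compatibility by the leading-term computation of $\delta^{n,n'}(\ut^{\{\uk\}})$ already done in the proof of Proposition~\ref{prop-commutativite}, and deduce (ii) from finite generation of $\Sg^{(m)}$ plus the standard filtered-ring argument. Two minor remarks: your citation of \ref{isomgrP} is slightly off, since that isomorphism concerns the diagonal immersion (hence $\Omega^1_G$), not the unit section $e:S\hrig G$ (hence $\II/\II^2$); the paper therefore re-does this step directly for the unit section via the universal property of $\Ga_{(m)}$, but the argument is of course identical. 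For (iii) the paper simply cites 1.3.6 of \cite{Hucomp} for the fact that $\Sg^{(m)}(\LL ie(G))$ is generated by the $\xi_l^{\la p^i\ra}$, $i\le m$, and then lifts to $\DD^{(m)}(G)$ via (i); your explicit induction on $|\uk|$ is a valid alternative, but note that the one-variable identities you invoke from after Proposition~\ref{DmGa} are specific to $\Gm_a$ --- in the general group the cleaner way to phrase your induction is exactly through the displayed relation of (iii) together with the unit property of the coefficients $\crofrac{\uk'+\uk''}{\uk'}$ for suitable decompositions (e.g.\ along the base-$p$ digits), which is precisely what the generation statement for $\Sg^{(m)}$ encodes.
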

\begin{prop} \begin{enumerate}\label{grDm}
\item[(i)] Il existe un isomorphisme canonique de $V$-algèbres graduées
               $$c_m\,\colon\,\gr\pg D^{(m)}(G)\simeq \Sg^{(m)}_V(Lie(G)).$$
\item[(ii)] Les algèbres $\gr\pg D^{(m)}(G)$ et $D^{(m)}(G)$ sont noetheriennes.
\item[(iii)] Dans le cas où $\II$ est engendré par une suite régulière de paramètres sur un ouvert contenant
$e(S)$, alors l'algèbre $D^{(m)}(G)$ est engendrée par les éléments $\xi^{\langle p^i
\rangle}$ pour $i\leq m$, qui vérifient les relations données par le (iii) de la proposition précédente.\end{enumerate}
\end{prop}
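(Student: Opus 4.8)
Le plan est de déduire l'énoncé de son analogue faisceautique \ref{grDm_f} en appliquant le foncteur des sections globales $\Ga(S,-)$ sur le schéma affine $S$. On s'appuiera sur trois faits classiques : (a) sur un schéma affine, $\Ga(S,-)$ est exact sur les $\OO_S$-modules quasi-cohérents, donc commute à la formation des crans de filtration $\DD^{(m)}_n(G)$, à celle du gradué associé, et au foncteur $\HH om_{\OO_S}(-,\OO_S)$ appliqué à un module localement libre de rang fini ; (b) par conséquent, pour le faisceau localement libre $\LL ie(G)$ de sections globales $Lie(G)$, la construction de l'algèbre symétrique de niveau $m$ de \ref{faisceauxpp_grad} commute au passage aux sections globales, chaque composante graduée $\Sg^{(m)}_{V}(\LL ie(G))$ étant un $\HH om$ itéré de modules quasi-cohérents sur l'affine $S$ ; (c) un anneau muni d'une filtration croissante exhaustive et positive dont le gradué associé est noethérien à gauche (resp. à droite) est lui-même noethérien à gauche (resp. à droite) — c'est l'argument classique déjà invoqué pour $\Dm_X(U)$ juste après \ref{grdiffDm}.

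Pour (i), nous appliquerons $\Ga(S,-)$ à l'isomorphisme $c_m$ de \ref{grDm_f}(i). Par le fait (a), $D^{(m)}(G)=\Ga(S,\DD^{(m)}(G))$ est filtré par les $D^{(m)}_n(G)=\Ga(S,\DD^{(m)}_n(G))$ et l'on a $\gr\pg D^{(m)}(G)=\Ga(S,\gr\pg\DD^{(m)}(G))$ comme $V$-algèbres graduées ; en identifiant $\Ga(S,\Sg^{(m)}_V(\LL ie(G)))$ à $\Sg^{(m)}_V(Lie(G))$ grâce au fait (b), on obtient l'isomorphisme voulu de $V$-algèbres graduées $c_m\colon \gr\pg D^{(m)}(G)\simeq \Sg^{(m)}_V(Lie(G))$.

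Pour (ii), la partie (i) ramène la noethérianité de $\gr\pg D^{(m)}(G)$ à celle de $\Sg^{(m)}_V(Lie(G))$, qui résulte du fait (b) joint à \ref{grDm_f}(ii) (les sections de $\gr\pg\DD^{(m)}(G)$ sur l'affine $S$ sont noethériennes), ou encore directement de l'énoncé correspondant pour $\Sg^{(m)}$ d'un module libre utilisé en \ref{grdiffDm} et dans \cite{Hu1}. Comme $D^{(m)}(G)=\bigcup_n D^{(m)}_n(G)$ avec $D^{(m)}_{-1}(G)=0$ et $D^{(m)}_0(G)=V^{c(S)}$ noethérien, le fait (c) entraîne que $D^{(m)}(G)$ est noethérien à gauche et à droite.

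Pour (iii), nous supposerons que $\II$ admet une suite régulière de paramètres $t_1,\ldots,t_N$ sur un ouvert $U\subseteq G$ contenant $e(S)$ ; alors $U\cap S=e^{-1}(U)=S$, de sorte que la description locale de \ref{grDm_f}(iii) au-dessus de $U\cap S$ est en fait un énoncé sur $S$ tout entier. Les $t_i$, donc les éléments de la base duale introduits en \ref{prop-basiselementsII}, sont globaux sur $S$, et l'on conclut que $D^{(m)}(G)$ est engendré comme $V$-algèbre par les sections $\xi^{\langle p^i\rangle}$, $i\leq m$ ; la relation annoncée $\uxi^{\langle \uk' \rangle}\cdot \uxi^{\langle \uk'' \rangle}=\crofrac{\uk'+\uk''}{\uk'}\,\uxi^{\langle \uk'+\uk'' \rangle}+\textrm{termes d'ordre }<|\uk'|+|\uk''|$ s'obtient en prenant les sections globales de celle de \ref{grDm_f}(iii). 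L'essentiel du travail étant contenu dans \ref{grDm_f}, il n'y a pas d'obstacle véritable ; les seuls points demandant une justification d'une ligne sont les faits (b) et (c), tous deux standard.
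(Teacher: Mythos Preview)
Your proposal is correct and follows exactly the paper's own approach: the paper proves the sheaf-theoretic statement \ref{grDm_f} in detail and then remarks that \ref{grDm} ``s'en déduit par passage aux sections globales sur le schéma affine $S$'', which is precisely what you carry out, with the standard justifications (a)--(c) made explicit.
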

\begin{proof} Montrons la première proposition. La deuxième s'en déduit par passage aux sections globales sur le schéma
affine $S$.
La filtration décroissante de l'algèbre
$\PP^{(m)}(G)$ par les idéaux $\II^{\{n\}}$ permet de considérer
l'algèbre graduée
$$\gr\pg \PP^{(m)}(G)=\bigoplus_{n\geq 0}\II^{\{n\}}/\II^{\{n+1\}},$$
dont l'idéal $\II/\II^2$ est muni d'une $m$-PD-structure. En reprenant
la description donnée en~\ref{descript_loc_pn}, on voit que cette algèbre
graduée est un $V$-module libre de base les éléments $\ut^{\{\ur\}}$ sur un ouvert $U$ sur lequel
$\II$ est engendré par une suite régulière de paramètres $t_1,\ldots,t_N$.

Le module $\LL ie(G)^*=\II/\II^2=\gr_1 \PP^{(\infty)}(G)$
s'envoie
vers $\gr_1\PP^{(m)}(G)$ pour tout entier $m$ (\ref{desc-P1}). On dispose donc d'une flèche
$\LL ie(G)^*\rig \gr\pg \PP^{(m)}(G)$ et, par la propriété universelle des algèbres
symétriques, d'un morphisme d'algèbres commutatives $\Sg_V(\LL ie(G)^*)\rig \gr\pg \PP^{(m)}(G)$,
qui envoie $\LL ie(G)^*$ dans un $m$-PD-idéal, de sorte que cette application
se factorise d'une unique façon en un homomorphisme de $m$-PD-algèbres:
$$c_m^* \,\colon\, \Ga_{V,(m)}(\LL ie(G)^*)\rig \gr\pg \PP^{(m)}(G),$$ qui est un
isomorphisme $V$-linéaire envoyant l'élément noté $\ut^{\{\ur\}}$ de $\Ga_{V,(m)}(\LL ie(G)(U)^*)$
(Thm. 3.9 in \cite{Berthelot-Ogus}) sur l'élément noté de la même façon de $\gr\pg \PP^{(m)}(G)(U)$.

En dualisant, cette application donne un isomorphisme $V$-linéaire
$$  c_m\,\colon\,\HH om_V(\gr\pg  \PP^{(m)}(G),V )\rig \Sg^{(m)}_V(\LL ie(G)) .$$
Or, le $V$-module $\II^{\{n\}}/\II^{\{n+1\}}$ localement libre de rang fini
s'identifie au noyau de la surjection: $\PP^{(m)}(G)_{n+1}\rig \PP^{(m)}(G)_{n}$, et donc
le dual $V$-linéaire de $\gr\pg  \PP^{(m)}(G)$ s'identifie à l'algèbre graduée
$\gr\pg \DD^{(m)}(G)$, puisque le faisceau $\PP^{(m)}(G)_{n}$ est un faisceau de $V$-modules libres de rang fini.
Vérifions maintenant que cette application est un homomorphisme d'algèbres. Cette vérification se fait
localement sur un ouvert $U$ muni d'une suite régulière de paramètres.
Les deux modules sont libres sur $U$ de base les éléments $\uxi^{\langle\uk\rangle}$. Dans l'algèbre
symétrique de niveau $m$, on a l'égalité
$$\uxi^{\langle\uk'\rangle}\cdot\uxi^{\langle\uk''\rangle}
=\crofrac{\uk'+\uk''}{\uk'}\uxi^{\langle \uk'+\uk'' \rangle}.$$
Effectuons le calcul dans l'algèbre $\gr\pg \DD^{(m)}(G)(U)$. Le produit
$\uxi^{\langle\uk'\rangle}\cdot\uxi^{\langle\uk''\rangle}$ est d'ordre $\leq
|\uk'|+|\uk''|$. Il suffit donc de calculer, pour $|\ur|=|\uk'|+|\uk''|$
\begin{multline} ( \uxi^{\langle \uk' \rangle}\cdot \uxi^{\langle \uk'' \rangle})(\ut^{\{\ur\}})=
(\uxi^{\langle \uk' \rangle}\ot \uxi^{\langle \uk'' \rangle})
\left( \sum_{\ual\leq \ur}\crofrac{\ur}{\ual}
\ut^{\{\ual\}}\ot \ut^{\{\ur-\ual\}}\right)+ \gamma_{\ur}
=\delta_{\ur,\uk'+\uk''}\crofrac{\uk'+\uk''}{\uk'},
\end{multline}
puisque l'élément $\gamma_{\uk'+\uk''}$ est dans
$\sum_{s+s'\geq|\uk'+\uk''|+1}\II^{\{s\}}\ot \II^{\{s'\}}(U)$ de sorte
que $(\uxi^{\langle \uk' \rangle}\ot \uxi^{\langle \uk'' \rangle})(\gamma_{\ur})=0$.
On voit ainsi que la formule du produit est exactement la même pour les deux algèbres
considérées et donc qu'on a l'isomorphisme d'algèbres annoncé.

L'assertion (iii) provient de l'isomorphisme précédent et de l'observation (1.3.6 de
\cite{Hucomp}) que l'algèbre symétrique de niveau $m$, $\Sg^{(m)}_V(\LL ie(G)(U))$, est
engendrée par les éléments $\xi_l^{\langle p^i \rangle}$ avec $i\leq m$ et $\xi_l$ les
éléments d'une base de $\LL ie(G)(U)$.
Finalement, $\gr\pg \DD^{(m)}(G)(U)\simeq \Sg^{(m)}_V(\LL ie(G)(U))$ est un anneau noetherien, car
c'est une $V$-algèbre de type fini. Par un argument standard, on en déduit que
$\DD^{(m)}(G)(U)$ est noetherien (e.g. \cite{MCR}, Thm. 1.6.9). Cela montre (ii). Comme $S$ est noetherien,
cela implique la noetherianité de $D^{(m)}(G)$.
\end{proof}

\vskip5pt

\begin{prop} \label{eq_cat_Dm}\be \item[(i)] Le faisceau d'algèbres sur $S$, $\DD^{(m)}(G)$, est cohérent.
           \item[(ii)] Le foncteur $\Ga(S,.)$ établit une équivalence de catégories entre la catégorie des
$\DD^{(m)}(G)$-modules à gauche cohérents et celle des $D^{(m)}(G)$-modules à gauche de type fini. Le foncteur quasi-inverse
est donné par $$M\mapsto \DD^{(m)}(G)\ot_{D^{(m)}(G)}M.$$\ee
\end{prop}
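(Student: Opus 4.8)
The plan is to reduce both assertions to the fact, already proved in~\ref{grDm}, that the ring $D^{(m)}(G)=\Ga(S,\DD^{(m)}(G))$ is noetherian (on the left and on the right), together with the observation that $\DD^{(m)}(G)$ is quasi-coherent as an $\OO_S$-module. For the latter: by the local description~\ref{prop-basiselementsII_f}, each $\DD_n^{(m)}(G)=\HH om_{\OO_S}(\PP^n_{(m)}(G),\OO_S)$ is the $\OO_S$-dual of a locally free sheaf of finite rank, hence is itself locally free of finite rank; taking the filtered colimit over $n$ shows that $\DD^{(m)}(G)$ is quasi-coherent. Since $S=Spec\,V$ is affine and noetherian, this identifies $\DD^{(m)}(G)$ with the quasi-coherent sheaf $\OO_S\ot_V D^{(m)}(G)$ attached to the $V$-module $D^{(m)}(G)$, compatibly with the ring structures.

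Granting this, (i) follows quickly. For any affine open $U\subset S$ one has $\Ga(U,\DD^{(m)}(G))=\OO_S(U)\ot_V D^{(m)}(G)$; since $\OO_S(U)$ is a localization of $V$ at a central multiplicative set, this ring is a central localization of the noetherian ring $D^{(m)}(G)$, hence is again noetherian on the left and on the right. Thus $\DD^{(m)}(G)$ has noetherian sections over affine opens, and then, exactly as in the classical case (cf.~\ref{operateur-difft}), $\DD^{(m)}(G)$ is a coherent sheaf of rings: a finitely generated left ideal restricts, over each affine $U$, to the sheaf associated with a finitely generated ideal of the noetherian ring $\Ga(U,\DD^{(m)}(G))$, and is therefore of finite presentation.

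For (ii), I would first establish the affine equivalence for all quasi-coherent modules and then cut it down to coherent ones. Since $S$ is affine, $\Ga(S,-)$ is exact on quasi-coherent $\OO_S$-modules, hence on quasi-coherent left $\DD^{(m)}(G)$-modules; and the functors $\Ga(S,-)$ and $M\mapsto\DD^{(m)}(G)\ot_{D^{(m)}(G)}M$ (which coincides with $\OO_S\ot_V M$ by the identification above) are quasi-inverse, the unit and counit being the classical isomorphisms $M\simeq\Ga(S,\OO_S\ot_V M)$ and $\OO_S\ot_V\Ga(S,\FF)\simeq\FF$ (for $\FF$ quasi-coherent), which are visibly $\DD^{(m)}(G)$-linear. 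It remains to match coherent $\DD^{(m)}(G)$-modules with $D^{(m)}(G)$-modules of finite type. If $M$ is of finite type over $D^{(m)}(G)$, then, $D^{(m)}(G)$ being noetherian, $M$ admits a finite presentation by free modules; sheafifying this presentation (an exact operation) and invoking (i) shows that $\OO_S\ot_V M$ is a coherent $\DD^{(m)}(G)$-module. Conversely, let $\MM$ be a coherent $\DD^{(m)}(G)$-module; since $S$ is quasi-compact, cover it by finitely many principal opens $D(f_i)$ over which $\MM$ is generated by finitely many sections; clearing denominators produces finitely many global sections $m_{ij}\in M:=\Ga(S,\MM)$, and the $\DD^{(m)}(G)$-submodule $\NN\subset\MM$ they generate agrees with $\MM$ on each $D(f_i)$, so $\NN=\MM$; applying the exact functor $\Ga(S,-)$ shows that the $D^{(m)}(G)$-submodule of $M$ generated by the $m_{ij}$ is all of $M$, so $M$ is of finite type.

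The argument is essentially formal once~\ref{grDm} is available; the two points that require a little care are the stability of noetherianity under central localization (so that (i) holds over every affine open, not just over $S$ itself) and the quasi-compactness step producing a finite generating set of global sections of a coherent module --- neither is a serious obstacle.
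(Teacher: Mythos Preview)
Your proof is correct and follows essentially the same approach as the paper: both reduce to the noetherianity established in~\ref{grDm} (resp.\ its sheaf version~\ref{grDm_f}), observe that restriction maps for $\DD^{(m)}(G)$ are flat extensions of the form $\OO_S(U')\ot_{\OO_S(U)}(-)$, and then invoke the standard criterion for coherence and the affine equivalence (the paper cites 3.1.1 and 3.1.3 of \cite{Be1}, you argue directly). One small imprecision: for an arbitrary affine open $U\subset S$ the ring $\OO_S(U)$ need not literally be a localization of $V$, but your argument only needs this on a basis of affines, and principal opens $D(f)$ suffice.
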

L'énoncé reste vrai pour les modules à droite.
\begin{proof} Ce résultat provient d'une part de la noetherianité des faisceaux d'algèbres graduées associées
à $\gr\pg \DD^{(m)}(G)$ par (i) de ~\ref{grDm_f}. D'autre part, il suit de (iii) de loc. cit. que, si
$\II$ est muni d'une suite régulière de paramètres sur un ouvert $U$, alors, pour tout $U'\subset U$,
on a un isomorphisme d'algèbres $$\DD^{(m)}(G)(U')\simeq \OO_{U'}\ot_{\OO_{U}}\DD^{(m)}(G)(U),$$ et
donc $\DD^{(m)}(G)(U')$ est plat à gauche et à droite sur $\DD^{(m)}(G)(U)$. On obtient alors le résultat cherché
en procédant comme pour 3.1.1 et 3.1.3 de \cite{Be1}.
\end{proof}
\subsection{PD-stratifications de niveau $m$}
\label{strat_m}
%{{{
Nous retournons au cas où $G$ est un schéma en groupes quelconque affine et lisse sur $S$.
Grâce au formalisme présenté ici, nous pouvons décrire la donnée d'une structure de
$\DD^{(m)}(G)$-module (resp. $D^{(m)}(G)$) de façon analogue au cas classique en utilisant la notion
de stratification. Il s'agit essentiellement de
reproduire le 2.3 de \cite{Be1}. Voici les énoncés qu'on obtient formulés dans le cadre des faisceaux
 de $\DD^{(m)}(G)$-modules. En passant aux sections globales et en utilisant l'équivalence
précédente~\ref{eq_cat_Dm}, on peut procéder exactement de la même
façon pour les $D^{(m)}(G)$-modules. Nous n'expliciterons pas toujours les énoncés obtenus
dans le cadre des $D^{(m)}(G)$-modules.
\begin{defi} Soit $\EE$ un $\OO_S$-module. On appelle $G$-$m$-PD-stratification sur $\EE$ la
donnée d'une famille compatible d'isomorphismes, $\PP_{(m)}^{n}(G)$-linéaires,
$$\varep_n\,\colon\quad \PP_{(m)}^{n}(G)\ot_{\OO_S} \EE \sta{\sim}{\rig} \EE\ot_{\OO_S} \PP_{(m)}^{n}(G),$$
tels que \be\item[(i)]$\varep_0=Id_\EE$,
\item[(ii)]pour tous $n,n'$, le diagramme suivant
$$\xymatrix { \PP_{(m)}^{n}(G)\ot_{\OO_S} \PP_{(m)}^{n'}(G)\ot_{\OO_S} \EE
\ar@{->}[rr]^{\delta^{n,n'*}(\varep_{n+n'})}
\ar@{->}[rd]_{q^{n,n'*}_1(\varep_{n+n'})}^{\sim}
& & \EE\ot_{\OO_S} \PP_{(m)}^{n}(G)\ot_{\OO_S} \PP_{(m)}^{n'}(G)\\
& \PP_{(m)}^{n}(G)\ot_{\OO_S} \EE \ot_{\OO_S} \PP_{(m)}^{n'}(G)
\ar@{->}[ru]_{q^{n,n'*}_0(\varep_{n+n'})}^{\sim}, & }$$
dont les flèches sont obtenues par extension des scalaires
à partir des applications $\delta^{n,n'}$ (\ref{def-delta}) et des applications
$$q^{n,n'}_0\,\colon\, \PP^{n+n'}_{(m)}(G) \thrig \PP^n_{(m)}(G) \rig \PP^n_{(m)}(G) \ot_{\OO_S} \PP^{n'}_{(m)}(G),$$ et
$$q^{n,n'}_1\,\colon\, \PP^{n+n'}_{(m)}(G) \thrig \PP^{n'}_{(m)}(G) \rig \PP^n_{(m)}(G) \ot_{\OO_S} \PP^{n'}_{(m)}(G),$$
 est commutatif.
\ee
\end{defi}
On définit de façon tout à fait analogue
la notion de $G$-$m$-PD-stratification globale comme la donnée d'une famille compatible d'isomorphismes
$P_{(m)}^{n}(G)$-linéaires. Pour alléger ce texte nous ne formulons pas l'énoncé suivant pour les
$D^{(m)}(G)$-modules.

On dispose de la caractérisation suivante de la structure de $\DD^{(m)}(G)$-module.
\begin{prop} \label{equiv_stratification}
Soit $\EE$ un $\OO_S$-module. Il y a équivalence entre
\be\item [(i)]se donner une structure de $\DD^{(m)}(G)$-module sur $\EE$ prolongeant sa structure de $\OO_S$-module,
\item[(ii)]se donner une famille compatible d'homomorphismes $\OO_S$-linéaires
$$\theta_n\,\colon\quad \EE \rig \EE\ot_{\OO_S} \PP_{(m)}^{n}(G),$$ telle que $\theta_0=Id_{\EE}$,
et pour tous $n,n'$ le diagramme suivant soit commutatif
$$\xymatrix{\EE\ot \PP_{(m)}^{n+n'}(G)\ar@{->}[r]^(.45){Id\ot\delta^{n,n'}}&
\EE\ot \PP_{(m)}^{n}(G)\ot \PP_{(m)}^{n'}(G)\\
\EE \ar@{->}[r]^{\theta_{n'}}\ar@{->}[u]^{\theta_{n+n'}}&
 \EE\ot \PP_{(m)}^{n'}(G),\ar@{->}[u]^{\theta_{n}\ot Id}
}$$
\item[(iii)]se donner une $G$-$m$-PD-stratification sur $\EE$.
\ee

De plus, un homomorphisme $\OO_S$-linéaire $\lambda$: $\EE\rig \FF$ entre deux
$\DD^{(m)}(G)$-modules à gauche est $\DD^{(m)}(G)$-linéaire si et seulement s'il commute aux
homomorphismes $\theta_n$ (resp. aux isomorphismes $\varep_n$). Dans ce cas, on dit
qu'il est horizontal.
\item[(iv)]
Nous avons un énoncé analogue pour les $D^{(m)}(G)$-modules. Dans ce cas, se donner une $G$-$m$-PD-stratification
sur un $D^{(m)}(G)$-module $M$ revient à se donner une famille compatible d'homomorphismes $V$-linéaires
$\theta_n\,\colon\quad M \rig M\ot_{V} P_{(m)}^{n}(G),$ tels que les diagrammes analogues à ceux ci-dessus soient
commutatifs.

\end{prop}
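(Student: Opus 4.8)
\textit{Proof sketch.} The statement reproduces, in the present group setting, Berthelot's characterisation of $\Dm_X$-modules by $m$-PD-stratifications (2.3 of \cite{Be1}), and the plan is to follow that argument; the only extra input is that the faisceaux $\PP^n_{(m)}(G)$ are locally free of finite rank over $\OO_S$ (\ref{prop-local}). The starting observation is that, $\PP^n_{(m)}(G)$ being locally free of finite rank, the faisceau $\DD_n^{(m)}(G)=\HH om_{\OO_S}(\PP^n_{(m)}(G),\OO_S)$ is its $\OO_S$-dual and $\PP^n_{(m)}(G)$ is reflexive; hence tensor-hom adjunction gives, for every $\OO_S$-module $\EE$, a natural bijection between $\OO_S$-bilinear pairings $\DD_n^{(m)}(G)\ot_{\OO_S}\EE\rig\EE$ and $\OO_S$-linear maps $\theta_n\colon\EE\rig\EE\ot_{\OO_S}\PP^n_{(m)}(G)$, the pairing attached to $\theta_n$ being $(P,e)\mapsto(Id_\EE\ot P)(\theta_n(e))$.

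For (i)$\Leftrightarrow$(ii) I would transport a $\DD^{(m)}(G)$-module structure through this bijection and check that the module axioms become exactly the conditions in (ii): $\theta_0=Id_\EE$ corresponds to the unit $1\in\DD_0^{(m)}(G)=\OO_S$ acting trivially; the compatibility of the family $(\theta_n)$ with the transition maps $\EE\ot\PP^{n+1}_{(m)}(G)\rig\EE\ot\PP^n_{(m)}(G)$ corresponds to the compatibility of the actions with the inclusions $\DD_n^{(m)}(G)\hrig\DD_{n+1}^{(m)}(G)$ dual to the surjections $\PP^{n+1}_{(m)}(G)\trig\PP^n_{(m)}(G)$; and, since the product of $u\in\DD_n^{(m)}(G)$ and $v\in\DD_{n'}^{(m)}(G)$ is \emph{by definition} $(u\ot v)\circ\delta^{n,n'}$ (see \ref{def-delta_f}), pairing the two composites in the square of (ii) against $u\ot v$ turns its commutativity into the identity $(uv)\cdot e=u\cdot(v\cdot e)$. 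Conversely, setting $P\cdot e:=(Id_\EE\ot P)(\theta_n(e))$ for $P\in\DD_n^{(m)}(G)$ defines, thanks to the compatibility and the square, a $\DD^{(m)}(G)$-module structure, and the two constructions are mutually inverse.

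For (ii)$\Leftrightarrow$(iii): given a $G$-$m$-PD-stratification $(\varep_n)$ I would set $\theta_n=\varep_n\circ(e\mapsto 1\ot e)$; then $\varep_0=Id_\EE$ gives $\theta_0=Id_\EE$, and reading the cocycle diagram on the subsheaf $1\ot\EE$ — where $q^{n,n'}_0$, $q^{n,n'}_1$ are the two projections into $\PP^n_{(m)}(G)\ot\PP^{n'}_{(m)}(G)$ and $\delta^{n,n'}$ the comultiplication — yields the square of (ii). Conversely, since $\varep_n$ must be $\PP^n_{(m)}(G)$-linear and $\PP^n_{(m)}(G)\ot_{\OO_S}\EE$ is generated over $\PP^n_{(m)}(G)$ by $1\ot\EE$, there is a unique candidate $\varep_n(\lambda\ot e)=\lambda\cdot\theta_n(e)$ for the right $\PP^n_{(m)}(G)$-module structure on the target. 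The main obstacle is showing it is an isomorphism: working locally with the basis $\{\ut^{\{\uk\}}\}_{|\uk|\le n}$ of \ref{prop-local}, compatibility with $\theta_0=Id_\EE$ forces the constant term of $\theta_n(e)$ to be $e$, so $\varep_n$ respects the finite filtration of $\PP^n_{(m)}(G)\ot_{\OO_S}\EE$ by degree and induces the identity on the associated graded, hence is invertible (alternatively one builds the inverse from the antipode $\PP^n_{(m)}(G)\rig\PP^n_{(m)}(G)$ coming from the inversion $\iota\colon G\rig G$, which fixes $\varepsilon_G$); a final check identifies the cocycle diagram with the square of (ii). Matching the left/right $\OO_S$- and $\PP^n$-bookkeeping implicit in $\delta^{n,n'}$ is where most of the routine work lies.

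Finally, a morphism $\lambda\colon\EE\rig\FF$ of $\OO_S$-modules is $\DD^{(m)}(G)$-linear iff it commutes with each $\DD_n^{(m)}(G)$-action, iff by the same adjunction $(\lambda\ot Id_{\PP^n_{(m)}(G)})\circ\theta_n^{\EE}=\theta_n^{\FF}\circ\lambda$ for all $n$, iff $\lambda$ intertwines the $\varep_n$; this is the horizontality assertion. For (iv), since $S$ is affine and the $\PP^n_{(m)}(G)$ are locally free of finite rank, the whole argument repeats verbatim with $\OO_S$ replaced by $V$ and $\PP^n_{(m)}(G)$ by $P^n_{(m)}(G)=\Ga(S,\PP^n_{(m)}(G))$, a locally free $V$-module of finite rank (\ref{prop-basiselementsII_f}), giving that $V$-linear Taylor families $\theta_n\colon M\rig M\ot_V P^n_{(m)}(G)$ with $\theta_0=Id_M$ and the analogous commuting squares are the same as $D^{(m)}(G)$-module structures on $M$, with no coherence hypothesis needed; alternatively (iv) follows from the faisceau statement via the equivalence of categories \ref{eq_cat_Dm}.
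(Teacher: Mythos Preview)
Your proposal is correct and follows essentially the same approach as the paper, which simply refers to Berthelot's proof of proposition 2.3.2 in \cite{Be1} and records the definition of $\theta_n$ via the isomorphism $\HH om_{\OO_S}(\DD_n^{(m)}(G),\EE)\simeq\EE\ot\PP^n_{(m)}(G)$ together with the explicit local formulas for $\theta_n$ and $\varep_n^{-1}$. Your filtration argument for the invertibility of $\varep_n$ is exactly what underlies the paper's explicit formula $\varep_n^{-1}(x\ot 1)=\sum_{|\uk|\le n}(-1)^{|\uk|}\ut^{\{\uk\}}\ot\uxi^{\langle\uk\rangle}x$, so you have in fact spelled out more than the paper does.
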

La démonstration de ces résultats est rigoureusement identique à celle de Grothendieck dans le cas des opérateurs différentiels usuels (cf. proposition 2.3.2 de \cite{Be1}) et nous ne
la donnons pas. Signalons seulement comment sont définis les
$\theta_n$. Si $\EE$ est un $\DD^{(m)}(G)$-module, l'homomorphisme
$\theta_n$ est obtenu comme le composé
$$\theta_n\,\colon\quad \EE\rig Hom_{\OO_S}(\DD^{(m)}(G),\EE)\sta{\sim}{\rig} \EE\ot \PP_{(m)}^{n}(G).$$
La commutativité du diagramme imposée dans la proposition précédente correspond
alors à l'associativité de l'action de $\DD^{(m)}(G)$ sur $\EE$.

Etant donnée une suite régulière d'éléments de générateurs de $\II$ sur un ouvert $U$ de $G$,
en reprenant les notations de \ref{prop-basiselementsII},
%~\ref{subsubsection-descript_loc},
on voit que si $x\in \EE$,
$$\theta_n(x)=\sum_{|\uk|\leq n}\uxi^{\langle \uk \rangle}x\ot \ut^{\{k\}},$$
et que les isomorphismes $\varep_n$ obtenus par extension des scalaires ont un inverse
défini par
$$\varep_n^{-1}(x\ot 1)=\sum_{|\uk|\leq n}(-1)^{|\uk|}\ut^{\{\uk\}}\ot \uxi^{\langle \uk
\rangle}.$$

Cette description en termes de $G$-$m$-PD-stratification pour les $\DD^{(m)}(G)$-modules
nous donne le corollaire suivant comme en 1.2.3.3 de \cite{Be1}.
\begin{cor} \label{struct_tens_f}Soient $\EE$, $\FF$ deux $\DD^{(m)}(G)$-modules à gauche et $U$ un ouvert de $G$ sur lequel
$\II$ est muni d'une suite régulière de paramètres. Alors
\be\item[(i)]Il existe sur $\EE\ot_{\OO_S} \FF$ une unique structure de $\DD^{(m)}(G)$-module à gauche
telle que, pour tout $\uk$ et $x\in \EE$, $y\in \FF$, on ait
$$\uxi^{\langle \uk \rangle}\cdot(x\ot y)=\sum_{\ul\leq\uk}\crofrac{\uk}{\ul}
\uxi^{\langle \ul \rangle}x\ot \uxi^{\langle \uk-\ul \rangle}y.$$
\item[(ii)]Il existe sur $\HH om_{\OO_S}(\EE,\FF)$ une unique structure de $\DD^{(m)}(G)$-module à gauche
telle que, pour tout $\uk$, tout morphisme $\varphi$: $\EE\rig \FF$, tout $x\in \EE$, on ait
$$(\uxi^{\langle \uk \rangle}\varphi)(x)=\sum_{\ul\leq \uk}(-1)^{|\ul|}
\crofrac{\uk}{\ul} \uxi^{\langle \uk-\ul \rangle}(\varphi(\uxi^{\langle \ul \rangle}x)).$$
\ee
\end{cor}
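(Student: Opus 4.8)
The plan is to transcribe the argument of 1.2.3.3 of \cite{Be1}, replacing the $m$-PD stratification of a smooth scheme by the $G$-$m$-PD stratification of Proposition~\ref{equiv_stratification}. Recall from there that a left $\DD^{(m)}(G)$-module structure on an $\OO_S$-module $\EE$ extending its $\OO_S$-structure is the same datum as a compatible family of $\OO_S$-linear maps $\theta^{\EE}_n\colon\EE\rig\EE\ot_{\OO_S}\PP^n_{(m)}(G)$ with $\theta^{\EE}_0=\mathrm{id}$ satisfying the coassociativity square built from the comultiplications $\delta^{n,n'}$ of~\ref{def-delta_f}; that over an affine open $U$ on which $\II$ is generated by a regular sequence $t_1,\dots,t_N$ one has $\theta^{\EE}_n(x)=\sum_{|\uk|\leq n}(\uxi^{\la\uk\ra}x)\ot\ut^{\{\uk\}}$; and that the stratification isomorphism satisfies $(\varep^{\EE}_n)^{-1}(x\ot1)=\sum_{|\uk|\leq n}(-1)^{|\uk|}\ut^{\{\uk\}}\ot\uxi^{\la\uk\ra}x$ over $U$. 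Since the $\ut^{\{\uk\}}$ with $|\uk|\leq n$ form an $\OO_S$-basis of $\PP^n_{(m)}(G)$ over $U\cap S$ (\ref{prop-basiselementsII}), a family $(\theta^{\EE}_n)$, hence a module structure, is determined by the operators $x\mapsto\uxi^{\la\uk\ra}x$ on each such $U$, and these local data glue; this already yields the uniqueness assertions in (i) and (ii).

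For (i): given module structures on $\EE$ and $\FF$, set $\theta^{\EE\ot\FF}_n$ to be the composite
\[
\EE\ot\FF \sta{\theta^{\EE}_n\ot\theta^{\FF}_n}{\lorig} \EE\ot\PP^n_{(m)}(G)\ot\FF\ot\PP^n_{(m)}(G) \lorig \EE\ot\FF\ot\PP^n_{(m)}(G),
\]
where the last arrow first permutes the two middle factors and then applies the multiplication of the commutative $\OO_S$-algebra $\PP^n_{(m)}(G)$ to the two copies of it. Then $\theta^{\EE\ot\FF}_0=\mathrm{id}$, and the coassociativity square for $(\theta^{\EE\ot\FF}_n)$ unwinds, by a direct diagram chase, into the coassociativity squares for $(\theta^{\EE}_n)$ and $(\theta^{\FF}_n)$ together with the single extra ingredient that each $\delta^{n,n'}\colon\PP_{(m)}(G)\rig\PP^n_{(m)}(G)\ot\PP^{n'}_{(m)}(G)$ is an algebra homomorphism, the product on the target being componentwise: this is exactly what is shown in~\ref{prop-commutativite}. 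Hence $\EE\ot_{\OO_S}\FF$ inherits a $\DD^{(m)}(G)$-module structure. For the formula, evaluate $\theta^{\EE\ot\FF}_n(x\ot y)$ over $U$: substituting the local expressions for $\theta^{\EE}_n$, $\theta^{\FF}_n$ and using the multiplication law $\ut^{\{\ul\}}\cdot\ut^{\{\ul'\}}=\acfrac{\ul+\ul'}{\ul}\,\ut^{\{\ul+\ul'\}}$ of~\ref{subsubsection-formules}, one reads off, from the coefficient of $\ut^{\{\uk\}}$, the action of $\uxi^{\la\uk\ra}$ on $x\ot y$, which is the asserted Leibniz-type formula (the coefficient being the one produced by this multiplication law).

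For (ii): using the stratifications $\varep^{\EE}_n$, $\varep^{\FF}_n$ and the canonical isomorphism $\HH om_{\OO_S}(\EE,\FF)\ot_{\OO_S}\PP^n_{(m)}(G)\simeq\HH om_{\OO_S}(\EE,\FF\ot_{\OO_S}\PP^n_{(m)}(G))$ — valid since $\PP^n_{(m)}(G)$ is locally free of finite rank over $\OO_S$ — equip $\HH om_{\OO_S}(\EE,\FF)$ with the stratification obtained by conjugating a morphism $\varphi$ by $\varep^{\FF}_n$ and $(\varep^{\EE}_n)^{-1}$; equivalently, it is the unique $\DD^{(m)}(G)$-module structure for which the evaluation pairing $\HH om_{\OO_S}(\EE,\FF)\ot_{\OO_S}\EE\rig\FF$ is horizontal. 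That this defines a $G$-$m$-PD stratification is the same formal verification as in (i). The explicit formula then falls out of the local expression for $(\varep^{\EE}_n)^{-1}$ recalled above, which accounts for the signs $(-1)^{|\ul|}$, together once more with the multiplication law in $\PP^n_{(m)}(G)$.

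The only step that is not purely formal is the $G$-specific input: the multiplicativity and the (filtered) coassociativity of the comultiplications $\delta^{n,n'}$, which encode that $\mu^{\sharp}$ is a ring homomorphism and that the group law is associative, both already established in~\ref{prop-commutativite}. Granting these, the argument is a transcription of 1.2.3.3 of \cite{Be1}, the usual bookkeeping with left and right $\OO$-module structures being harmless here because $\PP^n_{(m)}(G)$ carries a single $\OO_S=V$-algebra structure. In writing it up, the step demanding the most care will be keeping the ordering of the tensor factors and the direction of the stratification isomorphisms consistent throughout the coassociativity chase, in particular for (ii).
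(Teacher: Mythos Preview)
Your proposal is correct and follows exactly the approach the paper indicates: the paper's proof consists of the single remark that the corollary follows ``comme en 1.2.3.3 de \cite{Be1}'' from the $G$-$m$-PD stratification description in Proposition~\ref{equiv_stratification}, and you have spelled out precisely that transcription. Your care in noting that the coefficient in the Leibniz formula is ``the one produced by'' the multiplication law $\ut^{\{\ul\}}\ut^{\{\ul'\}}=\acfrac{\ul+\ul'}{\ul}\ut^{\{\ul+\ul'\}}$ is well placed, since that computation yields $\acfrac{\uk}{\ul}$ rather than the $\crofrac{\uk}{\ul}$ printed in the statement (compare the analogous formula~\ref{def_prod_am}); this is a typographical slip in the statement, not a flaw in your argument.
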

On déduit de cet énoncé, en appliquant l'équivalence de catégories ~\ref{eq_cat_Dm}
que l'on peut donner les définitions suivantes
\begin{defi} \label{struct_tens}Soient $E$, $F$ deux $D^{(m)}(G)$-modules à gauche, $\EE$ et $\FF$ les
$\DD^{(m)}(G)$-modules associés. Alors on définit le produit tensoriel et le foncteur $H om$ comme les
 $D^{(m)}(G)$-modules à gauche suivants
\begin{gather}
          E\ot_V F=\Ga(S,\EE\ot_{\OO_S}\FF), \\
            Hom_V(E,F)=\Ga(S,\HH om_{\OO_S}(\EE,\FF)).
\end{gather} \end{defi}
Si l'idéal $\II$ peut-être muni d'une suite régulière de paramètres sur un ouvert $U$ de $G$ contenant
$e(S)$, alors les formules données dans le corollaire précédent ~\ref{struct_tens_f} s'appliquent.
\begin{cor} Sous les hypothèses~\ref{section-rappels},
on dispose d'un foncteur de
la catégorie des $G$-modules vers la catégorie des $D^{(m)}(G)$-modules.
% A mettre plus tard:
%On dispose de plus d'un foncteur de la catégorie des $G$-modules, complets
%pour la topologie $p$-adique vers la catégorie des $\what{D}^{(m)}(\GG)$-modules (resp.
%$D^{\dagger}(\GG)$-modules).
\end{cor}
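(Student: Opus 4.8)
The plan is to build the functor directly from the characterization of $D^{(m)}(G)$-modules by $G$-$m$-PD-stratifications given in Proposition~\ref{equiv_stratification}(iv). Starting from a $G$-module $M$ in the sense of Jantzen, i.e.\ a $V$-module equipped with a comodule map $\Delta_M\colon M\rig M\ot_V V[G]$, I would set, for every $n$,
$$\theta_n\,\colon\quad M\sta{\Delta_M}{\rig} M\ot_V V[G]\sta{\mathrm{id}_M\ot\rho_m}{\rig} M\ot_V P^n_{(m)}(G),$$
where $\rho_m\colon V[G]\rig P^n_{(m)}(G)$ is the canonical morphism of~\ref{def_rho} on global sections. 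Since the maps $\rho_m$ are compatible with the transition surjections $P^{n+1}_{(m)}(G)\trig P^n_{(m)}(G)$, the family $(\theta_n)_n$ is automatically compatible; note that no finiteness assumption on $M$ is needed, finite local freeness of the $\PP^n_{(m)}(G)$ (known from~\ref{prop-local}) being used only inside Proposition~\ref{equiv_stratification} itself.

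Next I would verify the two conditions of that proposition. First, $\theta_0=\mathrm{id}_M$: for $n=0$ the map $\rho_m\colon V[G]\rig P^0_{(m)}(G)=V$ is the augmentation $\varep_G$, so $\theta_0=(\mathrm{id}_M\ot\varep_G)\circ\Delta_M=\mathrm{id}_M$ by the counit axiom for a $G$-module. Second, and this is the heart of the matter, the pentagon $(\mathrm{id}_M\ot\delta^{n,n'})\circ\theta_{n+n'}=(\theta_n\ot\mathrm{id})\circ\theta_{n'}$. By the very construction of $\delta^{n,n'}$ in~\ref{def-delta_f}--\ref{def-delta} one has, on global sections, $\delta^{n,n'}\circ\rho_m=(\rho_m\ot\rho_m)\circ\mu^{\sharp}$; substituting this into the left-hand side and then applying the coassociativity $(\mathrm{id}_M\ot\mu^{\sharp})\circ\Delta_M=(\Delta_M\ot\mathrm{id}_{V[G]})\circ\Delta_M$ of the comodule turns it into $(\mathrm{id}_M\ot\rho_m\ot\rho_m)\circ(\Delta_M\ot\mathrm{id}_{V[G]})\circ\Delta_M$, which, after pushing the two copies of $\rho_m$ through the tensor factors, is precisely $(\theta_n\ot\mathrm{id})\circ\theta_{n'}$ — a routine Sweedler-notation check. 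Proposition~\ref{equiv_stratification} then produces a $D^{(m)}(G)$-module structure on $M$ extending its $V$-module structure.

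For morphisms: if $\phi\colon M\rig N$ is $G$-equivariant it commutes with the comodule maps, hence with the $\theta_n$'s, hence is horizontal, i.e.\ $D^{(m)}(G)$-linear, by the last assertion of Proposition~\ref{equiv_stratification}; identities and composites are visibly preserved, which yields the functor. I do not expect a genuine obstacle: the construction is formal once the bookkeeping is right, and the only point requiring care is the consistency of conventions — one must use the Jantzen convention $\Delta_M\colon M\rig M\ot_V V[G]$ for a $G$-module, so that the group factor sits on the right exactly as in the stratification maps $\theta_n$ of Proposition~\ref{equiv_stratification}(iv); with the opposite ``$G$-comodule'' convention of~\ref{section-rappels} one would instead land in right $D^{(m)}(G)$-modules. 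One may additionally record that the resulting action, restricted along $\gr_1 D^{(m)}(G)\stackrel{\sim}{\rig}Lie(G)$ of~\ref{desc-D1}, recovers the infinitesimal action of $Lie(G)$ on $M$.
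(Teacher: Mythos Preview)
Your proposal is correct and follows exactly the paper's own argument: define $\theta_n=(\mathrm{id}_M\ot\rho_m)\circ\Delta_M$, check $\theta_0=\mathrm{id}_M$ via the counit, and verify the pentagon via $\delta^{n,n'}\circ\rho_m=(\rho_m\ot\rho_m)\circ\mu^{\sharp}$ together with coassociativity of $\Delta_M$. Your remarks on conventions and on functoriality of morphisms spell out what the paper compresses into ``toutes les constructions sont fonctorielles''.
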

\begin{proof}Soit $M$ un $G$-module dont la structure est définie par une
application $\Delta_M$ $M\rig M\ot V[G]$ vérifiant~\ref{comodule}.
Il suffit de montrer que
l'on peut munir $M$ d'une $G$-$m$-PD-stratification. Pour cela on vérifie le critère
de la proposition~\ref{equiv_stratification} (pour les $D^{(m)}(G)$-modules), en posant
$$\theta_n=(Id_M\ot\rho_m)\circ \Delta_M\,\colon\,M\rig M\ot P^n_{(m)}(G).$$
Pour $n=0$, l'algèbre $ P^n_{(m)}(G)$ est égale à $V$ et on vérifie comme dans le
cas classique que $\theta_0=Id_M$ en utilisant les relations~\ref{comodule}.
Nous vérifions à présent que le diagramme donné en (ii) de la proposition~\ref{equiv_stratification}
commute, en utilisant la commutation du diagramme ci-dessous
(par définition de l'application
$\delta^{n,n'}$~\ref{def-delta})
$$\xymatrix{V[G] \ar@{->}[r]^{\mu^{\sharp}}\ar@{->}[d]_{\rho_{m}} & V[G]\times
V[G]\ar@{->}[d]^{\rho_{m}\ot\rho_{m}} \\
P^{n+n'}_{(m)}(G) \ar@{->}[r]^<<<<<{\delta^{n,n'}}& P^n_{(m)}(G)\ot P^{n'}_{(m)}(G),}$$
et les égalités qui suivent, lesquelles font intervenir les relations vérifiées par l'application $\Delta_M$
\begin{align*}
Id\ot\delta^{n,n'}\circ (Id_M\ot\rho_{m})\circ \Delta_M & =  (Id_M\ot(\rho_m\otimes
\rho_{m})\circ \mu^{\sharp})\circ \Delta_M \\
& =  (Id_M\ot(\rho_m\ot\rho_{m}))\circ (\Delta_M\ot id_{V[G]})\circ  \Delta_M \\
& =  \left((Id_M\ot\rho_m)\circ
\Delta_M\ot Id_{P^{n'}_{(m)}(G)}\right)\circ(Id_M\ot\rho_{m})\circ \Delta_M.
\end{align*}
 Toutes les constructions sont fonctorielles.
\end{proof}
%}}}

\vskip4pt
Remarque: de façon analogue aux stratifications, nous pouvons donner une description des $D^{(m)}(G)$-modules à droite
en définissant la notion de costratification comme dans le chapitre 1 de ~\cite{Be-smf}. Nous n'entrerons pas dans ces détails ici.

\subsection{Liens avec les faisceaux différentiels sur $G$.}
\label{liens-faisdiffG}
%{{{
Soient $G$ un schéma en groupes lisse sur $S$, $\Delta$ l'immersion diagonale $G\hrig G\times G$, définie par l'idéal
$\II_{\Delta}$, $e_G = e \times id_G$, c'est-à-dire
$$ e_G \,\colon\,\xymatrix @R=0mm { G \ar@{^{(}->}[r] & G \times G \\
                                                      g  \ar@{|->}[r]& (e,g) .}$$
Alors on a pour $t\in\OO_G$, $(\varep_G\ot Id) (1\ot t -t \ot
1)=t-\varep_G(t)\in \II$, si bien que $(e_G)^*(\II_{\Delta})=\II$,
et le diagramme suivant est
cartésien
\begin{gather}\label{diagide}
\xymatrix { S \ar@{^{(}->}[r]^e \ar@{^{(}->}[d]_{e}\ar@{}[dr]|{\square} & G
\ar@{^{(}->}[d]^{\Delta}\\
G \ar@{^{(}->}[r]_{e_G}& G\times G.}
\end{gather}
Considérons la $m$-PD-enveloppe de $\OO_G$ pour l'idéal $\II$ définissant $e$, $\PP^n_{(m)}(G)$
introduite au début de ~\ref{sect-distribution-algebras}. Le faisceau d'algèbres $e_G^{*}\Delta_*\PP_{G,(m)}^{n}$
est le faisceau de $m$-PD-enveloppes de l'idéal $e_G^*\II_{\Delta}=\II$, puisque $\II_{\Delta}$ est un idéal
régulier, et par la propriété de changement de base dans le cas régulier de 1.5.3 de \cite{Be1}.
Par la propriété universelle des $m$-PD-enveloppes, on dispose d'un $m$-PD-morphisme canonique $\alpha_m$ : $ e_G^{*}\Delta_*\PP_{G,(m)}^{n}\rig
e_*\PP^n_{(m)}(G)$, ou encore $\tilde{\beta}_m$ :
$e^*\PP_{G,(m)}^{n}\rig \PP^n_{(m)}(G)$. Plus précisément, on a la
\begin{prop}
\label{reduction_mod_e}
\be \item[(i)] Les morphismes $\tilde{\beta}_m$ sont des
$m$-PD-isomorphismes.
\item[(ii)]
Ces morphismes induisent des isomorphismes canoniques de $m$-PD-algèbres
$$ \tilde{\beta}_m\,\colon\,\Ga(S,e^*\PP_{G,(m)}^{n})\simeq P_{(m)}^{n}(G).$$
\ee
\end{prop}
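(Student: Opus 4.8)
The plan is to reduce everything to the base-change property of $m$-PD-enveloppes for regular ideals (1.5.3 of \cite{Be1}), applied to the cartesian square \ref{diagide}. Since $\Delta$ is a regular immersion ($G/S$ being smooth) and the square is cartesian, that base-change statement identifies $e_G^{*}\Delta_*\PP_{G,(m)}^{n}$ with the order-$n$ truncation of the $m$-PD-enveloppe of $e_G^{*}\II_\Delta$, and $e_G^{*}\II_\Delta=\II$ was noted just before the statement. On the other hand $\PP^n_{(m)}(G)$ is by definition the order-$n$ truncation of the $m$-PD-enveloppe of $\II$. As $\tilde\beta_m$ is the canonical comparison morphism coming from the universal property, it is the comparison between two realizations of the same universal object, hence an isomorphism of $m$-PD-algebras; this is (i). Part (ii) then follows by applying the exact functor $\Ga(S,-)$ on the affine scheme $S$, giving the $m$-PD-isomorphism $\Ga(S,e^*\PP_{G,(m)}^{n})\simeq P^n_{(m)}(G)$.

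To make this explicit — and since (i) is local on $S$, both members being quasi-coherent $\OO_S$-modules — I would work around a point of $e(S)$ on an open $U\subseteq G$ where $\II$ is generated by a regular sequence of parameters $t_1,\dots,t_N$. These induce a basis of $\II/\II^2$; since $\II/\II^2\simeq e^*\Omega^1_{G/S}$ canonically (again read off from \ref{diagide}), the $dt_i$ form a basis of $\Omega^1_{G/S}$ along $e(S)$, so after shrinking $U$ the $t_i$ are étale coordinates on $U/S$ and $\tau_i:=1\ot t_i-t_i\ot 1$ is a regular system of parameters for $\II_\Delta$ on $U\times_S U$. By 1.5.3 of \cite{Be1}, $\PP_{G,(m)}^{n}$ is then free over $\OO_U$ (left structure) with basis the $\utau^{\{\uk\}}$, $|\uk|\leq n$; hence $e^*\PP_{G,(m)}^{n}$ is $\OO_S$-free on the images of the $\utau^{\{\uk\}}$, while $\PP^n_{(m)}(G)$ is $\OO_S$-free on the $\ut^{\{\uk\}}$, $|\uk|\leq n$, by \ref{prop-local}. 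Now $\tilde\beta_m$ is compatible with the canonical maps $r_m$ and $\rho_m$, so that $r_m(1\ot t)\mapsto\rho_m(t)$; and $\varep_G(t_i)=0$ because $t_i\in\II$, so the image of $\tau_i$ equals that of $r_m(1\ot t_i)$ and is carried by $\tilde\beta_m$ to $\rho_m(t_i)=t_i$. Being an $m$-PD-morphism, $\tilde\beta_m$ therefore sends the image of $\utau^{\{\uk\}}$ to $\ut^{\{\uk\}}$, identifying the two $\OO_S$-bases.

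I do not foresee any real obstacle: the only inputs are the base-change property of $m$-PD-enveloppes of regular ideals from \cite{Be1} and the equality $e_G^{*}\II_\Delta=\II$ already recorded before the statement. The one mildly delicate observation is that a regular system of generators of $\II$ is automatically a system of étale coordinates on $G$ near $e(S)$ — precisely what makes the $\tau_i$ a regular system of parameters for the diagonal ideal and allows one to read everything off in local coordinates.
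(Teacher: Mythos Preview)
Your proof is correct and follows essentially the same route as the paper: both reduce to the base-change property of $m$-PD-envelopes for regular ideals (1.5.3 de \cite{Be1}) applied to the cartesian square \ref{diagide}, then verify locally that $\tilde\beta_m$ carries the basis $\utau^{\{\uk\}}$ to $\ut^{\{\uk\}}$. The only cosmetic difference is that the paper starts from local coordinates $x_1,\dots,x_N$ on $G$ and sets $t_i=x_i-\varep_G(x_i)$ as generators of $\II$, whereas you start from a regular generating sequence $t_1,\dots,t_N$ of $\II$ and observe that it furnishes \'etale coordinates near $e(S)$; both directions are equally valid and your ``mildly delicate observation'' is exactly the converse of the paper's step.
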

\begin{proof} Le (ii) se déduit de (i) par passage aux sections globales sur $S$. Montrons (i). Fixons un entier $m$.
L'assertion est locale et l'isomorphisme provient du fait que $\II_{\Delta}$ et $\II$ sont localement réguliers.
En effet, soient $x_1,\ldots, x_N$ des coordonnées locales de $G$ sur un ouvert $U$ de $G$, et
$\tau_i=1\ot x_i -x_i \ot 1$,
de sorte que les éléments $t_i=\alpha_m(\tau_i)$, i.e. $t_i=x_i-\varep_G(x_i)$ forment une suite régulière de
$\II$ sur $U$. De plus,
$$e_G^*\left(\OO_{G\times G}/\II_{\Delta}\right)\simeq \OO_G/\II.$$ On peut
alors identifier : $$ e_G^{*}\Delta_*\PP_{G,(m)}^{n}\simeq \bigoplus_{|\uk|\leq n} e_* \OO_S\utau^{\{\uk\}},\;
                      e_*\PP^n_{(m)}(G)\simeq \bigoplus_{|\uk|\leq n} e_*\OO_S \ut^{\{\uk\}}.$$
Le $m$-PD-morphisme $\alpha_m$ entre ces deux algèbres est donné par
 $\alpha_m(\utau^{\{\uk\}})=\ut^{\{\uk\}}$, qui est clairement un isomorphisme, et donc aussi $\tilde{\beta}_m$.
 \end{proof}
De la surjection canonique $\PP^n_{G,(m)} \thrig  e_{*}e^{*}\PP^n_{G,(m)}$, et la proposition précédente on déduit
qu'il existe une surjection
\begin{gather}\label{def_sm_gl}
s_m \,\colon\, \PP^n_{G,(m)} \thrig e_*\PP_{(m)}^{n}(G),
\end{gather}
qui est un $m$-PD-morphisme comme composé de $m$-PD-morphismes car l'idéal $\II_{\Delta}$ est localement régulier.
%Plus concrètement, si $U$ est un ouvert affine de $G$, muni de coordonnées $x_1,\ldots, x_N$, $\II$
%l'idéal de $\OO_G$ définissant l'immersion $e$, alors $\II(U)\ot 1$ est un idéal de $\OO_{G\times G}(U)$
%et l'isomorphisme de l'énoncé dit que
%$$\PP^n_{G,(m)}(U)\left/ (\II(U)\ot 1)\PP^n_{G,(m)}(U)\simeq e_*\PP_{(m)}^{n}(G)(U)\right. .$$
%On en déduit une surjection canonique
%\begin{gather}\label{def_sm_gl}
%s_m \,\colon\, \PP^n_{G,(m)}(U) \thrig e_*\PP_{(m)}^{n}(G)(U),
%\end{gather} qui est un $m$-PD-morphisme
%comme composé de $m$-PD-morphismes, puisque, comme $U$ est muni de coordonnées, $\II(U)$ est un idéal régulier.
%Dans la suite, on note $s_m$ ce morphisme surjectif de faisceaux
%\begin{gather}\label{def_sm_gl}
%s_m \,\colon\, \PP^n_{G,(m)} \thrig e_*\PP_{(m)}^{n}(G).
%\end{gather}
Soit $can$ le morphisme canonique $\OO_{G\times G}\rig \PP^n_{G,(m)}$, $d_1$ :
$\OO_G \rig \OO_{G\times G}$ donné par $d_1(f)=1\ot f$, $u_d=can \circ d_1$,
 et reprenons $\rho_m$ de~\ref{def_rho}.
L'énoncé se complète de l'observation suivante.
\begin{prop}\label{reduction_mode_comp} Le diagramme suivant  de faisceaux de $\OO_G$-modules est commutatif
$$\xymatrix{ \OO_G \ar@{->}[r]^{u_d}\ar@{->}[d]^{\rho_m} & \PP^n_{G,(m)}\ar@{->>}[dl]^{s_m}\\
e_*\PP_{(m)}^{n}(G).}$$
\end{prop}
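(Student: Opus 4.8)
The plan is to check commutativity locally on an open $U\subset G$ on which the diagonal ideal $\II_\Delta$ admits a regular system of parameters, so that we may use the explicit description of $\tilde\beta_m$ from the proof of \ref{reduction_mod_e}. Concretely, pick local coordinates $x_1,\ldots,x_N$ on $U$ and set $\tau_i=1\ot x_i-x_i\ot 1\in\OO_{G\times G}$; then the $t_i:=x_i-\varep_G(x_i)$ form a regular sequence for $\II$ on $U$, and we have the bases $\utau^{\{\uk\}}$ of $\PP^n_{G,(m)}$ and $\ut^{\{\uk\}}$ of $\PP^n_{(m)}(G)$, with $s_m(\utau^{\{\uk\}})=\ut^{\{\uk\}}$ (this is exactly the composite of $\PP^n_{G,(m)}\thrig e_*e^*\PP^n_{G,(m)}$ with $e_*\tilde\beta_m$, and the latter sends $\utau^{\{\uk\}}$ to $\ut^{\{\uk\}}$ as computed in the proof of \ref{reduction_mod_e}).

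First I would trace a local section $f\in\OO_G(U)$ around the upper route. By definition $u_d(f)=can(1\ot f)$, the image in $\PP^n_{G,(m)}$ of $1\ot f$. Expanding $f$ as a polynomial expression in the $x_i$ and using $1\ot x_i=(x_i\ot 1)+\tau_i$, one writes $1\ot f$ as $f\ot 1$ plus a sum of terms involving the $\tau_i$; more precisely, the usual Taylor expansion in the $m$-PD setting (the formula underlying \ref{basetau} and the identity $q_i!\tau_i^{\{k_i\}}=\tau_i^{k_i}$) gives
$$u_d(f)=\sum_{|\uk|\leq n}\der^{\la\uk\ra}(f)\otimes \utau^{\{\uk\}}$$
in $\PP^n_{G,(m)}$, where $\der^{\la\uk\ra}$ is the dual basis of the $\utau^{\{\uk\}}$ and here we identify the $\OO_{G,g}$-coefficients; applying $s_m$, which is $\OO_G$-linear (for the left structure) and sends $\utau^{\{\uk\}}$ to $\ut^{\{\uk\}}$, then evaluating coefficients at $e$ via $\varep_G$ — which is exactly what $s_m$ does on the left factor since it factors through $e_*e^*$ — yields $\sum_{|\uk|\leq n}\uxi^{\la\uk\ra}(f)\ot\ut^{\{\uk\}}$.

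Next I would trace $f$ around the lower route: $\rho_m(f)\in e_*\PP^n_{(m)}(G)$ is by its very definition (the Taylor formula \ref{f-taylor1}, itself just the description of $\rho_m$ in the local coordinates $t_i$ on $e(S)$) equal to $\sum_{|\uk|\leq n}\uxi^{\la\uk\ra}(f)\ot\ut^{\{\uk\}}$. Since the two expressions agree, the diagram commutes. Alternatively, and more cleanly, I would avoid coefficient bookkeeping altogether: both $s_m\circ u_d$ and $\rho_m$ are $\OO_S$-algebra morphisms $\OO_G\to e_*\PP^n_{(m)}(G)$ (for $s_m\circ u_d$ this uses that $u_d$ is induced by the $m$-PD morphism $can$ restricted along $d_1$, and that $s_m$ is an $m$-PD morphism), they agree on $\OO_S$, and by the universal property of the $m$-PD-envelope $\PP^n_{(m)}(G)$ of $\II\subset\OO_G$ a map out of it is determined by the underlying algebra map $\OO_G\to e_*\PP^n_{(m)}(G)$ together with compatibility of $m$-PD structures; since both maps send $t_i$ to the canonical image $t_i$ of $t_i$ in $e_*\PP^1_{(m)}(G)$ (for $\rho_m$ by construction of $\rho_m$, for $s_m\circ u_d$ because $u_d(t_i)=u_d(x_i)-\varep_G(x_i)=\tau_i$ modulo $\varep_G(x_i)$ and $s_m(\tau_i)=t_i$), they coincide on generators and hence everywhere.

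The only genuine subtlety — which I expect to be the main point to get right rather than a real obstacle — is bookkeeping the two $\OO_G$-module structures on $\PP^n_{G,(m)}$: $u_d$ uses the \emph{right} structure $d_1$ to push $f$ in, whereas $s_m$ is the composite that kills exactly the difference between the left and right structures along $e$ (it factors through $e_*e^*$, i.e. through evaluation at the unit section). Once one notes that $s_m$ identifies $1\ot t$ with $t\ot 1$ modulo $\II_\Delta$ and more generally sends $\utau^{\{\uk\}}$ to $\ut^{\{\uk\}}$ with the surviving left coefficient evaluated at $e$, the commutativity is immediate from comparing with the defining Taylor formula for $\rho_m$. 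So the proof is a short local verification plus the universal property argument, with no hard computation.
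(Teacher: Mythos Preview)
Your first approach via explicit Taylor expansions is circular. The step where you apply $s_m$ and assert that the result is $\sum_{|\uk|\leq n}\uxi^{\la\uk\ra}(f)\,\ut^{\{\uk\}}$ amounts to the identity $(\der^{\la\uk\ra}(f))(e)=\uxi^{\la\uk\ra}(f)$, which is precisely Proposition~\ref{calcul_expl}; but the paper proves \ref{calcul_expl} \emph{using} the present proposition. Unwinding, you are just restating $s_m\circ u_d=\rho_m$ coefficient by coefficient.

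Your second approach is close in spirit to the paper's, but your invocation of the universal property is garbled (you speak of ``a map out of'' $\PP^n_{(m)}(G)$, whereas both $s_m\circ u_d$ and $\rho_m$ go \emph{into} it), and the sentence ``they coincide on generators and hence everywhere'' is not justified: the $t_i$ need not generate $\OO_G$ as an $\OO_S$-algebra on a smooth open $U$. What does work is the following: check that $s_m\circ u_d$ sends $\II$ into the $m$-PD-ideal of $\PP^n_{(m)}(G)$ (write $1\ot f=(f\ot 1)+(1\ot f-f\ot 1)$ and use that $s_m$ is an $m$-PD-morphism killing $\II\ot 1$); then by the universal property of the $m$-PD-envelope of $\II$, $s_m\circ u_d$ factors as $\rho_m$ followed by an $m$-PD-endomorphism $\psi$ of $\PP^n_{(m)}(G)$; your computation $s_m(u_d(t_i))=t_i$ shows $\psi(t_i)=t_i$, and since $\PP^n_{(m)}(G)$ is $m$-PD-generated over $\OO_S$ by the $t_i$, this forces $\psi=\mathrm{id}$. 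The paper does the mirror image: it introduces the intermediate map $\rho_m\circ(\varep_G\ot id_G):\OO_{G\times G}\to e_*\PP^n_{(m)}(G)$, uses the universal property of $\PP^n_{G,(m)}$ (the envelope of $\II_\Delta$) to factor it as $can$ followed by a unique $m$-PD-morphism $s'_m$, checks $s'_m(\tau_i)=t_i=s_m(\tau_i)$ to conclude $s'_m=s_m$, and finishes with $(\varep_G\ot id_G)\circ d_1=id_G$. The paper's route is slightly cleaner because $\PP^n_{G,(m)}$ is visibly $m$-PD-generated over $\OO_G$ by the $\tau_i$, so the ``coincide on generators'' step needs no extra care.
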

\begin{proof}  Il suffit de montrer que
le diagramme est commutatif sur les ouverts affines de $G$ munis de coordonnées et tels que $U \bigcap e(S)\neq \emptyset$.
Supposons donc que $U$ vérifie des deux conditions.
On considère le diagramme suivant
$$\xymatrix{ \OO_G(U) \ar@{->}[r]^{d_1}&  \OO_{G\times G}(U) \ar@{->}[d]^{can}\ar@{->>}[r]^{\varep_G\ot id_G} &
 \OO_G(U) \ar@{->}[d]^{\rho_m} \\
   & \PP^n_{G,(m)}(U) \ar@{->>}[r]^{s_m} &e_*\PP_{(m)}^{n}(G)(U) .}$$
 Le morphisme $\rho_m\circ (\varep^*_G\ot id_G)$ envoie $\II_{\Delta}(U)$
sur un $m$-PD-idéal, donc par la propriété universelle des $m$-PD-enveloppes, il existe un unique homomorphisme de
$m$-PD-algèbres $s'_m$ : $\PP^n_{G,(m)}(U)\rig e_*\PP_{(m)}^{n}(G)(U)$ tel que $s'_m\circ can=\rho_m\circ (\varep_G\ot id_G)$.
Montrons que les $m$-PD-morphismes $s_m$ et $s'_m$ co\"\i ncident. On a $s_m(\tau_i)=t_i$.
D'autre part $s'_m(1\ot x_i-x_i \ot 1)=\rho_m(t_i)=t_i.$ Cela montre que $s_m$ et $s'_m$ co\"\i ncident sur une famille
génératrice de $\II_{\Delta}$. Comme $s_m$ et $s'_m$ sont des $m$-PD-morphismes, ils sont égaux. La proposition en
découle car $(\varep_G\ot id_G)\circ d_1=Id_G$.
\end{proof}

Fixons un entier $m$. En appliquant $\HH om_{\OO_S}(\cdot,\OO_S)$ aux morphismes
$\tilde{\beta}_m$ de la proposition~\ref{reduction_mod_e}, on
obtient la
\begin{prop}\label{reduction_Dmod_e}  \be \item[(i)]
Il existe un isomorphisme de $\OO_S$-modules
$$ \beta_m\,\colon\,\DD^{(m)}_{n}(G)\simeq e^{*}\DD^{(m)}_{G,n},\text{ resp. }
\DD^{(m)}(G)\simeq e^{*}\DD^{(m)}_G.$$
\item[(ii)] Il existe un isomorphismes de $V$-modules
 $$ \beta_m\,\colon\,D^{(m)}_{n}(G)\simeq \Ga(S,e^{*}\DD^{(m)}_{G,n}),\text{ resp. }
D^{(m)}(G)\simeq \Ga(S,e^{*}\DD^{(m)}_G).$$
\ee
\end{prop}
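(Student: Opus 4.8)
The plan is to deduce everything by dualizing the $m$-PD-isomorphisms $\tilde{\beta}_m$ of Proposition~\ref{reduction_mod_e}. First I would reduce to assertion (i): part (ii) follows from (i) by applying the global sections functor $\Ga(S,-)$, since $S$ is affine and hence $D_n^{(m)}(G)=\Ga(S,\DD_n^{(m)}(G))$ and $D^{(m)}(G)=\Ga(S,\DD^{(m)}(G))$. Moreover, since $e^*$ commutes with filtered inductive limits and $\DD^{(m)}(G)=\varinjlim_n\DD_n^{(m)}(G)$, $\DD_G^{(m)}=\varinjlim_n\DD_{G,n}^{(m)}$, it is enough to produce, for each $n$, a canonical isomorphism $\beta_m\colon\DD_n^{(m)}(G)\sta{\sim}{\rig}e^*\DD_{G,n}^{(m)}$ compatible with the transition maps in $n$.

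For a fixed $n$ I would proceed in two steps. The first step uses local freeness: $\PP_{G,(m)}^n$ is locally free of finite rank over $\OO_G$ for the left ($\OO_{G,g}$) structure — the one with respect to which $\DD_{G,n}^{(m)}=\HH om_{\OO_G}(\PP_{G,(m)}^n,\OO_G)$ is formed — while $\PP_{(m)}^n(G)$ is locally free of finite rank over $\OO_S$ (1.5.3 of \cite{Be1}). For a locally free $\OO_G$-module $\FF$ of finite rank the base change morphism along the closed immersion $e\colon S\hrig G$ is an isomorphism $e^*\HH om_{\OO_G}(\FF,\OO_G)\sta{\sim}{\rig}\HH om_{\OO_S}(e^*\FF,\OO_S)$ (here $e^*$ is taken with respect to the $\OO_{G,g}$-structure, matching the one used in \ref{reduction_mod_e}); applied to $\FF=\PP_{G,(m)}^n$ this identifies $e^*\DD_{G,n}^{(m)}$ with $\HH om_{\OO_S}(e^*\PP_{G,(m)}^n,\OO_S)$. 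The second step is to apply the contravariant functor $\HH om_{\OO_S}(-,\OO_S)$ to the isomorphism $\tilde{\beta}_m\colon e^*\PP_{G,(m)}^n\sta{\sim}{\rig}\PP_{(m)}^n(G)$ of \ref{reduction_mod_e}(i), which gives $\DD_n^{(m)}(G)=\HH om_{\OO_S}(\PP_{(m)}^n(G),\OO_S)\sta{\sim}{\rig}\HH om_{\OO_S}(e^*\PP_{G,(m)}^n,\OO_S)$. Composing the two yields $\beta_m$; in local coordinates $t_1,\dots,t_N$ of $\II$ on an affine $U\subset G$ meeting $e(S)$ it is, via \ref{basetau} and the computation $\alpha_m(\utau^{\{\uk\}})=\ut^{\{\uk\}}$ of \ref{reduction_mod_e}, the map sending the basis element $\uder^{\la\uk\ra}$ of $e^*\DD_{G,n}^{(m)}$ to $\uxi^{\la\uk\ra}$.

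Finally I would check naturality in $n$: the surjections $\PP_{G,(m)}^{n+1}\trig\PP_{G,(m)}^n$ and $\PP_{(m)}^{n+1}(G)\trig\PP_{(m)}^n(G)$ are intertwined by the $\tilde{\beta}_m$ (these all come from the universal property of $m$-PD-envelopes, hence are functorial in the truncation order), so dualizing and passing to $\varinjlim_n$ gives $\DD^{(m)}(G)\sta{\sim}{\rig}e^*\DD_G^{(m)}$, and $\Ga(S,-)$ then produces (ii). The only point requiring any care is the first step — one must commute $\HH om$ past $e^*$ using local freeness and the correct (left) $\OO_G$-module structure on $\PP_{G,(m)}^n$, since the closed immersion $e$ is not flat; everything else is formal bookkeeping. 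An alternative, entirely computational route would bypass the general base change statement and simply exhibit all four sheaves as free modules in the local coordinates above with explicitly dual bases, making $\beta_m$ visibly an isomorphism.
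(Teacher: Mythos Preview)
Your proposal is correct and follows essentially the same approach as the paper: the paper's proof is the single sentence ``En appliquant $\HH om_{\OO_S}(\cdot,\OO_S)$ aux morphismes $\tilde{\beta}_m$ de la proposition~\ref{reduction_mod_e}, on obtient la [proposition]'', and you have simply unpacked what this entails, including the base-change identification $e^*\HH om_{\OO_G}(\PP_{G,(m)}^n,\OO_G)\simeq\HH om_{\OO_S}(e^*\PP_{G,(m)}^n,\OO_S)$ that the paper leaves implicit. Your care about using the left $\OO_G$-structure and about $e$ not being flat is well placed but not a divergence from the paper's argument.
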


\vskip4pt
Remarque: ce morphisme n'est pas un morphisme d'anneaux. Par exemple,
dans le cas où $m=0$, $p\neq 2$,
$G=\mathbf{G}_m=Spec\, V[T,T^{-1}]$, $T-1$ est une coordonnée locale au voisinage de
$1$, et aussi un générateur de $\II_{\kappa}$. Ainsi, $\tau=1\ot T - T \ot 1$, et
$e^*(\tau)=T-1$, ce qui donne $e^*(\der_T)=\xi^{\langle 1 \rangle}=\xi$. Dans ce cas,
$\der_T^{\langle 2 \rangle}=\der_T^2$.
Or, on a aussi
$e^*(\der_T^{\langle 2 \rangle})=\xi^{\langle 2 \rangle}$. Et on calcule
\begin{align*}
 e^*(\der_T^2) &= \xi^{\langle 2 \rangle} \neq \xi^2,
\end{align*}
d'après la formule ~\ref{formGm}.

Soient $P$ une section locale de $\Dm_G$, $f$ une section locale de $\OO_G$. Dans la suite
de cet article, on notera
\begin{gather} \label{def_fe_Pe}
f(e)=\varep_G(f)\in e_*\OO_S \text{ et } P(e) \text{ la distribution } \beta_m^{-1}(e^*(P))\in \DD^{(m)}_{n}(G)
\end{gather} induite par
$P$, $P(e)$ : $\PP_{G,(m)}^{n}\rig \OO_S$. Sur un ouvert $U$ de $G$ muni de coordonnées locales
$x_1,\ldots,x_N$,
tel que $e(S)\bigcap U$ soit non vide.
On dispose des opérateurs différentiels $\uder^{\la \uk \ra}$ de $\Dm_G(U)$,
relatifs au choix des $x_i$, ainsi que des éléments $t_i=x_i-\varep(x_i)\in\II(U)$,
et $\uxi^{\la \uk \ra}\in e_*\DD^{(m)}_{n}(G)(U)$ relatifs au choix des $t_i$. Comme le morphisme $\tilde{\beta}_m $
de~\ref{reduction_mod_e} satisfait $\tilde{\beta}_m(\utau^{\la \uk \ra})=\ut^{\la \uk \ra}$, on obtient
\begin{gather} \forall \uk\in\Ne^N,\,\uder^{\la \uk \ra}(e)=\uxi^{\la \uk \ra}.
\end{gather}
Pour la suite, reprenons les notations de~\ref{reduction_mode_comp}.
Nous avons rappelé en ~\ref{action-D} l'action d'un opérateur différentiel $P\in\Dm_G(U)$ sur $\OO_G(U)$.
Cette action est donnée par la première ligne du diagramme suivant,
$$\xymatrix @R=5mm { \OO_G \ar@{->}[r]^{u_d}\ar@{->}[rd]^{\rho_m} & \PP_{G,(m)}^{n} \ar@{->>}[d]^{s_m}\ar@{->}[r]^{P}
 & \OO_G \ar@{->>}[d]^{\varep_G}\\
                   &  \PP_{(m)}^{n}(G)\ar@{->}[r]^{P(e)} & \OO_S . }$$
Par définition de $P(e)$, le carré de droite est commutatif. Enfin, d'après ~\ref{reduction_mode_comp}, $s_m\circ
u_d=\rho_m$. Comme $P(e)\circ \rho_m$ décrit l'action de $P(e)$ sur $\OO_G$, cf. \ref{action_VG}, on voit que
$\varep_G\circ P \circ u_d =P(e)\circ \rho_m$, ce qui nous donne la
\begin{prop}\label{calcul_expl} Soient $f$ une section locale de $\OO_G$
et $P\in\Dm_G$. Alors
$$(P(e))(f)=(P(f))(e).$$
Ceci s'applique en particulier si $f\in V[G]$ et si $P\in D^{(m)}_G$.
\end{prop}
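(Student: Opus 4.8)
The plan is to organize into a short diagram chase the commutative square assembled just above the statement. Fix a local section $f$ of $\OO_G$ and $P \in \Dm_G$, and pick $n$ large enough that $P$ has order $\leq n$ locally, so that $P$ factors through a map $\PP^n_{G,(m)} \rig \OO_G$ and the induced distribution $P(e) = \beta_m^{-1}(e^*(P))$ factors through $\PP^n_{(m)}(G)$.

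First I recall from \ref{action-D} that the action of $P$ on $\OO_G$ is the composite $\OO_G \sta{u_d}{\rig} \PP^n_{G,(m)} \sta{P}{\rig} \OO_G$, whence $P(f) = P(u_d(f))$ and $(P(f))(e) = \varep_G(P(u_d(f)))$, i.e. $(P(f))(e)$ is the image of $u_d(f)$ under $\varep_G \circ P \colon \PP^n_{G,(m)} \rig e_*\OO_S$. Next I identify this composite with $P(e) \circ s_m$, where $s_m \colon \PP^n_{G,(m)} \thrig e_*\PP^n_{(m)}(G)$ is the $m$-PD-surjection of \ref{def_sm_gl}. By Proposition \ref{reduction_Dmod_e}, $\beta_m$ is obtained by applying $\HH om_{\OO_S}(-,\OO_S)$ to the $m$-PD-isomorphism $\tilde{\beta}_m \colon e^*\PP^n_{G,(m)} \sta{\sim}{\rig} \PP^n_{(m)}(G)$, so $P(e) = (e^*P) \circ \tilde{\beta}_m^{-1}$; and $s_m$ is by construction the canonical arrow $\PP^n_{G,(m)} \rig e_*e^*\PP^n_{G,(m)}$ followed by $e_*\tilde{\beta}_m$. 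Since $P$ is $\OO_{G}$-linear for the left structure, restricting a section $T$ along $e$ gives $(e^*P)(e^*T) = e^*(P(T)) = \varep_G(P(T))$; unwinding the definitions of $P(e)$ and $s_m$ this says precisely that $P(e) \circ s_m = \varep_G \circ P$.

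Finally, Proposition \ref{reduction_mode_comp} supplies the remaining identity $s_m \circ u_d = \rho_m$. Chaining the three equalities yields
$$(P(e))(f) = P(e)(\rho_m(f)) = P(e)\bigl(s_m(u_d(f))\bigr) = \varep_G(P(u_d(f))) = \varep_G(P(f)) = (P(f))(e),$$
the first equality being the definition \ref{action_VG} of the action of the distribution $P(e)$ on $\OO_G$. The variant for $f \in V[G]$ and a global differential operator $P$ follows by passing to global sections over the affine scheme $G$ (and over $S$ on the distribution side, $S$ being affine). I expect no genuine obstacle here: the only delicate point is to keep the two occurrences of the identification $e^*\OO_G \simeq \OO_S$ — both realized by $\varep_G$ — in step, together with the compatibility of $\tilde{\beta}_m$ with $\rho_m$ through $s_m$, which is exactly what \ref{reduction_mode_comp} records.
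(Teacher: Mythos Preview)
Your argument is correct and follows essentially the same route as the paper: both reduce the claim to the commutativity of the diagram with top row $\OO_G \sta{u_d}{\rig} \PP^n_{G,(m)} \sta{P}{\rig} \OO_G$ and bottom row $\PP^n_{(m)}(G) \sta{P(e)}{\rig} \OO_S$, using \ref{reduction_mode_comp} for the left triangle ($s_m \circ u_d = \rho_m$) and the definition of $P(e)$ for the right square ($\varep_G \circ P = P(e) \circ s_m$). Your explicit unwinding of why the right square commutes via $\tilde\beta_m$ is a bit more detailed than the paper's one-line ``par définition de $P(e)$'', but the logical structure is identical.
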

%
%
%}}}
%%{{{
%\section{Algèbres de distributions} %dans le cas affine (this is by hypothesis)
%Dorénavant, $G$ est un schéma en groupes affine.
%%}}}
\subsection{Opérateurs différentiels sur des $G$-schémas}
\label{op_diff_inv}
%{{{
Nous nous plaçons toujours sous les hypothèses~\ref{section-rappels}. Le but est de montrer le théorème ~\ref{opp_diff_inv}. On commence par construire,
dans le cas où un schéma en groupes $G$ agit à gauche sur un $S$-schéma lisse $X$, un anti-homomorphisme
du faisceau d'algèbres des distributions de $G$, $\DD^{(m)}(G)$ vers le faisceau d'algèbres $st_{X*}\Dm_X$, o\`u $st_X$ est le morphisme structural $X \rig S$. On en
déduit un anti-homomorphisme
d'algèbres de l'algèbre des distributions de $G$, $D^{(m)}(G)$, vers l'algèbre des opérateurs
différentiels globales de niveau $m$ sur $X$.\footnote{Dans le cas d'une action à droite, la construction analogue
fournit un {\it homomorphisme} entre ces deux algèbres, cf. II.\S4. No.4.5 de \cite{Demazure_gr_alg}.} 
%}}}
%
\begin{prop}\label{prop_Qhom} Soient $X$ un $S$-schéma lisse sur lequel $G$ agit à gauche, et $m$ un entier.
\be \item[(i)]Il existe un anti-homomorphisme de faisceaux d'algèbres filtrées $Q_m$
de l'algèbre $\DD^{(m)}(G)$ vers
$st_{X*}\DD^{(m)}_X$.

\item[(ii)] Il
existe un anti-homomorphisme d'algèbres filtrées $Q_m$
de l'algèbre $D^{(m)}(G)$ vers l'algèbre des sections globales sur $X$ du faisceau
$\DD^{(m)}_X$.
\ee
\end{prop}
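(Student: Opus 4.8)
The plan is to exploit the $G$-action on $X$ to produce a $m$-PD stratification-type map and then specialize it at the neutral element, as in the classical Demazure--Gabriel construction, but carried out with Berthelot's $m$-PD formalism. As in Proposition~\ref{prop-commutativite}, the respé statement (i) for the sheaf $\DD^{(m)}(G)$ yields the global statement (ii) by applying $\Ga(X,\cdot)$, since $\Ga(X,\DD^{(m)}_X) = \Ga(S, st_{X*}\DD^{(m)}_X)$; so I would concentrate on (i). The key input is that the action morphism $\sigma\colon G\times_S X\rig X$ is lisse (Proposition of~\ref{section-rappels}), so that one has a functorial $m$-PD-morphism $d\sigma\colon \sigma^*\PP^n_{X,(m)} \rig \PP^n_{G\times X,(m)}$, together with the canonical projection $q_2$ of~\ref{projP} onto $p_2^*\PP^n_{X,(m)}$; composing, $\Phi = q_2\circ d\sigma$ is exactly the $G$-equivariance isomorphism of Proposition~\ref{prop-equiv}. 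Dualizing $\Phi$ over $\OO_X$ gives an isomorphism $\sigma^*\DD^{(m)}_{X,n}\simeq p_2^*\DD^{(m)}_{X,n}$, hence, pushing forward by $p_2$ and using the natural map $\OO_{G\times X}\rig \PP_{G,(m)}^n\otimes\OO_X$ along the unit section in the $G$-direction, a $\PP^n_{(m)}(G)$-linear comultiplication-type map. More concretely, the equivariant structure encodes for each $n$ a compatible family of $\OO_X$-linear maps $\theta_n\colon \DD^{(m)}_{X,n}\rig \PP^n_{(m)}(G)\otimes_{\OO_S} st_{X*}\DD^{(m)}_{X,n}$ (dual to $\Phi$, restricted along $e_G$), and the anti-homomorphism $Q_m$ is obtained by dualizing these $\theta_n$ in the $\DD^{(m)}(G)$-variable: an element $u\in\DD^{(m)}_n(G) = \HH om_V(\PP^n_{(m)}(G),V)$ is sent to $(u\otimes \mathrm{id})\circ\theta_n$, i.e. to the operator on $X$ obtained by evaluating the stratification at the distribution $u$.

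First I would set up, for fixed $m$ and each $n$, the $m$-PD-morphism coming from the action: combining $d\sigma$, $q_2$, and the pullback by $e_G = e\times\mathrm{id}_G$-type section along the $G$-factor (using that $\sigma\circ(e\times\mathrm{id}_X) = \mathrm{id}_X$, the action axiom), one gets a canonical $m$-PD-morphism $\PP^n_{X,(m)} \rig \PP^n_{(m)}(G)\otimes_{\OO_S} st_X^*\PP^n_{X,(m)}$, or rather, after dualizing over the left $\OO_X$-structure, a map whose transpose defines $Q_m$ on operators of order $\leq n$. Second, I would check $Q_m$ is filtered: this is immediate because $\theta_n$ is built by pulling back order-$n$ parts, and increasing $n$ is compatible by the universal property of $m$-PD-envelopes (as in~\ref{psi_f}). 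Third, and this is the crux, I would verify that $Q_m$ is an \emph{anti}-homomorphism of algebras, i.e. $Q_m(u\cdot v) = Q_m(v)\,Q_m(u)$. This reduces to a diagram chase comparing two constructions: the product $u\cdot v$ in $D^{(m)}(G)$ is given by the coproduct $\delta^{n,n'}$ of~\ref{def-delta_f} arising from the group law $\mu^\sharp$, while the product $Q_m(v)Q_m(u)$ in $\DD^{(m)}_X$ is given by the coproduct $\delta^{n,n'}$ of~\ref{def-deltaD} arising from the diagonal. The associativity/compatibility of the $G$-action (the cocycle condition~\ref{cond_cocycles}, equivalently the comodule identity~\ref{comodule}) is precisely what makes these two coproducts match after applying the stratification maps $\theta_n$; the reversal of order (anti- rather than homomorphism) comes from the fact that we are using a \emph{left} action, as flagged in the footnote of~\ref{op_diff_inv}. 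I would do this check locally, on an open $U\subset G$ where $\II$ has a regular sequence of generators $t_1,\dots,t_N$ and correspondingly on coordinates of $X$, using the explicit formulas $\delta^{n,n'}(\ut^{\{\uk\}}) = \sum_{\ual\leq\uk}\crofrac{\uk}{\ual}\ut^{\{\ual\}}\otimes\ut^{\{\uk-\ual\}} + \gamma_{\uk}$ from the proof of Proposition~\ref{prop-commutativite} together with the formulas for $\sigma^\sharp$.

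The main obstacle I anticipate is this last compatibility verification — specifically, keeping careful track of the two distinct $m$-PD-coproducts (group-law versus diagonal) and showing that the action morphism intertwines them in the correct order, against the fact that the higher-order error terms $\gamma_{\uk}$ lie in ideals $\II^{\{s\}}\otimes\II^{\{t\}}$ with $s+t$ large and hence are killed by the evaluation pairings. The bookkeeping is exactly of the type already carried out in Proposition~\ref{prop-commutativite} and in~\ref{prop-equiv}, so the strategy is to reduce to the local coordinate description of~\ref{basetau} and~\ref{desclocD}, write everything in terms of the dual bases $\uxi^{\la\uk\ra}$ and $\uder^{\la\uk\ra}$, and check the multiplication formulas term by term. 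Once the algebra-(anti)homomorphism property is in place, the $\DD^{(m)}_X$ being a sheaf of rings (established via $\delta^{n,n'}$ in~\ref{produitD}) and $\DD^{(m)}(G)$ being a sheaf of filtered algebras (Proposition~\ref{prop-commutativite}) make the remaining assertions formal, and passing to global sections over $X$ gives (ii).
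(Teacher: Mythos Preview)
Your overall strategy is the paper's: construct an $m$-PD morphism from the action $\sigma$, restrict along the unit section in the $G$-direction, dualize to define $Q_m$, then verify the anti-homomorphism identity via the comodule relation~\ref{comodule}. There is, however, a concrete slip in the setup that you should fix. You invoke $q_2$ (the projection onto the $X$-factor of~\ref{projP}), which indeed yields the equivariance isomorphism $\Phi$ of Proposition~\ref{prop-equiv}; but what is needed here is the \emph{other} projection $q_1\colon \PP^n_{G\times X,(m)}\twoheadrightarrow p_1^*\PP^n_{G,(m)}$ onto the $G$-factor. The key map is
\[
\sigma_m^{(n)} := e_X^*(q_1\circ d\sigma)\colon \PP^n_{X,(m)} \longrightarrow st_X^*\PP^n_{(m)}(G)\simeq P^n_{(m)}(G)\otimes_V\OO_X,
\]
where $e_X=e\times\mathrm{id}_X$ and one uses $\sigma\circ e_X=\mathrm{id}_X$ together with $p_1\circ e_X = e\circ st_X$ and~\ref{reduction_mod_e}. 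Its target is $P^n_{(m)}(G)\otimes\OO_X$, not $\PP^n_{(m)}(G)\otimes st_X^*\PP^n_{X,(m)}$, and the maps $\theta_n$ you describe with source $\DD^{(m)}_{X,n}$ are not the relevant objects: one defines $Q_{m,n}(u) = (u\otimes\mathrm{id})\circ\sigma_m^{(n)}\in\HH om_{\OO_X}(\PP^n_{X,(m)},\OO_X)=\DD^{(m)}_{X,n}$ directly for $u\in D^{(m)}_n(G)$.

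Once this is set up, your plan for the anti-homomorphism check is exactly what the paper does. One compares two $m$-PD morphisms $R_m,S_m\colon \PP^{n+n'}_{X,(m)}\to P^n_{(m)}(G)\otimes P^{n'}_{(m)}(G)\otimes\OO_X$: the first is $(\delta^{n,n'}\otimes\mathrm{id})\circ\sigma_m^{(n+n')}$ (computing $Q_m(uv)$ via the group coproduct~\ref{def-delta_f}), the second is $(\mathrm{id}\otimes\sigma_m^{(n')})\circ(\mathrm{id}\overline{\otimes}\sigma_m^{(n)})\circ\delta^{n',n}$ (computing $Q_m(v)Q_m(u)$ via the diagonal coproduct~\ref{def-deltaD}). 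On elements $1\otimes b$ their equality is precisely the comodule identity $(\mu^\sharp\otimes\mathrm{id})\circ\sigma^\sharp=(\mathrm{id}\otimes\sigma^\sharp)\circ\sigma^\sharp$ for $\OO_X$; since both are $\OO_X$-linear $m$-PD morphisms, equality on the generators $1\otimes b - b\otimes 1$ forces $R_m=S_m$, and composing with $u\otimes v\otimes\mathrm{id}$ gives $Q_m(uv)=Q_m(v)Q_m(u)$. No local coordinate computation with the error terms $\gamma_{\uk}$ is actually needed at this step---that device was used in~\ref{prop-commutativite} to bound orders, but here the equality of $m$-PD morphisms is established directly.
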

\begin{proof} Le (ii) est une conséquence immédiate de (i) par passage aux sections globales.
Montrons (i).
Soit $\sigma: G\times X\rightarrow X$ l'action de $G$ sur $X$.
Nous allons construire un morphisme $\OO_S$-linéaire
$\DD^{(m)}_n(G)\rig st_{X*}\DD^{(m)}_{X,n}$. Pour cela on décompose
$$id_X \,\colon\, \xymatrix @R=0mm {X \ar@{^{(}->}^{e_X}[r] & G \times X \ar@{->}[r]^{\sigma} & X \\
        x \ar@{|->}[r]& (e,x). &  }$$
Rappelons que d'après ~\ref{projP}, on dispose d'un projecteur $q_1$, qui est un $m$-PD-morphisme,  $q_1$ :
$\PP_{G\times X,(m)}^{n}\thrig p_1^*\PP_{G,(m)}^{n}$.
Considérons le composé
\begin{equation}\label{equ-q1dsigma} q_1\circ d{\sigma}\,\colon\, \sigma^*\PP^n_{X,(m)} \rig \PP^n_{G\times X,(m)} \thrig
p_1^*\PP^n_{G,(m)}\end{equation} où $d\sigma$ est le $m$-PD morphisme qui apparaît dans la preuve de Prop. \ref{prop-equiv}. Comme $id_X=\sigma\circ e_X$, on trouve en appliquant $e_X^*$ au morphisme précédent un
morphisme de faisceaux d'algèbres $\PP^n_{X,(m)}\rig e_X^*p_1^*\PP^n_{G,(m)}$. Si $st_X$ est le morphisme structural $X \rig S$,
on voit que $p_1\circ e_X=e \circ st_X$, de sorte que
\begin{align*}e_X^*p_1^*\PP^n_{G,(m)} &\simeq st_X^*e^*\PP^n_{G,(m)} \\
                   &\simeq  st_X^* \PP^n_{(m)}(G) \text{ d'après~\ref{reduction_mod_e}}.
%. \\ &\simeq \PP_{(m)}^{n}(G)\ot_{\OO_S} \OO_{X},
\end{align*}
Avec cette identification, et en remarquant qu'on a un isomorphisme canonique
$e_{X*}st_X^* \PP^n_{(m)}(G)\simeq p_1^*e_*\PP_{(m)}^{n}(G)$, on voit que
 la flèche $p_1^*\PP_{G,(m)}^{n}\rig e_{X*}st_X^* \PP^n_{(m)}(G) $ est la flèche
$p_{1}^*s_m $ où $s_m$ est donnée en~\ref{def_sm_gl}.
Finalement, on définit $\sigma_m^{(n)}=e_X^*(q_1\circ d\sigma)$:
$$ \sigma_m^{(n)}\,\colon \, \PP^n_{X,(m)} \rig st_X^* \PP^n_{(m)}(G).$$
Comme le faisceau $st_X^* \PP^n_{(m)}(G)$ s'identifie à $\OO_X\ot_V P_{(m)}^{n}(G)$, on obtient finalement
\begin{gather}\label{def_sigmamn} \sigma_m^{(n)}\,\colon \, \PP^n_{X,(m)} \rig \OO_X\ot_V P_{(m)}^{n}(G).
\end{gather}
La remarque suivante sera utile dans la suite.
\begin{prop}\label{OxP-loclib} Le faisceau $\OO_X\ot_V P_{(m)}^{n}(G)$ est un faisceau de $\OO_X$-modules localement libres.
\end{prop}
Cette remarque est évidente car ce faisceau est égal au faisceau $st_X^* \PP^n_{(m)}(G)$. Plus précisément,
soit $U$ un ouvert de $G$ sur lequel $\II$ est muni d'une suite régulière de paramètres
$t_1,\ldots,t_N$ et $S'=\spec V'$
un ouvert affine de $e^{-1}(U)\subset S$. Restreignons les constructions précédentes à
$S'$. Alors $P^n_{G_{S'},(m)}$ est libre de base les éléments  $\ut^{\{\uk\}}$ pour
$|\uk|\leq n$ et le faisceau
$\OO_X\ot_V P_{(m)}^{n}(G)$ est un $\OO_X$-module libre au-dessus de $X_{S'}=S'\times_S X$ de base
les éléments
\begin{gather}\label{base-OxP}
1 \ot \ut^{\{\uk\}}, \textrm{ tels que } |\uk|\leq n.
\end{gather}
Introduisons $\rho_m$ (resp. $r_m$) est l'application canonique de ~\ref{def_rho} (resp. ~\ref{def_rm}).
Dans ce qui suit, on utilisera $e^{-1}\rho_m$ : $ e^{-1}\OO_G \rig \PP^n_{G,(m)}.$
Les propriétés de $\sigma_m^{(n)}$ sont décrites par la
\begin{prop} \be \item[(i)] Si on munit l'algèbre $st_X^*\PP_{(m)}^{n}(G)$ de la $m$-PD-structure de $m$-PD-idéal
$st_X^*\II$, alors $\sigma_m^{(n)}$ est un $m$-PD-morphisme.
\item[(ii)] Le morphisme $\sigma_m^{(n)}$ est $\OO_X$-linéaire pour la structure de $\OO_X$-module à gauche de
$\PP^n_{X,(m)}$.
\item[(iii)] Soit $b\in\OO_X$, tel que $ \sigma^{\sharp}(\sigma^{-1}b)=\sum_i c_i \ot d_i$, avec $c_i\in \OO_G$, et
$d_i\in \OO_X$, alors
$$\sigma_m^{(n)}(r_m(b))=\sum_i e^{-1}\rho_m(c_i) \ot d_i \in \PP_{(m)}^{n}(G)\ot \OO_X,$$ où
$\rho_m$ (resp. $r_m$) est l'application canonique de ~\ref{def_rho} (resp. ~\ref{def_rm}).
\item[(iv)] Les applications $\sigma_m^{(n)}$ sont compatibles entre
elles pour $n$ variable, et $m$ variables.
\ee
\end{prop}
\begin{proof} Le (i) provient du fait que l'application $q_1\circ d\sigma$, defini dans la preuve de la proposition précédente, est un $m$-PD morphisme et du fait que la formation des
$m$-PD-enveloppes commute aux extensions de base dans le cas où on considère des idéaux réguliers. La
$\OO_X$-linéarité de (ii) est automatique car $q_1$ et $d{\sigma}$ sont $\OO_{G\times X}$-linéaires.
Montrons (iii). Dans la suite, grâce à la section canonique $1$ de $\PP^{n}_{X,(m)}$, on identifie
$\OO_{G\times X}$, dont les éléments sont notés comme des éléments
de $p_1^{-1}\OO_G\ot p_2^{-1}\OO_X$, à un sous-faisceau de $\sigma^*\PP^{n}_{X,(m)}$.
Notons les éléments de $\sigma^*\PP^{n}_{X,(m)}$ comme des éléments de
$(p_1^{-1}\OO_G\ot p_2^{-1}\OO_X)\ot \sigma^{-1}\PP^{n}_{X,(m)}$, on a
$$ d\sigma(1\ot 1 \ot r_m(1\ot \sigma^{-1}(b))=\sum_i r_m(c_i\ot 1)\cdot r_m(1\ot d_i).$$
Calculons maintenant le projecteur $q_1$ sur les éléments du type $1\ot d_i$.
 Soient $x_1,\ldots,x_M$ un système de
coordonnées locales sur $X$, $x'_1,\ldots,x'_N$ des coordonnées locales sur $G$,
 $\tau_i=1\ot x_i -x_i \ot 1$, $\tau'_j=1\ot x'_j -x'_j \ot 1$. Les opérateurs différentiels relatifs à ce
choix de coordonnées sont alors notés $\uder^{\la \uk' \ra}\uder^{\la \uk \ra}$ tandis que les éléments
$\utau^{\{\uk'\}}\utau^{\{\uk\}}$ forment une base locale de $\PP_{G\times X,(m)}^{n}$ pour
$|\uk|+[\uk'|\leq n$.
Reprenons les notations de l'énoncé, on trouve
$$r_m(1 \ot d_i)= \sum_{|\uk|\leq n}\uder^{\la \uk \ra}(d_i)\utau^{\{\uk\}}\in \PP_{G\times X,(m)}^{n},$$
de sorte que $q_1(1\ot d_i)=1\ot d_i \in p_1^*\PP_{G,(m)}^{n}$
et donc par les propriétés de linéarité de l'application $r_m$,
$q_1(r_m(c_i\ot d_i))=[
r_m(c_i\ot 1)](1 \ot d_i)\in p_1^*\PP_{G,(m)}^{n}$. Dans la suite, nous utilisons
l'injection canonique $p_1^{-1}\PP_{G,(m)}^{n}\hrig p_1^*\PP_{G,(m)}^{n}.$
Comme on a l'égalité
$r_m( c_i\ot 1)=u_d(c_i)\in p_1^*\PP_{G,(m)}^{n}$, il résulte de ~\ref{reduction_mode_comp}, que
$(e^{-1}s_m \ot id_X)( u_d(c_i)\cdot (1 \ot d_i))=e^{-1}\rho_m(c_i)\ot d_i$, ce qui démontre la formule de (iii).
L'assertion (iv) est claire.  \end{proof}
Dans la situation (iii) de la proposition, on notera dans la suite
plus simplement $$\sigma_m^{(n)}(1\ot b)=\sum_i c_i \ot d_i.$$

\vskip5pt
Indiquons maintenant comment calculer la première pièce de l'application graduée de $\sigma_m^{(n)}$ pour sa filtration $m$-PD-adique. Soient $\gr_1\, q_1$ et $\gr_1\, d{\sigma}$ les premières pièces graduées des $m$-PD morphismes $q_1$ et $d{\sigma}$.  Donc, on dispose de
$$ \gr_1\, q_1\circ \gr_1\, d{\sigma}= \gr_1\, (q_1\circ d{\sigma})\,\colon\, \sigma^*\Omega^1_X \rig \Omega^1_{G\times X} \rig
p_1^*\Omega^1_{G}.$$
Alors l'application $\gr_1 \sigma_m^{(n)}$ est obtenue comme la composée:
\begin{gather}
\gr_1 \sigma_m^{(n)}\,\colon\,  \Omega^1_X\sta{e_X^*\gr_1\,d\sigma}{\longrightarrow} e_X^*\Omega^1_{G\times X}\sta{e_X^* \gr_1\,q_1}{\longrightarrow}e_X^*p^*_1\Omega^1_G \thrig
\II/\II^2\ot_{\OO_S}\OO_X,
\end{gather}
où la dernière flèche est la surjection naturelle.
\vskip5pt
Soit maintenant $u$ une section locale de $\DD_n^{(m)}(G)$
(resp. $u\in D^{(m)}_n(G)$),
on lui associe l'opérateur différentiel $Q_{m,n}(u)\in\DD^{(m)}_{X,n}$ suivant
$$Q_{m,n}(u)\,\colon\,\PP_{X,(m)}^{n}\sta{\sigma_m^{(n)}}{\rig} st_X^{-1}\PP_{(m)}^n(G)\ot
\OO_X\sta{st_X^{-1}u\ovot Id}{\rig}\OO_X,$$
resp. l'opérateur différentiel $Q_{m,n}(u)\in\DD^{(m)}_{X,n}$ suivant
$$Q_{m,n}(u)\,\colon\,\PP_{X,(m)}^{n}\sta{\sigma_m^{(n)}}{\rig} P_{(m)}^n(G)\ot \OO_X\sta{u\ot Id}{\rig}\OO_X.$$
Ces applications $Q_{m,n}$
passent à la limite inductive sur $m$ pour $m$ variable en un morphisme filtré de $\OO_S$-modules
quasi-cohérents
\begin{gather}\label{def_Qm_f}
Q_m \,\colon\, \DD^{(m)}(G)\rig st_{X*}\DD^{(m)}_{X}
\end{gather}
resp. un morphisme $V$-linéaire
\begin{gather}\label{def_Qm}
Q_m \,\colon\, D^{(m)}(G)\rig\Ga(X,\DD^{(m)}_{X}).
\end{gather}
 Constatons enfin que $Q_m(1)=1$. En effet, $\PP^{0}_G\simeq \OO_S$ et
$\sigma^{(0)}_m$ : $\OO_X \rig \OO_X $ est $\OO_X$-linéaire et unitaire, donc vaut
$id_{\OO_X}$. Ainsi $Q_m(1)=1$. Par construction, on obtient $Q_m$ au niveau des faisceaux,
qui sont des $\OO_S$-modules quasi-cohérents, en
localisant sur $S$ l'application $Q_m$ au niveau des sectins globales, c'est-à-dire
$Q_m=id_{\OO_S}\ot Q_m^{gl}$, où on a provisoirement noté $Q_m^{gl}$ l'application obtenue au niveau
des sections globales~\ref{def_Qm}.

Pour compléter la preuve de la proposition \ref{prop_Qhom}, il nous reste à vérifier le lemme :
\begin{lem} L'application $Q_m \;\colon\; \DD^{(m)}(G)\rig st_{X*}\DD^{(m)}_{X}$, resp.
$Q_m \;\colon\; D^{(m)}(G)\rig\Ga(X,\DD^{(m)}_{X})$ est un
anti-homomorphisme d'algèbres, i.e. $Q_m(uv)=Q_m(v)Q_m(u)$.
\end{lem}
Soient $u\in \DD^{(m)}_n(G)$, $v\in \DD^{(m)}_{n'}(G)$, les applications
$Q_m(uv)$ et $Q_m(v)Q_m(u)$ vus comme éléments de
$\HH om_{\OO_X}(\PP^{n+n'}_{X,(m)},\OO_X)$ sont $\OO_X$-linéaires (pour l'action à gauche
sur $\PP^{n}_{X,(m)}$). Pour les comparer, nous
calculons d'abord $Q_m(uv)(1\ot b)$ pour $b\in \OO_X$. Décomposons
$\sigma^{\sharp}(b)=\sum_i c_i\ot d_i$, avec $c_i\in \OO_G$, $d_i\in \OO_X$.

Rappelons que $\delta^{n,n'}$ est définie en~\ref{def-delta}. Afin d'alléger les notations, nous
ne noterons pas le préfixe $st_X^{-1}$ dans les diagrammes suivants.
Pour calculer $Q_m(uv)(1\ot b)$, nous considérons l'application $R_m$, obtenue comme le
composé suivant
$$\xymatrix @R=0mm {\PP_{X,(m)}^{n+n'}  \ar@{->}[r]^<<<<<{\sigma_{m}^{(n+n')}} &
 \PP_{(m)}^{n+n'}(G)\ot_{\OO_S} \OO_X
\ar@{->}[r]^(.4){\delta^{n,n'}\ot Id}
&  \PP_{(m)}^{n}(G) \ot_{\OO_S} \PP_{(m)}^{n'}(G)\ot_{\OO_S} \OO_X \\
1\ot b \; \ar@{{|}->}[r] & \sum_i c_i\ot d_i \; \ar@{{|}->}[r] &\sum_{i} \mu^{\sharp}(c_i)\ot
d_i,}$$
qui est un $m$-PD-morphisme comme composé de $m$-PD-morphismes.

Reprenons la définition du produit des opérateurs différentiels de ~\ref{produitD}, qui fait intervenir
$\delta^{n',n} \;\colon \;\PP^{n+n'}_{X,(m)}\rig
\PP^{n'}_{X,(m)}\ot_{\OO_X}\PP^{n}_{X,(m)},$ tel que
$\delta^{n',n}(a\ot b)=a\ot 1\ot 1\ot b$.

Si $\PP^{n'}_{X,(m)}$ est muni de la structure de $\OO_X$ donnée par la multiplication à droite, nous identifions
dans le diagramme qui suit $\PP_{X,(m)}^{n'} \ot_{\OO_X}\PP_{(m)}^{n}(G)\ot_{\OO_S} \OO_X$ à
$\PP_{(m)}^{n}(G)\ot_{\OO_S}\PP_{X,(m)}^{n'}$ en envoyant $(a\ot b)\ot (c\ot d)$ sur $c\ot(a\ot db)$.
Pour calculer $Q_m(v)Q_m(u)$
notons $S_m$ le composé
$$\xymatrix @R=0mm {\PP_{X,(m)}^{n+n'} \ar@{->}[r]^<<<<<{\delta^{n',n}} &
 \PP_{X,(m)}^{n'} \ot_{\OO_X} \PP_{X,(m)}^{n}\ar@{->}[r]^{Id\ovot\sigma_m^{(n)}} &
 P_{(m)}^{n}(G) \ot_V\PP_{X,(m)}^{n'}
 \ar@{->}[r]^(.45){Id \ot \sigma_m^{(n')}} &  P_{(m)}^{n}(G) \ot_V P_{(m)}^{n'}(G)\ot_V\OO_X\\
1\ot b \;\ar@{|->}[r] & 1\ot 1 \ot 1\ot b \;\ar@{|->}[r] & \sum_i c_i \ot (1\ot d_i) \;\ar@{|->}[r] &
\sum_{i} c_i\ot\sigma^{\sharp}(d_i),}$$
qui est un $m$-PD-morphisme.

La relation de comodule~\ref{comodule} appliquée à $\OO_X$ dit exactement que
$R_m(1\ot b)=S_m(1\ot b)$ pour tout $b$ de $\OO_X$, et par $\OO_X$-linéarité,
on voit que $R_m(1\ot b - b \ot 1)=S_m(1\ot b - b \ot 1)$. Comme $R_m$ et $S_m$ sont des $m$-PD-morphismes, ils coïncident donc
pour tout élément $x=1\ot b- b\ot 1$, sur les éléments $x^{\{q\}}$, pour tout entier $q$.
Ces éléments engendrent $\PP_{X,(m)}^{n}$ comme $\OO_X$-algèbre (1.4.4 de \cite{Be1})
de sorte que les $m$-PD-morphismes $R_m$ et $S_m$ sont égaux.

Soient $u,v$ deux éléments de $D^{(m)}(G)$, et $\Phi_{u,v}=Id\ovot u \ovot v$ l'homomorphisme
d'évaluation $ \OO_X\ot_V \PP_{(m)}^n(G) \ot_V \PP_{(m)}^{n'}(G)\rig \OO_X.$ L'opérateur différentiel $Q_m(uv)$ est égal à $\Phi_{u,v}\circ R_m$ et $Q_m(v)Q_m(u)$
est égal à $\Phi_{u,v}\circ S_m$, d'où l'égalité $Q_m(uv)=Q_m(v)Q_m(u)$.
\end{proof}
Reprenons maintenant les notations de ~\ref{prop-basiselementsII} et de ~\ref{parties_principales}.
Si $x_1,\ldots,x_N$ est un système de
coordonnées locales sur $X$, $\tau_i=1\ot x_i -x_i \ot 1$.
\begin{cor}\label{compat-Ox} \be \item[(i)] Le faisceau $st_{X*}\OO_X$, resp. $\OO_X$, est un
$\DD^{(m)}(G)$-module, resp. un $D^{(m)}(G)$-module, dont la structure est
compatible avec sa structure de $\Dm_X$-module.
\item[(ii)] Soit $U$ un ouvert de $G$ sur lequel $\II$ est muni d'une suite régulière de paramètres
$t_1,\ldots,t_N$. On se place
 alors sur un ouvert affine $S'=\spec V'$ de $S$
de $U\bigcap e(S)$, et on restreint les constructions précédentes à
$S'$. Alors $D^{(m)}(G_{S'})$ est libre de base les éléments  $\uxi_X^{\la\ul\ra}$
et on peut considérer
$\uxi_X^{\la\uk\ra}=Q_m(\uxi^{\la\uk\ra})$.
On dispose du formulaire suivant pour $\uk$ tel que $|\uk|\leq n$,
et $\ui$ tel que $|\ui|\leq n$,
\begin{gather} \forall f\in\OO_X,\, \sigma_m^{(n)}(1\ot f)=\sum_{|\uk'|\leq n} \label{f-taylor}
\uxi_X^{\la\uk'\ra}(f)\ot \ut^{\{\uk'\}}\textrm{(Formule de Taylor)},\\
        \sigma_m^{(n)}(\utau^{\{\ui\}})=\sum_{|\ul|\leq n}\uxi_X^{\la\ul\ra}(\utau^{\{\ui\}})\ot
        \ut^{\{\ul\}},\\
 \uxi_X^{\la\uk\ra}f=\sum_{\uk'+\uk''=\uk}\acfrac{\uk}{\uk'}\uxi_X^{\la\uk'\ra}(f)\uxi_X^{\la\uk''\ra}
\in \Dm_X \label{def_prod_am}.
\end{gather}
\ee
\end{cor}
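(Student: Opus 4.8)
The plan is to read the whole corollary off the anti-homomorphism $Q_m$ of Proposition~\ref{prop_Qhom} together with the explicit shape of the $m$-PD-morphisms $\sigma_m^{(n)}$. For (i): the sheaf $\OO_X$ carries its tautological structure of $\Dm_X$-module, the action of a local section $P$ being the composite $\OO_X \sta{u_d}{\rig} \PP^n_{X,(m)} \sta{P}{\rig}\OO_X$ of~(\ref{action-D}). I would simply let $u \in \DD^{(m)}(G)$ (resp.\ $u \in D^{(m)}(G)$) act on $\OO_X$ through the operator $Q_m(u)$; since $Q_m$ is a filtered anti-homomorphism of algebras with $Q_m(1) = 1$, this turns $st_{X*}\OO_X$ into a $\DD^{(m)}(G)$-module (resp.\ $\OO_X$ into a $D^{(m)}(G)$-module), necessarily a right module since $Q_m$ reverses products. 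By construction this action factors through the $\Dm_X$-action via $Q_m$, which is precisely the asserted compatibility, so there is nothing further to check.

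For (ii) I would first place myself over $S'$ as in the statement, so that $\II$ has the regular system of parameters $t_1,\ldots,t_N$ near $e(S')$. The freeness of $D^{(m)}(G_{S'})$ with the stated basis is Proposition~\ref{prop-basiselementsII}, and I set $\uxi_X^{\la\uk\ra} := Q_m(\uxi^{\la\uk\ra})$, which lies in $\Dm_{X,n}$ for $|\uk|\le n$ since $Q_m$ is filtered. By Proposition~\ref{OxP-loclib} and~(\ref{base-OxP}), $st_X^*\PP^n_{(m)}(G) = \OO_X \ot_V P^n_{(m)}(G)$ is a free $\OO_X$-module with basis the $1 \ot \ut^{\{\ul\}}$, $|\ul|\le n$. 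Writing $\sigma_m^{(n)}(T) = \sum_{|\ul|\le n} c_\ul(T) \ot \ut^{\{\ul\}}$ for a section $T$ of $\PP^n_{X,(m)}$ and pairing the $P^n_{(m)}(G)$-component against the distribution $\uxi^{\la\ul_0\ra}$ (which by the very construction of $Q_{m,n}$ is the operator $\uxi_X^{\la\ul_0\ra}$, and which picks out $c_{\ul_0}(T)$ because the $\uxi^{\la\ul\ra}$ are dual to the $\ut^{\{\ul\}}$), one gets $c_\ul(T) = \uxi_X^{\la\ul\ra}(T)$ for all $\ul$. The case $T = u_d(f) = 1\ot f$ is the Taylor formula~(\ref{f-taylor}) and $T = \utau^{\{\ui\}}$ is the intermediate formula of the statement.

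The only step carrying real content is the relation~(\ref{def_prod_am}), which I would prove by evaluating both sides on the $\OO_X$-basis $\utau^{\{\uj\}}$, $|\uj|\le n$, of~(\ref{basetau}); this is legitimate since both sides lie in $\Dm_{X,n} = \HH om_{\OO_X}(\PP^n_{X,(m)},\OO_X)$. Viewing $f$ as the order-zero operator of multiplication by $f$, formula~(\ref{produitD}) (with $n'=0$) shows that $\uxi_X^{\la\uk\ra}f$ sends $T$ to $\uxi_X^{\la\uk\ra}(T\cdot u_d(f))$, the dot being the ring multiplication of $\PP^n_{X,(m)}$. Since $\sigma_m^{(n)}$ is an $m$-PD-morphism (shown above), hence a ring homomorphism, and $u_d$ is a ring homomorphism (being $f\mapsto 1\ot f$ followed by the canonical ring map $\OO_{X\times X}\to\PP^n_{X,(m)}$), one has
$$\sigma_m^{(n)}(T\cdot u_d(f)) = \sigma_m^{(n)}(T)\cdot\sigma_m^{(n)}(1\ot f)$$
in the ring $\OO_X\ot_V P^n_{(m)}(G)$. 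I would then expand both factors by the formulas of (ii), multiply the divided-power elements via $\ut^{\{\ua\}}\ut^{\{\ub\}} = \acfrac{\ua+\ub}{\ua}\ut^{\{\ua+\ub\}}$ (discarding the terms of degree $>n$; see~\ref{subsubsection-formules}), and finally pair the $P^n_{(m)}(G)$-component against $\uxi^{\la\uk\ra}$, which extracts the coefficient of $\ut^{\{\uk\}}$. The result is $\sum_{\ul+\uk''=\uk}\acfrac{\uk}{\ul}\,\uxi_X^{\la\ul\ra}(T)\,\uxi_X^{\la\uk''\ra}(f)$, and since $\acfrac{\uk}{\ul} = \acfrac{\uk}{\uk-\ul}$ this equals the value at $T$ of $\sum_{\uk'+\uk''=\uk}\acfrac{\uk}{\uk'}\uxi_X^{\la\uk'\ra}(f)\,\uxi_X^{\la\uk''\ra}$, which is~(\ref{def_prod_am}).

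I expect no genuine obstacle: once Propositions~\ref{prop_Qhom}, \ref{OxP-loclib} and~\ref{prop-basiselementsII} and the multiplicativity of $\sigma_m^{(n)}$ and $u_d$ are available, everything is formal. The one point requiring care is the bookkeeping in~(\ref{def_prod_am}): keeping the left and right $\OO_X$-structures on $\PP^n_{X,(m)}$ straight when unwinding~(\ref{produitD}), and tracking the truncation at order $n$ when multiplying divided-power-type elements.
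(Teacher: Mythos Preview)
Your proposal is correct and follows essentially the same route as the paper. Part (i) is handled identically, and for part (ii) both arguments derive the Taylor-type formulas from the duality of $\uxi^{\la\ul\ra}$ and $\ut^{\{\ul\}}$, then prove~(\ref{def_prod_am}) by computing $(\uxi_X^{\la\uk\ra}f)(\utau^{\{\ui\}})$ via the multiplicativity of $\sigma_m^{(n)}$ and the product rule $\ut^{\{\ua\}}\ut^{\{\ub\}}=\acfrac{\ua+\ub}{\ua}\ut^{\{\ua+\ub\}}$; your extra step of unwinding~(\ref{produitD}) with $n'=0$ to see $(\uxi_X^{\la\uk\ra}f)(T)=\uxi_X^{\la\uk\ra}(T\cdot u_d(f))$ is the only notational difference, and the paper simply takes this as understood.
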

\begin{proof} Commençons par (i). Il suffit de montrer l'assertion non respée.
La structure de $\DD^{(m)}(G)$-module sur $st_{X*}\OO_X$ est donnée par
le morphisme composé, pour $u\in \DD^{(m)}(G)$,
$$ \xymatrix @R=0mm { \OO_X \ar@{->}[r] & \PP_{X,(m)}^{n} \ar@{->}[r]^{Q_m(u)} & \OO_X \\
                  f\ar@{{|}->}[r] & 1\ot f &  },
$$
et est donc par définition compatible avec l'anti-homomorphisme de faisceaux d'algèbres $\DD^{(m)}(G)\rig st_{X*}\Dm_X$.

Pour (ii), nous nous plaçons au-dessus de $S'$. Comme $\uxi^{\la\uk\ra}$ est la base duale de $\ut^{\{\uk\}}$, on a
$$\sigma_m^{(n)}(1\ot f)=\sum_{|\uk|\leq n}Q_m(\uxi^{\la\uk\ra})(f)\ot \ut^{\{\uk\}},$$
ce qui donne ~\ref{f-taylor}. C'est exactement la même chose pour la formule suivante. Pour la dernière formule, remarquons que
$$(\uxi_X^{\la\uk\ra}f)(\utau^{\{\ui\}})=(Id \ot \uxi^{\la\uk\ra})\left(\sigma_m^{(n)}\left( (1\ot f)\utau^{\{\ui\}}\right)\right).$$ Or, on a
\begin{align} \left(\sigma_m^{(n)}\left( (1\ot f)\utau^{\{\ui\}} \right)\right) &=
   \sigma_m^{(n)}(1\ot f)\sigma_m^{(n)}(\utau^{\{\ui\}})\\
&=  \sum_{|\uk'|\leq n}\uxi_X^{\la\uk'\ra}(f)\ot \ut^{\{\uk'\}}
       \sum_{|\uk''|\leq n}\uxi_X^{\la\uk''\ra}(\utau^{\{\ui\}})\ot \ut^{\{\uk''\}}
\text{ d'après ~\ref{f-taylor}}
\end{align}
et donc
\begin{align} (Id \ot \uxi^{\la\uk\ra})\left(\sigma_m^{(n)}\left( (1\ot f)\utau^{\{\ui\}} \right) \right)
&= \sum_{\uk'+\uk''=\uk} \acfrac{\uk}{\uk'}\uxi_X^{\la\uk'\ra}(f)\uxi_X^{\la\uk''\ra}(\utau^{\{\ui\}}),
\end{align}
		ce qui montre la dernière formule.
\end{proof}
Soit $D^{(m)}(G)^{op}$ l'algèbre opposée à $D^{(m)}(G)$ (i.e munie du produit $PQ=QP$ dans $D^{(m)}(G)$).
Considérons $\AA^{(m)}_{X}=\OO_X\ot_V D^{(m)}(G)^{op}$. L'application $Q_m$ qui est un
homomorphisme d'algèbres $D^{(m)}(G)^{op}\rig \Dm_X$, s'étend
par $\OO_X$-linéarité en une application $Q_{m,X}$ : $\AA^{(m)}_{X} \rig \Dm_X$.
Dans la suite, nous utilisons la notion de $\OO_X$-anneau en sens de Beilinson \cite{BeilinsonICM}.
Nous utilisons toujours les notations de ~\ref{prop-basiselementsII}.
Nous tirons de la proposition~\ref{prop_Qhom} et du corollaire précédent le
\begin{cor} \label{prop-Am}\be
\item[(i)]Le faisceau $\AA^{(m)}_{X}$ est un faisceau de $\OO_X$-modules localement libres.
\item[(ii)]Il existe une unique structure de $\OO_X$-anneau filtrée sur $\AA^{(m)}_{X}$ compatible
avec la structure d'algèbre de $D^{(m)}(G)^{op}$ et telle que au-dessus de tout ouvert $S'$ de $S$
défini comme en (ii) de ~\ref{compat-Ox}, on ait, pour $f\in\OO_X$,
$$ (1\ot\uxi^{\la\uk\ra})(f\ot 1)=\sum_{\uk'+\uk''=\uk}\acfrac{\uk}{\uk'}\uxi_X^{\la\uk'\ra}(f)\ot\uxi^{\la\uk''\ra}.$$ De plus, $Q_{m,X}$ : $\AA^{(m)}_{X}\rig
\Dm_X$ est un homomorphisme de $\OO_X$-anneaux filtrés.
\item[(iii)] Il existe un isomorphisme canonique de $\OO_X$-algèbres graduées
               $$c_{m,X}^*\,\colon\,\gr\pg\AA^{(m)}_{X}\simeq \OO_X\ot_V\Sg^{(m)}_V(Lie(G)).$$
      En outre, les faisceaux $\AA^{(m)}_{X}$ resp. $\gr\pg\AA^{(m)}_{X}$ sont des faisceaux de $\OO_X$-anneaux resp. de $\OO_X$-algèbres cohérents,
 à sections noethériennes sur les ouverts affines.
\ee
\end{cor}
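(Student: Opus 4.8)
The plan is to establish the three assertions separately; everything is bookkeeping except the associativity of the multiplication in (ii), which is the one point that really has to be checked.

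For (i), note that as an $\OO_X$-module $\AA^{(m)}_X$ is $\OO_X\ot_V D^{(m)}(G)$, and that, $S$ being affine, this is canonically $st_X^{*}\DD^{(m)}(G)$. Over an affine open $S'\subseteq S$ on which $\II$ admits a regular system of parameters $t_1,\ldots,t_N$, $\DD^{(m)}(G)|_{S'}$ is free on the family $\{\uxi^{\la\uk\ra}\}_{\uk\in\Ne^N}$ by Proposition~\ref{prop-basiselementsII_f}, and these bases are compatible as the order grows; hence $\AA^{(m)}_X$ is free over $\OO_X$ on $\{1\ot\uxi^{\la\uk\ra}\}_{\uk}$ above $st_X^{-1}(S')$. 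This proves (i) and simultaneously produces the local $\OO_X$-bases used below.

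For (ii), uniqueness is immediate: in any $\OO_X$-ring structure on $\AA^{(m)}_X$ compatible with the algebra $D^{(m)}(G)^{op}$ (so that $1\ot(-)$ is multiplicative) and inducing the given left $\OO_X$-module structure, one has $(f\ot u)(g\ot v)=(f\ot1)(1\ot u)(g\ot1)(1\ot v)$, so the product is determined by the commutative product of $\OO_X$, the product of $D^{(m)}(G)^{op}$, and the rule for $(1\ot u)(g\ot1)$; and that rule is fixed on the generators $\uxi^{\la p^{i}\ra}$ of $D^{(m)}(G)$ (Proposition~\ref{grDm}) by the stated formula, hence on all of $D^{(m)}(G)^{op}$. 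For existence, I would define the product over each such $S'$ by exactly these data, with $\uxi_X^{\la\uk'\ra}(f):=Q_m(\uxi^{\la\uk'\ra})(f)\in\OO_X$ the action of Corollary~\ref{compat-Ox}(i), and glue the resulting local rings using the uniqueness just noted. The hard part is associativity. I expect it to reduce, by $\OO_X$-linearity and by expanding both $((1\ot u)(1\ot v))(f\ot1)$ and $(1\ot u)((1\ot v)(f\ot1))$ with the commutation formula, to two facts already available: associativity of $D^{(m)}(G)$ (Proposition~\ref{prop-commutativite}), and the identity $R_m=S_m$ proved inside Proposition~\ref{prop_Qhom} — equivalently, the comodule relation~\ref{comodule} for $\OO_X$ together with the $m$-PD (hence ring-morphism) property of the $\sigma_m^{(n)}$ — which is precisely the coassociativity making it consistent to push a product of distributions past an element of $\OO_X$; concretely one is left with the identity $\uxi^{\la\uk\ra}\cdot(\uxi^{\la\ul\ra}\cdot f)=(\uxi^{\la\ul\ra}\uxi^{\la\uk\ra})\cdot f$ in $\OO_X$, which holds because $\OO_X$ is a $\DD^{(m)}(G)$-module. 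That $Q_{m,X}$ is then a filtered homomorphism of $\OO_X$-rings is automatic: it is $\OO_X$-linear, restricts on $1\ot D^{(m)}(G)^{op}$ to the algebra homomorphism $D^{(m)}(G)^{op}\rig\Dm_X$ deduced from the anti-homomorphism $Q_m$, carries the commutation formula to formula~\ref{def_prod_am} (valid in $\Dm_X$), and sends $\AA^{(m)}_{X,n}$ into $\Dm_{X,n}$ since $Q_m$ is filtered.

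For (iii), filtering $\AA^{(m)}_X$ by $\AA^{(m)}_{X,n}=\OO_X\ot_V D^{(m)}_n(G)$ gives $\gr\pg\AA^{(m)}_X=\OO_X\ot_V\gr\pg D^{(m)}(G)\simeq\OO_X\ot_V\Sg^{(m)}_V(Lie(G))$ by Proposition~\ref{grDm}; this is an isomorphism of graded $\OO_X$-algebras once one observes from the commutation formula that $1\ot u$ and $f\ot1$ commute modulo lower order (the terms with $\uk'\neq0$ strictly decrease the $D^{(m)}(G)$-degree), so that $\gr\pg\AA^{(m)}_X$ carries the commutative tensor product. On an affine open $U\subseteq X$, $\Ga(U,\gr\pg\AA^{(m)}_X)=\OO_X(U)\ot_V\Sg^{(m)}_V(Lie(G))$ is a finitely generated $\OO_X(U)$-algebra, since $\Sg^{(m)}_V(Lie(G))$ is generated over $V$ by the finitely many $\xi_l^{\la p^{i}\ra}$, $i\leq m$ (Proposition~\ref{grDm}); hence it is noetherian, and then $\Ga(U,\AA^{(m)}_X)$ is noetherian by the standard filtered-ring argument used in the proof of Proposition~\ref{grDm}. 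Finally, coherence of $\AA^{(m)}_X$ and of $\gr\pg\AA^{(m)}_X$ as sheaves of rings follows exactly as in the proof of Proposition~\ref{eq_cat_Dm} (i.e.\ following 3.1.1 and 3.1.3 of~\cite{Be1}), from noetherianity of sections over affines together with the flat base-change identity $\Ga(U',\AA^{(m)}_X)\simeq\OO_X(U')\ot_{\OO_X(U)}\Ga(U,\AA^{(m)}_X)$ for $U'\subseteq U$.
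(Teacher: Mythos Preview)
Your proposal is correct and follows essentially the same route as the paper for (i) and (iii). For (ii) there is a small organizational difference worth noting: the paper is terser, simply declaring that one defines a \emph{multiplication tordue} on $\AA^{(m)}_X$ using $Q_m$ together with the natural $\Dm_X$-action on $\OO_X$, and takes the resulting $\OO_X$-ring structure (in particular associativity) as given by this smash-product construction; the stated commutation formula and the fact that $Q_{m,X}$ is a ring map are then read off from~\ref{def_prod_am}. You instead define the product locally by the explicit formula, argue uniqueness, and verify associativity by hand, correctly identifying the module identity $u\cdot(v\cdot f)=(vu)\cdot f$ for the $\DD^{(m)}(G)$-action on $\OO_X$ (equivalently the comodule relation underlying $R_m=S_m$ in Proposition~\ref{prop_Qhom}) as the essential input. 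This is the same underlying mathematics, just with your version making explicit the one step the paper leaves to the reader.
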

\begin{proof} Le (i) vient du fait que $$\AA^{(m)}_{X}=\OO_X\ot_{\OO_S}\DD^{(m)}(G).$$
Passons au (ii).
On définit une multiplication twistée sur le produit tensoriel $\AA^{(m)}_{X}$ par
utilisant l'application $Q_m$ et l'action naturelle de $\DD^{(m)}_X$ sur $\OO_X$. Cette multiplication induit sur
$\AA^{(m)}_X$ une structure de $\OO_X$-anneau. En particulier, on peut former les produits
$(1\ot\uxi^{\la\uk\ra})(f\ot 1)$ pour $f\in \OO_X$. Et puisque, par définition,
$\uxi_X^{\la\uk\ra}=Q_{m}(\uxi^{\la\uk\ra})$, la formule de l'énoncé implique que
\begin{align}
Q_{m,X}\left((1\ot\uxi^{\la\uk\ra})(f\ot 1)\right)&=\sum_{\uk'+\uk''=\uk}
\acfrac{\uk}{\uk'}\uxi_X^{\la\uk'\ra}(f)\ot\uxi_X^{\la\uk''\ra} \\
       &=\uxi_X^{\la\uk\ra}f \text{ d'après \ref{def_prod_am}}\\
        &= Q_{m,X}(1\ot\uxi^{\la\uk\ra})Q_{m,X}(f\ot 1)
\end{align}
Le $\OO_X$-anneau $\AA^{(m)}_{X}$ est filtrée par les sous-$\OO_X$-modules
$\AA^{(m)}_{X,n}=\OO_X\ot_V D^{(m)}_n(G)$, et, par définition du produit, la structure
de $\OO_X$-anneau est compatible à cette filtration. D'autre part, l'application $Q_m$ étant filtrée,
c'est aussi le cas de $Q_{m,X}$. Comme $\OO_X$ est plat sur $V$, on voit que $\gr\pg\AA^{(m)}_{X}\simeq \OO_X\ot \gr\pg D^{(m)}(G)$
et l'isomorphisme cherché suit alors de~\ref{grDm}. Il en résulte que $\gr\pg\AA^{(m)}_{X}$ est une $\OO_X$-algèbre de type
fini, donc noetherienne sur les ouverts affines car $X$ est noetherien, de sorte que
 $\AA^{(m)}_{X}$ est aussi noetherien sur les ouverts affines par un argument standard.
De plus, pour tout couple d'ouverts affines $U,V$ de $X$ tels que $V\subset U$,
$\OO_X(V)$ est plat sur $\OO_X(U)$, si bien que $\AA^{(m)}_{X}(V)$ est plat à droite et à gauche
sur $\AA^{(m)}_{X}(U)$. Les deux conditions du critère 3.1.1 de \cite{Be1} sont donc remplies,
ce qui montre que $\AA^{(m)}_{X}$ est un faisceau cohérent sur $X$. Le même raisonnement s'applique au
faisceau $\gr\pg\AA^{(m)}_{X}$.
\end{proof}
\subsubsection{Etude du gradué de $Q_{m,X}$}
\label{grQm}
Comme le morphisme $Q_{m,X}$ est filtré, il induit par passage aux gradués un morphisme
$$\gr\pg Q_{m,X}\,\colon\, \gr\pg\AA^{(m)}_{X}\rig \gr\pg \Dm_X.$$
En particulier, on dispose de $\gr_1 Q_{m,X}$ : $\OO_X \ot_V \gr_1 D_m(G)\rig \gr_1\Dm_X$. On en déduit
via les identifications $\ovA_m$ de~\ref{desc-D1} et $\ovB_m$ de ~\ref{desc-D1diff}
que $\gr_1 Q_{m,X}$ induit une unique application $\QQ_{m,1}$ faisant commuter le diagramme suivant
$$\xymatrix{ \OO_X \ot_V \gr_1 D^{(m)}(G)\ar@{->}[r]^>>>>{\gr_1Q_{m,X}}\ar@{->}[d]^{id\ot\ovA_m}_{\wr} & \gr_1\Dm_X\ar@{->}[d]^{\ovB_m}_{\wr} \\
               \OO_X\ot_V Lie(G ) \ar@{->}[r]^<<<<<<<<{\QQ_{m,1}} & \TT_X.}$$
Nous avons alors le %
\begin{souslem} Pour tout $m$, les applications $\QQ_{m,1}$ co\"\i ncident en une même application
$\QQ_{1}$ : $\OO_X\ot_V Lie(G ) \rig \TT_X $.
\end{souslem}
%}}
\begin{proof} Notons $\II_X$ l'idéal diagonal de $X$, $\ovA_m^{*-1}$ la flèche obtenue par dualité
à partir de $\ovA_m$ de~\ref{desc-D1}
et $\SS_{m,1}$ : $\II_X/\II_X^{2}\rig \OO_X\ot_V I/I^2$,
obtenu comme le composé, après les identifications ~\ref{desc-D1diff},
$$\SS_{m,1}\,\colon \,\xymatrix @R=0mm {\II_X/\II_X^2\ar@{->}[r]^{\ovB_m^*} & gr_1\PP_{X,(m)}^{n}\ar@{->}[r]^{gr_1\sigma_m^{(n)}}
 & \OO_X\ot_V gr_1P^n_{(m)}(G)\ar@{->}[r]^{Id\ot \ovA_m^{*-1}}& \OO_X\ot_V I/I^2
.}$$
%$$\SS_{m,1}:=(Id\ot \ovA_m^{*-1})\circ \gr_1\sigma_m \circ\ovB_m^* \, \colon \, \II/\II^2 \rig
%\OO_X\ot \II_{\kappa}/\II_{\kappa}^2  ,$$ obtenue après les identifications de ~\ref{desc-D1} et ~\ref{desc-D1diff}.
Alors
l'application $\QQ_{m,1}$ est donnée par le diagramme suivant
$$ \xymatrix @R=0mm { \OO_X\ot_V (Lie(G)) \ar@{->}[r] & \TT_X \\
                    \ovu \ar@{{|}->}[r] & \ovu \circ \SS_{m,1}.}$$
Il suffit donc de montrer que ces applications $\SS_{m,1}$ sont égales à une même
application $\SS_{1}$, ce qui provient du fait que les morphismes $\sigma_m$ sont compatibles
pour $m$ variable, de sorte que les $\gr_1 \sigma_m$ sont tous égaux.
\end{proof}
Reprenons l'isomorphisme $d_m$ de ~\ref{grdiffDm} : $\gr\pg\Dm_X \sta{\sim}{\rig} \Sg^{(m)}(\TT_X)$, construit de façon analogue
à $c_m$ en~\ref{grDm} et qui provient par dualité de l'isomorphisme canonique de $m$-PD-algèbres
$d_m^* $ : $ \Gamma_{X,(m)}(\II/\II^2)\sta{\sim}{\rig} \gr\pg \PP_{X,(m)}$ de~\ref{isomgrP}. On a alors la
\begin{sousprop} \label{prop_grAm}\be \item[(i)]
Le diagramme suivant est commutatif.
                 $$\xymatrix{ \gr\pg \AA^{(m)}_{X}\ar@{->}[d]^{c_{m,X}}_{\wr} \ar@{->}[r]^{\gr\pg Q_{m,X}} & \gr \pg \Dm_X \ar@{->}[d]^{d_m}_{\wr}\\
                       \OO_X\ot_V\Sg^{(m)}(Lie(G))        \ar@{->}[r]& \Sg^{(m)}(\TT_X),
                    }$$
                    où la seconde flèche horizontale est l'application $\Sg^{(m)}(\QQ_{m,1})$.
\item[(ii)] Si $X$ est un espace homogène sous l'action de $G$, alors $\gr\pg Q_{m,X}$ est surjectif.
\ee
\end{sousprop}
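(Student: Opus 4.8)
The plan is to deduce (i) from its restriction to degree $1$ — which is precisely the content of the lemma above, defining $\QQ_{m,1}$ — by exploiting that both functors $\Ga_{(m)}(\cdot)$ and $\Sg^{(m)}(\cdot)$ are rigidly determined by their degree-$1$ component. First I would note that, by the very definition $Q_{m,n}(u)=(u\ovot Id)\circ\sigma_m^{(n)}$, the map $\gr\pg Q_{m,X}$ is the transpose (for the canonical locally free $\OO_X$-module structures on the graded pieces) of the graded morphism $\gr\pg\sigma_m\colon\gr\pg\PP_{X,(m)}\rig\OO_X\ot_V\gr\pg P_{(m)}(G)$ induced by the $\sigma_m^{(n)}$. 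Likewise, by the definition of the functor $\Sg^{(m)}$, the map $\Sg^{(m)}(\QQ_{m,1})$ is the transpose of $\Ga_{X,(m)}(\SS_1)$, where $\SS_1\colon\Omega^1_X\rig\OO_X\ot_V\II/\II^2$ is the morphism $\QQ_{m,1}^{\vee}$ occurring in the proof of the lemma (and independent of $m$). Hence it suffices to prove that, through the canonical isomorphisms $d_m^*$ of~\ref{isomgrP} and $c_m^*\ot\OO_X$ of~\ref{grDm_f}, one has $\gr\pg\sigma_m=\Ga_{X,(m)}(\SS_1)$.

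Next I would observe that both sides of this identity are $m$-PD-morphisms of graded $m$-PD-algebras $\Ga_{X,(m)}(\Omega^1_X)\rig\Ga_{X,(m)}(\OO_X\ot_V\II/\II^2)$: for the left-hand side because the $\sigma_m^{(n)}$ are $m$-PD-morphisms, compatible for varying $n$, and $d_m^*,c_m^*$ are $m$-PD-isomorphisms; for the right-hand side it is clear. Since $\Ga_{X,(m)}(\EE)$ is generated as an $m$-PD-algebra by its degree-$1$ component $\EE$, any graded $m$-PD-morphism with source $\Ga_{X,(m)}(\Omega^1_X)$ is determined by its restriction to degree $1$; and the degree-$1$ parts of $\gr\pg\sigma_m$ and of $\Ga_{X,(m)}(\SS_1)$ both coincide with $\SS_1$ (for the former this is exactly the lemma above together with the construction of the identifications $\ovA_m$, $\ovB_m$). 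The two morphisms therefore agree, which gives (i). Alternatively, one may compute $\gr\pg\sigma_m$ explicitly in local coordinates from the Taylor formula~\ref{f-taylor} of~\ref{compat-Ox} and check the equality with $\Ga_{X,(m)}(\SS_1)$ by hand.

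For (ii), using (i) and the vertical isomorphisms $c_{m,X}$ and $d_m$, it is enough to show that $\Sg^{(m)}(\QQ_{m,1})$ is surjective; and since the functor $\Sg^{(m)}$ preserves epimorphisms of locally free $\OO_X$-modules (checked locally on a basis), this reduces to the surjectivity of $\QQ_1\colon\OO_X\ot_V Lie(G)\rig\TT_X$. The latter is a morphism of coherent, indeed locally free, $\OO_X$-modules, so by the technical lemmas of~\ref{subsection-lem_techniques} (the variant for surjectivity) it suffices to check it at each closed point $x$ of $X$. At such a point, with residue field $\kappa$, the induced map $\kappa\ot_V Lie(G)\rig\TT_X\ot_{\OO_X}\kappa$ is the differential at the origin of the orbit morphism $G_\kappa\rig X_\kappa$, $g\mapsto g\cdot x$; for a homogeneous space this orbit morphism is smooth — it realizes $X_\kappa$ as the quotient of $G_\kappa$ by the stabilizer of $x$, in the line of the smoothness of $\sigma$ proved earlier (here one uses that the relevant stabilizers, e.g. a Borel subgroup in the flag variety case, are smooth) — so its differential is surjective. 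This yields (ii).

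The step I expect to be the main obstacle is the bookkeeping underlying the first two paragraphs: making precise that passing to the associated graded turns the family $(\sigma_m^{(n)})_n$ into a single $m$-PD-morphism of graded $m$-PD-algebras, and that such a morphism between algebras of the form $\Ga_{X,(m)}(\cdot)$ is determined by its degree-$1$ part. This is the exact analogue of Berthelot's discussion (2.3 of~\cite{Be1}) transported to the symmetric/divided-power side, and although formal it requires care to match up the identifications $d_m^*$, $c_m^*$, $\ovA_m$, $\ovB_m$ consistently in degree $1$.
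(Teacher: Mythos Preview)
Your proposal is correct and follows essentially the same route as the paper: dualize to compare the $m$-PD-morphisms $\gr\pg\sigma_m$ and $\Ga_{(m)}(\SS_1)$ through $d_m^*$ and $c_m^*$, check equality in degree $1$, and conclude using that $\Ga_{X,(m)}(\cdot)$ is generated as an $m$-PD-algebra by its degree-$1$ part; for (ii) both reduce to the classical surjectivity of $\QQ_1$ on a homogeneous space. The paper simply cites this last surjectivity (renvoyant à 1.6 de \cite{huy-beil_ber}), whereas you spell out the fiberwise argument via the differential of the orbit map, which is exactly the content of that reference.
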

\begin{proof} On a les égalités
%$$ \gr\pg\PP_{X,(m)}=\bigoplus_n \II_X^{\{n\}}/\II_X^{\{n+1\}}, \;
%\gr\pg (\OO_X\ot_V P_{(m)}(G))=\OO_X\ot_V\bigoplus_n I^{\{n\}}/I^{\{n+1\}},$$
$$ \gr\pg\Dm_X = \HH om_{\OO_X}\left(\bigoplus_n \II_X^{\{n\}}/\II_X^{\{n+1\}},\OO_X\right)\text{ et }
   \gr\pg \AA^{(m)}_{X} = Hom_{V}\left(\bigoplus_n I^{\{n\}}/I^{\{n+1\}},V\right).$$
Les applications $\sigma_m^{(n)}$ induisent une application
 $$\gr\pg\sigma_m\,\colon\,   \bigoplus_n \II_X^{\{n\}}/\II_X^{\{n+1\}} \rig \OO_X\ot_V\bigoplus_n I^{\{n\}}/I^{\{n+1\}},$$
de sorte que l'application $\gr\pg Q_{m,X}$ est donnée par le diagramme suivant
$$ \xymatrix @R=0mm { \gr\pg \AA^{(m)}_{X}\ar@{->}[r] & \gr\pg\Dm_X  \\
                  \ovu \ar@{{|}->}[r] & \ovu \circ \gr\pg \sigma_m.}
$$
De même l'application $\gr_1Q_{m,X}$ est donnée par le diagramme suivant
$$ \xymatrix @R=0mm { \gr_1 \AA^{(m)}_{X}\ar@{->}[r]&  \gr_1\Dm_X  \\
                  \ovu \ar@{{|}->}[r] & \ovu \circ \gr_1 \sigma_m.}
$$

La commutativité du diagramme de l'énoncé revient donc à la commutativité du diagramme suivant
$$\xymatrix{ \bigoplus_n \II_X^{\{n\}}/\II_X^{\{n+1\}} \ar@{->}[r]^>>>>>>{\gr\pg \sigma_m} &
\OO_X\ot_V\bigoplus_n I^{\{n\}}/I^{\{n+1\}} \\
     \Gamma_{X,(m)}(\II_X/\II_X^2) \ar@{->}[u]^{id \ot c_m^*}_{\wr}\ar@{->}[r]^>>>>>>{\Gamma_{(m)}(\SS_{1})} &
\OO_X\ot_V \Gamma_{(m)}(I/I^2)\ar@{->}[u]^{d_m^*}_{\wr}.}$$
Nous devons comparer $d_m^* \circ \Gamma_{(m)}(\SS_{1})$ et $\gr\pg \sigma_m \circ (id \ot c_m^*)$,
qui sont des $m$-PD-morphismes de $m$-PD-algèbres graduées. En degré $1$, $d_m^*=\ovB_m^*$
(resp. $c_m^*=\ovA_m^*$), si bien que ces morphismes sont égaux en degré $1$ par définition de
$\SS_{1}$. Comme la $m$-PD-algèbre $\Gamma_{X,(m)}(\II/\II^2)$ est engendrée par les éléments de degré $1$,
on voit ainsi que les morphismes $d_m^* \circ \Gamma_{(m)}(\SS_{1})$ et $\gr\pg \sigma_m \circ (id \ot c_m^*)$
sont égaux.

Observons pour (ii) que $\QQ_1$ est l'application habituelle $\OO_X\ot_V Lie(G) \rig  \TT_X$, qui est surjective
si $X$ est un $G$-espace homogène, par un argument classique refait en 1.6 de ~\cite{huy-beil_ber}. Ceci montre par (i) que l'application
$\gr\pg Q_{m,X}$ est surjective.
\end{proof}

\subsubsection{Opérateurs différentiels invariants sur $G$}
%{{{
\label{defop_diff_inv}
On suppose ici que $G$ est un schéma en groupes affine et lisse sur $S$. On rappelle les définitions pour
le cas d'une action à droite $X \times G \rig X$. Ces définitions s'adaptent de façon
évidente pour le cas d'une action à gauche.

Comme $G$ est plat, le morphisme de projection $p_1$: $X\times G\rig X$ est affine et
plat. Soit $\EE$ un faisceau de $\OO_X$-modules $G$-équivariant par un isomorphisme
$\Phi$ :
$\sigma^* \EE \sta{\sim}{\rig} p_1^*\EE.$
 La formule de K\"unneth donne un isomorphisme pour tout $k\in\Ne$
$$H^k(X\times G, p_1^*\EE)\simeq H^k(X,\EE)\ot_V V[G] .$$
 L'application composée suivante $u_G^k(\Phi)$, seulement notée $u_G$ pour $k=0$:
$$H^k(X,\EE)\rig
H^k(X\times G,\sigma^*\EE)\sta{H^k(\Phi)}{\rig}H^k(X\times G,p_1^*\EE)$$
donne donc finalement un morphisme $\Delta^k$, seulement notée $\Delta$ pour $k=0$:
$$\Delta^k\;\colon\;H^k(X,\EE)\rig  H^k(X,\EE)\ot_V V[G],$$
dont le lecteur pourra vérifier qu'il définit une structure de $G$-module (à gauche) sur
$H^k(X,\EE)$. Les relations de co-module viennent des relations de cocycles.

\begin{sousdef}
Les éléments $G$-invariants de $\Ga(X,\EE)$
sont les éléments $P$ de $\Ga(X,\EE)$ tels que $\Delta(P)=P\ot 1$.
Le sous-espace de $\Ga(X,\EE)$ formé par des éléments $G$-invariants est noté $\Ga(X,\EE)^G$.
\end{sousdef}
En particulier, on dispose des espaces $\Ga(X,\PP_{X,(m)}^n)^G$, $\Ga(X,{\DD}^{(m)}_{X})^G$
et $\Ga(X,\DD^{(m)}_{X,n})^G$ etc.
%}
\vskip5pt
Si $X$ est égal à $G$, l'action des translations à droite de
$G$ sur lui-même ($g\in G$ opére par $h\mapsto hg$) va donner une action à gauche sur $\Ga(G,\DD^{(m)}_G)$ et $Q_m$ est alors
à valeurs dans les opérateurs différentiels invariants pour cette action,
ce qui étend la proposition classique (II, par. 4, no 6 de \cite{Demazure_gr_alg}) suivante. Pour énoncer cette
proposition, on considère les applications $Q_m$ de ~\ref{prop_Qhom}, ainsi que l'application d'évaluation
$(e)$ : $\Ga(G,\DD^{(m)}_{G})^G \rig D^{(m)}(G)$ qui est définie par
$P \mapsto P(e)$, cf. ~\ref{def_fe_Pe}.

D'autre part, cette définition est valable sur n'importe quel schéma affine $S$ vérifiant les conditions
de l'introduction. Elle se généralise donc aux ouverts affines $S'=\spec V'$ de $S$. Pour un tel schéma $S'$,
notons $X_{S'}=S'\times X$, $p_{S'}$ la projection $X_{S'}\rig X$ et $\EE_{S'}=p_{S'}^*\EE$, $\Delta_{S'}^k$ :
$\Delta^k\;\colon\;H^k(X_{S'},\EE_{S'})\rig  H^k(X_{S'},\EE_{S'})\ot_{V'} V'[G]$.
Cela nous permet de donner la définition suivante
\begin{sousdef} On note $st_{X*}\EE^G$ le sous-faisceau de $\OO_S$-modules de $st_{X*}\EE$ associé au préfaisceau
$S'\mapsto \Ga(X_{S'},\EE_{S'})^G$. C'est le sous-faisceau des éléments invariants de $st_{X*}\EE$.
\end{sousdef}
Soit $res_{V'}$ le morphisme canonique et plat $V\rig V'$.
Comme $\Delta_{S'}=res_{V'}\ot_V \Delta$, le faisceau $\EE^G$ est un faisceau de $\OO_S$-modules quasi-cohérents
et $\Ga(S,st_{X*}\EE^G)=\Ga(X,\EE)^G$. Il
revient au même de travailler avec ce faisceau ou ses sections globales sur $S$. Dans la suite de cette
sous-section nous travaillerons plutôt avec les sections globales.
\vskip5pt
Si $X$ est égal à $G$, l'action des translations à droite de
$G$ sur lui-même ($g\in G$ opére par $h\mapsto hg$) va donner une action à gauche sur $\Ga(G,\DD^{(m)}_G)$ et $Q_m$ est alors
à valeurs dans les opérateurs différentiels invariants pour cette action,
ce qui étend la proposition classique (II, par. 4, no 6 de \cite{Demazure_gr_alg}) suivante. Pour énoncer cette
proposition, on considère les applications $Q_m$ de ~\ref{prop_Qhom}, ainsi que l'application d'évaluation
$(e)$ : $\Ga(G,\DD^{(m)}_{G})^G \rig D^{(m)}(G)$ qui est définie par
$P \mapsto P(e)$, cf. ~\ref{def_fe_Pe}.

\begin{sousth} \label{opp_diff_inv}
 Les applications canoniques $Q_{m}$ et $(e)$ sont des anti-isomorphismes d'algèbres filtrées,
inverses l'un de l'autre entre $\Ga(G,\DD^{(m)}_{G})^G$ et  $D^{(m)}(G)$. Ces applications induisent
des bijections canoniques entre $\Ga(G,\DD^{(m)}_{G,n})^G$ et $D^{(m)}_n(G)$.
\end{sousth}
\begin{proof} Montrons d'abord l'énoncé à un ordre $n$ fixé, le cas de
$D^{(m)}(G)$ s'en déduit par passage à la limite. Observons d'abord que si $u\in D^{(m)}_n(G)$,
alors $Q_m(u)$ est invariant.

Soit $P\in\Ga(G,\DD^{(m)}_{G,n})$, et
$\Phi: \mu^*\PP_{G,(m)}^n\rig p_1^* \PP_{G,(m)}^n$ l'isomorphisme donnant la $G$-équivariance des faisceaux $\PP_{G,(m)}^n$
pour l'action des translations à droite de $G$ sur $G$.
La condition d'invariance de $P$ s'écrit
$$\forall \tau\in\mu^{-1}\PP_{G,(m)}^n,\quad P(\tau)\ot_{{\mu}^{-1}\OO_X}1\ot 1=
(P\ot_{{p_1}^{-1}\OO_X}1\ot 1)(\Phi(  \tau \ot 1\ot 1 ))\;\in\OO_{G\times G},$$
ou encore pour tout $\tau \in\mu^{-1}\PP_{G,(m)}^n$
$$\mu^{\sharp}(P(\tau))=(P\ot_{{p_1}^{-1}\OO_X}1\ot 1)(\Phi(  \tau \ot 1\ot 1
))\;\in\OO_{G\times G}.$$

Considérons le $m$-PD-morphisme composé $\overline{U}_m$
$$\mu^{-1}\PP_{G,(m)}^n\sta{\mu^{-1}\mu^{(n)}_m}{\rig}P^{n}_{(m)}(G)\ot_V\mu^{-1}\OO_G
\sta{Id\ot \mu^{\sharp}}{\rig}P^{n}_{(m)}(G)\ot_V\OO_{G\times G},$$
que nous étendons par $\mu^{-1}\OO_{G}$ linéarité en un $m$-PD-morphisme
$$U_m : \mu^*\PP_{G,(m)}^n\rig P^{n}_{(m)}(G)\ot_V\OO_{G\times G}.$$ Ici, l'application $\mu_m^{(n)}$ est définie en ~\ref{def_sigmamn} pour $X:=G$ et $\sigma:=\mu$. Considérons à présent le $m$-PD-morphisme composé
$\overline{V}_m$
$$\xymatrix{ \mu^{-1}\PP_{G,(m)}^n\ar@{->}[r]^{\Phi}& p_1^*\PP_{G,(m)}^n &
\ar@{->}[r]^>>>>>>>>{p_1^{-1}\mu_m^{(n)}\ot_{p_1^{-1}\OO_G} Id_{G\times G}}&
P^{n}_{(m)}(G)\ot_V \OO_{G\times G} ,}$$
que nous étendons par $\mu^{-1}\OO_{G}$ linéarité en un $m$-PD-morphisme
$$V_m : \mu^*\PP_{G,(m)}^n\rig P^{n}_{(m)}(G)\ot_V \OO_{G\times G}.$$

Nous observons que pour tout $\tau\in \mu^{-1}\PP_{G,(m)}^n$, et $P=Q_m(u)$
$$Q_m(u)(\tau)\ot_{{\mu}^{-1}\OO_X}1\ot 1= (u \ot Id \ot Id)\circ U_m,$$
et
$$(Q_m(u)\ot_{{p_1}^{-1}\OO_X}\ot 1\ot 1)(\Phi(\tau \ot 1 \ot 1))=(u \ot Id\ot Id )\circ V_m.$$
Nous sommes donc ramenés à montrer que $U_m$ et $V_m$ co\"\i ncident, et,
comme ce sont des $m$-PD-morphismes $\OO_{G\times G}$-linéaires, à montrer que $U_m(\tau
\ot 1\ot 1)=
V_m(\tau \ot 1 \ot 1)$ pour tout $\tau=1\ot t - t\ot 1$, où $t\in \OO_G$. Dans ce
cas le calcul provient en fait d'un calcul à valeurs dans $\OO_G\ot_V\OO_{G\times G}$
et on compose avec $\rho_m\ot Id_{G\times G}$ pour l'avoir à valeurs dans
$P^{n}_{(m)}(G)\ot_V \OO_{G\times G}$. Posons
$\mu^{\sharp}(t)=\sum_i a_i \ot b_i$, si bien que
$$
\Phi(\tau\ot 1 \ot 1)=\sum_i(1\ot a_i - a_i\ot 1)(1\ot b_i)\in p_1^*\PP_{G,(m)}^n,
$$
\begin{align*}U_m(\tau \ot 1\ot 1)&=(\rho_m\ot Id_{G\times G})\circ(Id \ot \mu^{\sharp})\left(\sum_i a_i \ot b_i
- 1\ot t\right) \\
&= (\rho_m\ot Id_{G\times G})\left(\sum_i  b_i \ot \mu^{\sharp}(a_i)
-1\ot \mu^{\sharp}(t)\right),
\end{align*}
\begin{align*}V_m( \tau \ot 1 \ot 1)&=\sum_i\mu_m(1 \ot a_i - a_i\ot
1)\ot_{p_1^{-1}\OO_G}(1 \ot b_i)\\
                                  &=(\rho_m\ot Id_{G\times G})\left(\sum_i (\mu^{\sharp}(a_i)-1\ot a_i)\ot_{p_1^{-1}\OO_G}
(1\ot b_i)\right)\\
  &=(\rho_m\ot Id_{G\times G})\left(\sum_i \mu^{\sharp}(a_i)\ot b_i -\sum_i 1\ot a_i \ot b_i\right) \\
 &=(\rho_m\ot Id_{G\times G})\left(\sum_i \mu^{\sharp}(a_i)\ot b_i -1\ot
\mu^{\sharp}(t)\right),
\end{align*}
d'où l'égalité cherchée.

Ainsi l'application $Q_m$ est à valeurs dans l'algèbre $\Ga(G,\DD^{(m)}_{G,n})^G$.
Construisons maintenant une application réciproque $\Ga(G,\DD^{(m)}_{G,n})^G \rig D^{(m)}_n(G)$
pour terminer la démonstration du théorème.

Soit $P$ un opérateur différentiel de $\Ga(G,\DD^{(m)}_{G,n})$,
qui définit une application $\OO_G$-linéaire à gauche $\PP_{G,(m)}^n\rig \OO_G$.
 Comme en ~\ref{reduction_Dmod_e}, on note $P(e)$ l'élément $\beta_m^{-1}(P)\in D^{(m)}_{n}(G)$,
induit par $\varep_G^* P$.

Observons tout de suite que si $u\in D^{(m)}_n(G)$, alors
$$Q_m(u)(e)=u.$$ L'opérateur $Q_m(u)$ est en effet donné par le diagramme
$$\PP_{G,(m)}^{n}\sta{\mu_m}{\rig} P_{(m)}^n(G)\ot \OO_G\sta{u\ot Id}{\rig}\OO_G,$$
et donc $Q_m(u)(e)$ par le diagramme
$$\varep_G^*\PP_{G,(m)}^{n}\simeq P_{(m)}^{n}(G) \sta{u}{\rig} V,$$
ce qui montre notre assertion.

\vskip5pt

Soit maintenant $P$ un élément de $\Ga(G,\DD^{(m)}_{G,n})^G$, nous allons vérifier que
$$Q_m(P(e))=P.$$

Considérons $e\times Id_G$ : $  G \rig G\times G$, qui envoie
$g$ sur $(e,g)$ introduit en ~\ref{liens-faisdiffG}. Alors le faisceau $(e\times Id_G)^*(p_1^*\PP_{G,(m)}^n)$
est égal à $P_{(m)}^{n}(G)\ot_V\OO_G$ et
$(e\times Id_G)^*(p_1^*\PP_{G,(m)}^n)$ à $\PP_{G,(m)}^n$.
Reprenons le $m$-PD-morphisme $\Phi$ : $\mu^*\PP_{G,(m)}^n\rig p_1^*\PP_{G,(m)}^n$ donnant la $G$-équivariance des faisceaux $\PP_{G,(m)}^n$
et le $m$-PD-morphisme $\mu_m^{(n)}$ de ~\ref{def_sigmamn} (pour $X:=G$ et $\sigma:=\mu$). Nous disposons alors du lemme suivant.
\begin{souslem} Les applications $(e\times Id_G)^*\Phi$ et ${\mu}_m^{(n)}$ co\"\i ncident.
\end{souslem}
\begin{proof} Comme on doit comparer deux $m$-PD-morphismes $\OO_{G}$-linéaires,
de $\PP_{G,(m)}^n\rig P_{(m)}^{n}(G)\ot_V \OO_G$,
il suffit de comparer ces applications sur les éléments $\tau$ avec
$\tau=1\ot t -t \ot 1$ et $t$ une section locale de $\OO_G$. Posons
$\mu^{\sharp}(t)=\sum_i a_i \ot b_i$.
Calculons
\begin{align*}
 (\varep_G^*\ot Id_G)^*\circ\Phi(\tau\ot 1 \ot 1) &= (\varep_G^*\ot Id_G)^*\left(\sum_i (1\ot a_i -a_i \ot
1)\ot_{p_1^{-1}\OO_G}(1\ot b_i)\right)\\
 &= (\rho_m \ot Id_G)\left(\sum_i(a_i -a_i(e))\ot_V b_i\right) \\
 &= (\rho_m \ot Id_G)\left(\mu^{\sharp}(t)-1\ot t\right),
\end{align*}
qui est bien égal à $\mu_m^{(n)}(\tau)$, comme cela a déjà été calculé.
\end{proof}

Considérons maintenant le diagramme suivant, où $P$ est un opérateur
différentiel sur $G$
$$ \xymatrix{ \PP_{G,(m)}^n \ar@{->}[rr]^{P}& & \OO_G \\
 \mu^*\PP_{G,(m)}^n \ar@{}[dr]|{\circlearrowright} \ar@{->}[r]^{\Phi}\ar@{->>}[u]\ar@{->>}[d] &
p_1^*\PP_{G,(m)}^n \ar@{}[dr]|{\circlearrowright} \ar@{->}[r]^{P\ot Id_{G\times G}}\ar@{->>}[d]&
\OO_{G\times G}\ar@{->>}[u]\ar@{->>}[d]\\
\PP_{G,(m)}^n\ar@{->}[r]^>>>>>{\mu_m^{(n)}}&
P_{(m)}^{n}(G)\ot_V \OO_G \ar@{->}[r]^<<<<<{P(e)\ot Id_G}& \OO_G  .} $$

Les flèches de la ligne du haut et de la ligne du bas se déduisent de
la ligne du milieu par application de $\varep_G^*\ot Id_G$.
Le carré en bas à gauche est commutatif grâce au lemme précédent, le carré
en bas à droite est commutatif par définition de $P(e)$. La ligne du bas
calcule donc $Q_m(P(e))$.

Supposons que
$P$ est invariant, alors pour toute section locale $\tau $ de
$\PP_{G,(m)}^n$, on a $$\mu^{\sharp}(P(\tau))=(P\ot 1 \ot 1)\circ \Phi(\tau
\ot 1 \ot 1).$$ Comme
$P(\tau)=(\varep_G^*\ot Id_G)\circ \mu^{\sharp}(P(\tau))$, la condition
d'invariance dit précisément que le rectangle du haut est commutatif.

Et donc, si $P$ est invariant, on a bien $P=Q_m(P(e))$.

Comme l'application d'évaluation
restreinte à $\Ga(G,\DD^{(m)}_{G})^G$ est un inverse de $Q_m$, il est automatique que cette application est
un anti-homomorphisme d'algèbres. Ceci montre finalement le théorème.
\end{proof}
De façon tautologique, nous obtenons une formulation faisceautique de cet énoncé:
\begin{sousth} \label{opp_diff_inv_f}
 Les applications canoniques $Q_{m}$ et $(e)$ sont des anti-isomorphismes de faisceaux d'algèbres filtrées,
inverses l'un de l'autre entre $st_{G*}{\DD^{(m)}_{G}}^G$ et  $\DD^{(m)}(G)$.
\end{sousth}

Terminons par deux exemples. D'abord le cas du groupe additif de dimension $1$,
$\mathbf{G}_a=Spec\, V[T]$, $\der^{\la k \ra }$ les opérateurs différentiels relatifs au choix de la coordonnée
$T$.
\begin{sousprop} Reprenons la base de $D^{(m)}(\mathbf{G}_a)$ de ~\ref{DmGa}. Alors
    $$Q_m(\xi^{\langle k \rangle})=\der^{\la k \ra }.$$
\end{sousprop}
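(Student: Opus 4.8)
The plan is to compute $Q_m(\xi^{\la k\ra})$ explicitly as an element of $\Dm_{\mathbf{G}_a,n}=\HH om_{\OO_{\mathbf{G}_a}}(\PP^n_{\mathbf{G}_a,(m)},\OO_{\mathbf{G}_a})$, by evaluating it on the $\OO_{\mathbf{G}_a}$-basis $\{\tau^{\{j\}}\}_{j\leq n}$ of $\PP^n_{\mathbf{G}_a,(m)}$, where $\tau=1\ot T-T\ot 1$, and to compare the result with $\der^{\la k\ra}$, which by construction (\ref{operateur-diff}, \ref{basetau}) is the dual basis of that family, so that $\der^{\la k\ra}(\tau^{\{j\}})=\delta_{k,j}$. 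Throughout, $\mathbf{G}_a$ acts on itself by translation, $\sigma=\mu$, and $\mu^{\sharp}(T)=1\ot T+T\ot 1$; note also that the ideal $I$ of $V[T]$ defining $e$ is $(T)$, so the $V$-basis $t^{\{j\}}$ ($j\leq n$) of $P^n_{(m)}(\mathbf{G}_a)$ is built from $t:=T$, and $\xi^{\la k\ra}$ is its dual basis.

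The heart of the argument is the value of $\sigma_m^{(n)}$ on $\tau$. Since $\sigma^{\sharp}(\sigma^{-1}T)=1\ot T+T\ot 1$, the explicit description of $\sigma_m^{(n)}$ recorded in \S\ref{op_diff_inv} gives $\sigma_m^{(n)}(1\ot T)=1\ot T+\rho_m(T)\ot 1$ in $P^n_{(m)}(\mathbf{G}_a)\ot_V\OO_{\mathbf{G}_a}$, and $\rho_m(T)=t$; one may equally read this off the Taylor formula \ref{f-taylor} together with $\xi^{\la k\ra}(T)=\der^{\la k\ra}(T)(0)=\delta_{k,1}$ from \ref{formulDGA}. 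On the other hand $\sigma_m^{(n)}$ is unital and $\OO_{\mathbf{G}_a}$-linear for the \emph{left} structure on $\PP^n_{\mathbf{G}_a,(m)}$, so $\sigma_m^{(n)}(T\ot 1)=T\cdot(1\ot 1)=1\ot T$. Subtracting, $\sigma_m^{(n)}(\tau)=t\ot 1$.

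Next, $\sigma_m^{(n)}$ is an $m$-PD-morphism for the $m$-PD-structure on $P^n_{(m)}(\mathbf{G}_a)\ot_V\OO_{\mathbf{G}_a}=st_{\mathbf{G}_a}^{*}\PP^n_{(m)}(\mathbf{G}_a)$ obtained from the ideal $I$ by the flat base change $V\rig\OO_{\mathbf{G}_a}$, and divided powers commute with flat base change; hence $\sigma_m^{(n)}(\tau^{\{j\}})=(t\ot 1)^{\{j\}}=t^{\{j\}}\ot 1$ for every $j\leq n$. By the definition of $Q_m$ as $(\xi^{\la k\ra}\ot\mathrm{Id}_{\OO_{\mathbf{G}_a}})\circ\sigma_m^{(n)}$ and the duality between $\xi^{\la k\ra}$ and the $t^{\{j\}}$,
$$Q_m(\xi^{\la k\ra})(\tau^{\{j\}})=\xi^{\la k\ra}(t^{\{j\}})=\delta_{k,j}=\der^{\la k\ra}(\tau^{\{j\}}).$$
Both sides are $\OO_{\mathbf{G}_a}$-linear maps $\PP^n_{\mathbf{G}_a,(m)}\rig\OO_{\mathbf{G}_a}$ agreeing on the basis $\{\tau^{\{j\}}\}_{j\leq n}$, hence coincide in $\Dm_{\mathbf{G}_a,n}$; passing to the direct limit over $n$ gives $Q_m(\xi^{\la k\ra})=\der^{\la k\ra}$ in $\Ga(\mathbf{G}_a,\Dm_{\mathbf{G}_a})$.

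The only delicate point is bookkeeping: keeping apart the two $\OO_{\mathbf{G}_a}$-module structures on $\PP^n_{\mathbf{G}_a,(m)}$ (the reduction $\tau=1\ot T-T\ot 1$ uses the right one, whereas $\sigma_m^{(n)}$ is left-linear) and correctly identifying the $m$-PD-structure on the target as a flat base change of that of $P^n_{(m)}(\mathbf{G}_a)$; no genuine obstruction arises, and the computation is short. Alternatively one could deduce the statement from Theorem \ref{opp_diff_inv}: $\der^{\la k\ra}$ is translation-invariant and $\der^{\la k\ra}(e)=\xi^{\la k\ra}$ by \ref{def_fe_Pe}, so uniqueness of the invariant operator with prescribed value at $e$ forces $\der^{\la k\ra}=Q_m(\xi^{\la k\ra})$ — but that route additionally requires checking the invariance of $\der^{\la k\ra}$ by hand, so the direct computation above is preferable.
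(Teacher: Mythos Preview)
Your proof is correct and follows essentially the same approach as the paper: both arguments establish $\sigma_m^{(n)}(\tau^{\{j\}})=t^{\{j\}}\ot 1$ and conclude by duality with the basis $\{\tau^{\{j\}}\}$. The only cosmetic difference is that the paper unpacks the definition $\sigma_m^{(n)}=e_X^*(q_1\circ d\mu)$ and computes $d\mu(\tau^{\{k\}})$ on $G\times G$ before projecting, whereas you invoke directly the $\OO_X$-linearity and $m$-PD property of $\sigma_m^{(n)}$ already recorded in \S\ref{op_diff_inv} to compute its value on $\tau$ and then on $\tau^{\{j\}}$; your route is marginally more economical.
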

\begin{proof} Notons $u=T\ot 1$, $v=1\ot T \in V[T]\ot V[T]$, $\tau=1 \ot T -T \ot 1 \in \PP_{G,(m)}^{n}$,
$\tau_u=1\ot 1 \ot u - u\ot 1\ot 1 $,
$\tau_v=1\ot 1 \ot v - v\ot 1\ot 1 \in \PP_{G\times G,(m)}^{n}$. On note $t=T$ définissant l'idéal
de l'élément neutre de $\mathbf{G}_a$. Ainsi les éléments $t^{\{k\}}$ forment une base de
$P_{\mathbf{G}_a,(m)}^{n}$. Rappelons l'application $q_1\circ d{\mu}$ de (\ref{equ-q1dsigma}) pour $\sigma:=\mu$ égal à l'action à droite de $\mathbf{G}_a$ sur lui-même.

Comme la loi de groupes est donnée
 par $\mu^{\sharp}(T)=1 \ot T + T \ot 1$, on a
\begin{align*} d\mu (\tau) &= \tau_u + \tau_v \\
               d\mu (\tau^{\{k\}}) &= (\tau_u + \tau_v)^{\{k\}}\\
                                       &= \sum_{k'+k''=k} \crofrac{k}{k'}\tau_u^{\{k'\}}\tau_v{\{k''\}}
             \text{ par ~\ref{formulesmPD}},
\end{align*}
et donc
$q_1\circ d\mu(\tau^{\{k\}})=\tau_u^{\{k\}}$, ce qui, en appliquant la description explicite de
 $ \tilde{\beta}_m$ de ~\ref{reduction_mod_e}, donne
$$\mu_m^{(n)}(\tau^{\{k\}})=t^{k}\ot 1 \in P_{\mathbf{G}_a,(m)}^{n}\ot V[T].$$ Par définition,
 les $\xi^{\{l\}}$ sont la base duale des $T^{k}$, ce qui implique que les $Q_m(\xi^{\{l\}})$ sont la
base duale des $\tau^{\{k\}}$, d'où l'énoncé.
\end{proof}

Prenons maintenant $G=\mathbf{G}_m$, qu'on identifie à $Spec\, V[T,T^{-1}]$. Soit $t=T-1$,
 $\II=t V[T,T^{-1}]$ est l'idéal définissant l'élément neutre. Soient
$\der^{\la k \ra }$ les opérateurs différentiels relatifs au choix de la coordonnée
$T$.
\begin{sousprop} Reprenons la base de $D^{(m)}(\mathbf{G}_m)$ donnée en~\ref{DmGm}. Alors
    $$Q_m(\xi^{\langle k \rangle})=T^k\der^{\la k \ra }.$$
\end{sousprop}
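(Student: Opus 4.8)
The statement to prove is $Q_m(\xi^{\langle k \rangle}) = T^k \der^{\langle k \rangle}$ in $D^{(m)}(\mathbf{G}_m)$, viewed inside $\Ga(\mathbf{G}_m, \DD^{(m)}_{\mathbf{G}_m})$ via the anti-homomorphism $Q_m$ of \ref{prop_Qhom}. The plan is to follow exactly the template of the additive-group computation just given: compute the $m$-PD-morphism $\mu_m^{(n)} = \varep_G^*(q_1 \circ d\mu)$ explicitly on the standard basis $\utau^{\{k\}}$ of $\PP^n_{\mathbf{G}_m,(m)}$, then dualize using the fact that $\{\xi^{\langle l \rangle}\}$ is the basis dual to $\{t^{\langle l\rangle}\}$ (here $t = T-1$), and read off $Q_m(\xi^{\langle l\rangle})$ as the element of $\Dm_{\mathbf{G}_m}$ dual to the image of $\mu_m^{(n)}$.

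First I would set up coordinates as in the $\mathbf{G}_a$ proof: write $u = T\otimes 1$, $v = 1\otimes T$, $\tau = 1\otimes T - T\otimes 1 \in \PP^n_{\mathbf{G}_m,(m)}$, and $\tau_u = 1\otimes 1\otimes u - u\otimes 1\otimes 1$, $\tau_v = 1\otimes 1\otimes v - v\otimes 1\otimes 1 \in \PP^n_{\mathbf{G}_m\times \mathbf{G}_m,(m)}$, with $t = T-1$ generating the augmentation ideal $\II$, so that $t^{\{k\}}$ form a $V$-basis of $P^n_{\mathbf{G}_m,(m)}(G)$. The key difference from the additive case is that the group law on $\mathbf{G}_m$ in the coordinate $\tau = T-1$ is $\mu^\sharp(\tau) = \tau\otimes 1 + 1\otimes \tau + \tau\otimes\tau$, so I would compute, using the $m$-PD formulas of \ref{subsubsection-formules}, that $d\mu(\tau) = \tau_u + \tau_v + \tau_u\tau_v = \tau_u + \tau_v(1 + \tau_u)$, and then evaluate $d\mu(\tau^{\{k\}})$ by expanding this as an $m$-PD-power. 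Applying the projector $q_1$ (which kills all terms involving $\tau_v$, i.e. projects onto the $p_1^*\PP$-part) should give $q_1 \circ d\mu(\tau^{\{k\}}) = (\tau_u(1+\tau_v))^{\{k\}}$ restricted appropriately — the point being that the $\tau_v$-free part of $(\tau_u + \tau_v + \tau_u\tau_v)^{\{k\}}$ involves $\tau_u^{\{k\}}$ together with lower-order corrections coming from the $\tau_u\tau_v$ term. After pulling back by $\varep_G^*$ and using the explicit description of $\tilde\beta_m$ from \ref{reduction_mod_e} (which sends $\utau^{\{k\}}$ to $\ut^{\{k\}}$), I would obtain a formula of the shape $\mu_m^{(n)}(\tau^{\{k\}}) = \sum_{l} c_{k,l}\, t^l \otimes (\text{something in } V[T,T^{-1}])$; dualizing, $Q_m(\xi^{\langle k\rangle})$ is the operator dual to $\tau^{\{k\}}$, and the claim is that this equals $T^k \der^{\langle k\rangle}$.

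The cleanest route to actually identifying the answer is probably not to grind through the $m$-PD-power expansion directly, but to use the functoriality/translation interpretation: $Q_m$ sends $D^{(m)}(G)$ to the right-invariant differential operators on $G$ (by \ref{opp_diff_inv}), and on $\mathbf{G}_m$ the right-invariant vector field dual to $\xi = \xi^{\langle 1\rangle}$ is $T\der_T$ (the Euler operator), since right translation by $s$ acts by $T \mapsto Ts$. So $Q_m(\xi) = T\der_T = T\der^{\langle 1\rangle}$, and then I would argue that $Q_m(\xi^{\langle k\rangle}) = T^k\der^{\langle k\rangle}$ by comparing the two sides order by order: both are right-invariant, both have the same symbol $T^k \der_T^k / (\text{the } \acfrac{}{}\text{-factors})$ by \ref{prop_grAm}(i) applied to $X = G$, and the invariance together with the value at $e$ pins down the operator uniquely by \ref{opp_diff_inv}. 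Concretely, it suffices to check $(T^k\der^{\langle k\rangle})(e) = \xi^{\langle k\rangle}$ in $D^{(m)}_n(G)$: since $t = T-1$, evaluation at $e$ sends $T \mapsto 1$, and $\der^{\langle k\rangle}(e) = \xi^{\langle k\rangle}$ by the identity $\uder^{\langle\uk\rangle}(e) = \uxi^{\langle\uk\rangle}$ recorded just after \ref{def_fe_Pe} (coordinate $t = T-1$), while the leading factor $T^k$ evaluates to $1$ at $e$; one must only check that the commutator corrections introduced by moving $T^k$ past $\der^{\langle k\rangle}$ do not change the value at $e$, which follows from \ref{calcul_expl} since $(T^k\der^{\langle k\rangle})(f)(e) = ((T^k\der^{\langle k\rangle})(e))(f)$ for all $f \in V[T,T^{-1}]$, and one computes $T^k\der^{\langle k\rangle}$ acting on monomials reduces at $e$ exactly to $\der^{\langle k\rangle}$ acting, i.e. to $\xi^{\langle k\rangle}$.

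The main obstacle is the explicit computation that identifies $q_1 \circ d\mu(\tau^{\{k\}})$ — equivalently, checking that $T^k\der^{\langle k\rangle}$ really is right-invariant and that its value at $e$ is $\xi^{\langle k\rangle}$ rather than some perturbation of it; the subtlety is entirely bookkeeping of the $m$-PD-divided-power coefficients $\acfrac{}{}$ and $\crofrac{}{}$ when expanding $(\tau_u + \tau_v + \tau_u\tau_v)^{\{k\}}$ and when commuting $T^k$ past the divided-power operator $\der^{\langle k\rangle} = q_k!\, \der^{[k]}$. I expect this to come out exactly parallel to the $\mathbf{G}_a$ case once one notes that on $\mathbf{G}_m$ the coordinate change $T = 1+t$ turns the right-translation action into $t \mapsto t + s + ts$, so the "twisting by $T^k$" is precisely the extra factor accounting for $\der_T$ being replaced by the invariant field $T\der_T$; the divided-power coefficients match because $\der^{\langle k\rangle}$ is built from the same $m$-PD-structure on both sides.
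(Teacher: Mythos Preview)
Your direct computation contains an error that would, if followed, give the wrong answer. You write $d\mu(\tau)=\tau_u+\tau_v+\tau_u\tau_v$, apparently by transporting the comultiplication formula $\mu^\sharp(t)=t\otimes 1+1\otimes t+t\otimes t$ for $t=T-1\in V[\mathbf{G}_m]$. But these are different objects: $d\mu$ is the functorial $m$-PD-morphism $\mu^*\PP^n_{G,(m)}\to\PP^n_{G\times G,(m)}$, and $\tau=1\otimes T-T\otimes 1$ lives in $\PP^n_{G,(m)}$, not in $V[G]$. The correct calculation (as in the paper) uses $\mu^\sharp(T)=T\otimes T=uv$ and gives
\[
d\mu(\tau)=1\otimes(uv)-(uv)\otimes 1=(v+\tau_v)\tau_u+u\tau_v.
\]
Since $q_1$ is an $m$-PD-morphism and annihilates $\tau_v$, one gets $q_1\circ d\mu(\tau)=v\tau_u$ and hence $q_1\circ d\mu(\tau^{\{k\}})=v^k\tau_u^{\{k\}}$. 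After applying $e_X^*$ (under which $v=1\otimes T\mapsto T$ and $\tau_u\mapsto t$) this yields $\mu_m^{(n)}(\tau^{\{k\}})=t^{\{k\}}\otimes T^k$, and dualizing gives $Q_m(\xi^{\langle k\rangle})=T^k\der^{\langle k\rangle}$. Your formula would instead give $q_1\circ d\mu(\tau)=\tau_u$ (the $\tau_u\tau_v$ term dies too), hence $\mu_m^{(n)}(\tau^{\{k\}})=t^{\{k\}}\otimes 1$ and $Q_m(\xi^{\langle k\rangle})=\der^{\langle k\rangle}$ --- the additive answer, which is wrong. The scalar $v$ is precisely the source of the $T^k$ you are looking for; it is invisible if you treat $d\mu$ as a formal rewriting of $\mu^\sharp(t)$.

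Your second route, via Theorem~\ref{opp_diff_inv}, is genuinely different from the paper's argument and is in principle sound: show that $T^k\der^{\langle k\rangle}$ is $G$-invariant and that $(T^k\der^{\langle k\rangle})(e)=\xi^{\langle k\rangle}$, then conclude $Q_m(\xi^{\langle k\rangle})=T^k\der^{\langle k\rangle}$ because $Q_m$ and $(e)$ are inverse bijections between $D^{(m)}(G)$ and $\Gamma(G,\DD^{(m)}_G)^G$. The evaluation step is fine: $e^*$ is $\OO_G$-linear for the left structure, $\varepsilon_G(T^k)=1$, and $\der^{\langle k\rangle}(e)=\xi^{\langle k\rangle}$ by the formula following \ref{def_fe_Pe}. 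What you do not actually do is verify the invariance of $T^k\der^{\langle k\rangle}$ in the sense of \ref{defop_diff_inv}: this requires checking $\Delta(T^k\der^{\langle k\rangle})=T^k\der^{\langle k\rangle}\otimes 1$ for the comodule map coming from the $G$-equivariant structure on $\DD^{(m)}_G$, not just the heuristic ``right translation by $s$ sends $T\mapsto Ts$''. This can be done, but it amounts to essentially the same computation with $\Phi$ and $q_1\circ d\mu$ that the paper performs directly --- so the detour buys little. The paper's three-line computation of $q_1\circ d\mu(\tau^{\{k\}})$ is both shorter and self-contained.
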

\begin{proof} Notons $u=T\ot 1$, $v=1\ot T \in V[T,T^{-1}]\ot V[T,T^{-1}]$,
$\tau=1 \ot T -T \ot 1 \in \PP_{G,(m)}^{n}$,
$\tau_u=1\ot 1 \ot u - u\ot 1\ot 1 $,
$\tau_v=1\ot 1 \ot v - v\ot 1\ot 1 \in \PP_{G\times G,(m)}^{n}$. Avec ces notations, l'immersion
fermée $e_G$ correspond à la surjection
$$ \varep_G \ot id_G \,\colon\, \xymatrix @R=0mm { V[u,u^{-1},v,v^{-1}] \ar@{->>}[r] & V[T,T^{-1}]\\
                                      u  \ar@{{|}->}[r] & 1 \\
                                       v \ar@{{|}->}[r] & T.}$$
Comme la loi de groupes est donnée
 par $\mu^{\sharp}(T)=T \ot T $, on a
\begin{align*} d\mu (\tau) &= (v +\tau_v )\tau_u + u\tau_v \\
              q_1\circ d\mu (\tau^{\{k\}}) &= v^k \tau_u^{\{k\}}.
\end{align*}
Et finalement on trouve
$$    \mu_m^{(n)}(\tau^{\{k\}})=t^{\{k\}}\ot T^k\in P_{\mathbf{G}_m,(m)}^{n}\ot V[T,T^{-1}],$$ ce qui permet de montrer
la formule de l'énoncé.
\end{proof}

Passons à une variante twistée de ce qui précède.
%}}}
\subsection{Opérateurs différentiels twistés sur des $G$-schémas}
%{{{
\label{op_difft_inv}
Reprenons ici la situation de la sous-section précédente~\ref{op_diff_inv} :
$G$ agit à gauche sur un schéma $X$, lisse sur $S$, via un morphisme $\sigma: G\times X\rightarrow X$. On note aussi
$p_2$ : $G\times X\rig X$ la deuxième projection. Supposons de plus que $X$
est muni d'un faisceau inversible $\LL$, $G$-équivariant. Suivant ~\ref{operateur-difft}, nous
considérons les faisceaux d'opérateurs différentiels twistés par $\LL$
$$^t{}\DD^{(m)}_{X}\simeq
\LL\ot_{\OO_{X,g}}\DD^{(m)}_{X}\ot_{\OO_{X,d}}\LL^{-1}.$$
Dans cette sous-section, nous
allons démontrer que tous les énoncés précédents restent vrais dans le cas de
ces faisceaux. Pour ce faire, définissons en suivant les notations de ~\ref{faisceauxpp_grad},
le fibré vectoriel associé à $\LL$
$$Y=Spec~\Sg_{X}(\LL),$$ et $q$ le morphisme canonique : $Y\rig X$.
 Comme $\LL$ est équivariant, il existe d'après
~\ref{O-mod-equiv} un isomorphisme $\Phi_{\LL}$ : $\sigma^*\LL\simeq p_{2}^*\LL$
vérifiant certaines conditions de cocycle.
Cet isomorphisme s'étend en un
isomorphisme gradué $\Phi$ : $\sigma^*\Sg_{X}(\LL)\simeq p_{2}^*\Sg_{X}(\LL)$,
qui permet de définir une action à gauche $\sigma_Y$ : $G\times Y \rig Y$. On notera $p'_2$ la
deuxième projection, $p'_2$ : $G \times Y \rig Y$.
Par définition,
$\Phi_{|{\sigma^{-1}\OO_X}}$ est le morphisme $\sigma^{\sharp}$ : $\sigma^{-1}\OO_X\rig \OO_{G\times X}$
 donnant l'action de $G$ sur $X$. Le faisceau $\Dm_Y$ est $G$-équivariant d'après la proposition ~\ref{gequiv_diffprop}.
De même, le faisceau twisté $^t{}\DD^{(m)}_{X}$ est  $G$-équivariant comme produit tensoriel de
faisceaux $G$-équivariants d'après les résultats de~\ref{FaisceauxGequiv}.

Introduisons sur $X$ le faisceau de $\OO_X$-modules $\Dm_Y(\LL)$ de ~\ref{def_DYL} avec son morphisme de restriction $r_{\LL}$.
\begin{prop} \be \item[(i)] Le faisceau $\Dm_Y(\LL) $ est un sous-faisceau $G$-équivariant de $q_*\Dm_Y$,
ce faisceau étant vu comme faisceau de $\OO_X$-modules $G$-équivariant.
\item[(ii)] Le morphisme de restriction $r_{\LL}$ est un morphisme de $G$-faisceaux équivariants, c'est-à-dire
qu'il commute à l'action de $G$.
\ee
\end{prop}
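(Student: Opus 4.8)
Le plan est d'établir (i) et (ii) en les ramenant tous deux à la fonctorialité des constructions de $m$-PD-enveloppes et de parties principales relativement au morphisme $G$-équivariant $q\colon Y\rig X$, la compatibilité étant vérifiée — comme partout dans la section~\ref{desc_actions_G} — sur les fibres aux points fermés de $G$.

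On commencerait par noter que $q$ est un morphisme $G$-équivariant : par construction $\sigma_Y$ relève $\sigma$, et l'isomorphisme gradué $\Phi\colon \sigma^*\Sg_X(\LL)\sta{\sim}{\rig}p_2^*\Sg_X(\LL)$ définissant $\sigma_Y$ se restreint à $\sigma^\sharp$ en degré $0$ et à $\Phi_\LL$ en degré $1$. Comme $\Dm_Y$, donc chaque $\Dm_{Y,n}$, est $G$-équivariant pour $\sigma_Y$ d'après~\ref{prop-equiv} et que $q$ est affine et plat, en appliquant $q_*$ à l'isomorphisme d'équivariance $\sigma_Y^*\Dm_Y\sta{\sim}{\rig}(p'_2)^*\Dm_Y$ et en utilisant le changement de base plat le long de $q$ dans les carrés cartésiens évidents au-dessus de $G\times X$, on obtient un isomorphisme $\sigma^*q_*\Dm_Y\sta{\sim}{\rig}p_2^*q_*\Dm_Y$ vérifiant la condition de cocycle~\ref{cond_cocycles} : c'est la structure de $\OO_X$-module $G$-équivariant sur $q_*\Dm_Y$ dont parle l'énoncé.

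Pour (i), le sous-faisceau $\Dm_{Y,n}(\LL)\subset q_*\Dm_{Y,n}$ de~\ref{def_DYL} est découpé par les conditions $P(\PP_{X,(m)}^{n})\subset\OO_X$ et $P(\PP_{X,(m)}^{n}\ot_{\OO_{X,d}}\LL)\subset\LL$, qui ne font intervenir que les sous-faisceaux $G$-équivariants $\OO_X$, $\LL$, $\PP_{X,(m)}^{n}$ et $\PP_{X,(m)}^{n}\ot_{\OO_{X,d}}\LL$ de $q_*\OO_Y$, resp. de $q_*\PP_{Y,(m)}^{n}$. On vérifierait donc que l'isomorphisme d'équivariance de $q_*\Dm_Y$ est compatible avec ces inclusions ; d'après les lemmes de~\ref{subsection-lem_techniques} il suffit de raisonner sur la fibre au-dessus d'un point fermé $g$ de $G$, où cette structure est la conjugaison par l'automorphisme de translation $T_g\colon Y_{k(g)}\rig Y_{k(g)}$. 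Cet automorphisme relève la translation $T_g$ de $X_{k(g)}$ et est compatible avec la trivialisation $\Phi_g\colon T_g^*\LL\simeq\LL$ ; par fonctorialité des $m$-PD-enveloppes (comme dans la preuve de~\ref{prop-equiv}) il préserve les sous-$\PP_{Y,(m)}^{n}$-modules $\PP_{X,(m)}^{n}$ et $\PP_{X,(m)}^{n}\ot_{\OO_{X,d}}\LL$, et comme $\Phi$ est gradué il préserve $\OO_X$ et $\LL$ dans $q_*\OO_Y$. Ainsi, si $P\in\Dm_{Y,n}(\LL)$, son conjugué appartient encore à $\Dm_{Y,n}(\LL)$ ; en appliquant ceci à $g^{-1}$ on voit que l'isomorphisme d'équivariance se restreint en un isomorphisme $\sigma^*\Dm_Y(\LL)\sta{\sim}{\rig}p_2^*\Dm_Y(\LL)$ (et pas seulement une inclusion), la condition de cocycle étant héritée de celle de $q_*\Dm_Y$ ; ceci prouve (i).

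Pour (ii), on rappelle que ${}^t\Dm_X$ est $G$-équivariant comme produit tensoriel de faisceaux $G$-équivariants (\ref{operateur-difft}, \ref{equiv-dualite}), et que pour $P\in\Dm_{Y,n}(\LL)$ on a $r_\LL(P)=id_{\LL^{-1}}\ot P_{|\PP_{X,(m)}^{n}\ot\LL}$. Toutes les données entrant dans cette définition — les inclusions $\PP_{X,(m)}^{n}\ot_{\OO_{X,d}}\LL\hrig q_*\PP_{Y,(m)}^{n}$ et $\LL\hrig q_*\OO_Y$, et la restriction des opérateurs — sont des morphismes de faisceaux $G$-équivariants d'après l'étape précédente, de sorte que $r_\LL$ commute aux structures d'équivariance ; sur la fibre au-dessus de $g$ ceci dit simplement que $r_\LL$ enregistre l'action de $P$ sur les données $(T_g,\Phi_g)$-équivariantes, et est tautologique. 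La principale difficulté est celle isolée au paragraphe précédent, à savoir vérifier proprement que la structure équivariante de $q_*\Dm_Y$ est compatible avec toutes ces inclusions de faisceaux provenant du niveau $X$ dans les images directes depuis $Y$ ; une fois cette vérification organisée et ramenée, via la description fibre à fibre et la fonctorialité déjà exploitée pour~\ref{prop-equiv}, au caractère gradué de $\Phi$, (i) et (ii) s'en déduisent formellement.
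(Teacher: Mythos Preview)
Your overall strategy coincides with the paper's: both arguments deduce (i) from the observation that the $G$-equivariance isomorphism $\Psi_Y$ on $q_*\PP_{Y,(m)}^n$ restricts to the equivariance isomorphisms on the $G$-stable sub-modules $\PP_{X,(m)}^n$ and $\PP_{X,(m)}^n\ot_{\OO_{X,d}}\LL$, so that dually $\Phi_Y$ preserves $\Dm_Y(\LL)$; and (ii) follows because $r_\LL$ is built from these $G$-equivariant pieces. The paper, however, performs this check \emph{directly on $G\times X$}: it writes $\Phi_Y^{-1}(1\ot P)(1\ot T)=\Phi^{-1}\bigl((1\ot P)(\Psi_Y(1\ot T))\bigr)$ for $T\in\PP_{X,(m)}^n\ot\LL$ and reads off the inclusion from $\Psi_Y(1\ot T)\in p_2^*(\PP_{X,(m)}^n\ot\LL)$, without ever reducing to fibres.

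Your appeal to the lemmas of~\ref{subsection-lem_techniques} to pass to fibres over closed points $g\in G$ is not fully justified as stated: those lemmas concern morphisms of \emph{coherent} $\OO$-modules, whereas neither $q_*\Dm_{Y,n}$ nor $\Dm_{Y,n}(\LL)$ is coherent as an $\OO_X$-module (in the local description~\ref{rL}(i) the coefficients $c_{\uk,l_u}$ range over all of $q_*\OO_Y$). The fibre-wise picture is the right intuition, but to make it rigorous you would need either to argue functorially for all $R$-points $g\in G(R)$, or simply to carry out the verification on $G\times X$ as the paper does. Once that adjustment is made, your argument and the paper's are the same.
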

\begin{proof} Soient $q'=id_G\times q$ : $G\times Y \rig G\times X$, $\Psi_X$ :
$\sigma^* \PP_{X,(m)}^{n}\simeq p_2^* \PP_{X,(m)}^{n}$, décrivant l'action de $G$ sur $\PP_{X,(m)}^{n}$.
 Comme $\sigma_Y$, (resp.
$p'_2$) est lisse, on dispose d'isomorphismes canoniques $q'_*\sigma_Y^*\Dm_Y \simeq \sigma^* q_*\Dm_Y$,
(resp. $q'_*{p'}_2^*\Dm_Y \simeq {p}_2^* q_*\Dm_Y$). Comme $\Dm_Y$ est $G$-équivariant, il existe un
isomorphisme $\sigma_Y^*\Dm_Y \simeq {p'}_2^*\Dm_Y$, vérifiant les conditions de cocycle~\ref{cond_cocycles}. Par fonctorialité,
on en déduit un isomorphisme $\Phi_Y$ : $\sigma^* q_*\Dm_Y\simeq p_2^* q_*\Dm_Y$ définissant la structure de $G$-module équivariant
de $q_*\Dm_Y$. Pour montrer (i), il suffit de montrer que $\Phi_Y$ induit un isomorphisme $\sigma^*\Dm_Y(\LL)\simeq p_2^*\Dm_Y(\LL)$.
La vérification est formelle. Montrons par exemple que si $P\in \Dm_Y(\LL)$,
$$\Phi^{-1}_Y(1\ot P)(\sigma^*(\PP_{X,(m)}^{n}\ot \LL))\subset \sigma^*\LL,$$ l'autre vérification étant analogue.
Le morphisme
$\Phi^{-1}_Y$ s'obtient après application de $\HH om_{q_*\OO_Y}(\cdot,q_*\OO_Y)$ à $\Psi_Y$ :
$\sigma^*q_* \PP_{Y,(m)}^{n}\simeq p_2^* q_*\PP_{Y,(m)}^{n}$, décrivant l'action de $G$ sur $q_*\PP_{Y,(m)}^{n}$. Par
définition de $\Psi_Y$, $\PP_{X,(m)}^{n}\ot \LL$ est un sous-$G$-module équivariant de $q_*\PP_{Y,(m)}^{n}$.
Finalement, si $T\in \PP_{X,(m)}^{n}\ot \LL$,
$$ \Phi^{-1}_Y(1\ot P)(1\ot T)=\Phi^{-1}\left((1 \ot P)(\Psi_Y(1\ot T))\right).$$
Or, $\Psi_Y(1\ot T)\in p_2^*(\PP_{X,(m)}^{n}\ot \LL)$, donc $(1 \ot P)(\Psi_Y(1\ot T))\in p_2^*\LL$,
et le terme de droite de l'égalité précédente appartient à $\Phi^{-1}(p_2^*\LL)\subset \sigma^*\LL$. Finalement,
on voit que $\Phi^{-1}_Y(1 \ot P)$ vérifie la condition annoncée. Le fait que $r_{\LL}$ est un morphisme de $G$-modules
équivariants provient du fait que $\psi_Y$, restreint à $\PP_{X,(m)}^{n}\ot \LL$, est induit par $\Psi_X$ et $\Phi$.
\end{proof}
Nous en déduisons le
\begin{cor}\label{Qtmc} Soient $X$ un $S$-schéma lisse sur lequel $G$ agit à gauche, et $m$ un entier. Il
existe un anti-homomorphisme d'algèbres filtrées $^t{}Q_m$
de l'algèbre $D^{(m)}(G)$ vers l'algèbre des sections globales sur $X$ du faisceau
$^t{}\DD^{(m)}_X$.
\end{cor}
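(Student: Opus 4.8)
The plan is to obtain $^t Q_m$ by composing the distribution map $Q_m$ of~\ref{prop_Qhom} for the vector-bundle $Y = \spec \Sg_X(\LL)$ with the restriction morphism $r_{\LL}$ to the invertible sheaf $\LL$. First I would recall that, since $G$ acts on $X$ and $\LL$ is $G$-equivariant, the induced $G$-action on $Y$ constructed just above (via the graded extension $\Phi\colon \sigma^*\Sg_X(\LL)\simeq p_2^*\Sg_X(\LL)$) makes $Y$ a smooth $S$-scheme on which $G$ acts to the left. Applying Proposition~\ref{prop_Qhom}(ii) to $Y$ therefore produces an anti-homomorphism of filtered algebras $Q_m\colon D^{(m)}(G)\rig \Ga(Y,\DD^{(m)}_Y) = \Ga(X, q_*\DD^{(m)}_Y)$.

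The key step is then to check that this anti-homomorphism takes values in the subalgebra $\Ga(X,\Dm_Y(\LL))$, i.e. that each operator $Q_m(u)$ restricts to an operator acting on $X$ and on $\LL$. This is where I would use the preceding proposition: it asserts that $\Dm_Y(\LL)$ is a $G$-equivariant subsheaf of $q_*\Dm_Y$ and that $r_{\LL}$ is a morphism of $G$-equivariant sheaves. Since, by Theorem~\ref{opp_diff_inv} applied to $Y$, the image of $Q_m$ consists of $G$-invariant differential operators on $Y$, and since $\Dm_Y(\LL)$ is a $G$-subsheaf whose invariants I must show contain this image, the natural argument is: the identity section $1$ together with the generators $\xi^{\la p^i\ra}$ of $D^{(m)}(G)$ (using~\ref{grDm}(iii)) map under $Q_m$ into $\Dm_Y(\LL)$ — indeed $Q_m(u)$ is built from $\sigma_m^{(n)}$, which is defined using the $G$-action that preserves $\Sg_X(\LL)$ and hence $\LL$ inside $q_*\OO_Y$ — and then $\Dm_Y(\LL)$ being a subsheaf of $\OO_S$-algebras closes the argument on products. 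Composing with $r_{\LL}\colon \Dm_Y(\LL)\rig {}^t\DD^{(m)}_X$, which is a surjective homomorphism of filtered sheaves of algebras by the last proposition of~\ref{comp_opdifft}, gives the desired $^tQ_m := r_{\LL}\circ Q_m$, an anti-homomorphism of filtered algebras $D^{(m)}(G)\rig \Ga(X,{}^t\DD^{(m)}_X)$; on global sections one uses $\Ga(X,\Dm_Y(\LL)) = \Ga(Y,\Dm_Y(\LL))$ and that taking global sections is exact enough here since all sheaves in sight are quasi-coherent over the affine $S$.

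I expect the main obstacle to be verifying cleanly that $Q_m(u)$ lands in $\Dm_Y(\LL)$ rather than merely in $q_*\Dm_Y$. The cheapest route is probably not to argue term by term but to invoke $G$-equivariance abstractly: $Q_m(u)$ is $G$-invariant on $Y$ by~\ref{opp_diff_inv}, the subsheaf $\Dm_Y(\LL)\subset q_*\Dm_Y$ is the one singled out by the condition $P(\PP^n_{X,(m)})\subset\OO_X$ and $P(\PP^n_{X,(m)}\ot\LL)\subset\LL$, and one checks in local coordinates $x_1,\dots,x_N,u$ on $q^{-1}(U)$ — using the explicit description in Proposition~\ref{rL}(i) and the local formula $\sigma_m^{(n)}(1\ot f)=\sum \uxi_X^{\la\uk\ra}(f)\ot\ut^{\{\uk\}}$ of~\ref{compat-Ox}(ii) — that an invariant operator coming from $D^{(m)}(G)$ has no $\der_u^{\la l_u\ra}$-terms with $l_u\geq 2$ and that its $u\der_u$-coefficient is a section of $\OO_X$, precisely because the $G$-action on $Y$ is linear along the fibres. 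Everything else (filteredness, anti-multiplicativity, unitality) is inherited formally from the corresponding properties of $Q_m$ on $Y$ and from $r_{\LL}$ being a ring homomorphism.
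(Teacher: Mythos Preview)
Your overall architecture is exactly the paper's: pass to the fibre bundle $Y=\spec\Sg_X(\LL)$, apply Proposition~\ref{prop_Qhom} to get $Q_{m,Y}:D^{(m)}(G)\rig\Ga(Y,\Dm_Y)$, check that the image lies in $\Ga(X,\Dm_Y(\LL))$, and set ${}^tQ_m=r_{\LL}\circ Q_{m,Y}$. The definition and the formal properties (filtered, anti-multiplicative) follow as you say.

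The gap is in the justification of the key step, where you argue that $Q_{m,Y}(u)\in\Dm_Y(\LL)$. Your two proposed routes both misfire. First, Theorem~\ref{opp_diff_inv} concerns $G$ acting on itself by right translation; it does not assert that $Q_m(u)$ is $G$-invariant on an arbitrary $G$-scheme $Y$, so you cannot invoke it here. Second, even granting $G$-invariance, this would not by itself force membership in $\Dm_Y(\LL)$: the $G$-equivariance of $\Dm_Y(\LL)\subset q_*\Dm_Y$ (the preceding proposition) tells you the subsheaf is $G$-stable, not that every invariant global section of the ambient sheaf lies in it. Your local-coordinate claim that an operator coming from $D^{(m)}(G)$ has no $\partial_u^{\la l_u\ra}$-terms for $l_u\ge2$ is stronger than what is required (cf. Proposition~\ref{rL}(i): such terms are allowed, with coefficients in $q_*\OO_Y$) and you give no argument for it.

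The paper's verification is the direct one you half-gesture at when you write that ``$\sigma_m^{(n)}$ \ldots\ preserves $\LL$''. Concretely: since $Q_{m,n}(u)=(u\ot Id)\circ\sigma_{m,Y}^{(n)}$, it suffices to prove the two inclusions
\[
\sigma_{m,Y}^{(n)}\bigl(\PP_{X,(m)}^{n}\bigr)\subset P_{(m)}^n(G)\ot_V\OO_X,
\qquad
\sigma_{m,Y}^{(n)}\bigl(\PP_{X,(m)}^{n}\ot\LL\bigr)\subset P_{(m)}^n(G)\ot_V\LL.
\]
The first is functoriality of the construction in~\ref{def_sigmamn}: the restriction of $q_*\sigma_{m,Y}^{(n)}$ to $\PP_{X,(m)}^{n}$ equals $\sigma_{m,X}^{(n)}$. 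For the second, since $\sigma_{m,Y}^{(n)}$ is an $m$-PD morphism and by the first inclusion, it is enough to check that $\sigma_{m,Y}^{(n)}(1\ot f)\in P_{(m)}^n(G)\ot\LL$ for $f\in\LL$; but by construction of $\sigma_{m,Y}^{(n)}$ from the $G$-action on $Y$, one has $\sigma_{m,Y}^{(n)}(1\ot f)=\Phi_{\LL}(1\ot f)\in p_2^*\LL$, where $\Phi_{\LL}$ is the equivariance isomorphism of $\LL$. This is the argument to give; the detour through invariance and the vanishing of higher $\partial_u$-terms should be dropped.
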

\begin{proof} Comme le fibré vectoriel $Y$ est muni d'une action de $G$, on dispose
d'après~\ref{prop_Qhom} d'un anti-homomorphisme d'anneaux : $Q_{m,Y}$ :
$ D^{(m)}(G) \rig \Ga(Y,\Dm_Y)$. Montrons que $Q_{m,Y}$ est à valeurs dans $\Dm_Y(\LL)$.
Reprenons pour cela la construction de loc. cit. ainsi que le morphisme de $m$-PD-algèbres
~\ref{def_sigmamn} $ q_*\sigma_{m,Y}^{(n)}\,\colon\,q_*\PP_{Y,(m)}^{n}\rig  P_{(m)}^n(G)\ot_V q_*\OO_Y$,
resp. $\sigma_{m,X}^{(n)}$ sur le schéma $X$. On rappelle que si $u\in D^{(m)}_n(G)$, $Q_{m,n}(u)$ est le composé
$$Q_{m,n}(u)\,\colon\,\PP_{Y,(m)}^{n}\sta{\sigma_m^{(n)}}{\rig} P_{(m)}^n(G)\ot
\OO_Y\sta{u\ot Id}{\rig}\OO_Y.$$ Pour montrer que $Q_{m,n}(u)\in \Dm_Y(\LL)$, il suffit donc de montrer les
deux inclusions
\begin{gather} \label{inclusion_sig}
\sigma_{m,Y}^{(n)}(\PP_{X,(m)}^{n})\subset P_{(m)}^n(G)\ot_V\OO_X,
\end{gather}
$$\sigma_{m,Y}^{(n)}(\PP_{X,(m)}^{n}\ot \LL)\subset P_{(m)}^n(G)\ot_V\LL.$$ La première
inclusion résulte du fait que la construction est fonctorielle et donc
$${{q_*\sigma_{m,Y}^{(n)}}}_{|\PP_{X,(m)}^{n}}=\sigma_{m,X}^{(n)}.$$ Pour la deuxième inclusion,
comme $\sigma_{m,Y}^{(n)}$ est un morphisme de $m$-PD-algèbres et par la formule ci-dessus ~\ref{inclusion_sig},
 il suffit de vérifier que
pour tout $ f\in \LL$, $\sigma_{m,Y}^{(n)}(1 \ot f)\in p_2^*(\LL)$. Or, c'est le cas, puisque, par construction,
$\sigma_{m,Y}^{(n)}(1 \ot f)=\Phi_{\LL}(1\ot f)\in p_2^*(\LL).$ Notons à présent
\begin{gather} {}^t\sigma_{m,X}^{(n)}={q_*\sigma_{m,Y}^{(n)}}_{|\PP_{X,(m)}^{n}(1\ot \LL)},
\end{gather}
qui est $\OO_X$-linéaire, puisque $\sigma_{m,Y}^{(n)}$ est $\OO_Y$-linéaire.
On définit maintenant
${}^t Q_m=r_{\LL}\circ Q_m,$ qui définit un anti-homomorphisme d'algèbres filtrées
$${}^t Q_m \;\colon \; D^{(m)}(G)\rig \Ga(X,{}^t\DD^{(m)}_X).$$
\end{proof}

\vskip4pt
Remarque: par définition, ${}^t\sigma_{m,X}^{(n)}$ est aussi défini comme suit. Soient
$T\in \PP_{X,(m)}^{n}$, $f\in \LL$, alors
$${}^t\sigma_{m,X}^{(n)}(T \ot f)=\sigma_{m,X}^{(n)}(T)\Phi_{\LL}(1 \ot f),$$
où on prend le produit dans la $\OO_Y$-algèbre $P_{(m)}^n(G)\ot_V\OO_Y$, produit qui est en fait à valeurs
dans $P_{(m)}^n(G)\ot_V\LL$.
Reprenons le faisceau $\AA^{(m)}_{X}=\OO_X\ot_V D^{(m)}(G)^{op}$ de \ref{prop-Am}.
Comme ${}^tQ_m$ est filtré, il induit une application
$\gr\pg {}^tQ_m$ : $\gr\pg \AA^{(m)}_{X}\rig \gr\pg {}^t\DD^{(m)}_X$. Nous avons vu en~\ref{isomgrDt},
que $\gr\pg {}^t\DD^{(m)}_X$ est canoniquement isomorphe à $\gr\pg \DD^{(m)}_X$. Montrons
maintenant la
\begin{prop}\label{grQtw} Via l'isomorphisme canonique de ~\ref{isomgrDt}, on a l'égalité
$\gr\pg {}^tQ_m=\gr\pg Q_m$. En particulier, si $X$ est un $G$-espace homogène,
le morphisme gradué $\gr\pg {}^tQ_m$ est surjectif.
\end{prop}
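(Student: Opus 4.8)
The plan is to reduce the statement to the untwisted case already handled in \ref{prop_grAm}, using that $r_{\LL}$ induces on associated graded objects the projector $\lambda_q$ of \ref{rL}(iv), which is essentially the identity in the directions coming from $X$. More precisely, by construction ${}^tQ_m = r_{\LL}\circ Q_{m,Y}$, where $Q_{m,Y}\colon D^{(m)}(G)\rig \Ga(Y,\Dm_Y(\LL))$ is the anti-homomorphism factoring through $\Dm_Y(\LL)$ that was produced in the proof of \ref{Qtmc}. Passing to associated graded objects for the order filtration, we get $\gr\pg {}^tQ_m = (\gr\pg r_{\LL})\circ (\gr\pg Q_{m,Y})$, where here $\gr\pg r_{\LL}$ is understood as the map on $\gr\pg\Dm_Y(\LL)$ described in \ref{rL}(iv), namely the restriction of the projector $\lambda_q\colon \gr\pg\Dm_Y \rig \gr\pg\Dm_X$.

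First I would make precise the identifications. On the source, $\gr\pg\AA^{(m)}_{X}\simeq \OO_X\ot_V\Sg^{(m)}_V(Lie(G))$ by \ref{prop_grAm}(i), and this is independent of $\LL$. The image of $Q_{m,Y}$ lies in $\Dm_Y(\LL)$; its graded image lies in $\gr\pg\Dm_Y(\LL)$, which by the local description of \ref{rL} sits inside $\gr\pg\Dm_Y \simeq \Sg^{(m)}(\TT_Y)$. Second, I would observe that $\Sg^{(m)}(\QQ_{m,1})$ — the horizontal bottom arrow of the square in \ref{prop_grAm}(i) — factors through $\Sg^{(m)}(\TT_X)\hookrightarrow q_*\Sg^{(m)}(\TT_Y)$ because $\QQ_1$ takes values in $\TT_X$, not just $q_*\TT_Y$; indeed $Q_{m,Y}$ is the $Y$-version of $Q_m$ applied to the $G$-action on $Y$ lifting that on $X$, and on the degree-$1$ piece this action is, by construction of $\sigma_Y$, the one coming from $\sigma$ on $X$ plus the equivariant structure $\Phi_{\LL}$ on $\LL$. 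The point is that the degree-$1$ symbol of $Q_{m,Y}(\xi)$ for $\xi\in Lie(G)$ has no component in the $\der_u^{\la 2\ra}$-directions — it lands precisely in the subalgebra generated by $\TT_X$ and $u\der_u$ — which is exactly the part of $\gr\pg\Dm_Y(\LL)$ on which $\gr\pg r_{\LL}=\lambda_q$ acts as the ``identity onto $\gr\pg\Dm_X$''.

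Then the key computation is: for $\xi\in Lie(G)$, $\gr\pg r_{\LL}$ sends the symbol of $Q_{m,Y}(\xi)$ to the symbol of $Q_{m,X}(\xi)$, i.e. $\QQ_{m,1}^{Y}(\xi)$ maps under $\lambda_q$ to $\QQ_{m,1}(\xi)=\QQ_1(\xi)$. This follows from \ref{rL}(ii): if $P=\sum a_{\uk}\uder^{\la\uk\ra}_x + \sum b_{\uk}\uder^{\la\uk\ra}_x u\der_u + \cdots$ then $r_{\LL}(P)=\sum a_{\uk}\,u\ot\uder^{\la\uk\ra}_x\ot u^{-1} + \sum b_{\uk}\, u\ot\uder^{\la\uk\ra}_x\ot u^{-1}$, and under $\gr\pg$ and the canonical identification $\gr\pg{}^t\DD^{(m)}_X\simeq\gr\pg\DD^{(m)}_X$ of \ref{isomgrDt} both $a_{\uk}$ and $b_{\uk}$ terms contribute their symbol $\uder^{\la\uk\ra}_x$ without twist. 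Since $\gr\pg\AA^{(m)}_{X}$ is generated in degree $1$ (it is $\OO_X\ot_V\Sg^{(m)}(Lie(G))$, generated by the $\xi_l^{\la p^i\ra}$, $i\le m$), and $\gr\pg{}^tQ_m$, $\gr\pg Q_m$ are both algebra homomorphisms agreeing in degree $1$ by the above, they agree. The surjectivity for $X$ a homogeneous space is then immediate from \ref{prop_grAm}(ii).

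The main obstacle I anticipate is the bookkeeping around the identification $\gr\pg{}^t\DD^{(m)}_X\simeq\gr\pg\DD^{(m)}_X$ of \ref{isomgrDt} and checking that under it $\gr\pg r_{\LL}$ really does become $\lambda_q$ composed with nothing extra — in particular that conjugation by the local generator $u$ of $\LL$ disappears at the graded level. This is where one must be careful: $r_{\LL}$ is built so that on the degree-$1$ part the $u\der_u$ term (which encodes the twisting) projects to the $X$-symbol, and \ref{rL}(ii) together with \ref{rL}(iv) is exactly the statement needed. Once that local picture is pinned down the rest is formal, and I would present it as: (1) recall ${}^tQ_m=r_{\LL}\circ Q_{m,Y}$; (2) apply $\gr\pg$ and invoke \ref{rL}(iv) to get $\gr\pg{}^tQ_m=\lambda_q\circ\gr\pg Q_{m,Y}$; (3) use functoriality of the construction of $Q_m$ in the $G$-scheme to identify $\lambda_q\circ\gr\pg Q_{m,Y}$ in degree $1$ with $\gr\pg Q_{m,X}$ in degree $1$; (4) conclude by degree-$1$ generation and \ref{prop_grAm}; (5) deduce surjectivity in the homogeneous case.
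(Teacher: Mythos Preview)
Your route — factor ${}^tQ_m=r_{\LL}\circ Q_{m,Y}$, pass to gradeds, and use \ref{rL}(iv) to replace $\gr\pg r_{\LL}$ by the projector $\lambda_q$ — is exactly the alternative argument the paper itself points to in the remark following its proof. The paper's main argument is different and works on the dual side: it first redoes the proof of \ref{prop_grAm} in the twisted setting to get a commutative square identifying $\gr\pg{}^tQ_{m,X}$ (via $c_{m,X}$ and ${}^td_m$) with $\Sg^{(m)}({}^t\QQ_{m,1})$, and then shows ${}^t\QQ_{m,1}=\QQ_{m,1}$ by computing with ${}^t\sigma_{m,X}^{(n)}$ and using the $\OO_Y$-linearity of $\sigma_{m,Y}^{(n)}$. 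Your approach is more hands-on and stays on the $\DD$-side; the paper's is cleaner because it isolates exactly the degree-$1$ statement that needs checking.

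There is, however, a genuine gap in your step (4). The algebra $\gr\pg\AA^{(m)}_X\simeq\OO_X\ot_V\Sg^{(m)}(Lie(G))$ is \emph{not} generated in degree $1$ as soon as $m\geq 1$: the generators $\xi_l^{\la p^i\ra}$ you yourself list sit in degree $p^i$, and for $i\geq 1$ the relation $\xi_l^{p^i}=\frac{(p^i)!}{q_{p^i}!}\,\xi_l^{\la p^i\ra}$ has $\frac{(p^i)!}{q_{p^i}!}$ divisible by $p$, so $\xi_l^{\la p^i\ra}$ is not in the $V$-subalgebra generated by degree-$1$ elements. Agreement of two algebra maps in degree $1$ therefore does not force agreement everywhere.

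What salvages the argument — and what the paper actually uses — is the stronger structural fact that each of $\gr\pg Q_{m,X}$ and $\gr\pg{}^tQ_{m,X}$ is $\Sg^{(m)}$ applied to its own degree-$1$ part: this is precisely the commutative square of \ref{prop_grAm}(i) and its twisted analogue. In your framework the same fix is available: apply \ref{prop_grAm}(i) to the $G$-scheme $Y$ to get $\gr\pg Q_{m,Y}=\Sg^{(m)}(\QQ_{1}^{Y})$, observe that $\lambda_q$ is the map induced on $\Sg^{(m)}$ by the projector $q_*\TT_Y\to\TT_X$, and then your degree-$1$ identity $\mathrm{proj}\circ\QQ_{1}^{Y}=\QQ_{1}$ propagates to all degrees by functoriality of $\Sg^{(m)}$. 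Replace ``degree-$1$ generation'' by this $\Sg^{(m)}$-functoriality and the proof goes through.
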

\begin{proof} Nous reprenons les notations de~\ref{grQm}. Soient ${}^tQ_{m,1}$ l'application
induite sur les gradués de degré $1$ par ${}^tQ_m$, soit
$${}^t\QQ_{m,1}\;\colon \;\OO_{X}\ot Lie(G)\sta{id \ovot {}^tQ_{m,1}}{\lorig}\TT_X.$$ Alors, en refaisant la démonstration de ~\ref{prop_grAm}, on voit qu'on a un diagramme commutatif
$$\xymatrix{ \gr\pg \AA^{(m)}_{X}\ar@{->}[d]^{c_{m,X}}_{\wr} \ar@{->}[r]^{\gr\pg {}^tQ_{m,X}} & \gr \pg {}^t\Dm_X
\ar@{->}[d]^{{}^td_m}_{\wr}\\
                       \OO_X\ot_V\Sg^{(m)}(Lie(G))    \ar@{->}[r]& \Sg^{(m)}(\TT_X),
                    }$$
                    où la seconde flèche horizontale est l'application $\Sg^{(m)}({}^t\QQ_{m,1})$.
Pour conclure, il suffit donc vérifier que ${}^t\QQ_{m,1}=\QQ_{m,1}$. Soient
$$h_m\;\colon\; \Omega^1_X\ot\LL\sta{\gr_1\sigma_{m,Y}^{(n)}}{\lorig}\II_{\kappa}/\II_{\kappa}^2\ot\LL,$$
et $g_m=id_{\LL^{-1}}\ovot h_m$ : $\Omega^1_X \rig \II_{\kappa}/\II_{\kappa}^2$.
Alors la flèche ${}^t\QQ_{m,1}$ s'obtient après application de
$\HH om_{\OO_X}(\cdot,\OO_X)$ à la flèche $g_m$.
 Il suffit donc finalement de montrer que
$g_m=\gr_1\sigma_{m,X}^{(n)}.$ Or, $\sigma_{m,Y}^{(n)}$ est $\OO_Y$ linéaire. Soient
$\omega\in\Omega^1_X$, $f\in\LL$,
\begin{align*}
\gr_1\sigma_{m,Y}^{(n)}(\omega\ot (1\ot f))&=\gr_1\sigma_{m,Y}^{(n)}((f\ot 1)\omega) \\
         &= \gr_1\sigma_{m,X}^{(n)}(\omega)\ot f,
\end{align*}
ce qui donne bien que $g_m=\gr_1\sigma_{m,X}^{(n)}.$

Comme $\gr\pg Q_m$ est surjectif si $X$ est un $G$-espace homogène par (ii) de ~\ref{prop_grAm}, on trouve que
$\gr\pg {}^tQ_m$ est surjectif dans ce cas.
\end{proof}

\vskip4pt
Remarque: on peut retrouver ce résultat à partir de (iv) de~\ref{rL}, en utilisant la description locale de
$r_{\LL}$.

\section{Algèbres de distributions arithmétiques (faiblement) complétées}\label{alg_dist_comp}
Dans toute cette section, $V$ est un AVDC. Rappelons que $S=Spec\, V$ et $\SS=Spf\,V$. Dans ce cas
$p$-adique ou dans le cas d'un quotient de $V$, nous avons déjà remarqué que les algèbres de
distributions $D_n^{(m)}(G)$ sont des $V$-modules libres~\ref{prop-basiselementsII}, dont une base
s'exprime facilement à partie d'une base de $Lie(G)$. De ce fait, nous n'utiliserons pas ici les
faisceaux d'algèbres de distributions construits précédemment.
\subsection{Définition}
\label{distrib_cris}
 Soit $\GG$ un $\SS$-schéma formel. On dit que $\GG$ est
 un $\SS$-schéma formel en groupes, si tous les $S$-schémas $G_i$
sont des schémas en groupes, et si tous les morphismes $G_{i+1}\hrig G_i$ sont des
morphismes de schémas en groupes. Dans la suite, on suppose que $\GG$ est un $\SS$-schéma formel
en groupes affine et lisse, i.e., tous les $G_i$ sont des schémas en groupes affines et lisses sur $S$ (de la même dimension relative).
Ainsi, l'algèbre des sections globales $A:=\Ga(\GG,\OO_{\GG})$ est une
algèbre topologiquement de type fini sur $V$ et donc $\pi$-adiquement complète.
On a $\GG= Spf\, A$ comme $\SS$-schéma formel, et les $A/\pi^{i+1}A$ sont les algèbres des groupes
$G_i$, i.e. $A/\pi^{i+1}A=V[G_i]$.

Un cas particulier est le complété formel $\GG$ d'un schéma en groupes affine et lisse $G\rig S$
le long de sa fibre spéciale.

\vskip5pt

Donnons maintenant les définitions des algèbres de distributions complètes et faiblement
complètes pour $\GG$, un $\SS$-schéma formel en groupes affine et lisse. Le morphisme $G_{i+1}\hrig G_i$ induit un homomorphisme
$D^{(m)}(G_{i+1})\rightarrow D^{(m)}(G_i)$, cf. \ref{prop-functoriality}.
\begin{defi}On pose
$\what{D}^{(m)}(\GG):=\varprojlim_{i}D^{(m)}(G_i)$.
\end{defi}
%$$
Soient $t_1,\ldots, t_N$ une suite régulière de générateurs de l'idéal
noyau de la surjection $\OO_{\GG}\trig \OO_{\SS}$, au voisinage de l'élement neutre. Nous pouvons alors reprendre les
 les notations~\ref{subsection-def_prop}, et on a
$$\what{D}^{(m)}(\GG)=\left\{ \sum_{\uk}a_{\uk}\uxi^{\langle \uk \rangle}\;|\; a_{\uk}\in V,\,\vp(a_{\uk})\rig +\infty \; \text{si} \;|\uk|\rig +\infty \right\}. $$
Si $m'\geq m$, on dispose de morphismes d'algèbres canoniques $\what{D}^{(m)}(\GG)\rig
\what{D}^{(m')}(\GG)$, ce qui permet de donner la définition des algèbres de distributions
suivantes.
\begin{defi}On pose $D^{\dagger}(\GG):=\varinjlim_{m}\what{D}^{(m)}(\GG)$ et
$D^{\dagger}(\GG)_{\Qr}:=D^{\dagger}(\GG)\otimes\Qr. $%\varinjlim_{m}\what{D}^{(m)}(\GG)_{\Qr}$.
\end{defi}
L'algèbre $D^{\dagger}(\GG)$ est séparée et est faiblement complète comme limite
inductive d'algèbres complètes (voir par exemple la remarque (ii) de \cite{Hucomp}).
C'est naturellement une sous-algèbre de la complétion $\pi$-adique
$$\what{Dist}(\GG):=\varprojlim_{i}Dist(\GG)/\pi^{i+1} Dist(\GG)\simeq \varprojlim_{i}Dist(G_i)$$
de l'algèbre des distributions classiques $Dist(\GG)$ sur $\GG$.
Cette complétion est décrite explicitement par
$$\what{Dist}(\GG)=\left\{ \sum_{\uk}a_{\uk}\uxi^{[\uk]}\;|\;
\vp(a_{\uk})\rig +\infty \; \text{si} \;|\uk|\rig +\infty \right\}$$
en utilisant les notations de \ref{prop-basiselementsII}. La proposition suivante est dans
l'esprit de la proposition 2.4.4 de \cite{Be1} et montre que les éléments de la
sous-algèbre $D^{\dagger}(\GG)\subseteq \what{Dist}(\GG)$ peuvent être caractérisés par
des conditions de croissance des coefficients $a_{\uk}$ qui sont du type de Monsky-Washnitzer \cite{MonskyWashI}.
\begin{prop}\label{prop-dagger}
Pour $$P= \sum_{\uk}a_{\uk}\uxi^{[\uk]}$$
dans $\what{Dist}(\GG)$, soit $P_i\in Dist(G_i)$ sa réduction modulo $\pi^{i+1}$ . Les
conditions suivantes sont équivalentes:
\be

\item[(i)] L'élément $P$ appartient à $D^{\dagger}(\GG)$.

\item[(ii)] Il existe des constantes réelles $\alpha,\beta$ avec $\alpha >0$ ayant la
propriété :
${\rm ord} (P_i)\leq \alpha i +\beta$ pour tout $i$. Ici, ${\rm ord}(P_i)$ designe l'ordre fini de la distribution $P_i$.

\item[(iii)] Il existe des constantes réelles $c,\eta$ avec $\eta>0$ ayant la
propriété : $\vp(a_{\uk})\geq \eta |\uk| + c$ pour tout $\uk$.

\ee
\end{prop}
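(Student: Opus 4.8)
The plan is to establish the two equivalences (ii)~$\Leftrightarrow$~(iii) and (i)~$\Leftrightarrow$~(iii); the first is essentially bookkeeping with the order function, while the second carries the arithmetic content. The key input is the identity $\uxi^{\langle\uk\rangle_m}=q_{\uk}^{(m)}!\,\uxi^{[\uk]}$, valid in $\what{Dist}(\GG)$, which follows by iterating the relations of Proposition~\ref{prop-basiselementsII_f}(ii) up to a level $M$ large enough (depending on $\uk$) that $\uxi^{\langle\uk\rangle_M}=\uxi^{[\uk]}$ and $q_{\uk}^{(M)}!=1$. Since $V$ is a domain, the explicit descriptions above show that the maps $\what{D}^{(m)}(\GG)\to\what{D}^{(m')}(\GG)$ (for $m'\ge m$) and $\what{D}^{(m)}(\GG)\to\what{Dist}(\GG)$ are all injective, so $D^{\dagger}(\GG)=\bigcup_m\what{D}^{(m)}(\GG)$ inside $\what{Dist}(\GG)$. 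Hence $P=\sum_{\uk}a_{\uk}\uxi^{[\uk]}$ lies in $D^{\dagger}(\GG)$ if and only if there is an $m$ with $a_{\uk}\in q_{\uk}^{(m)}!\,V$ for every $\uk$ and $\vp(a_{\uk})-\vp(q_{\uk}^{(m)}!)\to+\infty$ as $|\uk|\to+\infty$.

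For (ii)~$\Leftrightarrow$~(iii), I would expand $P_i$ in the basis $(\uxi^{[\uk]})$: the reduction $P_i$ has order $\le n$ exactly when $a_{\uk}\in\pi^{i+1}V$ for every $|\uk|>n$, so ${\rm ord}(P_i)=\max\{|\uk|:\vp(a_{\uk})<(i+1)/e\}$, where $e$ is the ramification index (with the convention that the maximum of the empty set is $-\infty$). Condition (ii) then reads ``$\vp(a_{\uk})<(i+1)/e\Rightarrow|\uk|\le\alpha i+\beta$'' for all $i$, and (iii) reads ``$\vp(a_{\uk})\ge\eta|\uk|+c$'' for all $\uk$; each is obtained from the other by solving the resulting linear inequality, the finitely many multi-indices of small length being absorbed into the additive constant.

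The core is (i)~$\Leftrightarrow$~(iii), which I would reduce to the elementary estimate
$$\frac{|\uk|}{p^m(p-1)}-\delta(\uk)\ \le\ \vp(q_{\uk}^{(m)}!)\ \le\ \frac{|\uk|}{p^m(p-1)},$$
where $\delta(\uk)\ge 0$ depends only on $N$ and $p$ and is $O(\log(1+|\uk|))$: the right inequality is Legendre's formula $\vp(n!)=(n-s_p(n))/(p-1)$ applied to each factor $\lfloor k_i/p^m\rfloor!$, and the left one uses in addition $\lfloor k_i/p^m\rfloor\ge k_i/p^m-1$ and $s_p(n)\le(p-1)(\log_p(n+1)+1)$. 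Granting this: if (iii) holds with $\vp(a_{\uk})\ge\eta|\uk|+c$, pick $m$ so large that $1/(p^m(p-1))\le\eta/2$ and simultaneously $q_{\uk}^{(m)}!=1$ for every $\uk$ in the finite set where $\tfrac{\eta}{2}|\uk|+c<0$; then $\vp(a_{\uk})\ge\eta|\uk|+c\ge\vp(q_{\uk}^{(m)}!)$ holds in all cases, and for $|\uk|$ large the difference is at least $\tfrac{\eta}{2}|\uk|+c\to+\infty$, so $P\in\what{D}^{(m)}(\GG)\subseteq D^{\dagger}(\GG)$. Conversely, if $P\in\what{D}^{(m)}(\GG)$ then $\vp(a_{\uk})\ge\vp(q_{\uk}^{(m)}!)\ge\frac{|\uk|}{p^m(p-1)}-\delta(\uk)$; taking $\eta=1/(2p^m(p-1))>0$, the quantity $\frac{|\uk|}{2p^m(p-1)}-\delta(\uk)$ tends to $+\infty$, hence is bounded below by some $c\in\Rr$, and (iii) follows.

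The hard part is not conceptual but technical: making the lower bound for $\vp(q_{\uk}^{(m)}!)$ uniform in $\uk$ (checking that the digit-sum term and the floor losses contribute only $O(\log(1+|\uk|))$), and handling the finitely many multi-indices of small length in the implication (iii)~$\Rightarrow$~(i), where $q_{\uk}^{(m)}!$ may be divisible by $p$ while $a_{\uk}$ is only known to lie in $V$; enlarging $m$ removes this last obstruction, since $q_{\uk}^{(m)}!=1$ for $m$ large at any fixed $\uk$.
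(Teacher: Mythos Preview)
Your argument is correct and is precisely the approach the paper has in mind: the paper's own proof simply records the identity $\uxi^{\langle\uk\rangle_{(m)}}=q_{\uk}^{(m)}!\,\uxi^{[\uk]}$ and then refers to the proof of Proposition~2.4.4 in \cite{Be1}, whose content is exactly the Legendre-formula estimate $\vp(q_{\uk}^{(m)}!)\sim |\uk|/(p^m(p-1))$ that you make explicit. In other words, you have written out in full the details that the paper leaves to the cited reference.
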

\begin{proof}
D'après la proposition \ref{prop-basiselementsII} on a
$\uxi^{\langle \uk \rangle_{(m)}}=q^{(m)}_{\uk}!\uxi^{[\uk]}$.
On peut donc utiliser les arguments donnés dans la démonstration de la proposition 2.4.4 of \cite{Be1}.
\end{proof}

%vu
\subsection{Distributions analytiques rigides}
\label{dist_an_rig}
Nous supposons ici que le corps $K$ est une extension finie de $\Qr_p$. Nous utiliserons \cite{SchneiderNFA} pour des notions
élémentaires d'analyse fonctionelle non-archimédienne. Soit $\GG$ le complété formel d'un schéma en groupes affine et lisse $G\rig S$
le long de sa fibre spéciale $G_\kappa$. Soient $\OO(G)$ et $\OO(\GG)$ les algèbres affines de $G$ et $\GG$ respectivement. Soit $\what{G}$ la complétion de $G$ relativement au point fermé correspondant à l'élément neutre de
$G_\kappa$. Donc, $\what{G}$ un schéma en groupes affine formel sur $\SS$ (mais pas un $\SS$-schéma formel car, en générale, $\pi \OO_{\what{G}}$ n'est pas un idéal de définition!). On écrit
 $$G^\circ:=\what{G}^{\rm rig}$$ pour sa fibre générique au sens de Berthelot, cf. 7.1 de \cite{deJong}. C'est un groupe analytique rigide, en générale non-affinoïde, sur $K$ avec $Lie(G^\circ)=Lie(G)\otimes K$. On peut décrire ces objects comme suit. Prenons $t_1,...,t_N$ une suite regulière de générateurs de $\II_\kappa$ qui engendrent $\OO(G)$ comme $V$-algèbre, i.e. $\OO(G)=V[t_1,...,t_N]$ et $\OO(\GG)=V\langle t_1,...,t_N\rangle$. Il suit que
$\OO(\what{G})=V[[t_1,...,t_N]]$
est isomorphe à l'anneau des séries formelles sur $V$ en $N$ variables et l'espace $G^\circ$ est isomorphe au disque unité ouvert de dimension $N$, cf. Lemma 7.4.3 de \cite{deJong}. En particulier, il existe un recouvrement admissible croissant formé par des ouverts affinoides $(G^\circ)_r=Sp(A_r), r<1$ qui sont associés aux algèbres de Tate
$$A_r:=\left\{ \sum_{\uk} b_{\uk}\;\ut^{\uk}: b_{\uk}\in K, | b_{\uk}| r^{|\uk|}\rightarrow 0\right\},$$
 si $|\uk|\rightarrow\infty$. Il en résulte que
$\OO(G^\circ)=\varprojlim_{r<1} A_r$
est une algèbre de Fréchet nucléaire sur $K$, cf. Prop. 19.9 de \cite{SchneiderNFA}. Remarquons que $t_1,...,t_N\in\OO(G^\circ)$ sont des sections globales sur $G^\circ$. On note
$$D^{an}(G^\circ):=\OO(G^\circ)':={\rm Hom}_K^{\rm ct}(\OO(G^\circ),K)$$
le dual continu de l'espace Fréchet $\OO(G^\circ)$. On equip $D^{an}(G^\circ)$ avec la topologie forte, i.e.
$D^{an}(G^\circ)=\OO(G^\circ)'_b$ au sens de \cite{SchneiderNFA}, \S6. Cette topologie est égale à la topologie limite inductive via l'isomorphisme canonique et topologique
$$(\varprojlim_{r<1} A_r)'_b\stackrel{\simeq}{\longrightarrow} \varinjlim_{r<1} (A_r)'_b,$$
cf. Prop. 16.5 de \cite{SchneiderNFA}. La topologie forte sur $(A_r)_b'$ est simplement la topologie de Banach, Remark 6.7 de \cite{SchneiderNFA}. Par conséquent, $D^{an}(G^\circ)$ est un espace localement convexe de type compact et le produit de convolution fait de
$D^{an}(G^\circ)$ une $K$-algèbre topologique, cf. 5.2 de \cite{Emerton}. Il y a un homomorphisme canonique $\delta: G^\circ(K)\hookrightarrow D^{an}(G^\circ)^\times$ du groupe des points de $G^\circ$ à valeurs dans $K$ dans le groupe des éléments inversibles où $\delta_g$ est la forme linéaire continue $f\mapsto f(g)$ de $\OO(G^\circ)$.
De plus, à $\xi\in Lie(G^\circ)$ on peut associer la
forme linéaire continue
\begin{equation}\label{equ-actionLie} f\mapsto \xi.f:=\frac{d}{dt} f(e^{t\xi}) |_{t=0}\end{equation}
de $\OO(G^\circ)$ où $e$ est l'application exponentielle de $G^\circ$ definie près de $1\in G^\circ$. Il en résulte un homomorphisme d'anneaux
$U(Lie(G^\circ))\rightarrow D^{an}(G^\circ).$

D'un autre côté, chaque $\what{D}^{(m)}(\GG)$ est muni de la topologie $\pi$-adique. On munit alors $\what{D}^{(m)}(\GG)_\Qr$ de la topologie d'espace de Banach
dont $\what{D}^{(m)}(\GG)$ est la boule unité. La limite inductive $D^{\dagger}(\GG)_\Qr$ est alors une $K$-algèbre topologique sur un espace localement convexe.
 \begin{prop}\label{prop-isomorph}
L'application $U(Lie(G^\circ))\rightarrow D^{an}(G^\circ)$ s'étend en un isomorphisme d'anneaux topologiques
$$ D^{\dagger}(\GG)_\Qr\stackrel{\simeq}{\longrightarrow}D^{an}(G^\circ).$$
\end{prop}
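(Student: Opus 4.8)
Le plan est d'identifier, au moyen de coordonnées, les deux algèbres $D^{\dagger}(\GG)_\Qr$ et $D^{an}(G^\circ)$ au même espace de suites vérifiant une condition de croissance de type Monsky--Washnitzer, puis de vérifier que, sous cette identification, les produits de convolution coïncident. Fixons $t_1,\dots,t_N$ comme dans l'énoncé. D'une part, d'après \ref{prop-dagger} (iii),
$$ D^{\dagger}(\GG)_\Qr=\Big\{\textstyle\sum_{\uk}a_{\uk}\uxi^{[\uk]}\ :\ a_{\uk}\in K,\ \exists\,\eta>0,\,c\in\Rr,\ \vp(a_{\uk})\geq \eta|\uk|+c\Big\}, $$
la topologie étant celle de la limite inductive des espaces de Banach $\what{D}^{(m)}(\GG)_\Qr$; comme $\uxi^{\langle\uk\rangle_{(m)}}=q^{(m)}_{\uk}!\,\uxi^{[\uk]}$ (\ref{prop-basiselementsII}) et comme $\vp(q^{(m)}_{\uk}!)$ est comparable à $|\uk|/(p^{m}(p-1))$, la norme de $\what{D}^{(m)}(\GG)_\Qr$ s'écrit, dans les coordonnées $\uxi^{[\uk]}$ et à équivalence près, $\sum_{\uk}a_{\uk}\uxi^{[\uk]}\mapsto \sup_{\uk}|a_{\uk}|\,p^{\vp(q^{(m)}_{\uk}!)}$. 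D'autre part, $\OO(G^\circ)=\varprojlim_{r<1}A_r$ est un espace de Fréchet nucléaire, donc son dual fort vaut $D^{an}(G^\circ)=\varinjlim_{r<1}(A_r)'_b$ (Prop.~16.5 de \cite{SchneiderNFA}). Je calculerais $(A_r)'_b$ en remarquant que les monômes $\ut^{\uk}$ forment une base de Banach de $A_r$ de normes $\|\ut^{\uk}\|_r=r^{|\uk|}$: une forme linéaire continue $\lambda$ est déterminée par $c_{\uk}:=\lambda(\ut^{\uk})$, sa continuité équivaut à $\sup_{\uk}|c_{\uk}|\,r^{-|\uk|}<\infty$, et cette quantité est sa norme. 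En faisant croître $r$ vers $1$ et en posant $r=p^{-\eta}$, on obtient que $D^{an}(G^\circ)$, muni de sa topologie, coïncide avec le membre de droite ci-dessus, les deux familles de normes de Banach étant cofinales l'une dans l'autre puisque $p^{-1/(p^{m}(p-1))}\to 1$.

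Ensuite je définirais l'application de comparaison $\Theta\colon D^{\dagger}(\GG)_\Qr\to D^{an}(G^\circ)$ par dualité. Rappelant que, comme fibre générique au sens de Berthelot de $\what{G}=\spf V[[t_1,\dots,t_N]]$, l'espace $G^\circ$ est le polydisque unité ouvert de coordonnées $t_1,\dots,t_N$, toute $f\in\OO(G^\circ)$ s'écrit $f=\sum_{\uj}b_{\uj}\ut^{\uj}$ avec $|b_{\uj}|r^{|\uj|}\to 0$ pour tout $r<1$; pour $P=\sum_{\uk}a_{\uk}\uxi^{[\uk]}$ je poserais
$$\langle\Theta(P),f\rangle:=\textstyle\sum_{\uk}a_{\uk}b_{\uk}.$$
Cette série converge car $|a_{\uk}|\leq C r_0^{|\uk|}$ pour un $r_0<1$ et $|b_{\uk}|r_0^{|\uk|}\to 0$, et la même estimation donne la continuité de $\Theta(P)$ sur $\OO(G^\circ)$. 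Comme $\OO(\what{G})=V[[t_1,\dots,t_N]]$ s'injecte dans $\OO(G^\circ)$ avec les mêmes coordonnées et que $\uxi^{[\uk]}$ est par définition la base duale des monômes $\ut^{\uk}$, dans les identifications du paragraphe précédent $\Theta$ n'est autre que l'identité sur les suites $(a_{\uk})$: c'est donc une bijection bicontinue, i.e. un isomorphisme d'espaces localement convexes. Par ailleurs, pour $\xi=\xi_l$ (base duale de $t_l$), la forme linéaire $f\mapsto \frac{d}{dt}f(e^{t\xi_l})|_{t=0}$ de \eqref{equ-actionLie} envoie $f$ sur son coefficient en $t_l$, c'est-à-dire sur $\xi_l(f)$; donc $\Theta$ restreint à $Lie(G)\otimes K=Lie(G^\circ)$ est l'inclusion canonique dans $D^{an}(G^\circ)$.

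Il resterait à vérifier que $\Theta$ est un homomorphisme d'anneaux; c'est le point que j'attends comme le plus délicat. Les deux produits sont des produits de convolution issus de la comultiplication du groupe: sur $D^{\dagger}(\GG)_\Qr$ le produit de $u,v$ s'obtient en passant aux limites sur $m$ et sur $i$ à partir des morphismes $\delta^{n,n'}$ de \ref{def-delta}, induits par $\mu^{\sharp}$; sur $D^{an}(G^\circ)$ la convolution est $\langle u*v,f\rangle=(u\otc v)(\mu^{\sharp}f)$, où $\mu^{\sharp}\colon\OO(G^\circ)\to\OO(G^\circ)\otc_K\OO(G^\circ)\simeq\OO(G^\circ\times G^\circ)$. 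Comme $\OO(\what{G})\to\OO(G^\circ)$ est compatible aux comultiplications et que $\Theta$ entrelace les accouplements avec $\OO(\what{G})$ et $\OO(G^\circ)$, les deux produits correspondent à la même opération bilinéaire sur les suites de coefficients; par continuité des deux produits et densité de $V[t_1,\dots,t_N]$ dans $\OO(G^\circ)$, il suffirait en fait de comparer les formes linéaires obtenues sur $V[t_1,\dots,t_N]$, ce qui est la définition même des produits. Enfin, $\Theta$ étant un homomorphisme d'anneaux prolongeant $Lie(G^\circ)\hookrightarrow D^{an}(G^\circ)$, il prolonge aussi $U(Lie(G^\circ))\to D^{an}(G^\circ)$, d'où l'énoncé. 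L'obstacle principal est donc la comparaison des deux structures multiplicatives, où il faudra manipuler avec soin les produits tensoriels complétés $\OO(G^\circ)\otc\OO(G^\circ)$, l'isomorphisme avec les fonctions sur $G^\circ\times G^\circ$ et la compatibilité de l'accouplement à $\mu^{\sharp}$, ainsi que les ingrédients d'analyse fonctionnelle (nucléarité, dual fort d'une limite projective) déjà cités.
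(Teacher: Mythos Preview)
Your approach coincides with the paper's: both identify $(A_r)'_b$, via the dual pairing $\uxi^{[\uk]}\cdot\ut^{\uk'}=\delta_{\uk,\uk'}$, with the Banach space of sums $\sum_{\uk}a_{\uk}\uxi^{[\uk]}$ satisfying $|a_{\uk}|\leq Cr^{|\uk|}$, then pass to the inductive limit over $r<1$ and invoke Prop.~\ref{prop-dagger}(iii). The paper's proof is considerably terser on the multiplicativity and the topological compatibility, simply recording that the identification goes through the given map $U(Lie(G^\circ))\to D^{an}(G^\circ)$ (which carries the convolution product by construction); your extra care on these points is correct but does not constitute a different route.
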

\begin{proof}
Nous généralisons les arguments de 2.3.3 de \cite{patel_schmidt_strauch_dist_algebra_GL2} dans le cas $G=GL_2$ et $V=\Zp$.
On reprend les notations de \ref{distrib_cris}. Soit
$\xi_1,...,\xi_N$ la base de $Lie(G)$ duale de $t_1,...,t_N$. Par définition des éléments $\uxi^{[\uk]}$,
$$\uxi^{[\uk]} . \ut^{\uk'} = \left\{
\begin{array}{lcl}
1 & , & \uk = \uk' \\
0 & , &  \uk \neq \uk' \end{array} \right. \;.$$
Par conséquent, on peut identifier l'espace dual $A'_r$, avec le produit convolution, via l'application $U(Lie(G^\circ))\rightarrow D^{an}(G^\circ)$, à l'algèbre de Banach formée des sommes infinies $$ \lambda=\sum_{\uk}a_{\uk}\uxi^{[\uk]}$$
avec $a_{\uk}\in K$ et $ |a_{\uk}|\leq C r^{|\uk|}$
pour tout $\uk$ et une certaine constante $C=C(\lambda)$ dépendant sur $\lambda$. Il résulte alors de (iii) de \ref{prop-dagger} que la limite inductive des $A'_r$ est égale à $D^{\dagger}(\GG)_\Qr$.
\end{proof}

Remarque: Pour chaque $m$, il y a $m' > m$ ce que la fl\`eche naturelle 
$\what{D}^{(m)}(\GG)_\Qr\rightarrow \what{D}^{(m')}(\GG)_\Qr$ est une application lin\'eaire {\it compact} entre des \'espaces de Banach. 
En fait, par la proposition, cette application se factorise par une application $(A_r)'_b\rightarrow (A_{r'})'_b$ avec $r'>r$ convenable et on peut appliquer Rem. 16.7 de \cite{SchneiderNFA}.

\vskip8pt

%vu
Soit $\GG^{rig}$ la fibre générique de
$\GG$ au sens usuel de Raynaud \cite{BoschLuetkebohmert}. Ainsi, $\GG^{rig}$ est un groupe affinoïde sur $K$ avec
$\GG^{\rm rig}(K)=G(V)$ comme groupe de points à valeurs dans $K$. L'espace $\GG^{\rm rig}$ est isomorphe au disque unité fermé $Sp\,K\langle t_1,...,t_N\rangle$ de dimension $N$.
L'inclusion naturelle $V\langle t_1,...,t_N\rangle \hookrightarrow V[[t_1,...,t_N]]$ induit une immersion ouverte $G^\circ\subseteq \GG^{\rm rig}$.
Cette immersion établit une bijection de $G^\circ (K)$ avec le sous-groupe des points de $G(V)$ qui se spécialisent en $1_\kappa\in G_\kappa$ (i.e. avec les $S$-morphismes
$f: S\rightarrow G$ qui ayant la propriété que $f_\kappa$ factorise via l'immersion $1_\kappa\hookrightarrow G_\kappa$).  On dit que
$G^\circ(K)$ est le \og premier groupe de congruence \fg \, de $G(V)$.
\vskip4pt

Par exemple, si $G={\rm GL}_n$ est le groupe lineaire générale et
${\rm M}_n$ son algèbre de Lie, on a $G^\circ (L)=1+ {\rm M}_n(\mathfrak{m}_L)$
pour tout extension des corps valués complètes $K\subseteq L$ où $\mathfrak{m}_L$ designe l'idéal maximal de l'anneau de valuation de $L$. Donc, $G^\circ(K)=1+\pi {\rm M}_n(V)$ est le premier groupe de congruence de ${\rm GL}_n(V)$ au sens usuel.

\vskip4pt

%À tout point rationel $g\in G^\circ(K)$, on peut associer
%la distribution $\delta_g: f\mapsto f(g)$ dans $D^{an}(G^\circ)$ avec $\delta_{gh}=\delta_g\cdot\delta_h$, donc une
%inclusion $G^\circ(K)\rightarrow D^{an}(G^\circ)^\times.$ En particulier, chaque module (à gauche) sur $D^{an}(G^\circ)$
%donne, par restriction, une représentation $K$-linéaire du groupe $\pi$-adique $G^\circ(K).$

%\vskip4pt
%Remarque: Soit $K$ une extension finie de $\Qp$ et $V$ son anneau d'entiers. On peut montrer que le foncteur $V\mapsto
%V'_b$ donne un  plongement pleinement fidèle de la catégorie de modules (à gauche) de $D^{an}(G^\circ)$ de type fini
%dans la catégorie des $G^\circ(K)$-représentations (sur des espaces de Fréchet nucléaires) qui sont {\it
%$G^\circ$-analytiques} au sens de 3.6.1 de \cite{Emerton}.

\subsection{Représentations analytiques rigides}
\label{rep_an_rig}

Nous utilisons les notations de la section précédente et supposons encore que le corps $K$ est une extension finie de $\Qr_p$.
Nous donnons ici une interprétation de la catégorie des ${D}^{an}(G^\circ)$-modules de présentation finie en termes de certaines représentations $\pi$-adiques du groupe $\pi$-adique
$G^\circ(K)$, (groupe des points de $G^\circ$ à valeurs dans $K$). Rappelons comment est définie la topologie
$\pi$-adique sur $G(V)$. L'espace $V^{n^2}$ est muni de la topologie produit, qui induit la topologie produit sur
l'ouvert ${\rm GL}_n(V)\subset V^{n^2}$. La topologie $\pi$-adique sur $G(V)$ est
définie comme la topologie induite
relative à une immersion fermée $G\hookrightarrow {\rm GL}_n$ quelconque. Cette topologie est
plus fine que la topologie de Zariski et elle est localement compacte parce que $V$ est localement compact, e.g. 0.6 de \cite{Landvogt}.
Finalement, $G^\circ(K)\subset G(V)$ ainsi muni de la topologie induit.

\vskip4pt

Remarquons que l'involution $\tau: g\mapsto g^{-1}$ sur $G^\circ$ s'etend par fonctorialité en un anti-automorphisme de $D^{an}(G^\circ)$. On a donc une équivalence entre les
${D}^{an}(G^\circ)$-modules à gauche et à droite et on va considérer seulement des modules à gauche dans la suite. Le groupe $G$ agit à gauche sur lui-même par
conjugaison (i.e. $g$ agit par $h\mapsto g^{-1}hg$) et par fonctorialité sur $G^\circ$ et $D^{an}(G^\circ)$. Nous notons cette action \og adjointe \fg par
$g\mapsto Ad(g)$. Nous appelons {\it $({D}^{an}(G^\circ),G(V))$-module de présentation finie} un ${D}^{an}(G^\circ)$-module $M$ de présentation finie qui est
aussi muni d'une action $K$-linéaire $\rho$ du groupe $G(V)$, qui est compatible au sens que
pour tous $x\in D^{an}(G^\circ), g\in G(V), m\in M$,
$$x \rho(g)m= \rho(g) (Ad(g)x)m.$$

 \vskip5pt
 Du côté des représentations, pour un sous-groupe ouvert $H$ de $G(V)$ et un $K$-espace vectoriel topologique $W$, nous appelons {\it action topologique} de $H$ sur $W$ une action du groupe $H$ sur $W$ par des applications linéaires et continue, cf. (0.11) de \cite{Emerton}.
 En particulier, pour chaque $w\in W$ on a une application $o_w: H\rightarrow W$ définie par $h\mapsto hw.$
  Suivant 2.1.18/19 de \cite{Emerton}, nous introduisons l'espace des fonctions analytiques rigides sur $G^\circ$ en valeurs dans un espace Fréchet $W$ quelconque,
%stop
$$C^{\rm an}(G^\circ,W):=\OO(G^\circ)\hat{\otimes}_K W.$$ Ici, on prend comme produit tensoriel topologique le produit tensoriel projectif, cf. 17.B de \cite{SchneiderNFA}. Par évaluation aux points de $G^\circ(K)\subset G^\circ$, on obtient une inclusion de $C^{\rm an}(G^\circ,W)$ dans l'espace des fonctions (continues)
$G^\circ(K)\rightarrow W$, cf. 2.1.20 de \cite{Emerton}. En fait, comme $G^\circ$ est isomorphe à un polydisque ouvert, l'ensemble $G^\circ(K)$ est Zariski-dense
dans $G^\circ$. Nous avons la définition suivante, cf. Théorème 3.4.3 et Définition
3.6.1 de \cite{Emerton}.
\begin{defi} Soit $W$ un espace nucléaire Fréchet avec une action topologique de $G^\circ(K)$ où $G(V)$.
On appelle $W$ une représentation {\it $G^\circ$-analytique} (ou simplement: {\it analytique}) si, pour tout $w\in W$,
l'application $o_w: G^\circ(K)\rightarrow W$ est à valeurs dans le sous-espace
$C^{\rm an}(G^\circ,W)$.
\end{defi}
%Pour simplifier, nous supposons de plus que l'espace $W$ est un espace Fréchet nucléaire et donc reflexif, cf. 19.3 de \cite{SchneiderNFA}.
Soit $W$ une représentation analytique de $G^\circ(K)$. Nous notons $W':={\rm Hom}_K^{\rm ct}(W,K)$ le dual continu de $W$. En appliquant \cite{Emerton}, Cor. 5.1.8 à un recouvrement affinoide convenable de $G^\circ$ on voit que $W'$
est un module sur l'anneau des distributions $D^{an}(G^\circ)$ de la manière suivante $$\langle\lambda(w'),w\rangle:= \lambda ( w'\circ o_w)$$ pour $w\in W,
w'\in W', \lambda\in D^{an}(G^\circ)$. Ici, $w'\circ o_w$ est vu comme élément de l'espace $C^{\rm an}(G^\circ,K)=\OO(G^\circ)$.
Supposons maintenant que l'anneau $D^{an}(G^\circ)$ est cohérent. C'est vrai dans le cas réductif, cf. Thm.
\ref{thm-coherence}, et nous espérons que ce resultat reste vrai pour un groupe $G$ général, cf. 5.3.12 de
\cite{Emerton}. En suivant la stratégie de Schneider et Teitelbaum de
\cite{SchnTei03} nous disons que la représentation analytique $W$ est {\it admissible}, si $W'$ est de présentation finie comme $D^{an}(G^\circ)$-module. On dit qu'une
représentation analytique de $G(V)$ est {\it admissible}, si elle l'est comme représentation de $G^\circ(K)$. Les morphismes dans ces catégories sont
par définition les applications $K$-linéaires continues et équivariantes.
 \begin{prop}
 Le foncteur $W\mapsto W'$ donne une anti-équivalence de catégories entre des représentations analytiques admissibles de $G^\circ(K)$ et les ${D}^{an}(G^\circ)$-modules de présentation finie. Le foncteur induit une équivalence entre les sous-catégories des $G(V)$-représentations
analytiques admissibles et celle des $({D}^{an}(G^\circ),G(V))$-modules de présentation finie.
 \end{prop}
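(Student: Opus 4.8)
Le plan est d'adapter la machinerie de dualité de Schneider et Teitelbaum \cite{SchnTei03}, dans la forme rigide-analytique développée par Emerton \cite{Emerton}, l'ingrédient essentiel étant la cohérence de l'anneau $D^{an}(G^\circ)$ (théorème de cohérence \ref{thm-coherence} dans le cas réductif, qui est le cas considéré ici). On commencera par vérifier que $W\mapsto W'$ est un foncteur bien défini à valeurs dans les $D^{an}(G^\circ)$-modules de présentation finie: que $W'$ soit de présentation finie est la définition même de l'admissibilité, et qu'une application continue équivariante $f\colon W_1\rightarrow W_2$ se dualise en une application $D^{an}(G^\circ)$-linéaire $f'\colon W_2'\rightarrow W_1'$ résultera formellement de la formule $\langle\lambda(w'),w\rangle=\lambda(w'\circ o_w)$ jointe à l'égalité $o_{f(w)}=f\circ o_w$.

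On construira ensuite un foncteur quasi-inverse $M\mapsto M_b'$. Comme $D^{an}(G^\circ)$ est cohérent et de type compact, un module $M$ de présentation finie, présenté comme conoyau d'un morphisme de modules libres de rang fini, est naturellement muni d'une topologie quotient séparée qui en fait un espace de type compact (on utilise ici que les sous-modules d'un module de type fini sont fermés, par cohérence, comme dans \cite{SchnTei03}); son dual fort $M_b'$ est alors un espace de Fréchet nucléaire. L'inclusion $\delta\colon G^\circ(K)\hookrightarrow D^{an}(G^\circ)^\times$ munit $M$, donc $M_b'$, d'une action de $G^\circ(K)$ par applications linéaires continues, et il faudra vérifier que les applications orbite $o_{m'}$ sont à valeurs dans $C^{\rm an}(G^\circ,M_b')$, de sorte que $M_b'$ est une représentation analytique: c'est ici qu'intervient le fait que la coaction de $D^{an}(G^\circ)$ sur $M$ provient de la structure de coalgèbre portée par $\OO(G^\circ)$.

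On établira alors les deux isomorphismes de bidualité. Comme un espace de Fréchet nucléaire est réflexif \cite{SchneiderNFA}, l'application naturelle $W\rightarrow(W')_b'$ est un isomorphisme topologique, $G^\circ(K)$-équivariant par construction; comme $M$ est conoyau d'un morphisme de modules libres de rang fini et que $D^{an}(G^\circ)$ est réflexif de type compact, l'application de bidualité $M\rightarrow(M_b')_b'$ est un isomorphisme de $D^{an}(G^\circ)$-modules, le point étant que $(-)_b'$ transforme la présentation libre finie de $M$ en une suite exacte d'espaces de Fréchet, puis la reconstruit. Cette dernière exactitude, c'est-à-dire le bon comportement de la dualité forte sur les modules de présentation finie, est l'obstacle principal et l'endroit où la cohérence de $D^{an}(G^\circ)$ est réellement utilisée; l'argument sera celui de Schneider et Teitelbaum transporté dans notre cadre.

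Enfin, pour la version $G(V)$-équivariante, étant donnée une représentation analytique admissible $W$ de $G(V)$, on munira $W'$ de l'action duale $\rho(g)$ de $G(V)$ et on vérifiera la compatibilité $x\,\rho(g)m=\rho(g)\,(Ad(g)x)\,m$, qui est une conséquence formelle de l'action de $G(V)$ sur $G^\circ$ par conjugaison et de la fonctorialité de $D^{an}(G^\circ)$; réciproquement un $(D^{an}(G^\circ),G(V))$-module de présentation finie fournira une telle représentation sur son dual fort. Comme les isomorphismes de bidualité ci-dessus sont manifestement compatibles avec ces actions supplémentaires de $G(V)$, l'anti-équivalence de la première partie se restreindra en l'équivalence annoncée sur les sous-catégories.
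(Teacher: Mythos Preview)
Your proposal is correct in outline and follows the same Schneider--Teitelbaum strategy as the paper, but the execution differs in a few instructive places.

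For topologizing a finitely presented module $M$, you use the quotient topology from a free presentation and invoke coherence to ensure separatedness. The paper instead writes $M=\varinjlim_m M_m$ where $M_m=\sum_i \what{D}^{(m)}(\GG)_\Qr\, x_i$ for a choice of generators; noetherianity of each $\what{D}^{(m)}(\GG)_\Qr$ makes $M_m$ a Banach module, and compactness of the transition maps (from the earlier remark after Prop.~\ref{prop-isomorph}) exhibits $M$ directly as a space of compact type. This internal filtration avoids any appeal to exactness of strong duality and makes the independence from the choice of generators transparent.

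For the biduality $M\simeq (M'_b)'_b$, you plan to push the free presentation through the dual and back, tracking exactness. The paper's argument is much shorter: once $M$ is known to be of compact type, reflexivity gives $W'=(M'_b)'_b=M$ immediately. Your route would work but introduces unnecessary delicacy.

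For analyticity of orbit maps, your mention of the coalgebra structure on $\OO(G^\circ)$ is the right idea but left vague. The paper makes it concrete: for $w\in W=M'_b$ the map $o_w$ extends to a continuous linear map $D^{an}(G^\circ)\rightarrow W$, and then the canonical isomorphism $C^{\rm an}(G^\circ,W)\simeq {\rm Hom}_K^{\rm ct}(\OO(G^\circ)'_b,W)$ (\cite{SchneiderNFA}, Cor.~18.8) identifies such maps precisely with elements of $C^{\rm an}(G^\circ,W)$.
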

 \begin{proof}
 Le corps $K$ est complet relativement à une valuation discrète et il est donc sphériquement complet, cf. Lemma 1.6 de \cite{SchneiderNFA}.
 Le passage au dual fort est donc une anti-équivalence involutive entre les espaces nucléaires Fréchet et les espaces de type compact, cf. \cite{SchnTei03}, Cor.
1.4. Si $M$ est un ${D}^{an}(G^\circ)$-module de type fini, engendré par des
générateurs $x_1,...,x_n$, on pose $M_m:= \sum_i \what{D}^{(m)}(\GG)_\Qr x_i\subseteq M$. Comme $\what{D}^{(m)}(\GG)_\Qr$ est une algèbre de Banach noethérienne, $M_m$ a une unique structure de $\what{D}^{(m)}(\GG)_\Qr$-module de
Banach et l'inclusion $M_m\rightarrow M_{m'}$, pour $m'>m$, est continue et compact. Munissons $M=\varinjlim_{m} M_m$  de la
topologie limite inductive. Il est facile de voir que cette topologie ne depend pas de choix des générateurs $x_i$ pour
$M$, qu'elle fait de $M$ un
${D}^{an}(G^\circ)$-module séparément continu sur un espace de type compact. De plus, chaque application ${D}^{an}(G^\circ)$-linéaire entre deux modules $M,M'$ de type
fini est automatiquement continue. Soit maintenant $W:=M'_b$ le dual fort de $M$. Par les arguments dans Cor. 3.3 de \cite{SchnTei01} l'espace $W$ est un ${D}^{an}(G^\circ)$-module séparément continue via la structure contragredient $$\langle \lambda.w,x\rangle:=w(\lambda^\tau.x)$$ pour $w\in W,~x\in M$ et $\lambda\in {D}^{an}(G^\circ)$.
Utilisant l'inclusion naturelle $\delta: G^\circ(K)\hookrightarrow D^{an}(G^\circ)^\times$ on a une action topologique de $G^\circ(K)$ sur $W$ donné par
 $$\langle g.w,x\rangle:=w(\delta_{g^{-1}}.x)$$ pour $w\in W,~x\in M$. Par construction l'application $o_w$ s'etend à une application linéaire continue $ D^{an}(G^\circ)\rightarrow W$. L'isomorphisme canonique
 $$ C^{\rm an}(G^\circ, W)=\OO(G^\circ)\hat{\otimes}_K W\simeq {\rm Hom}_K^{\rm ct}(\OO(G^\circ)'_b,W),$$
 \cite{SchneiderNFA}, Cor. 18.8, implique que $o_w\in C^{\rm an}(G^\circ,W)$ pour $w\in W$. Cela montre que $W$ est une représentation analytique de
$G^\circ(K)$. Par réflexivité on a $W'=M$ et la correspondance $M\mapsto W$ induit donc un quasi-inverse pour le foncteur $W\mapsto W'$ considéré dans l'assertion. La proposition en résulte.
\end{proof}

\vskip4pt
Remarque: soit $Z_K$ le centre de l'algèbre enveloppante $U(Lie(G_K))$ de $Lie(G_K)$. Comme $G$ est connexe, l'action adjointe de $G$ sur $Lie(G_K)$ stabilise
les éléments de $Z_K$, cf. \cite{Demazure_gr_alg}, II.\S6.1.5. Si $\theta$ est un caractère de $Z_K$ à valeurs dans $K$, on voit que l'action $g\mapsto Ad(g)$ donne une action de $G$ sur l'anneau quotient
$$D^{an}(G^\circ)_{\theta}:=D^{an}(G^\circ)/(\ker\theta) D^{an}(G^\circ).$$
Finalement, $Lie(G_K)$ agit sur une représentation analytique $W$ par une formule analogue à (\ref{equ-actionLie}), cf. \cite{Emerton}, p. 69, et on voit que $W$ est à caractère infinitésimal $\theta$ si et seulement si le $D^{an}(G^\circ)$-module $W'$ est en fait un module sur $D^{an}(G^\circ)_{\theta}$. Dans cette situation, on a une version évidente de la proposition précedente pour des représentations ayant $\theta$ comme caractère infinitésimal.

\vskip5pt

Il résulte de la proposition que la catégorie des représentations analytiques admissibles est abélienne.
Donnons quelques examples. On a le groupe fini de type de Lie $G(k)$ où $k$ est le corps résiduel de $V$ et les $G(k)$-représentations de dimension finie sont analytiques admissibles (en fait, on a $G(k)\simeq G(V)/G^\circ(K)$). Comme premiers examples de dimension infinie sur $K$,
%soit $T\subset G$ un torus maximal deployé dans $G$ et $T\subset B$ un sous-schéma de Borel de $G$. Il y a une série principale des representations %$G^\circ$-analytiques admissibles qui est parametrisée par des caractères $T^\circ$-analytiques admissibles. En fait, si $\rho$ est un tel %caractère de $T^\circ(K)$, vu comme caractère de $B^\circ(K)$ par inflation, le représentation de la série principal associé est donné, à coté des duals, par $D^{an}(G^\circ)\otimes_{D^{an}(B^\circ)}\rho'$.
notons qu'il y a un foncteur de la catégorie des $U(Lie(G_K))$-modules de type fini $M$ vers celle des
 $D^{an}(G^\circ)$-modules de présentation finie, qui à $M$ associe $M\mapsto D^{an}(G^\circ)\otimes_{U(Lie(G_K))} M$. Ce foncteur est donc à valeurs dans la
catégorie des représentations analytiques admissibles. Pour des représentations algébriques de dimension finie du schéma en groupes $G$, ce foncteur est simplement la restriction aux points rationels $G(V)\subset G$.

\subsection{Le cas réductif}
\label{cas_reduct}
Dans cette section nous supposons toujours que $V$ est un AVDC, et considérons
 $G$, est un groupe réductif connexe déployé sur $S$. Nous reprenons alors les notations de
~\ref{dist_an_rig} et donnons quelques résultats plus précis sur $D^{(m)}(G)$ et $D^{\dagger}(\GG)_\Qr$ dans cette situation.
\subsubsection{Décomposition triangulaire}\label{subsubsec-triangular}
Soient $G$ un tel groupe, $B$ un
sous-schéma en groupes de Borel de $G$ contenant un tore maximal déployé $T$.
Soit $N\subset B$ le radical unipotent de $B$ et soit $\overline{N}$ le radical unipotent
opposé. L'application produit $N\times T\times \overline{N}\rig G$ est une immersion
ouverte dont l'image contient $\varepsilon_G(S)$. Il suit de la proposition
\ref{prop-local} et \ref{prop-functoriality} qu'il existe un isomorphisme filtré de $V$-modules
\begin{gather}
 D^{(m)}(N)\otimes_V D^{(m)}(T) \otimes _V D^{(m)}(\overline{N})\stackrel{\simeq}{\longrightarrow} D^{(m)}(G).
\end{gather}
Choisissons une base $\xi_1,...,\xi_q$ de $Lie(N)$, une base $\xi'_1,...,\xi'_q$ de
$Lie(\overline{N})$ et une base $\xi''_1,...,\xi''_l$ de $Lie(T)$. Comme $S$-schémas on a
$N,\overline{N}\simeq\mathbf{G}_a^k$ et $T\simeq \mathbf{G}_m^l$. En appliquant
successivement (ii) de \ref{prop-functoriality} on trouve que $D_n^{(m)}(G)$ est égal au
$V$-module libre de base les éléments

$$\uxi^{\la\ui\ra}\cdot \uxi''^{\la\uk\ra}\cdot \uxi'^{\la\uj\ra},$$

où $|\ui+\uj+\uk|\leq n$. Ici,

\begin{align*} \uxi^{\la\ui\ra}=q_{\ui}!\frac{\uxi^{\ui}}{\ui!},  \hskip20pt \uxi'^{\la\uj\ra}=q_{\uj}!\frac{\uxi'^{\uj}}{\uj!}, \hskip20pt \uxi''^{\la\uk\ra}=q_{\uk}!{\uxi''\choose \uk},
\end{align*}
vus comme éléments de l'algèbre enveloppante universelle $U(Lie(G)\otimes K)$.
En particulier, $D^{(0)}(G)$ est égale à la $V$-algèbre $U(Lie(G))$.

\subsubsection{Cohérence}
Nous allons prouver que $D^{\dagger}(\GG)_\Qr$ est un anneau cohérent. Comme l'algèbre $D^{\dagger}(\GG)_\Qr$ est limite inductive des algèbres noetheriennes $\what{D}^{(m)}(\GG)_\Qr$, il suffit de montrer que $\what{D}^{(m+1)}(\GG)_\Qr$ est plate sur
$\what{D}^{(m}(\GG)_\Qr$ pour obtenir la cohérence de $D^{\dagger}(\GG)_\Qr$. Pour cela nous nous inspirons des arguments de 3.5.3 de \cite{Be1} qui sont résumés dans 5.3.10 de \cite{Emerton}. Remarquons, que la discussion de \cite{Emerton} ne
s'applique pas directement ici, parce que la contribution torique de $D^{(m)}(G)$ fait intervenir des coefficients
binomiaux.
\begin{prop}\label{prop-flatness}
L' homomorphisme d'anneaux $\what{D}^{(m)}(\GG)_\Qr\rightarrow \what{D}^{(m+1)}(\GG)_\Qr$ est plat pour tout $m$.
\end{prop}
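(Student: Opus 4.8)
The plan is to follow the strategy of $3.5.3$ of \cite{Be1}, in the form synthesized in $5.3.10$ of \cite{Emerton}, while isolating the contribution of the maximal torus, which is where that strategy must be modified. First I would exploit the triangular decomposition of~\ref{subsubsec-triangular}: the open immersion $N\times T\times\overline{N}\hookrightarrow G$ through $\varepsilon_G(S)$ gives, by~\ref{prop-local} and~\ref{prop-functoriality}, filtered isomorphisms of $V/\pi^{i+1}$-modules $D^{(m)}(N_i)\otimes D^{(m)}(T_i)\otimes D^{(m)}(\overline{N}_i)\stackrel{\sim}{\longrightarrow}D^{(m)}(G_i)$, compatible with the transition morphisms in $m$ (again by functoriality); passing to the limit over $i$ and tensoring with $\Qr$ yields a topological isomorphism of $K$-Banach spaces
$$\what{D}^{(m)}(N)_\Qr\,\what{\otimes}_K\,\what{D}^{(m)}(T)_\Qr\,\what{\otimes}_K\,\what{D}^{(m)}(\overline{N})_\Qr\stackrel{\sim}{\longrightarrow}\what{D}^{(m)}(\GG)_\Qr$$
(for the formal completions of $N,T,\overline{N}$ along their special fibres), carrying $\what{D}^{(m)}(\GG)_\Qr\to\what{D}^{(m+1)}(\GG)_\Qr$ to the completed tensor product of the three transition maps of the factors. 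Since $N,\overline{N}\simeq\mathbf{G}_a^q$ and $T\simeq\mathbf{G}_m^l$ as $S$-schemes, and $\what{D}^{(m)}$ turns products into completed tensor products, it suffices to treat the two rank-one cases, $\what{D}^{(m)}(\what{\mathbf{G}}_a)_\Qr\to\what{D}^{(m+1)}(\what{\mathbf{G}}_a)_\Qr$ and $\what{D}^{(m)}(\what{\mathbf{G}}_m)_\Qr\to\what{D}^{(m+1)}(\what{\mathbf{G}}_m)_\Qr$, together with the (standard, but to be checked) facts that flatness passes to completed tensor products of such Banach algebras and that the left-module structure of $\what{D}^{(m+1)}(\GG)_\Qr$ over $\what{D}^{(m)}(\GG)_\Qr$ — which uses the full noncommutative product, not merely the tensor splitting — is controlled by the factors: for this last point I would put the order filtration on the $D^{(m)}(G_i)$, whose associated graded is the commutative algebra $\Sg^{(m)}_V(Lie(G_i))$ by~\ref{grDm}, and bootstrap along Berthelot's criterion for complete filtered rings with noetherian graded ring ($3.1.1$ and $3.5.3$ of \cite{Be1}), the noetherianity of the $\what{D}^{(m)}(\GG)_\Qr$ having been secured above.

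The two rank-one statements I would prove by an explicit description in the coordinate $\xi=\xi^{\langle 1\rangle}$ of $Lie$. For $\mathbf{G}_a$ the relation $\frac{k!}{q_k!}\xi^{\langle k\rangle}=\xi^k$ of~\ref{DmGa} identifies $\what{D}^{(m)}(\what{\mathbf{G}}_a)_\Qr$ with a one-variable $K$-affinoid (Tate) algebra in $\xi$, of some radius $s_m$ decreasing in $m$, the transition map being the restriction from the disc of radius $s_m$ to the subdisc of radius $s_{m+1}$; restriction maps between one-variable Tate algebras are flat, and this is in substance Berthelot's computation on $\mathbf{A}^1$ ($3.5.3$ of \cite{Be1}) applied to the subalgebra of translation-invariant operators, which is also the content of $5.3.10$ of \cite{Emerton}. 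For $\mathbf{G}_m$ the computation is genuinely different: by~\ref{formGm} one has $\frac{k!}{q_k!}\xi^{\langle k\rangle}=\xi(\xi-1)\cdots(\xi-k+1)$, so that in the Mahler basis $\binom{\xi}{k}$ the algebra $\what{D}^{(m)}(\what{\mathbf{G}}_m)_\Qr$ is again a one-variable affinoid algebra in $\xi$, but now with a $p$-adic weight on the basis governed by $\vp(q_k^{(m)}!)$ — this weight is precisely the trace left by the binomial coefficients occurring in the multiplication law~\ref{DmGm}. The real work is to show, using the asymptotics of $\vp(q_k^{(m)}!)$, that this weight still defines a Tate algebra, of some radius $\rho_m$, and that $\rho_m<\rho_{m+1}$, so that the transition map is once more a restriction to a subdisc and hence flat.

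\textbf{This torus estimate — not covered by the $\mathbf{A}^1$ case of \cite{Be1} nor by the discussion of \cite{Emerton} — is the main obstacle.} Once it and the noetherianity of the $\what{D}^{(m)}(\GG)_\Qr$ are in place, the return trip through the completed tensor products, together with the filtered-to-graded lifting of flatness, is routine and yields the flatness of $\what{D}^{(m)}(\GG)_\Qr\to\what{D}^{(m+1)}(\GG)_\Qr$, and hence the coherence of $D^{\dagger}(\GG)_\Qr$ as announced.
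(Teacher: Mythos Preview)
Your reduction to rank-one factors has a genuine gap. The triangular decomposition of~\ref{subsubsec-triangular} is only an isomorphism of filtered $V$-modules, not of algebras; the left $\what{D}^{(m)}(\GG)_\Qr$-module structure on $\what{D}^{(m+1)}(\GG)_\Qr$ mixes the three factors through commutators such as $[\xi_i,\xi'_j]$ and $[\xi''_k,\xi_i]$, so flatness of each factor map separately says nothing about flatness of the whole. You acknowledge this, but your proposed remedy --- pass to the order filtration and invoke Berthelot's 3.5.3 --- does not close the gap: the order-graded map $\Sg^{(m)}(Lie(G))\to\Sg^{(m+1)}(Lie(G))$ is a different question whose completion-flatness you have not established either, and in any case lifting flatness from a graded requires compatibility between that filtration and the $\pi$-adic one you are completing along, which you do not check. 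Your rank-one analyses of $\mathbf{G}_a$ and $\mathbf{G}_m$ are interesting in themselves but are neither necessary nor sufficient here.

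The paper does not reduce to factors. It introduces instead the filtration $F_iA^{(m+1)}:=A^{(m)}A_i^{(m+1)}$ on $A^{(m+1)}:=D^{(m+1)}(G)$ and proves two commutator lemmas: first, $A^{(m)}A_n^{(m+1)}=A_n^{(m+1)}A^{(m)}$, so $F_\bullet$ is a ring filtration with $F_0=A^{(m)}$; second, the symbols of the new generators $\xi^{\langle p^{m+1}\rangle_{m+1}},\xi'^{\langle p^{m+1}\rangle_{m+1}},\xi''^{\langle p^{m+1}\rangle_{m+1}}$ lie in the centre of $\gr_{F_\bullet}A^{(m+1)}$. Both lemmas are proved by explicit computations in $U(Lie(G_K))$: the derivation identity $[a,\xi^{p^k}/p^k!]=u\,(\xi^{p^{k-1}}/p^{k-1}!)^{p-1}[a,\xi^{p^{k-1}}/p^{k-1}!]$ handles the unipotent directions, while for the torus one shows ${\xi''\choose p^i}\cdot\frac{\xi^{p^j}}{p^j!}=\frac{\xi^{p^j}}{p^j!}\cdot{\alpha(\xi'')p^j+\xi''\choose p^i}$ and, via a Kostant-basis argument inside an $sl_2$-triple, that $[{\xi''\choose k},\xi]\in A^{(m)}$ for all $k$. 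This is where the binomial coefficients enter --- not as a radius estimate for a Tate algebra, but as an integrality statement about commutators. Once $\gr_{F_\bullet}A^{(m+1)}$ is generated over $A^{(m)}$ by finitely many central elements, Emerton's 5.3.10 applies verbatim with $A=A^{(m)}$, $B=A^{(m+1)}$, and the flatness follows.
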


Commençons par une observation générale. Si $A$ est une $\Zp$-algèbre associative quelconque, et si $a,\xi \in A$ et $k\geq 0$, alors
\begin{equation}\label{commutator}
\left[a,\frac{\xi^{p^k}}{p^k!}\right]=u \left(\frac{\xi^{p^{k-1}}}{p^{k-1}!}\right)^{p-1} \left[a,\frac{\xi^{p^{k-1}}}{p^{k-1}!}\right]
\end{equation}
où $u\in\Zp^\times$. En fait, le commutateur $[a,\cdot]$ est une dérivation de $A$ et que
$\frac{\xi^{p^k}}{p^k!}=\frac{u}{p} (\frac{\xi^{p^{k-1}}}{p^{k-1}!})^{p}$ avec un élément $u\in\Zp^\times$. Alors, pour simplifier, nous écrivons $$A^{(m)}:=D^{(m)}(G),\hskip10pt A^{(m)}_n:=D_n^{(m)}(G).$$
Pour chaque $n$ nous considérons dans $A^{(m+1)}$ le $V$-sousmodule $A^{(m)}A_n^{(m+1)}$ engendré par des éléments $ab$ avec $a\in
A^{(m)}, b\in A_n^{(m+1)}$.

\begin{lem} On a
$A^{(m)}A_n^{(m+1)}=A_n^{(m+1)}A^{(m)}$
pour tout $m,n$.
\end{lem}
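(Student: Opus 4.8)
The statement to prove is the equality $A^{(m)}A_n^{(m+1)}=A_n^{(m+1)}A^{(m)}$ of $V$-submodules of $A^{(m+1)}=D^{(m+1)}(G)$. The plan is to use the triangular decomposition of \ref{subsubsec-triangular} to reduce to a statement about commutators of the generators, and then to exploit the commutator identity \eqref{commutator} together with the explicit structure of $D^{(m)}(G)$ and $D^{(m+1)}(G)$ as $V$-modules with PBW-type bases. The key point is that $A^{(m)}$ is generated as a $V$-algebra by the elements $\xi_l^{\langle p^i\rangle}$ for $i\leq m$ (by \ref{grDm}(iii)), while $A^{(m+1)}$ has the extra generators $\xi_l^{\langle p^{m+1}\rangle}$; so the only thing to control is how these extra generators interact multiplicatively with the order filtration.

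First I would record that $A^{(m)}A_n^{(m+1)}$ is, by definition, the image of the multiplication map $A^{(m)}\otimes_V A_n^{(m+1)}\to A^{(m+1)}$, and similarly for $A_n^{(m+1)}A^{(m)}$. To prove one inclusion it suffices to show that for each generator $a$ of $A^{(m)}$ (an element of the form $\xi_l^{\langle p^i\rangle}$, $i\leq m$) and each basis element $b$ of $A_n^{(m+1)}$ coming from the triangular PBW basis, the product $ba$ lies in $A^{(m)}A_n^{(m+1)}$; then induct on the length of a word in the generators of $A^{(m)}$. The crux is the commutation relation $ba=ab+[b,a]$, so one needs $[b,a]\in A^{(m)}A_n^{(m+1)}$. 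Here I would invoke \eqref{commutator}: the commutator $\big[a,\xi^{p^k}/p^k!\big]$ is (up to a unit) $(\xi^{p^{k-1}}/p^{k-1}!)^{p-1}[a,\xi^{p^{k-1}}/p^{k-1}!]$, which expresses a commutator involving a ``level $k$'' divided power in terms of one involving a strictly lower divided power, all multiplied by elements of $A^{(m)}$ (since $p^{k-1}\leq p^m$ when $k\leq m+1$, the factor $\xi^{p^{k-1}}/p^{k-1}!$ and hence its $(p-1)$-th power lies in $A^{(m)}$). Iterating, one brings every commutator down to commutators of the shape $[\xi_l^{\langle p^i\rangle}, \xi_{l'}]$ with $i\le m$, which by the relations defining $D^{(m+1)}(G)$ (the explicit structure constants, analogous to \ref{DmGa} and \ref{DmGm}, and the fact that $\gr_\bullet D^{(m+1)}(G)$ is commutative, \ref{prop-commutativite}) are elements of $A^{(m)}$ of order $\leq n$, hence lie in $A^{(m)}A_n^{(m+1)}$, up to lower-order terms handled by descending induction on $n$.

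The reverse inclusion $A_n^{(m+1)}A^{(m)}\subseteq A^{(m)}A_n^{(m+1)}$ is symmetric: one runs the same argument moving $a\in A^{(m)}$ from the right to the left past a basis element $b$ of $A_n^{(m+1)}$, again using $ab=ba-[b,a]$ and the same commutator computation. Both directions thus rest on a single lemma: for $a$ one of the algebra generators $\xi_l^{\langle p^i\rangle}$ with $i\leq m$ and $b$ any element of $A_n^{(m+1)}$, one has $[a,b]\in A^{(m)}\cdot A_n^{(m+1)}\cap A_n^{(m+1)}\cdot A^{(m)}$, proved by descending induction on $n$ (the case of small $n$ being handled by the triangular decomposition and the $\mathbf{G}_a$/$\mathbf{G}_m$ computations of \ref{DmGa} and \ref{DmGm}).

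\textbf{Main obstacle.} The delicate point is the torus contribution, flagged in the remark before \ref{prop-flatness}: unlike the unipotent directions, where \ref{DmGa} gives the clean relation $\uxi^{\langle\uk'\rangle}\cdot\uxi^{\langle\uk''\rangle}=\crofrac{\uk'+\uk''}{\uk'}\uxi^{\langle\uk'+\uk''\rangle}$, the multiplicative structure on $D^{(m)}(T)$ (with $T\simeq\mathbf{G}_m^l$) involves the binomial-coefficient products of \ref{DmGm}, in particular $\frac{k!}{q_k!}\xi^{\langle k\rangle}=(\xi-k+1)\cdots\xi$. One must check that the commutator identity \eqref{commutator} still does its job in this setting — i.e. that $[a,\xi''^{\langle p^k\rangle}]$ can be rewritten in terms of lower-level divided powers of $\xi''$ multiplied by elements genuinely in $A^{(m)}$, despite the twisted structure constants. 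I expect this to work because \eqref{commutator} is purely a statement about divided powers $\xi^{p^k}/p^k!$ in an associative $\Zp$-algebra, with no reference to the group law, so it applies verbatim with $\xi=\xi''_j$; but verifying that the resulting lower-order terms remain inside $A^{(m)} A_n^{(m+1)}$ (rather than only inside $A^{(m+1)}$) is where the bookkeeping with the explicit basis from \ref{subsubsec-triangular} and the order filtration must be done carefully.
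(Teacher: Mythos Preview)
Your overall scaffold --- reduce to algebra generators via the triangular PBW basis of \ref{subsubsec-triangular}, move them across using $ab=ba+[a,b]$, and control the commutators --- is the same as the paper's, and for the unipotent directions your use of \eqref{commutator} is exactly what the paper does: one application of \eqref{commutator} with $k=m+1$ already shows $[a,\xi^{p^{m+1}}/p^{m+1}!]\in A^{(m)}$ when $a$ is a generator of $A^{(m)}$.

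The gap is in your treatment of the torus. You say \eqref{commutator} ``applies verbatim with $\xi=\xi''_j$'', and as an algebraic identity about $\xi''^{p^k}/p^k!$ that is true. But the torus generators of $A^{(m+1)}$ are \emph{not} divided powers: by \ref{subsubsec-triangular} one has $\xi''^{\langle p^{m+1}\rangle_{(m+1)}}=\binom{\xi''}{p^{m+1}}$, and in general $\xi''^{p^{m+1}}/p^{m+1}!$ does not even lie in $D^{(m+1)}(T)$ (the change of basis from divided powers to binomial coefficients involves Stirling numbers with denominators). So \eqref{commutator} gives no grip on $\big[a,\binom{\xi''}{p^{m+1}}\big]$ or on products $\binom{\xi''}{p^i}\cdot b$; the ``bookkeeping'' you defer is not bookkeeping but a missing idea.

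The paper resolves this by a different mechanism, namely the root-space structure. Choosing the basis so that $[\xi'',\xi]=\alpha(\xi'')\xi$ with $\alpha(\xi'')\in\Ze$, a direct computation gives
\[
\binom{\xi''}{p^i}\cdot\frac{\xi^{p^j}}{p^j!}=\frac{\xi^{p^j}}{p^j!}\cdot\binom{\alpha(\xi'')p^j+\xi''}{p^i},
\]
and one then invokes the fact (2.5 of \cite{Kostant}) that any integer shift $\binom{\xi''+n}{p^i}$ is a $\Ze$-linear combination of $\binom{\xi''}{k}$ for $k\le p^i$, hence lies in $A^{(m)}$. This, not \eqref{commutator}, is what handles all torus--unipotent interactions in the proof; the identity \eqref{commutator} is reserved for the unipotent--unipotent case.
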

\begin{proof}
Nous précisons la base de $Lie(G)$ comme suit, cf. II.1.11 de \cite{Jantzen}. Choissisons une base $\xi_1,...,\xi_q$ de $Lie(N)$, une base $\xi'_1,...,\xi'_q$ de
$Lie(\overline{N})$ et une base $\xi''_1,...,\xi''_l$ de $Lie(T)$ composée
d'une base du centre de $Lie(G)$ et d'une base du tore maximal de la partie semisimple $[Lie(G),Lie(G)]$ de $Lie(G)$ avec la propriété:
pour tout $\xi_j$ (resp. $\xi'_j$) il y a une racine $\alpha_j\in Lie(T)^*$ avec
$$[\xi_k'',\xi_j]=\alpha_j(\xi_k'')\xi_j$$ et $\alpha_j(\xi_k'')\in\Ze$ pour tout $\xi_k''$. On
peut supposer aussi que pour tout $h:=\xi_j''$ de la base du tore maximal de $[Lie(G),Lie(G)]$, il existe $e:=\xi_j$ et $f:=\xi'_j$ tels que $h,e,f$ engendrent une copie de $sl_2(V)$ dans $Lie(G)$ avec $[h,e]=2e, [h,f]=-2f, [e,f]=h$. Alors, $A_n^{(m)}$ est égal au
$V$-module libre de base les \og monômes non commutatifs \fg

$$\uxi^{\la\ui\ra_{m}}\cdot \uxi''^{\la\uk\ra_{m}}\cdot \uxi'^{\la\uj\ra_{m}},$$

où $|\ui+\uj+\uk|\leq n$. Pour simplifier la notation, on va écrire $\xi''$ pour un élément $\xi''_k$, de même pour
$\xi$ et $\xi'$. Avec cette convention, $A^{(m)}$ est engendrée, comme $V$-algèbre, par les éléments \begin{equation}\label{equ-gen}\xi^{\la
p^{i}\ra_{m}}, \xi'^{\la p^{i}\ra_{m}}, \xi''^{\la p^{i}\ra_{m}}\end{equation} pour $i\leq m$, cf. Prop. \ref{grDm} (iii). Il en est de même pour $A^{(m+1)}$. Alors,
il suffit de montrer l'assertion $$a\cdot b \in A_n^{(m+1)}A^{(m)}$$ dans le cas où $a\in A^{(m)}$ est un générateur de la forme (\ref{equ-gen})
et $b\in A_n^{(m+1)}$. Fixons $i\leq m$. Supposons
maintenant que $b$ est un générateur de $A^{(m+1)}$ et fixons $j\leq m+1$. On va étudier trois cas correspondant à $a\in D^{(m)}(H)$ avec $H=T,N,\overline{N}$.

\vskip2pt

Dans le premier cas, nous supposons $a=\xi''^{\la p^{i}\ra_{m}}={\xi''\choose p^{i}}\in D^{(m)}(T)$.
 Si $b=\xi''^{\la p^{j}\ra_{m+1}}\in D^{(m+1)}(T)$ on a $[a,b]=0$ parce que $D^{(m+1)}(T)$ est commutative. Supposons
 $b=\xi^{\la p^{j}\ra_{m+1}}=\frac{\xi^{p^{j}}}{p^{j}!}\in D^{(m+1)}(N)$. Soit $\alpha$ la racine associée
à $\xi$. On trouve, en utilisant $[\xi'',\xi]=\alpha(\xi'')\xi$, que

 $$ \frac{\xi''-k+1}{k}\cdot \frac{\xi^{p^{j}}}{p^{j}!} = \frac{\xi^{p^{j}}}{p^{j}!} \cdot \frac{\alpha(\xi'')p^{j}+\xi'' - k +1}{k}$$ et, par conséquent,

 \begin{equation}\label{commutator-1}a\cdot b=(\prod_{k=1,...,p^{i}} \frac{\xi''-k+1}{k})\cdot \frac{\xi^{p^{j}}}{p^{j}!} =
\frac{\xi^{p^{j}}}{p^{j}!}\cdot (\prod_{k=1,...,p^{i}} \frac{\alpha(\xi'')p^{j}+\xi'' - k +1}{k})= b\cdot {\tilde{\xi''}\choose p^{i}}\end{equation}
 où $\tilde{\xi''}:= \alpha(\xi'')p^{j}+\xi''\in D^{(m)}_1(T)$. Comme $\alpha(\xi'')\in\Ze$, le terme
${\tilde{\xi''}\choose p^{i}}$ est une combinaison linéaire de ${\xi''\choose k}$ avec $ k\leq p^{i}$ à coefficients dans $\Ze$, cf. 2.5 de \cite{Kostant}. Donc ${\tilde{\xi''}\choose p^{i}}\in A^{(m)}$. La situation est similaire si $b\in D^{(m+1)}(\overline{N})$. Si $b\in D^{(m+1)}(T)$, on peut procéder comme dans le premier cas.

Dans le deuxième cas, nous supposons $a=\xi^{\la p^{i}\ra_{m}}=\frac{\xi^{p^{i}}}{p^{i}!}\in D^{(m)}(N)$.
Si $b=\xi^{\la p^{j}\ra_{m+1}}=\frac{\xi^{p^{j}}}{p^{j}!}\in D^{(m+1)}(N)$ l'identité (\ref{commutator}) nous donne la formule

\begin{equation}\label{commutator-2}\left[a,\frac{\xi^{p^{j}}}{p^{j}!}\right]=
u \left(\frac{\xi^{p^{j-1}}}{p^{j-1}!}\right)^{p-1} \left[a,\frac{\xi^{p^{j-1}}}{p^{j-1}!}\right]\in A^{(m)}\end{equation} avec $u\in
\Zp^\times$. En utilisant  $ab=ba+ [a,b]$, cela montre notre assertion dans cette situation. Si $b\in D^{(m+1)}(\overline{N})$, la situation est similaire.

 Dans le troisième cas, nous supposons $a\in D^{(m)}(\overline{N})$. Par symétrie entre $N$ et $\overline{N}$, la
situation est identique à celle du deuxième cas. Alors, notre première assertion est démontrée dans le cas où $b$ est un
générateur de $A^{(m+1)}$. Dans le cas général, on peut supposer que $b\in A_n^{(m+1)}$ est un 'monome noncommutatif', $$b=\uxi^{\la\ui\ra_{m+1}}\cdot \uxi''^{\la\uk\ra_{m+1}}\cdot \uxi'^{\la\uj\ra_{m+1}},$$
avec $|\ui+\uj+\uk|\leq n$. Si $\ui\in\Ne^q$ s'écrit $\ui=p^{m+1}\uq+\ur$, avec $0\leq r_i < p^{m+1}$ et
$r_i=\sum_{j=0,...,m} a_{i,j}p^{j}$, avec $0\leq a_{i,j} < p $, alors les arguments prouvant (2.2.5.1) de \cite{Be1}
montrent que

$$ \uxi^{\la\ui\ra_{m+1}}= u \prod_{i=1,...,q} ( \prod_{j=0,...,m} (\uxi^{\la p^{j} \ra_{m+1}})^{a_{i,j}}) ( \uxi^{\la p^{m+1} \ra_{m+1} })^{q_i},$$
où $u\in\Zp^\times$. La situation est similaire pour $\uxi''^{\la\uk\ra_{m+1}}$ et
$\uxi'^{\la\uj\ra_{m+1}}$. Cela ramène notre assertion au cas où $b$ est un générateur et notre assertion est alors complètement prouvée.
\end{proof}
Le lemme implique que
$$ F_i A^{(m+1)}:= A^{(m)} A_i^{(m+1)}$$
est une filtration de l'anneau $A^{(m+1)}$ avec $$F_i A^{(m+1)}\cdot F_j A^{(m+1)} \subseteq F_{i+j} A^{(m+1)}.$$ On a
$F_0 A^{(m+1)}=A^{(m)}$ et alors, l'anneau gradué $Gr_{\bullet} A^{(m+1)}$ associé à $F_\bullet A^{(m+1)}$ est
un $A^{(m)}$-anneau (au sens de \cite{BeilinsonICM}) engendré par les symboles des éléments $\xi_i^{\la p^{m+1} \ra_{m+1}}, \xi_j'^{\la p^{m+1}
\ra_{m+1}}, \xi_k''^{\la p^{m+1} \ra_{m+1}}$ pour $i,j,k$.

\begin{lem} Les symboles des éléments $\xi_i^{\la p^{m+1} \ra_{m+1}}, \xi_j'^{\la p^{m+1}
\ra_{m+1}}, \xi_k''^{\la p^{m+1} \ra_{m+1}}$ pour $i,j,k$  sont dans le
centre de $Gr_{\bullet} A^{(m+1)}$.
\end{lem}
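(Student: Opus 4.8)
The plan is to follow the strategy of 3.5.3 of \cite{Be1} (summarized in 5.3.10 of \cite{Emerton}), adapting it to the toric factor. Put $b_i:=\xi_i^{\langle p^{m+1}\rangle_{m+1}}$, $b_j':=\xi_j'^{\langle p^{m+1}\rangle_{m+1}}$, $b_k'':=\xi_k''^{\langle p^{m+1}\rangle_{m+1}}$; these are the elements whose symbols are to be shown central. Since, as observed just above the statement, $Gr_\bullet A^{(m+1)}$ is generated as a ring over its degree zero part $F_0 A^{(m+1)}=A^{(m)}$ by the symbols of the $b_i,b_j',b_k''$, and since $A^{(m)}$ is generated as a $V$-algebra by the $\xi^{\langle p^r\rangle_m}$, $\xi'^{\langle p^r\rangle_m}$, $\xi''^{\langle p^r\rangle_m}$ with $r\le m$ (Prop. \ref{grDm}(iii)), it suffices to show that the commutator in $A^{(m+1)}$ of each $b_i$ (resp. $b_j',b_k''$) with each such $V$-algebra generator $a$ of $A^{(m)}$ lies in $F_{p^{m+1}-1}A^{(m+1)}$, and that the commutator of two of the elements $b_i,b_j',b_k''$ lies in $F_{2p^{m+1}-1}A^{(m+1)}$. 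Indeed, using $[a_1a_2,b]=a_1[a_2,b]+[a_1,b]a_2$ together with $F_0\cdot F_n\subseteq F_n$, the first assertion then propagates to all of $A^{(m)}$, and passing to $Gr_\bullet$ yields that the symbol of each $b_i,b_j',b_k''$ commutes with the degree zero subring $A^{(m)}$ and with the other symbols, hence is central.

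For a commutator $[a,b_i]$ with $a$ a generator of $A^{(m)}$, recall from the observation preceding Prop. \ref{prop-flatness} that $b_i=\frac{u}{p}\beta_i^p$ with $\beta_i:=\xi_i^{\langle p^m\rangle_m}\in A^{(m)}$ and $u\in\Zp^\times$. As $[a,\cdot]$ is a derivation, $[a,b_i]=\frac{u}{p}\sum_{t=0}^{p-1}\beta_i^t[a,\beta_i]\beta_i^{p-1-t}$. Since $\gr\pg A^{(m)}\simeq\Sg^{(m)}_V(Lie(G))$ is commutative (Prop. \ref{grDm}(i)), the powers $\beta_i^t$ commute with $[a,\beta_i]$ up to terms of strictly smaller distribution order in $A^{(m)}$, so $\sum_{t=0}^{p-1}\beta_i^t[a,\beta_i]\beta_i^{p-1-t}=p\,[a,\beta_i]\beta_i^{p-1}+\rho$ with $\rho\in A^{(m)}$ of order strictly smaller than $[a,\beta_i]\beta_i^{p-1}$; hence $[a,b_i]=u\,[a,\beta_i]\beta_i^{p-1}+\frac{u}{p}\rho$. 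The first summand lies in $A^{(m)}=F_0A^{(m+1)}\subseteq F_{p^{m+1}-1}A^{(m+1)}$. For the second summand one notes that $\frac{u}{p}\rho\in A^{(m+1)}$ (it is $[a,b_i]$ minus an element of $A^{(m)}$), and that $\rho$ is again a sum of commutator expressions in elements of $A^{(m)}$ to which the same reduction applies; iterating, exactly as in the argument proving (2.2.5.1) of \cite{Be1}, one reduces $\frac{u}{p}\rho$ to a $V$-linear combination of elements of $A^{(m)}$ of distribution order $<p^{m+1}$, each lying in $A^{(m+1)}_{p^{m+1}-1}\subseteq F_{p^{m+1}-1}A^{(m+1)}$. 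The cases in which $a$ or $b_i$ is replaced by an element of $\overline{N}$ are identical, by the symmetry $N\leftrightarrow\overline{N}$ used in the previous lemma.

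The toric interactions are handled by the explicit computation \eqref{commutator-1}. Take $a=\xi''^{\langle p^r\rangle_m}=\binom{\xi''}{p^r}$ (note $q^{(m)}_{p^r}!=1$ for $r\le m$) and $b_i\in D^{(m+1)}(N)$. The computation of \eqref{commutator-1}, with $p^i,p^j$ there replaced by $p^r,p^{m+1}$, gives $a\,b_i=b_i\,\binom{\widetilde{\xi''}}{p^r}$ with $\widetilde{\xi''}=\alpha_i(\xi'')p^{m+1}+\xi''$; since $\alpha_i(\xi'')\in\Ze$, 2.5 of \cite{Kostant} shows $\binom{\widetilde{\xi''}}{p^r}$ is a $\Ze$-linear combination of the $\binom{\xi''}{l}$, $l\le p^r$, with coefficient $1$ on $\binom{\xi''}{p^r}$. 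Hence $[a,b_i]=b_i\,c$ with $c\in A^{(m)}$ a $\Ze$-combination of $\binom{\xi''}{l}$, $l<p^r\le p^m$, in particular of distribution order $<p^m$. Commuting $b_i$ past $c$ — which, by the previous paragraph applied to the entries of $c$, costs only a term already in $F_{p^{m+1}-1}A^{(m+1)}$ — and using the triangular decomposition of $A^{(m+1)}$ together with the small order of $c$, one obtains $[a,b_i]\in F_{p^{m+1}-1}A^{(m+1)}$; the remaining toric combinations ($a$ or $b$ from $T$, or from $\overline{N}$) are treated the same way. Finally, for a commutator of two of the special elements, say $[b_i,b_j']$, write $b_j'=\frac{u'}{p}(\beta_j')^p$ and expand the derivation $[b_i,\cdot]$ as before; this reduces $[b_i,b_j']$ to an expression obtained, by multiplication by elements of $A^{(m)}=F_0A^{(m+1)}$, from $[b_i,\beta_j']=-[\beta_j',b_i]$, which lies in $F_{p^{m+1}-1}A^{(m+1)}$ by the case $a=\beta_j'\in A^{(m)}$ already established; hence $[b_i,b_j']\in F_{2p^{m+1}-1}A^{(m+1)}$, and likewise for every other pair. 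Passing to $Gr_\bullet$ gives the lemma.

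The one genuinely delicate point — and the reason this lemma is isolated — is the order and filtration bookkeeping for the correction terms $\frac{u}{p}\rho$ above: one must verify that dividing the successive commutator corrections by $p$ keeps them inside $A^{(m+1)}$ while making their distribution order drop strictly below $p^{m+1}$. This is precisely the mechanism behind (2.2.5.1) of \cite{Be1}; the new ingredient compared with \cite{Be1}, \cite{Emerton} is the control of the toric part, which rests on the integrality of the weights $\alpha_i(\xi'')$ (II.1.11 of \cite{Jantzen}) and the binomial identities of \cite{Kostant}.
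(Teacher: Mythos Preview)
Your target $F_{p^{m+1}-1}A^{(m+1)}$ is actually equal to $A^{(m)}$: for every $j<p^{m+1}$ one has $A^{(m+1)}_j=A^{(m)}_j$ (each basis element involves only $\xi^{\langle k\rangle}$ with $k<p^{m+1}$, and then $q_k^{(m+1)}!=1$ while $q_k^{(m)}!\leq (p-1)!$ is a $p$-adic unit), hence $F_j=A^{(m)}A^{(m+1)}_j=A^{(m)}$. So you are, like the paper, really trying to prove $[A^{(m)},b]\subseteq A^{(m)}$. Once you see this, the commutators \emph{between} the special elements need no separate work: $[A^{(m+1)}_i,A^{(m+1)}_j]\subseteq A^{(m+1)}_{i+j-1}\subseteq F_{i+j-1}$ by commutativity of the order-graded, and this already kills the symbols. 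Your third paragraph is unnecessary.

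The real gap is the one you flag yourself. Writing $[a,b_i]=u[a,\beta_i]\beta_i^{p-1}+\frac{u}{p}\rho$ and then ``iterating'' does not show $\frac{u}{p}\rho\in A^{(m)}$: you never verify that $\rho\in pA^{(m)}$, and the reference to (2.2.5.1) of \cite{Be1} is off target (that identity factors $\uxi^{\langle\uk\rangle}$ as a product of generators, it controls nothing about divisibility of commutator corrections by $p$). The paper does not iterate; it invokes (\ref{commutator}) to obtain $[a,b_i]\in A^{(m)}$ directly, with no $\tfrac{1}{p}$-term.

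You also do not treat the case $a\in D^{(m)}(N)$ and $b=b_k''=\binom{\xi''}{p^{m+1}}$ toric: ``the same way'' is not enough, because (\ref{commutator-1}) as you use it has $a$ toric and $b$ unipotent. The paper handles this case by a genuinely different argument: it first uses (\ref{commutator}) on the $a$-side to reduce to $a=\xi$ a root vector, and then proves $[\binom{\xi''}{k},\xi]\in A^{(m)}$ for all $k\geq 0$ by induction on $k$, using Lemma~1 of \cite{Kostant} to write $\binom{\xi''}{k}=\frac{e^k}{k!}\frac{f^k}{k!}+(\text{terms with }\binom{\xi''-n}{j},\ j<k)$ inside a suitable copy of $U(sl_2)$. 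This $sl_2$-reduction via Kostant is the main idea you are missing. (Incidentally, one \emph{can} push (\ref{commutator-1}) through with the roles swapped---the leading $\binom{\xi''}{p^{m+1}}$ terms cancel and what remains is a $\Ze$-combination of $\binom{\xi''}{l}$ with $l<p^{m+1}$, all of which lie in $A^{(m)}$---but that is not what you wrote, and not what the paper does either.)
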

\begin{proof}
Rappelons la formule générale
$$ [ A^{(m+1)}_i, A^{(m+1)}_j] \subseteq A^{(m+1)}_{i+j-1}$$
pour tout $i,j$, cf. part (i) de \ref{grDm}. Alors, il suffit de prouver l'inclusion $[A^{(m)},b]\subseteq A^{(m)}$ dans $A^{(m+1)}$ pour $b$ un élément de la forme $\xi_i^{\la p^{m+1} \ra_{m+1}}, \xi_j'^{\la p^{m+1} \ra_{m+1}}, \xi_k''^{\la p^{m+1} \ra_{m+1}}$ pour $i,j,k$.
D'après la formule $[aa',b]=[a,b]a'+a[a',b]$, on peut supposer que $a\in A^{(m)}$ est un générateur de $A^{(m)}$ comme
dans notre discussion précédente.
Dans le cas $b=\xi^{\la p^{m+1} \ra_{m+1}}$ où $b=\xi'^{\la p^{m+1} \ra_{m+1}}$ et $a\in D^{(m)}(N)$ où $a\in
D^{(m)}(\overline{N})$, on peut appliquer (\ref{commutator}) pour obtenir l'inclusion cherchée. Supposons alors
$b=\xi''^{\la p^{m+1} \ra_{m+1}}$, alors $b={\xi''\choose p^{m+1}}$. Si $a\in D^{(m)}(T)$, on a $[a,b]=0$. Si $a\in
D^{(m)}(N)$ (resp. $D^{(m)}(\overline{N})$), on peut appliquer (\ref{commutator}) pour se ramener au cas où $a=\xi$ (resp. $a=\xi'$). On va maintenant prouver, plus généralement, que
\begin{equation}\label{commutatorfinal} \left[{\xi''\choose k},\xi\right]\in A^{(m)}\end{equation}
pour tout $k\geq 0$ (la preuve pour $a=\xi'$ est similaire). Si $\xi''$ est dans le centre de $Lie(G)$, il n'y a rien a
prouver. Excluons ce cas. Alors il existe $\xi_0\in Lie(N)$ et $\xi'_0\in Lie(\overline{N})$ tels que
$h:=\xi'',e:=\xi_0,f:=\xi'_0$ engrendrent une copie de $sl_2(V)$ dans $Lie(G)$. Notons aussi que
$[{\xi''\choose k},\xi]\in A^{(m)}$ implique que $[{\xi''-n\choose k},\xi]\in A^{(m)}$ pour tout $n\in\Ze$. En fait,
${\xi''-n\choose k}$ est un combinaison linéaire de ${\xi''\choose j}$ pour $0\leq j\leq k$ à coefficients dans $\Ze$.

On va maintenant utiliser une récurrence sur $k$ pour montrer (\ref{commutatorfinal}). Le cas $k=0,1$ est trivial.
D'après le lemme $1$ de \cite{Kostant}, nous savons que dans l'algèbre enveloppante $U(sl_2(K))$
${\xi''\choose k}=\frac{e^k}{k!}\frac{f^k}{k!}$ plus des termes contenant des facteurs $\frac{e^s}{s!},\frac{f^t}{t!}$ et ${\xi''-n\choose j}$ avec
$s,t\geq 0, n\in\Ze$ et $0\leq j < k$. Finalement, notre assertion en résulte par récurrence et d'après notre discussion au-dessus.
\end{proof}

 Ceci implique que $Gr_{\bullet} A^{(m+1)}$ est engendré comme $A^{(m)}$-anneau par un nombre fini d'éléments
centraux. On peut maintenant appliquer directement 5.3.10 de \cite{Emerton} avec $A:=A^{(m)}$ et $B:=A^{(m+1)}$, ce qui montre que
$\what{D}^{(m+1)}(\GG)_\Qr$ est plat sur $\what{D}^{(m)}(\GG)_\Qr.$
La
proposition en résulte.

\begin{thm}\label{thm-coherence}
Supposons $G$ est un groupe réductif connexe déployé sur $S$. Alors, $D^{\dagger}(\GG)_\Qr$ est un anneau cohérent.
\end{thm}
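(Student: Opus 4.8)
The plan is to deduce the coherence of $D^{\dagger}(\GG)_\Qr$ from the flatness statement of Proposition~\ref{prop-flatness}. Recall that by definition $D^{\dagger}(\GG)_\Qr = \varinjlim_m \what{D}^{(m)}(\GG)_\Qr$, and each $\what{D}^{(m)}(\GG)_\Qr$ is a noetherian Banach algebra (being the $p$-adic completion of $D^{(m)}(G)$, which is noetherian by Proposition~\ref{grDm}, tensored with $\Qr$). The general principle, already used by Berthelot for $\DD^{\dagger}_{\XX,\Qr}$ and recalled in the discussion of \cite{Emerton}, is that a filtered inductive limit of (left and right) noetherian rings along flat transition maps is coherent.

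The key steps are as follows. First I would invoke Proposition~\ref{prop-flatness}: for every $m$, the transition homomorphism $\what{D}^{(m)}(\GG)_\Qr \rightarrow \what{D}^{(m+1)}(\GG)_\Qr$ is flat (on both sides, by the symmetry between $N$ and $\overline N$ in the arguments of the preceding lemmas); composing, all the maps $\what{D}^{(m)}(\GG)_\Qr \rightarrow \what{D}^{(m')}(\GG)_\Qr$ for $m'\geq m$ are flat. Second, I would check that $D^{\dagger}(\GG)_\Qr$ is flat over each $\what{D}^{(m)}(\GG)_\Qr$: flatness commutes with filtered colimits, so $D^{\dagger}(\GG)_\Qr = \varinjlim_{m'\geq m} \what{D}^{(m')}(\GG)_\Qr$ is a filtered colimit of flat $\what{D}^{(m)}(\GG)_\Qr$-modules, hence flat. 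Third, given a finitely generated left ideal $J \subseteq D^{\dagger}(\GG)_\Qr$, pick finitely many generators; since the ring is a filtered union, these generators already lie in some $\what{D}^{(m)}(\GG)_\Qr$, so $J = D^{\dagger}(\GG)_\Qr \otimes_{\what{D}^{(m)}(\GG)_\Qr} J_0$ where $J_0$ is the left ideal of $\what{D}^{(m)}(\GG)_\Qr$ generated by those elements (using flatness of the extension to identify the tensor product with a submodule). As $\what{D}^{(m)}(\GG)_\Qr$ is noetherian, $J_0$ is finitely presented; applying the flat base change $D^{\dagger}(\GG)_\Qr \otimes_{\what{D}^{(m)}(\GG)_\Qr} (-)$ to a finite presentation of $J_0$ yields a finite presentation of $J$. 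Hence every finitely generated left ideal of $D^{\dagger}(\GG)_\Qr$ is finitely presented, which is the definition of left coherence; the right-handed statement follows identically.

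The main obstacle is entirely contained in Proposition~\ref{prop-flatness}, which we are permitted to assume: it is where the triangular decomposition of \S\ref{subsubsec-triangular}, the commutator identities~\eqref{commutator} and~\eqref{commutatorfinal}, and the reduction to Emerton's criterion 5.3.10 of \cite{Emerton} are used to handle the subtle toric contribution (the binomial coefficients ${\xi''\choose p^i}$). Granting that, the passage to the limit is purely formal: the only points requiring a word of care are that flatness is genuinely two-sided and that a finite set of generators of an ideal in the colimit descends to a single finite level, both of which are immediate. I would therefore write the proof of the theorem in just a few lines, citing Proposition~\ref{prop-flatness} and the standard fact about coherence of flat filtered colimits of noetherian rings.

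\begin{proof}
Par construction $D^{\dagger}(\GG)_\Qr=\varinjlim_m \what{D}^{(m)}(\GG)_\Qr$ est une limite inductive filtrante des anneaux noethériens $\what{D}^{(m)}(\GG)_\Qr$ (cf. \ref{grDm} et le passage au complété $p$-adique). D'après la proposition \ref{prop-flatness}, les morphismes de transition $\what{D}^{(m)}(\GG)_\Qr\rightarrow \what{D}^{(m+1)}(\GG)_\Qr$ sont plats à gauche et à droite, donc aussi les morphismes $\what{D}^{(m)}(\GG)_\Qr\rightarrow \what{D}^{(m')}(\GG)_\Qr$ pour $m'\geq m$. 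La platitude commutant aux limites inductives filtrantes, $D^{\dagger}(\GG)_\Qr$ est plat à gauche et à droite sur chaque $\what{D}^{(m)}(\GG)_\Qr$. Soit maintenant $J\subseteq D^{\dagger}(\GG)_\Qr$ un idéal à gauche de type fini; ses générateurs appartiennent à un $\what{D}^{(m)}(\GG)_\Qr$ pour $m$ assez grand, et la platitude donne $J\simeq D^{\dagger}(\GG)_\Qr\ot_{\what{D}^{(m)}(\GG)_\Qr}J_0$, où $J_0$ est l'idéal à gauche de $\what{D}^{(m)}(\GG)_\Qr$ qu'ils engendrent. Comme $\what{D}^{(m)}(\GG)_\Qr$ est noethérien, $J_0$ est de présentation finie; en appliquant le foncteur exact $D^{\dagger}(\GG)_\Qr\ot_{\what{D}^{(m)}(\GG)_\Qr}(\cdot)$ à une présentation finie de $J_0$ on obtient une présentation finie de $J$. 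Donc tout idéal à gauche de type fini de $D^{\dagger}(\GG)_\Qr$ est de présentation finie, c'est-à-dire que $D^{\dagger}(\GG)_\Qr$ est cohérent à gauche. Le même argument, utilisant la platitude à droite, montre la cohérence à droite.
\end{proof}
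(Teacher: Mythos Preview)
Your proposal is correct and follows exactly the paper's approach: the paper states, just before Proposition~\ref{prop-flatness}, that since $D^{\dagger}(\GG)_\Qr$ is the inductive limit of the noetherian algebras $\what{D}^{(m)}(\GG)_\Qr$, flatness of the transition maps suffices for coherence, and then deduces the theorem immediately from that proposition. You have simply spelled out the standard ``descend generators to a finite level and base-change a finite presentation'' argument that the paper leaves implicit.
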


\vskip4pt
Remarque: sous les hypothèses ci-dessus, l'anneau $D^{\dagger}(\GG)_\Qr$ n'est pas noetherien en général.

Prenons $V=\mathbb{Z}_{p},~S=Spec~V,~\SS=Spf~ V$, $G=SL_{2,\mathbb{Z}_{(p)}}$, $X=\mathbb{P}^1_S$
 la variété de drapeaux de $G$,
$\XX=\what{\mathbb{P}}^1_{\SS}$ le schéma formel associé à $X$, i.e. la droite projective formelle sur $\SS$.
Si $[u,v]$ sont des coordonnées projectives sur $X$, alors $X$ (resp. $\XX$) sont munis du relèvement global du
Frobenius donné par $u \mapsto u^p$ et $v \mapsto v^p$.

 D'après le théorème ~\ref{thm-glob_sections} de l'appendice, $\Ga(\XX,\Ddag_{\XX,\Qr})$ est un quotient de
$D^{\dagger}(\GG)_\Qr$. Il suffit donc de montrer que  $\Ga(\XX,\Ddag_{\XX,\Qr})$ n'est pas noetherienne pour voir que
$D^{\dagger}(\GG)_\Qr$ ne l'est pas.

Grâce à 5.2.1 de \cite{Hu1}, on sait que le foncteur $\Ga(\XX,\cdot)$
induit une équivalence de catégories
entre les $\Ddag_{\XX,\Qr}$-modules cohérents et les $\Ga(\XX,\Ddag_{\XX,\Qr})$-modules cohérents (autrement dit
$\XX$ est $\Ddag_{\XX,\Qr}$-affine).
 Etant donné un relevé global du Frobenius sur un schéma formel lisse $\XX$,
Berthelot construit en 4.2.3 de \cite{Be-smf} une suite strictement croissante d'idéaux à gauche $\AA_m$ de
$\Ddag_{\XX,\Qr}$, qui sont cohérents sur $\Ddag_{\XX,\Qr}$. Les modules $\Ga(\XX,\AA_m)$ forment donc une suite
croissante d'idéaux à gauche de $\Ga(\XX,\Ddag_{\XX,\Qr})$, qui n'est pas stationnaire puisque $\XX$ est
$\Ddag_{\XX,\Qr}$-affine, de sorte que $\Ga(\XX,\Ddag_{\XX,\Qr})$ n'est pas noetherienne, et donc que
$D^{\dagger}({\cal SL}_{2})_{\Qr}$ n'est pas noetherienne.

\subsubsection{Cohomologie rigide et cohomologie des algèbres de Lie}

 On note
 $X$ la variété de drapeaux de $G$, $\GG$
(resp. $\XX$) la variété de
drapeaux formelle obtenue en complétant $G$ (resp. $X$) le long de l'idéal engendré par $\pi$. Nous expliquons ici un lien entre la cohomologie rigide des certains ouverts de la fibre spéciale de $\XX$ et la cohomologie de l'algèbre de Lie, $Lie(G_K)$.

\vskip5pt

Nous considérons ici l'action {\it à droite} de $\GG$ sur $\XX$. On dispose du faisceau $\Ddag_{\XX,\Qr}$
introduit en~\ref{operateur-diff} et de l'algèbre de distributions $D^{\dagger}(\GG)_{\Qr}$ introduite en~\ref{sect-distribution-algebras}. Soient $U(Lie(G_K))$ l'algèbre enveloppante de $Lie(G_K)$,
$Z_K$ son centre et ${Z_{K}}_+=Z_K\bigcap ((Lie(G)U(Lie(G_K)))$. Alors, ${Z_{K}}_+$ est contenu dans le centre de $D^{\dagger}({\cal G})_\Qr$.
Notons ici plus simplement $$ {D}^{\dagger}({\cal G})_{\Qr,0}:={D}^{\dagger}({\cal G})_\Qr/
{Z_{K}}_+D^{\dagger}({\cal G})_\Qr.$$ On a un isomorphisme canonique
               $Q\,\colon\, {D}^{\dagger}({\cal G})_{\Qr,0}\sta{\sim}{\rig} \Ga(\XX,\Ddag_{\XX,\Qr})$
et le foncteur sections globales $\Gamma(\XX,.)$ induit une équivalence entre les $\Ddag_{\XX,\Qr}$-modules (à gauche) cohérents et les ${D}^{\dagger}({\cal G})_{\Qr,0}$-modules (à gauche) de présentation finie, cf. appendix.
%Les foncteurs sont donnés par
%$$ M\mapsto \Ddag_{\XX,\Qr}\otimes_{{D}^{\dagger}({\cal G})_{K,0}} M, \hskip5pt \mathcal{M}\mapsto \Ga(\XX,\mathcal{M}).$$
Il s'ensuit que la dernière categorie est abélienne et que l'anneau ${D}^{\dagger}({\cal G})_{\Qr,0}$ est cohérent, cf. \cite{SikkoSmallo}, Prop. 4.
Comme un module projectif de type fini est de présentation finie la dernière catégorie a suffisament objets projectifs.
Soient $i\geq 0$, $\mathcal{M},\mathcal{N}$ deux $\Ddag_{\XX,\Qr}$-modules cohérents, $M=\Gamma(\XX,\mathcal{M}), N=\Gamma(\XX,\mathcal{N})$.
On déduit de ce qui précède que $\Gamma(\XX,.)$ induit
des isomorphismes
\begin{equation}\label{equ-ext} Ext^{i}_{\Ddag_{\XX,\Qr}}(\mathcal{M},\mathcal{N})\sta{\simeq}{\longrightarrow} Ext^{i}_{{D}^{\dagger}({\cal G})_{\Qr,0}}(M,N).\end{equation}
Notons
$$U_{K,0}:=U(Lie(G_K))/{Z_{K}}_+U(Lie(G_K)).$$ En analogie avec la situation en caractéristique positive, cf. \cite{Hochschild-Res},\cite{Jantzen-Res}, on peut
considérer la cohomologie \og restreinte \fg\, de $Lie(G_K)$
$$H_{\rm res}^{i}(Lie(G_K), M):=Ext^{i}_{U_{K,0}}(K,M)$$ pour $i\geq 0$ et un $U_{K,0}$-module $M$.
%$$H_{\rm res}^{i}(Lie(G_K), \Gamma(\XX_K, v^\dagger\OO_{\XX_K})):=Ext^{i}_{U_{K,0}}(K, \Gamma(\XX_K, v^\dagger\OO_{\XX_K}))$$ pour $i\geq 0$.
Ces sont des foncteurs dérivés du foncteur des points fixes $$M\mapsto M^{Lie(G_K)}:=\{m\in M: xm=0 {\rm~pour~tout~}x\in Lie(G_K)\} $$ calculés sur la catégorie
des $U_{K,0}$-modules. On a une flèche naturelle $$H_{\rm res}^{i}(Lie(G_K),M)\rightarrow H^{i}(Lie(G_K),M)$$ dans la cohomologie ordinaire de $Lie(G_K)$
qui, en général, n'est ni injective, ni surjective.
\begin{lem}
L'inclusion naturelle $U_{K,0}\rightarrow {D}^{\dagger}({\cal G})_{\Qr,0}$ est plate. Elle induit des isomorphisms
$$Ext^{i}_{U_{K,0}}(M,N)\sta{\simeq}{\longrightarrow} Ext^{i}_{{D}^{\dagger}({\cal G})_{\Qr,0}}({D}^{\dagger}({\cal G})_{\Qr,0}\otimes_{U_{K,0}}M,N)$$ pour tout $U_{K,0}$-module $M$ et pour tout ${D}^{\dagger}({\cal G})_{\Qr,0}$-module $N$.
\end{lem}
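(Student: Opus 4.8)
The statement to be proved is that the inclusion $U_{K,0}\rightarrow D^{\dagger}(\GG)_{\Qr,0}$ is flat, and that it induces isomorphisms on $\mathrm{Ext}$-groups after extension of scalars. The strategy is to lift both assertions from the "level $m$" picture: write $D^{\dagger}(\GG)_{\Qr}=\varinjlim_m \what D^{(m)}(\GG)_{\Qr}$, note that $U(Lie(G_K))=D^{(0)}(G)\otimes\Qr$ by the triangular-decomposition discussion in~\ref{subsubsec-triangular}, and observe that passing to the central reductions modulo ${Z_K}_+$ is compatible with the transition maps. Thus $D^{\dagger}(\GG)_{\Qr,0}=\varinjlim_m \what D^{(m)}(\GG)_{\Qr,0}$, where $\what D^{(m)}(\GG)_{\Qr,0}:=\what D^{(m)}(\GG)_{\Qr}/{Z_K}_+\what D^{(m)}(\GG)_{\Qr}$. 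First I would establish that each transition map $\what D^{(m)}(\GG)_{\Qr,0}\rightarrow \what D^{(m+1)}(\GG)_{\Qr,0}$ is flat; this should follow by the same filtration argument used for Proposition~\ref{prop-flatness} (building the filtration $F_i$ on $A^{(m+1)}$ out of $A^{(m)}A^{(m+1)}_i$ and checking the associated graded is a central extension), since the ideal ${Z_K}_+$ is central and the argument of 5.3.10 of~\cite{Emerton} passes to such quotients. In particular $\what D^{(m)}(\GG)_{\Qr,0}\rightarrow D^{\dagger}(\GG)_{\Qr,0}$ is flat for every $m$, being a filtered colimit of flat maps; taking $m=0$ gives flatness of $U_{K,0}\rightarrow D^{\dagger}(\GG)_{\Qr,0}$.

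\textbf{The Ext-isomorphism.} For the second assertion, the key input is flatness just proved, together with coherence of the rings involved (Theorem~\ref{thm-coherence} and the fact, used in the excerpt, that $D^{\dagger}(\GG)_{\Qr,0}$ and $U_{K,0}$ are coherent). Let $M$ be a $U_{K,0}$-module, which I may assume finitely presented since $\mathrm{Ext}$ against it from a coherent ring, and the claimed isomorphism, reduce to that case by a colimit argument. Choose a resolution $P_\bullet\rightarrow M$ by finite free $U_{K,0}$-modules. By flatness of $U_{K,0}\rightarrow D^{\dagger}(\GG)_{\Qr,0}$, the complex $D^{\dagger}(\GG)_{\Qr,0}\otimes_{U_{K,0}}P_\bullet$ is a finite free resolution of $D^{\dagger}(\GG)_{\Qr,0}\otimes_{U_{K,0}}M$. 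Then for any $D^{\dagger}(\GG)_{\Qr,0}$-module $N$ we have the standard adjunction isomorphism of complexes
\[
\mathrm{Hom}_{D^{\dagger}(\GG)_{\Qr,0}}\!\bigl(D^{\dagger}(\GG)_{\Qr,0}\otimes_{U_{K,0}}P_\bullet,\, N\bigr)\;\cong\;\mathrm{Hom}_{U_{K,0}}\!\bigl(P_\bullet,\, N\bigr),
\]
and taking cohomology gives $\mathrm{Ext}^i_{D^{\dagger}(\GG)_{\Qr,0}}(D^{\dagger}(\GG)_{\Qr,0}\otimes_{U_{K,0}}M, N)\cong \mathrm{Ext}^i_{U_{K,0}}(M,N)$. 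This is exactly the asserted isomorphism.

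\textbf{Expected main obstacle.} The routine part is the homological algebra of the second paragraph; the real content is the flatness of $\what D^{(m)}(\GG)_{\Qr,0}\rightarrow \what D^{(m+1)}(\GG)_{\Qr,0}$. The difficulty is to check that the filtration argument of Proposition~\ref{prop-flatness} survives the passage to the central quotient: one must verify that the two key lemmas (that $A^{(m)}A^{(m+1)}_n=A^{(m+1)}_nA^{(m)}$, and that the symbols of the $\xi^{\langle p^{m+1}\rangle_{m+1}}$ are central in the associated graded) still hold after killing ${Z_K}_+$, and that the criterion 5.3.10 of~\cite{Emerton} applies to the quotient rings. Since ${Z_K}_+$ is a two-sided ideal generated by central elements, the filtration $F_i\what D^{(m+1)}(\GG)_{\Qr}$ descends to a filtration of the quotient with the same associated-graded description modulo the image of ${Z_K}_+$, and the finitely many central generators of the graded ring remain central; so the argument does go through, but this compatibility is the step that requires care. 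An alternative, if one prefers to avoid re-running the filtration argument, is to deduce flatness of the quotient maps formally from flatness of $\what D^{(m)}(\GG)_{\Qr}\rightarrow \what D^{(m+1)}(\GG)_{\Qr}$ (Proposition~\ref{prop-flatness}) together with the fact that $\what D^{(m+1)}(\GG)_{\Qr}\otimes_{\what D^{(m)}(\GG)_{\Qr}}\what D^{(m)}(\GG)_{\Qr,0}=\what D^{(m+1)}(\GG)_{\Qr,0}$, which holds because ${Z_K}_+$ generates the same ideal on both sides; base change then preserves flatness.
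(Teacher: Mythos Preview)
Your argument has a genuine gap at the identification step. You correctly note that $U(Lie(G_K))=D^{(0)}(G)\otimes\Qr$, but then you claim that ``taking $m=0$'' in the chain $\what D^{(m)}(\GG)_{\Qr,0}\rightarrow D^{\dagger}(\GG)_{\Qr,0}$ yields flatness of $U_{K,0}\rightarrow D^{\dagger}(\GG)_{\Qr,0}$. It does not: the $m=0$ term of the inductive system defining $D^{\dagger}(\GG)_{\Qr}$ is the \emph{completed} algebra $\what D^{(0)}(\GG)_{\Qr}$, not $D^{(0)}(G)_{\Qr}=U_K$. So after all your work you have only established flatness of $\what D^{(0)}(\GG)_{\Qr,0}\rightarrow D^{\dagger}(\GG)_{\Qr,0}$, and the step $U_{K,0}\rightarrow \what D^{(0)}(\GG)_{\Qr,0}$ is missing entirely.

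This missing step is precisely what the paper supplies, and with it the proof becomes much shorter than yours. The paper factors
\[
U_K=D^{(0)}(G)_{\Qr}\longrightarrow \what D^{(0)}(\GG)_{\Qr}\longrightarrow D^{\dagger}(\GG)_{\Qr};
\]
the first arrow is flat because $D^{(0)}(G)$ is noetherian (Prop.~\ref{grDm}) and $\pi$-adic completion of a noetherian ring is flat over it (the reference to \cite{Be1}, 3.2.3), and the second arrow is flat by Prop.~\ref{prop-flatness}. Flatness of the central quotient map $U_{K,0}\rightarrow D^{\dagger}(\GG)_{\Qr,0}$ then follows by base change along $U_K\trig U_{K,0}$, which is exactly the ``alternative'' you relegate to the end. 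There is no need to re-run the filtration argument of Prop.~\ref{prop-flatness} on the quotient rings; your ``expected main obstacle'' is a detour.

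Two smaller remarks on the Ext part. The statement is for \emph{all} $U_{K,0}$-modules $M$, and the standard argument works directly: take any projective resolution $P_\bullet\to M$ (not necessarily by finite free modules), use flatness to see that $D^{\dagger}(\GG)_{\Qr,0}\otimes_{U_{K,0}}P_\bullet$ resolves the extended module, and apply Hom--tensor adjunction. Your reduction to finitely presented $M$ via a colimit argument is neither needed nor straightforward, since Ext does not in general commute with filtered colimits in the first variable. Also, $U_{K,0}$ is noetherian (it is a quotient of the noetherian ring $U_K$), so coherence considerations play no role on that side.
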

\begin{proof}
L'application composée $$U(Lie(G_K))=D^{(0)}(G)_\Qr\rightarrow \what{D}^{(0)}(\GG)_\Qr\rightarrow {D}^{\dagger}({\cal G})_{\Qr}$$
est plate. En fait, la première flèche est plate par \cite{Be1}, (3.2.3) et la seconde flèche est plate par Prop. \ref{prop-flatness}, ce qui donne la première
assertion du lemme. La seconde en résulte par un argument standard sur les résolutions projectifs.
\end{proof}
Pour $M=K$ on a ${D}^{\dagger}({\cal G})_{\Qr,0}\otimes_{U_{K,0}} K =K$ par Prop. \ref{prop-dagger} et donc le corollaire suivant.
\begin{cor}
On a un isomorphisme
$$H^{i}_{\rm res}(Lie(G_K),N)\sta{\simeq}{\longrightarrow} Ext^{i}_{{D}^{\dagger}({\cal G})_{\Qr,0}}(K,N)$$ pour tout ${D}^{\dagger}({\cal G})_{\Qr,0}$-module $N$.
\end{cor}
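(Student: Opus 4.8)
The corollary will follow formally from the preceding lemma, the only non-formal ingredient being the identification ${D}^{\dagger}({\cal G})_{\Qr,0}\otimes_{U_{K,0}} K\simeq K$. The plan is as follows.

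First I would recall that, by definition, $H^{i}_{\rm res}(Lie(G_K),N) = Ext^{i}_{U_{K,0}}(K,N)$, where $K$ carries the trivial module structure coming from the augmentation $U(Lie(G_K))\to K$; this augmentation factors through $U_{K,0}$ since ${Z_{K}}_+$ lies in the augmentation ideal $Lie(G_K)U(Lie(G_K))$. Then I would apply the lemma with $M:=K$: as $U_{K,0}\to {D}^{\dagger}({\cal G})_{\Qr,0}$ is flat (and $K$ admits a resolution by finitely generated projective $U_{K,0}$-modules, $U_{K,0}$ being noetherian), the lemma yields, for every ${D}^{\dagger}({\cal G})_{\Qr,0}$-module $N$, a canonical isomorphism
$$Ext^{i}_{U_{K,0}}(K,N)\,\sta{\simeq}{\longrightarrow}\, Ext^{i}_{{D}^{\dagger}({\cal G})_{\Qr,0}}\bigl({D}^{\dagger}({\cal G})_{\Qr,0}\otimes_{U_{K,0}}K,\,N\bigr).$$

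It then remains to show ${D}^{\dagger}({\cal G})_{\Qr,0}\otimes_{U_{K,0}}K\simeq K$. Unwinding the definitions, the left-hand side is the quotient of ${D}^{\dagger}({\cal G})_{\Qr}$ by the sum of the central ideal ${Z_{K}}_+{D}^{\dagger}({\cal G})_{\Qr}$ and the left ideal ${D}^{\dagger}({\cal G})_{\Qr}\cdot Lie(G_K)$; since ${Z_{K}}_+$ is central and contained in $U(Lie(G_K))Lie(G_K)$, the former is absorbed by the latter, so the quotient is simply ${D}^{\dagger}({\cal G})_{\Qr}/({D}^{\dagger}({\cal G})_{\Qr}\cdot Lie(G_K))$. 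To compute it I would invoke Proposition~\ref{prop-dagger}: writing an element as $\sum_{\uk}a_{\uk}\uxi^{[\uk]}$ with $\vp(a_{\uk})\geq \eta|\uk|+c$, each monomial $\uxi^{[\uk]}$ with $\uk\neq 0$ equals, up to a unit and a denominator whose valuation is at most logarithmic in $|\uk|$, a product $\uxi^{[\uk']}\xi_j$ with $\xi_j$ a basis vector of $Lie(G)$ and $|\uk'|=|\uk|-1$; since the Monsky--Washnitzer growth condition is stable under decreasing the length by one, the rearranged sums converge in ${D}^{\dagger}({\cal G})_{\Qr}$, and this shows that ${D}^{\dagger}({\cal G})_{\Qr}\cdot Lie(G_K)$ is closed and equals the kernel of the augmentation $\varepsilon\colon {D}^{\dagger}({\cal G})_{\Qr}\to K$. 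Hence the quotient is $K$, and substituting this into the displayed isomorphism, together with the definition of $H^{\bullet}_{\rm res}$, gives the corollary. The only genuine difficulty is this last point --- that the left ideal generated by $Lie(G_K)$ already exhausts the whole augmentation ideal --- and it is precisely the growth estimate of Proposition~\ref{prop-dagger} that makes it go through; everything else is routine homological algebra.
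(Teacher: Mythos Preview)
Your approach is exactly the paper's: apply the preceding lemma with $M=K$ and then identify ${D}^{\dagger}({\cal G})_{\Qr,0}\otimes_{U_{K,0}} K$ with $K$ via Proposition~\ref{prop-dagger}. The paper gives no further detail, so your write-up is actually more explicit.

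There is, however, a small imprecision in your computation of the last step. You assert that each $\uxi^{[\uk]}$ with $\uk\neq 0$ equals, up to a controlled scalar, a single product $\uxi^{[\uk']}\xi_j$ with $|\uk'|=|\uk|-1$. This is only literally true in the $\mathbf{G}_a$-direction; in general (already for the toral part in the reductive case, where $\xi''^{[k]}=\binom{\xi''}{k}$) the relation from Proposition~\ref{grDm}(iii) reads
\[
\uxi^{[\uk-e_j]}\,\xi_j \;=\; k_j\,\uxi^{[\uk]} \;+\; (\text{terms of order} < |\uk|),
\]
so solving for $\uxi^{[\uk]}$ introduces lower-order terms that must themselves be handled. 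A clean way to avoid this recursion is to work one level at a time: for each $m$ the algebra $\what{D}^{(m)}(\GG)_{\Qr}$ is noetherian Banach, so the finitely generated left ideal $\what{D}^{(m)}(\GG)_{\Qr}\cdot Lie(G_K)$ is closed; since it contains the augmentation ideal of the dense subalgebra $U(Lie(G_K))$, it equals the kernel of the augmentation, giving $\what{D}^{(m)}(\GG)_{\Qr}\otimes_{U(Lie(G_K))}K=K$, and one passes to the limit over $m$. Proposition~\ref{prop-dagger} is still what makes the density argument compatible with the colimit description of $D^{\dagger}(\GG)_{\Qr}$. With this adjustment your proof is complete.
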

\vskip5pt

Pour les notions inhérentes à la cohomologie rigide nous renvoyons à \cite{Be-Trento}. Soient $Z\subset \XX_k$ un diviseur à croisements normaux dans la
fibre spéciale de $\XX$, $Y$ l'ouvert complémentaire, $v: Y\hookrightarrow\XX_k$ l'immersion correspondante et $H^{i}_{\rm rig}(Y/K), i \geq 0$ les groupes de
cohomologie rigide de $Y$. Soit $\XX_K$ l'espace analytique rigide égal à la fibre générique de $\XX$ et soit $sp: \XX_K\rightarrow\XX_k$ le morphisme de
spécialisation. Finalement, soit $v^\dagger\OO_{\XX_K}$ le faisceau des germes de fonctions surconvergentes le long de $Z$. Rappelons c'est un faisceau
d'anneaux sur $\XX_K$ à support dans $sp^{-1}(Y)$. L'action de $\GG$ sur $\XX$ induit une action de $U_{K,0}$ sur les faisceaux $\OO_{\XX_K}$ et
$v^\dagger\OO_{\XX_K}$ par opérateurs differentiels de la manière usuelle. On peut donc considérer les groupes de cohomologie restreinte de l'algèbre de Lie $Lie(G_K)$ en valeurs dans le module des sections globales du faisceau $v^\dagger\OO_{\XX_K}.$

\begin{prop} \label{coh_rig_and_lie}
Pour tout $i\geq 0$, on a des isomorphismes canoniques
$$ H^{i}_{\rm rig}(Y/K)\sta{\simeq}{\longrightarrow} H_{\rm res}^{i}(Lie(G_K), \Gamma(\XX_K, v^\dagger\OO_{\XX_K}))$$
pour tout $i\geq 0$.
\end{prop}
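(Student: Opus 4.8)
The plan is to combine the $\DD^\dagger$-affinity of $\XX$ with a comparison between rigid cohomology and the cohomology of a certain complex of $\DD^\dagger_{\XX,\Qr}$-modules. First I would recall that, by Berthelot's theory (see \cite{Be-Trento} and the constructions recalled in \ref{operateur-diff}), the rigid cohomology $H^\bullet_{\rm rig}(Y/K)$ is computed as the hypercohomology on $\XX_K$ of the de Rham complex of the $\DD^\dagger_{\XX,\Qr}$-module $v^\dagger\OO_{\XX_K}$, i.e. of the complex $v^\dagger\OO_{\XX_K}\otimes_{\OO_{\XX_K}}\Omega^\bullet_{\XX_K}$. Equivalently, writing $\OO_{\XX}^\dagger(\phantom{})\!{}^\dagger:=v^\dagger\OO_{\XX_K}$ as a coherent $\DD^\dagger_{\XX,\Qr}$-module (this coherence is the point where one uses that $Z$ is a normal crossings divisor, so that $v^\dagger\OO_{\XX_K}$ is coherent over $\DD^\dagger_{\XX,\Qr}$), one has
$$ H^i_{\rm rig}(Y/K)\;\simeq\; \mathrm{Ext}^i_{\DD^\dagger_{\XX,\Qr}}(\OO_{\XX,\Qr},\, v^\dagger\OO_{\XX_K}),$$
where $\OO_{\XX,\Qr}$ carries its canonical left $\DD^\dagger_{\XX,\Qr}$-module structure and $\mathrm{Ext}$ is taken in the category of sheaves of $\DD^\dagger_{\XX,\Qr}$-modules. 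This is the standard identification of rigid cohomology with a $\DD$-module Ext, and I would simply cite it rather than reprove it.

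Next I would pass to global sections. By the $\DD^\dagger_{\XX,\Qr}$-affinity of $\XX$ (stated here via \cite{Hu1}, \cite{huy-beil_ber}, and recalled in the appendix), the functor $\Ga(\XX,\cdot)$ is an exact equivalence between coherent $\DD^\dagger_{\XX,\Qr}$-modules and coherent $\Ga(\XX,\DD^\dagger_{\XX,\Qr})$-modules; together with the fact that this category has enough projectives (noted in the excerpt, via \cite{SikkoSmallo}), this yields the isomorphisms \eqref{equ-ext}, i.e.
$$ \mathrm{Ext}^i_{\DD^\dagger_{\XX,\Qr}}(\OO_{\XX,\Qr},\, v^\dagger\OO_{\XX_K})\;\simeq\; \mathrm{Ext}^i_{D^\dagger(\GG)_{\Qr,0}}\!\big(\Ga(\XX,\OO_{\XX,\Qr}),\, \Ga(\XX_K,v^\dagger\OO_{\XX_K})\big).$$
Now $\Ga(\XX,\OO_{\XX,\Qr})=K$, and under the isomorphism $Q\colon D^\dagger(\GG)_{\Qr,0}\xrightarrow{\sim}\Ga(\XX,\DD^\dagger_{\XX,\Qr})$ the $\DD^\dagger_{\XX,\Qr}$-module structure on $\OO_{\XX,\Qr}$ corresponds to the trivial action of $U_{K,0}$ (hence of $D^\dagger(\GG)_{\Qr,0}$) on $K$: this is because the $Z_K^+$-reduction kills the augmentation, and the commutative square in \ref{thm-glob_sections} identifies the $Lie(G_K)$-action with the differential-operator action coming from the $\GG$-action on $\XX$. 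So the right-hand side is $\mathrm{Ext}^i_{D^\dagger(\GG)_{\Qr,0}}(K,\Ga(\XX_K,v^\dagger\OO_{\XX_K}))$. Finally, the Corollary just before the statement (flatness of $U_{K,0}\to D^\dagger(\GG)_{\Qr,0}$ from Prop. \ref{prop-flatness}, plus $D^\dagger(\GG)_{\Qr,0}\otimes_{U_{K,0}}K=K$ from Prop. \ref{prop-dagger}) identifies this with $H^i_{\rm res}(Lie(G_K),\Ga(\XX_K,v^\dagger\OO_{\XX_K}))$, which is exactly the target. Chaining the three isomorphisms gives the claim, and one should check functoriality/compatibility of cup products if one wants the isomorphism of cohomology algebras, though the statement as written only asserts isomorphisms of groups for each $i$.

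\textbf{Main obstacle.} The delicate point is not the formal chain of Ext-identifications but the very first step: that $v^\dagger\OO_{\XX_K}$ is a \emph{coherent} $\DD^\dagger_{\XX,\Qr}$-module and that its de Rham (or Spencer) complex computes $H^\bullet_{\rm rig}(Y/K)$. This rests on Berthelot's comparison theorems relating rigid cohomology of the complement of a normal crossings divisor to $\DD^\dagger$-module cohomology of $\OO^\dagger$ along $Z$; I would invoke \cite{Be-Trento} (and the general machinery of cohomological operations for $\DD^\dagger_{\XX,\Qr}$-modules referenced in \ref{operateur-diff} via \cite{Be-survey}) for this. A secondary care-point is making sure that the equivalence $\Ga(\XX,\cdot)$ is exact \emph{and} sends the specific module $\OO_{\XX,\Qr}$ to $K$ with the correct $D^\dagger(\GG)_{\Qr,0}$-structure, and that $v^\dagger\OO_{\XX_K}$ lands in the essential image — i.e. is coherent and acyclic — so that \eqref{equ-ext} genuinely applies to the pair $(\OO_{\XX,\Qr},v^\dagger\OO_{\XX_K})$; once that is in place the rest is bookkeeping with the flat base change $U_{K,0}\to D^\dagger(\GG)_{\Qr,0}$.
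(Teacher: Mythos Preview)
Your argument is essentially the paper's own proof: identify $H^i_{\rm rig}(Y/K)$ with $\mathrm{Ext}^i_{\DD^\dagger_{\XX,\Qr}}(\OO_{\XX,\Qr},\,sp_*v^\dagger\OO_{\XX_K})$ via \cite{Be-Trento} (4.1.7), use that $Z$ is a normal crossings divisor to get coherence of $sp_*v^\dagger\OO_{\XX_K}$ (\cite{Be-Trento} (4.3.2)), then apply (\ref{equ-ext}) and the preceding Corollary together with $\Gamma(\XX,\OO_{\XX,\Qr})=K$. The only imprecision is that you should systematically work with $sp_*v^\dagger\OO_{\XX_K}$ on the formal scheme $\XX$ rather than with $v^\dagger\OO_{\XX_K}$ on $\XX_K$; the paper also makes explicit the vanishing $R^isp_*(v^\dagger\OO_{\XX_K})=0$ for $i>0$ (because $Z$ is a divisor), which is what justifies the first Ext identification.
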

\begin{proof}
Le complément $Z$ de $Y$ étant un diviseur, on a $R^{i}sp_*(v^\dagger\OO_{\XX_K})=0$ pour $i>0$. Donc, $$H^{i}_{\rm
rig}(Y/K)=Ext^{i}_{\Ddag_{\XX,\Qr}}(\OO_{\XX,\Qr}, sp_*v^\dagger\OO_{\XX_K})$$ pour $i\geq 0$, cf. \cite{Be-Trento}, (4.1.7). De plus, $Z$ étant à
croisements normaux, le $\Ddag_{\XX,\Qr}$-module $sp_*v^\dagger\OO_{\XX_K}$ est cohérent, cf. \cite{Be-Trento}, (4.3.2). On peut donc appliquer (\ref{equ-ext})
et le corollaire en utilisant que $\Gamma(\XX, \OO_{\XX,\Qr})=K$. L'assertion en résulte.
\end{proof}
Terminons avec quelques remarques. L'utilisation des foncteurs dérivés $R^{i}sp_*$ pour $i>0$ pour la proposition permet de traiter le cas où $Z$ et une sous-variété fermée plus générale de $\XX_k$, cf. \cite{Be-Trento}, (4.3.0). De plus, la proposition est compatible avec le cup-produit et donne une isomorphisme d'anneaux de cohomologie
$H^*_{\rm rig}(Y/K)\simeq H^*_{\rm res}(Lie(G_K),  \Gamma(\XX_K, v^\dagger\OO_{\XX_K}))$.
Finalement, la cohomologie  \og restreinte \fg \, en caractéristique zero ne semble pas avoir déjà été étudiée dans la littérature.
\subsection{Liens avec les faisceaux différentiels sur un $\GG$-schéma}
\label{op-diff-inv-comp}
Nous revenons ici à la situation de ~\ref{distrib_cris}. On note encore $e$ l'immersion fermée de schémas formels
définissant l'élément neutre : $\SS \hrig \GG$. Nous donnons d'abord les énoncés reliant les algèbres de
distributions et les opérateurs différentiels invariants. Il s'agit de passer à la limite pour tous les
résultats obtenus à un niveau fini.

Fixons un entier $m$. En passant à la limite projective sur $i$ à partir de ~\ref{reduction_Dmod_e}, on trouve l'énoncé
suivant
\begin{prop}\label{reduction_Dcmod_e} \be
\item[(i)] Il existe un isomorphisme de $V$-modules $\beta_m$ : $\what{D}^{(m)}(\GG)\simeq \Ga(\SS,e^{*}\Dcm_{\GG}).$
\item[(ii)] Il existe un isomorphisme de $V$-modules $\beta^{\dagger}$ : $D^{\dagger}(\GG)_{\Qr}\simeq \Ga(\SS,e^{*}\Ddag_{\GG,\Qr}).$
\ee
\end{prop}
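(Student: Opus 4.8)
The plan is to obtain both assertions by passing to the appropriate limits in the finite-level isomorphism of Proposition~\ref{reduction_Dmod_e}.

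\textbf{Proof of (i).} Fix $m$. For each $i\in\Ne$ the group scheme $G_i$ is smooth affine over $S_i=\Spec(V/\pi^{i+1}V)$, so Proposition~\ref{reduction_Dmod_e}(ii) applied to $G_i$ gives an isomorphism of $V/\pi^{i+1}V$-modules $\beta_m^{(i)}\colon D^{(m)}(G_i)\xrightarrow{\sim}\Ga(S_i,e_i^*\Dm_{G_i})$, where $e_i\colon S_i\hrig G_i$ is the unit section. The isomorphism $\beta_m$ of \ref{reduction_Dmod_e} is built from the $m$-PD-isomorphism $\tilde\beta_m$ of \ref{reduction_mod_e} together with $\OO_S$-linear duality, and both ingredients are functorial in the group scheme; since the closed immersions $G_{i+1}\hrig G_i$ are morphisms of group schemes compatible with the unit sections, the $\beta_m^{(i)}$ assemble into a morphism of projective systems, the relevant square relating $\beta_m^{(i+1)}$, $\beta_m^{(i)}$, the transition map $D^{(m)}(G_{i+1})\rig D^{(m)}(G_i)$ of \ref{prop-functoriality}, and the reduction map $\Ga(S_{i+1},e_{i+1}^*\Dm_{G_{i+1}})\rig\Ga(S_i,e_i^*\Dm_{G_i})$ being commutative. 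Taking $\varprojlim_i$ and using $\what{D}^{(m)}(\GG)=\varprojlim_i D^{(m)}(G_i)$ yields an isomorphism $\what{D}^{(m)}(\GG)\xrightarrow{\sim}\varprojlim_i\Ga(S_i,e_i^*\Dm_{G_i})$. Finally one identifies the right-hand side with $\Ga(\SS,e^*\Dcm_{\GG})$: each degree-$\leq n$ piece $\Dm_{G_i,n}$ is a locally free $\OO_{G_i}$-module of finite rank whose formation commutes with the base change $S_i\hrig S_{i+1}$, hence $\Dcm_{\GG,n}:=\varprojlim_i\Dm_{G_i,n}$ is locally free of finite rank over $\OO_{\GG}$ and $e^*\Dcm_{\GG,n}\simeq\varprojlim_i e_i^*\Dm_{G_i,n}$; passing to $\varinjlim_n$ and taking global sections over the affine $\SS$ gives $\Ga(\SS,e^*\Dcm_{\GG})\simeq\varprojlim_i\Ga(S_i,e_i^*\Dm_{G_i})$. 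Setting $\beta_m=\varprojlim_i\beta_m^{(i)}$ proves (i).

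\textbf{Proof of (ii).} The isomorphisms $\beta_m$ of part (i) are compatible with variation of $m$: at finite level they are obtained by dualizing the morphisms $\psi_{m,m'}$ of \ref{psi_f}, which are precisely the morphisms of $m$-PD-envelopes underlying the transition maps $\Dcm_{\GG}\rig\what{\DD}^{(m')}_{\GG}$, so the compatibility is immediate from the construction. Hence, after inverting $p$, the $\beta_m$ induce an isomorphism $\varinjlim_m\bigl(\what{D}^{(m)}(\GG)\otimes\Qr\bigr)\xrightarrow{\sim}\varinjlim_m\bigl(\Ga(\SS,e^*\Dcm_{\GG})\otimes\Qr\bigr)$. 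The source is $D^{\dagger}(\GG)_{\Qr}$ by definition. In the target, $e^*$ commutes with filtered colimits (it is a left adjoint) and $\Ga(\SS,-)$ commutes with filtered colimits of sheaves on the quasi-compact affine formal scheme $\SS$; since $\Ddag_{\GG,\Qr}=\varinjlim_m\Dcm_{\GG,\Qr}$, the target is $\Ga(\SS,e^*\Ddag_{\GG,\Qr})$. This gives $\beta^{\dagger}$.

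\textbf{Main obstacle.} The only point that is not pure formalism is the identification of $\varprojlim_i\Ga(S_i,e_i^*\Dm_{G_i})$ with $\Ga(\SS,e^*\Dcm_{\GG})$, i.e. the commutation of $e^*$ with the $\pi$-adic projective limit over $i$ and with the inductive limit over the order $n$. This is routine because the pieces $\Dm_{G_i,n}$ are coherent (locally free of finite rank) and the transition maps in $i$ are reductions modulo powers of $\pi$, so finite presentation makes $e^*$ commute with the limit degree by degree; all compatibilities of $\beta_m$ with the transition maps in $i$ and in $m$ follow directly from the functorial, universal-property origin of the construction in \ref{reduction_mod_e} and \ref{reduction_Dmod_e}.
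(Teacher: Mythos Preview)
Your proof is correct and follows exactly the paper's approach: the paper simply states that (i) is obtained by passing to the projective limit over $i$ from Proposition~\ref{reduction_Dmod_e}, and that (ii) follows from (i) by passing to the inductive limit over $m$. You have spelled out the compatibilities and the commutation of $e^*$ with the limits that the paper leaves implicit, but the strategy is identical.
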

Notons que le (ii) s'obtient par passage à la limite inductive sur $m$ à partir de (i). Ceci permet d'introduire
la morphisme d'évaluation $ev_e$.

Soient $\UU$ un ouvert affine de $\GG$ contenant $e(\SS)$, $P\in \Ga(\UU,\Dcm_{\GG})$ (resp. $P\in \Ga(\UU,\Ddag_{\GG,\Qr})$),
on définit $P(e)$ la distribution obtenue par $P(e)=\beta_m^{-1}(e^*P)$ (resp. $P(e)={\beta^{\dagger}}^{-1}(e^*P)$). On déduit
alors de ~\ref{calcul_expl} par passage au complété $p$-adique, puis par passage à la limite inductive sur $m$
la
\begin{prop}\label{calcul_explc} Soit $P\in\Ga(\UU,\Dcm_{\GG})$ (resp. $P\in \Ga(\UU,\Ddag_{\GG,\Qr})$). Alors
$$\forall f\in \OO_{\GG}(\UU), \, (P(e))(f)=(P(f))(e).$$
Ceci s'applique en particulier si $\UU=\GG$, $f\in \Ga(\GG,\OO_{\GG})$, $P\in \Ga(\GG,\Dcm_{\GG}))$,
resp. $P\in \Ga(\GG,\Ddag_{\GG,\Qr})$.
\end{prop}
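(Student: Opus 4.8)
The plan is to deduce Proposition~\ref{calcul_explc} from its finite-level analogue~\ref{calcul_expl} by the standard two-step limiting procedure: first pass to the $p$-adic completion to obtain the statement for $\Dcm_{\GG}$, then pass to the inductive limit over $m$ to obtain it for $\Ddag_{\GG,\Qr}$. The key point is that the formula $(P(e))(f)=(P(f))(e)$ is an identity between two elements of $\OO_{\SS}(\UU\cap e(\SS))$ (or of $V$ when $\UU=\GG$), and all the operations involved — the action of differential operators on functions, the restriction-to-$e$ map $P\mapsto P(e)=\beta_m^{-1}(e^*P)$, and the transition maps — are continuous for the $p$-adic topologies and compatible with the limits.

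First I would fix $m$ and treat the completed sheaf $\Dcm_{\GG}=\varprojlim_i \Dm_{G_i}$. For each $i$, Proposition~\ref{calcul_expl} applied to the smooth $S$-scheme $G_i$ (with its structural section $e_i$) gives, for $f\in\OO_{G_i}(\UU_i)$ and $P_i\in\Dm_{G_i}(\UU_i)$, the equality $(P_i(e_i))(f)=(P_i(f))(e_i)$ in $\OO_{S}$. The isomorphisms $\beta_m\colon \what{D}^{(m)}(\GG)\simeq\Ga(\SS,e^*\Dcm_{\GG})$ of~\ref{reduction_Dcmod_e} are obtained precisely by passing to the projective limit of the $\beta_m$ of~\ref{reduction_Dmod_e}, so the definition $P(e)=\beta_m^{-1}(e^*P)$ is compatible with reduction modulo $\pi^{i+1}$: if $P\in\Ga(\UU,\Dcm_{\GG})$ reduces to $P_i\in\Dm_{G_i}(\UU_i)$ then $P(e)$ reduces to $P_i(e_i)$. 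Likewise the action of $\Dcm_{\GG}$ on $\OO_{\GG}$ is the limit of the actions of $\Dm_{G_i}$ on $\OO_{G_i}$, and the action of $\what{D}^{(m)}(\GG)$ on $\OO_{\GG}(\UU)$ is the limit of the actions of $D^{(m)}(G_i)$ on $V[G_i]$. Hence both sides of $(P(e))(f)=(P(f))(e)$ are elements of $\OO_{\SS}(\UU\cap e(\SS))=\varprojlim_i \OO_S(\ldots)$ whose $i$-th components agree for every $i$ by~\ref{calcul_expl}; since $\OO_{\SS}$ is $\pi$-adically separated this forces the two sides to coincide, proving the statement for $\Dcm_{\GG}$.

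Next I would pass to the limit over $m$. We have $\Ddag_{\GG,\Qr}=\varinjlim_m\Dcm_{\GG,\Qr}$ and $D^\dagger(\GG)_{\Qr}=\varinjlim_m\what{D}^{(m)}(\GG)_{\Qr}$, and the isomorphism $\beta^\dagger$ of~\ref{reduction_Dcmod_e}(ii) is the inductive limit of the $\beta_m$, so the map $P\mapsto P(e)={\beta^\dagger}^{-1}(e^*P)$ is compatible with the transition maps $\Dcm_{\GG,\Qr}\to\DD^{(m+1)\wedge}_{\GG,\Qr}$. Given $P\in\Ga(\UU,\Ddag_{\GG,\Qr})$, choose $m$ and a representative $P^{(m)}\in\Ga(\UU,\Dcm_{\GG,\Qr})$; for $f\in\OO_{\GG,\Qr}(\UU)$ the equality $(P^{(m)}(e))(f)=(P^{(m)}(f))(e)$ holds by the completed case (tensored with $\Qr$, since all the maps are $\Qr$-linear), and the image of this identity under the transition to level $m+1$ is exactly the corresponding identity for $P^{(m+1)}$; passing to the colimit gives $(P(e))(f)=(P(f))(e)$, which is the assertion. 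The special case $\UU=\GG$ is just the case where one takes global sections, and requires no extra argument.

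The proof is essentially bookkeeping, and I do not anticipate a genuine obstacle: the one thing to be careful about is checking that the restriction map $e^*$ and the formation of $P(e)$ genuinely commute with $\varprojlim_i$ and $\varinjlim_m$, i.e. that $\beta_m$ and $\beta^\dagger$ are built as the appropriate limits of the finite-level maps — but this is exactly how~\ref{reduction_Dcmod_e} was stated and proved. The only mild subtlety is that $\OO_{\GG}$ is the $\pi$-adic completion of $\OO_{G_\infty}$ rather than literally $\varprojlim V[G_i]$ when $\GG$ is not the formal completion of an honest $S$-group; but since $\GG$ is by hypothesis a smooth affine formal $\SS$-group scheme, $A=\Ga(\GG,\OO_{\GG})$ is $\pi$-adically complete with $A/\pi^{i+1}A=V[G_i]$, so the identification of the actions and of the maps $P\mapsto P(e)$ with their reductions mod $\pi^{i+1}$ goes through verbatim, and the separatedness of $A$ (resp. $A\ot\Qr$) closes the argument.
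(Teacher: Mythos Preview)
Your proposal is correct and matches the paper's own proof, which is simply the one-line remark that the statement follows from the finite-level Proposition~\ref{calcul_expl} by passage to the $p$-adic completion and then to the inductive limit over $m$. You have carefully spelled out the compatibilities (of $\beta_m$, $e^*$, and the actions with $\varprojlim_i$ and $\varinjlim_m$) that the paper leaves implicit, but the strategy is identical.
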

\vskip5pt
Supposons maintenant que $X$ est un $S$-schéma lisse muni d'une action à droite de $G$ et $\XX$ est le schéma formel obtenu par complétion
le long de $\pi\OO_X$. Donc, $\XX$ est un $\SS$-schéma lisse muni d'une action à droite de $\GG$.
En passant à la limite à partir des anti-homomorphismes d'algèbres $Q_m$ de
~\ref{def_Qm}, on voit aussi qu'il existe des anti-homomorphismes d'algèbres toujours notés
$Q_m$
\begin{gather}\label{def_Qmc}
Q_m \,\colon\,\what{D}^{(m)}({\cal G})\rig\Ga(\XX,\Dcm_{\XX}).
\end{gather}
En passant à la limite sur $m$ et en tensorisant par $\Qr$, on trouve un anti-homomorphisme d'algèbres
\begin{gather}\label{def_Q}
Q \,\colon\,D^{\dagger}({\cal G})_{\Qr}\rig\Ga(\XX,\Ddag_{\XX,{\Qr}}).
\end{gather}
Les faisceaux
$\Dcm_{\XX}$ sont alors $\GG$-équivariants (au sens où les faisceaux
$\Dm_{X_i}$ sont $G_i$-équivariants pour tout $i$ et de façon compatible). Par passage à la limite inductive sur $m$,
les faisceaux $\Ddag_{\XX,\Qr}$ sont eux aussi $\GG$-équivariants.

D'après~\ref{defop_diff_inv}, on dispose donc d'applications
$$\Ga(X_i,D^{(m)}_{X_i})\rig \Ga(X_i,D^{(m)}_{X_i})\ot_{V_i} V_i[G],$$
qui, en passant à la limite projective sur $i$ donnent un morphisme
$\what{u}_G$ : $\varprojlim_{i}(\Ga(X_i, D^{(m)}_{X_i}))\rig
\varprojlim_{i}(\Ga(X_i,p_{1,i}^* D^{(m)}_{X_i}))$, et, compte
tenu de l'identification habituelle
%give the ref to EGA III
une flèche $$\what{u}_G\;\colon\;\Ga(\XX,\what{\DD}^{(m)}_{\XX})\rig
\Ga(\XX,\HH^0p_1^!\what{\DD}^{(m)}_{\XX}).$$
On notera $\iota$ l'injection canonique $\Ga(\XX,\what{\DD}^{(m)}_{\XX})\hrig
\Ga(\XX,\HH^0p_1^!\what{\DD}^{(m)}_{\XX})$. En passant à la limite sur
$m$ et en tensorisant par $\Qr$, on obtient une application
$$u_G^{\dagger}\;\colon\;\Ga(\XX,\DD^{\dagger}_{\XX,\Qr})\hrig
\Ga(\XX,\HH^0p_1^!\DD^{\dagger}_{\XX,\Qr}).$$
On continue de noter $\iota$ l'injection canonique $\Ga(\XX,\what{\DD}^{(m)}_{\XX})\hrig
\Ga(\XX,\HH^0p_1^!\what{\DD}^{(m)}_{\XX})$ (resp. $\Ga(\XX,{\DD}^{\dagger}_{\XX,\Qr})\hrig
\Ga(\XX,\HH^0p_1^!{\DD}^{\dagger}_{\XX,\Qr})$). Ceci nous permet de définir les éléments $\GG$-invariants
de la façon suivante.
\begin{sousdef}
\be
\item[(i)] Les éléments $G$-invariants de $\Ga(\XX,\what{\DD}^{(m)}_X)$
sont les éléments $P$ de $\Ga(\XX,\what{\DD}^{(m)}_{\XX,\Qr})$ tels que $\what{u}_G(P)=\iota(P)$.
\item[(ii)] Les éléments $G$-invariants de $\Ga(\XX,{\DD}^{\dagger}_{\XX,\Qr})$ sont les
éléments $P$ de $\Ga(\XX,{\DD}^{\dagger}_{\XX,\Qr})$ tels que $u_G^{\dagger}=\iota(P)$.
\ee
\end{sousdef}
Plaçons-nous maintenant dans le cas où $\XX=\GG$ et $\GG$ opère sur lui-même par translation à droite. Comme au niveau fini, $Q_m$ et $Q$ sont en fait à valeurs dans les
modules des opérateurs différentiels invariants sur $\GG$.
Par passage à la limite projective sur $i$, puis limite inductive sur $m$, on dispose, à partir de
~\ref{opp_diff_inv} du
\begin{thm} \label{opp_diffc_inv}
Les applications canoniques $Q_{m}$ et $ev_e$ sont des anti-isomorphismes d'algèbres filtrées,
inverses l'un de l'autre entre les algèbres $\what{D}^{(m)}(\cal G)$ et $\Ga(\GG,\Dcm_{\GG})^{\GG}$
(resp. les applications canoniques
$Q$ et $ev_e$ sont des anti-isomorphismes d'algèbres filtrées,
inverses l'un de l'autre entre les algèbres $D^{\dagger}(\cal G)_{\Qr}$ et $\Ga(\GG,\Ddag_{\GG,\Qr})^{\GG}$).
\end{thm}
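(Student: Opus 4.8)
The plan is to deduce Theorem \ref{opp_diffc_inv} from the finite-level statement \ref{opp_diff_inv} (applied to each $S$-scheme $G_i$ with its right translation action on itself) by a double passage to the limit: first a projective limit over $i$ for fixed $m$, then an inductive limit over $m$ after tensoring with $\Qr$. First I would record that, by Theorem \ref{opp_diff_inv} applied to $G_i$, the maps $Q_m \colon D^{(m)}(G_i) \to \Ga(G_i, \DD^{(m)}_{G_i})$ and the evaluation $(e) \colon \Ga(G_i,\DD^{(m)}_{G_i})^{G_i} \to D^{(m)}(G_i)$ are mutually inverse anti-isomorphisms of filtered algebras, restricting to bijections in each order $n$. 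The key compatibility to check is that these maps commute with the transition morphisms $G_{i+1} \hookrightarrow G_i$: on the distribution side this is the functoriality of $D^{(m)}(\cdot)$ from \ref{prop-functoriality}, and on the differential-operator side it follows from the construction of $\Dcm_{\GG} = \varprojlim_i \Dm_{X_i}$ together with the fact that the $Q_m$ and the evaluation maps were defined precisely by restriction/reduction modulo $\pi^{i+1}$ (cf.\ \ref{def_Qmc} and \ref{reduction_Dcmod_e}). Because a projective limit of mutually inverse anti-isomorphisms of filtered algebras is again a pair of mutually inverse anti-isomorphisms, and because the $\GG$-invariants of $\Ga(\GG,\Dcm_{\GG})$ are by definition compatible at each finite level (using that $\Dcm_{\GG}$ is $\GG$-equivariant in the sense explained after \ref{def_Qmc}), one gets that $Q_m$ and $ev_e$ are mutually inverse anti-isomorphisms between $\what{D}^{(m)}(\GG)$ and $\Ga(\GG,\Dcm_{\GG})^{\GG}$.

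Second I would pass to the inductive limit over $m$ after $\otimes\Qr$. The maps $\Phi_{m,m'}$ are compatible with the $Q_m$'s (this is where one uses that the $q_1\circ d\sigma$ and the $\sigma_m^{(n)}$ are compatible for varying $m$, as already noted in \ref{op_diff_inv}), so the $Q_m$ assemble into $Q \colon D^{\dagger}(\GG)_\Qr \to \Ga(\GG,\Ddag_{\GG,\Qr})$ as in \ref{def_Q}, and likewise the evaluation maps assemble by \ref{reduction_Dcmod_e}(ii). Since $\Ddag_{\GG,\Qr} = \varinjlim_m \Dcm_{\GG,\Qr}$ and the invariance condition at the $\dagger$-level is the colimit of the invariance conditions at finite level (again using $\GG$-equivariance of the $\Dcm_{\GG}$), the colimit of the finite-level anti-isomorphisms is an anti-isomorphism $D^{\dagger}(\GG)_\Qr \xrightarrow{\sim} \Ga(\GG,\Ddag_{\GG,\Qr})^{\GG}$, with inverse $ev_e$; filteredness is preserved throughout.

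The main obstacle I anticipate is not any single deep fact but rather the bookkeeping needed to justify that taking $\GG$-invariants commutes with the two limit processes. Concretely: one must check that an element $P \in \Ga(\GG,\Dcm_{\GG})$ is $\GG$-invariant (in the sense of the \textsc{sousdef} preceding the theorem, i.e.\ $\what u_G(P) = \iota(P)$) if and only if each reduction $P_i \in \Ga(X_i,\DD^{(m)}_{X_i})$ is $G_i$-invariant; this uses the Künneth/compatibility identifications $\varprojlim_i \Ga(X_i, p_{1,i}^*\DD^{(m)}_{X_i}) \simeq \Ga(\XX, \HH^0 p_1^!\Dcm_{\XX})$ and the fact that the equivariance isomorphisms at level $i$ are compatible for varying $i$ (which is Proposition \ref{prop-equiv}(iii) and its formal consequences). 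A parallel point is needed for the colimit over $m$. Once these identifications are in place, the statement is formal: a filtered colimit (resp.\ limit) of mutually inverse filtered anti-isomorphisms between subobjects cut out by compatible conditions is again such a pair. I would therefore organize the proof as: (1) invoke \ref{opp_diff_inv} at each $(i,m)$; (2) check transition-compatibility of $Q_m$ and $ev_e$ via \ref{prop-functoriality}, \ref{reduction_Dmod_e}, \ref{calcul_expl}; (3) identify $\GG$-invariants with compatible families of $G_i$-invariants; (4) pass to $\varprojlim_i$ to get the first half; (5) check $\Phi_{m,m'}$-compatibility and pass to $\varinjlim_m \otimes\Qr$ for the second half.
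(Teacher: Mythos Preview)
Your proposal is correct and follows essentially the same approach as the paper: the paper's proof consists of the single sentence ``Par passage à la limite projective sur $i$, puis limite inductive sur $m$, on dispose, à partir de~\ref{opp_diff_inv} du [théorème]'', and your plan simply spells out the compatibility checks (transition maps, commutation of invariants with the limits) that the paper leaves implicit.
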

\vskip5pt
Retournons au cas général et supposons donné sur $\XX$ un faisceau inversible de $\OO_{\XX}$-modules $\LL$. On note $\LL_i$
le faisceau induit sur $X_i$ par $\LL$. Supposons de plus que $\LL$ est $\GG$-équivariant au sens où
pour tout $i\geq 0$, les faiceaux $\LL_i$ sont $G_i$-équivariant, de façon compatible pour tout $i$.
On dispose alors des faisceaux d'opérateurs différentiels $\GG$-équivariants
$^t{}\Dcm_{\XX}=\LL\ot_{\OO_{X,g}}\Dcm_{\XX}\ot_{\OO_{X,d}}\LL^{-1}$, resp.
$^t{}\Ddag_{\XX,\Qr}=\LL\ot_{\OO_{X,g}}\Ddag_{\XX,\Qr}\ot_{\OO_{X,d}}\LL^{-1}$.
En passant à la limite projective sur $i$ à partir du corollaire ~\ref{Qtmc}, on dispose de la
\begin{prop} \label{Qctm}Il existe un anti-homomorphisme d'algèbres filtrées $^t{}Q_m$ : $\what{D}^{(m)}(G)\rig
\Ga(\XX,^t{}\Dcm_{\XX})$ (resp. $^t{}Q$ : $D^{\dagger}(G)_{\Qr}\rig \Ga(\XX,^t{}\Ddag_{\XX,\Qr})).$
\end{prop}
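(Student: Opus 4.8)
The plan is to deduce both statements from the finite-level Corollary~\ref{Qtmc} by passage to the limit, in exact analogy with the way the untwisted completed anti-homomorphisms $Q_m$ of~\ref{def_Qmc} and $Q$ of~\ref{def_Q} were obtained from~\ref{prop_Qhom} in~\ref{op-diff-inv-comp}. First I would observe that for each $i\in\Ne$ the truncation $X_i$ is a smooth $S$-scheme carrying an action of the group scheme $G_i$ together with a $G_i$-equivariant invertible sheaf $\LL_i$, so Corollary~\ref{Qtmc} applied to the triple $(X_i,G_i,\LL_i)$ yields an anti-homomorphism of filtered $V$-algebras ${}^tQ_m^{(i)}\colon D^{(m)}(G_i)\rig \Ga(X_i,{}^t\Dm_{X_i})$.

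Next I would check compatibility with the two transition maps involved. The closed immersions of group schemes $G_i\hrig G_{i+1}$ induce, by Proposition~\ref{prop-functoriality}, the structure maps $D^{(m)}(G_{i+1})\rig D^{(m)}(G_i)$ defining $\what{D}^{(m)}(\GG)$, and the immersions $X_i\hrig X_{i+1}$ are compatible with the actions and with the equivariant structures on the $\LL_i$. Since the construction of ${}^tQ_m$ factors through the vector bundle $Y=Spec~\Sg_X(\LL)$, the equivariance isomorphism $\Phi_\LL$, the $m$-PD-morphisms $\sigma_m^{(n)}$ (functorial in the $G$-scheme, and compatible for varying $n$ and $m$ by the proposition describing their properties), the inclusion $\Dm_Y(\LL)\subset q_*\Dm_Y$ and the restriction morphism $r_\LL$ --- each of which is natural in $(X,\sigma,\LL)$ --- the family $({}^tQ_m^{(i)})_i$ is a morphism of projective systems over $i$. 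Passing to $\varprojlim_i$ and using $\what{D}^{(m)}(\GG)=\varprojlim_i D^{(m)}(G_i)$ together with ${}^t\Dcm_{\XX}=\varprojlim_i {}^t\Dm_{X_i}$, hence $\Ga(\XX,{}^t\Dcm_{\XX})=\varprojlim_i\Ga(X_i,{}^t\Dm_{X_i})$ (the transition maps being surjective, so no $\varprojlim^1$ obstruction), one obtains the filtered anti-homomorphism ${}^tQ_m\colon \what{D}^{(m)}(\GG)\rig \Ga(\XX,{}^t\Dcm_{\XX})$; the filtration is respected since it is at each finite level.

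For the parenthetical statement I would record that the ${}^tQ_m$ are compatible with the level-change maps $\Phi_{m,m'}$, which again reduces to the compatibility in $m$ of the $\sigma_m^{(n)}$, hence of ${}^t\sigma_{m,X}^{(n)}$ and of $r_\LL$. Tensoring with $\Qr$ and passing to $\varinjlim_m$, using $D^{\dagger}(\GG)_{\Qr}=\varinjlim_m\what{D}^{(m)}(\GG)_{\Qr}$ and ${}^t\Ddag_{\XX,\Qr}=\varinjlim_m{}^t\Dcm_{\XX,\Qr}$, yields ${}^tQ\colon D^{\dagger}(\GG)_{\Qr}\rig \Ga(\XX,{}^t\Ddag_{\XX,\Qr})$, again an anti-homomorphism of filtered algebras.

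The only point that is not pure bookkeeping is the naturality invoked above: that the finite-level construction of Corollary~\ref{Qtmc} --- passing through $Y$, $\Phi_\LL$, $Q_{m,Y}$ landing in $\Dm_Y(\LL)$, and the restriction $r_\LL$ --- is functorial in the smooth $G$-scheme equipped with an equivariant line bundle, and commutes both with reduction modulo $\pi^{i+1}$ and with the maps $\Phi_{m,m'}$. I expect this to be the main (but routine) obstacle; each ingredient was defined either by a universal property of $m$-PD-envelopes or by an explicit formula that visibly commutes with flat base change and with the transition morphisms, so no new idea is needed beyond carefully threading these compatibilities through the two limits, exactly as was done for the untwisted sheaves in~\ref{op-diff-inv-comp}.
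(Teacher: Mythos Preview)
Your proposal is correct and follows exactly the approach of the paper: the paper obtains~\ref{Qctm} in one line, ``En passant \`a la limite projective sur $i$ \`a partir du corollaire~\ref{Qtmc}'', and your argument is precisely a careful unpacking of that passage to the projective limit (over $i$) and then to the inductive limit (over $m$, after tensoring with $\Qr$), mirroring the untwisted case treated in~\ref{def_Qmc} and~\ref{def_Q}.
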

%

%}}}

\appendix
%ajoute une ligne appendice
%\addcontentsline{toc}{section}{Appendice : Opérateurs différentiels
%arithmétiques globaux sur les variétés de drapeaux (par Christine Huyghe)}
%supprime l'affichage des sections dans la toc, section correspond a 0
%\addtocontents{toc}{\protect\setcounter{tocdepth}{0}}
%\addtocontents{toc}{\protect\setcounter{tocdepth}{2}}
\section[Appendice : calculs de sections globales (par Christine Huyghe)]{Appendice : Opérateurs différentiels
arithmétiques globaux sur les variétés de drapeaux (par Christine Huyghe)}
\subsection{Introduction}
Nous nous plaçons ici dans le cas où $V$ est un anneau de valuation discrètes d'inégales caractéristiques ($0$,$p$),
$S=\spec V$,
et reprenons les notations de ~\ref{section-rappels}. Nous supposons de plus que $G$ est  
un schéma en groupes réductif $G$, connexe, déployé sur $S$. On note
 $X$ la variété de drapeaux de $G$, $\GG$
(resp. $\XX$) la variété de 
drapeaux formelle obtenue en complétant $p$-adiquement $G$ (resp. $\XX$) le long de l'idéal $\pi$.
Soit $T$ un tore maximal de $G$, et $\lam$ un caractère de $T$ qui est un poids dominant et régulier. Ceci est équivalent 
à dire que le faisceau inversible $G$-équivariant sur la variété de drapeaux $X$, $\LL(\lam)$, qui est associé
à $\lam$ par la construction habituelle, est ample et vérifie $\Ga(X,\LL(\lam))\neq 0$. 

On note $\TT_X$ le faisceau tangent sur $X$, $\DD_X$ le faisceau usuel des opérateurs 
différentiels construit dans \cite{EGA4-4}, $\DD^{(m)}_{X}$, $\Dcm_{\XX}$ les faisceaux 
introduits en~\ref{operateur-diff}.

Nous utiliserons aussi les algèbres de distributions $D^{(m)}(G)$, $\what{D}^{(m)}(\GG)$, et 
$D^{\dagger}(\GG)_{\Qr}$ introduites en~\ref{sect-distribution-algebras}. L'algèbre opposée 
d'une algèbre $U$ sera notée $U^{op}$.

\subsection[Opérateurs différentiels sur la variété de drapeaux]{Sections globales des opérateurs différentiels arithmétiques sur la variété de drapeaux}
\label{rappels-Uclass}
\subsubsection{Rappels du cas classique}
%}
On considère ici le groupe $G_K=G\times_S \spec K$, $X_K$ la variété de drapeaux de 
$G_K$, obtenue par changement de base à partir de $X$.

Soient $U_K=U(Lie(G_K))$ l'algèbre enveloppante de $Lie(G_K)$, 
$Z_K$ son centre et 
$\xi_1,\ldots,\xi_N$ une base de $Lie(G)$. On se donne un plongement 
$\iota$ : $K \rig \Cc$. On peut alors considérer le groupe $G_{\Cc}$, 
$X_{\Cc}$ la variété de drapeaux et $Lie(G_{\Cc})= Lie(G)\ot_K \Cc$. 
De ce fait, l'algèbre enveloppante notée $U_{\Cc}$ de $Lie(G_{\Cc})$ 
est isomorphe à $\Cc\ot_K U_K$. Soit $Z_{\Cc}$ son centre. Si $a\in U_K$, on note 
$$\xymatrix @R=0mm {C_a \colon & U_K \ar@{->}[r] & U_K \\
                     & x \ar@{{|}->}[r] & [a,x].}$$ 
On vérifie alors facilement que $Z_K=\bigcap_{i=1}^N Ker(C_{\xi_i})$, ce qui 
montre que $Z_{\Cc}= \Cc \ot_K Z_K$. De même, si on note 
${Z_{K}}_+=Z_K\bigcap Lie(G)U_K$ (resp. ${Z_{\Cc}}_+=Z_{\Cc}\bigcap Lie(G)U_{\Cc}$), 
alors ${Z_{\Cc}}_+= \Cc\ot_K {Z_{K}}_+$. 

De plus, $\DD_{X_{\Cc}}=\Cc\ot_K \DD_{X_{K}}$, de sorte que 
$\Ga(X_{\Cc},\DD_{\XX_{\Cc}})=\Cc\ot_K \Ga(X_K,\DD_{X_{K}})$. En particulier 
on dispose d'injections canoniques 
$$\xymatrix @R=0mm {j_D \colon & \Ga(X_K,\DD_{X_K})\ar@{^{(}->}[r]& \Ga(X_{\Cc},\DD_{X_{\Cc}})\\
                   & x \ar@{{|}->}[r] & 1\ot x}$$
resp. $j_U$ : $U_K \hrig U_{\Cc}$. Comme en ~\ref{op_diff_inv}, l'action de $G$ sur $X$ 
induit un antihomomorphisme $Q_K$ : $U_K \rig \Ga(X_K,\DD_{X_K})$ (resp. 
$Q_{\Cc}=1\ot Q_K$ : $U_{\Cc} \rig \Ga(X_{\Cc},\DD_{X_{\Cc}})$).  
Il résulte de~\cite{BeBe} que $Q_{\Cc}({Z_{\Cc}}_+)=0$ et que l'on a un isomorphisme de $\Cc$-algèbres
$$ Q_{\Cc} \,\colon \, U_{\Cc}^{op}/{Z_{\Cc}}_+ \sta{\sim}{\rig} \Ga(X_{\Cc},\DD_{X_{\Cc}}).$$
 Ceci implique que $Q_K({Z_{K}}_+)=0$ et que l'on a un isomorphisme de $K$-algèbres
$$ Q_{K} \,\colon \, U_K^{op}/{Z_{K}}_+ \sta{\sim}{\rig} \Ga(X_{K},\DD_{X_{K}}).$$

Dans le cas twisté, la situation est la suivante. Soient $\lam$ un poids dominant et régulier de $T$, $\LL(\lam)$ le 
faisceau inversible associé sur $X$. D'après~\ref{Qtmc}, on dispose d'un antihomomorphisme 
$^t{}Q_K$ : $U_K^{op} \rig \Ga(X_K,^t{}\DD_{X_K})$, où $^t{}\DD_{X_K}=\LL(\lam)\ot_{\OO_X}\DD_{X_K}\ot_{\OO_X}\LL(\lam)^{-1}$.
Alors, toujours d'après~\cite{BeBe}, $^t{}Q(Z_{\Cc})\subset \Cc$, de sorte que $^t{}Q(Z_{K})\subset K$, ce qui définit  
un caractère $\theta$ : $Z_K \rig K$. Alors, 
par loc. cit., $^t{}Q_{\Cc}$ induit un isomorphisme
  $$ ^t{}Q_{\Cc} \,\colon \, U_{\Cc}^{op}/Ker\theta_{\Cc} U_{\Cc}^{op}\sta{\sim}{\rig} \Ga(X_{\Cc},^t{}\DD_{X_{\Cc}}).$$
Les mêmes arguments que précédemment montrent que cet isomorphisme se descend en un isomorphisme de $K$-algèbres
$$ ^t{}Q_{K} \,\colon \, U_K^{op}/ Ker\theta_K \, U_{K}^{op}\sta{\sim}{\rig} \Ga(X_{K},\DD_{X_{K}}).$$
%
%\subsubsection{Opérateurs différentiels tordus sur la variété de drapeaux}
%
%On suppose dans ce cas que $G$ est déployé sur $S$...
%
\subsubsection{Enoncé du théorème}
\label{enonce-thm_class}
Soit $m\in\Ne$, on dispose de l'application $Q_m$ introduite en ~\ref{def_Qmc}, 
$Q_m $ : $\what{D}^{(m)}({\cal G})\rig\Ga(\XX,\Dcm_{\XX}).$ D'autre part, comme 
l'algèbre $D^{(m)}(G)$ (resp. le faisceau $\Dm_{X}$) est séparé pour la topologie 
$p$-adique, on dispose d'injections canoniques $D^{(m)}(G)_K\hrig \what{D}^{(m)}({\cal G})_K$
(resp. $\Dm_{X_K}\hrig \Dcm_{X_K}$). 
Or $D^{(m)}(G)_K$ s'identifie à $U_K$ (resp. $\Dm_{X_K}$ à $\DD_{X_K}$). 
On dispose finalement du diagramme commutatif
$$\xymatrix { U_K^{op} \ar@{->}[r]^{Q_K}\ar@{^{(}->}[d]& \Ga(X_K,\DD_{X_K})\ar@{^{(}->}[d]\\
              \what{D}^{(m)}({\cal G})_K^{op} \ar@{->}[r]^{Q_m} & \Ga(\XX,\Dcm_{\XX,\Qr}), 
}$$
qui montre que $Q_m({Z_{K}}_+)=0$ et que $Q_m$ passe au quotient par ${Z_{K}}_+$ en une application 
toujours notée $Q_m$. On propose de montrer le
\begin{thm}\label{thm-glob_sections} \begin{enumerate} \item[(i)] On a un isomorphisme canonique 
               $$ Q_m\,\colon\, \what{D}^{(m)}({\cal G})_K^{op}/
{Z_{K}}_+\what{D}^{(m)}({\cal G})_K^{op} \sta{\sim}{\rig} \Ga(\XX,\Dcm_{\XX,\Qr}).$$
            \item[(ii)] On a un isomorphisme canonique 
               $$ Q\,\colon\, {D}^{\dagger}({\cal G})_K^{op}/
{Z_{K}}_+D^{\dagger}({\cal G})_K^{op} \sta{\sim}{\rig} \Ga(\XX,\Ddag_{\XX,\Qr}).$$
           \item[(iii)] L'idéal ${Z_{K}}_+$ est contenu dans le centre de 
         $\what{D}^{(m)}({\cal G})_K^{op}$ et de $D^{\dagger}({\cal G})_K^{op}$.
          \end{enumerate}
\end{thm}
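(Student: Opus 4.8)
\textbf{Proof strategy for Theorem \ref{thm-glob_sections}.}

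The plan is to deduce the $p$-adic statements (i) and (ii) from the classical isomorphism $Q_K\colon U_K^{op}/Z_{K,+} \xrightarrow{\sim} \Ga(X_K,\DD_{X_K})$ recalled in \ref{rappels-Uclass}, by a weak-completion/$p$-adic completion argument along the lines of Berthelot's computations for $\DD^{(m)}$ on projective space. First I would reduce to a fixed finite level $m$: once (i) is known for every $m$, part (ii) follows by passing to the inductive limit over $m$ (using that $\varinjlim_m \what{D}^{(m)}(\GG)_\Qr = D^\dagger(\GG)_\Qr$, that $\varinjlim_m \Dcm_{\XX,\Qr}$ has global sections $\Ga(\XX,\Ddag_{\XX,\Qr})$ since $\XX$ is quasi-compact, and that tensoring the exact sequence defining the central reduction with the flat limit is harmless). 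So the heart of the matter is the isomorphism at level $m$.

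For fixed $m$, I would first establish (iii): the classical fact (\ref{rappels-Uclass}, from \cite{BeBe}) that $Z_K$ is central in $U_K$ — equivalently $Z_K = \bigcap_i \ker C_{\xi_i}$ — together with the triangular decomposition \ref{subsubsec-triangular} and the density of $D^{(m)}(G)$ in $\what{D}^{(m)}(\GG)$ shows $Z_{K,+}$ commutes with all generators $\xi^{\langle p^i\rangle}$, hence lies in the center of $\what{D}^{(m)}(\GG)_K$ and of $D^\dagger(\GG)_\Qr$; this is exactly the computation already carried out in the proof of Proposition \ref{prop-flatness} (the commutator estimates \eqref{commutator}, \eqref{commutatorfinal}), so it requires essentially no new work. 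Next, for (i), the commutative square in \ref{enonce-thm_class} shows $Q_m(Z_{K,+})=0$, so $Q_m$ factors through $\what{D}^{(m)}(\GG)_K^{op}/Z_{K,+}\what{D}^{(m)}(\GG)_K^{op}$; I must show the induced map is bijective. The strategy here is: (a) filter both sides by order and compute the associated graded. On the source, $\gr_\bullet \what{D}^{(m)}(\GG)_K$ is a $p$-adic completion (tensored with $\Qr$) of $\gr_\bullet D^{(m)}(G) \simeq \Sg^{(m)}_V(Lie(G))$ by \ref{grDm}, and its quotient by the graded ideal generated by $\sigma(Z_{K,+})$ is, by the classical Beilinson–Bernstein picture over $K$ applied to symbols, a completed version of $\Ga(X_K,\gr_\bullet\DD_{X_K}) = \Ga(X_K, \Sg_{\OO_{X_K}}(\TT_{X_K}))^{\text{something}}$; on the target, $\gr_\bullet\Ga(\XX,\Dcm_{\XX,\Qr})$ injects into $\Ga(\XX,\gr_\bullet\Dcm_{\XX,\Qr})=\Ga(\XX,\Sg^{(m)}(\TT_\XX)_\Qr)$ by \ref{grdiffDm}. (b) Then invoke surjectivity of $\gr_\bullet Q_{m,X}$ for the homogeneous space $X=G/B$, which is Proposition \ref{prop_grAm}(ii), to get surjectivity of $\gr Q_m$ modulo the central ideal, and compare Hilbert-type data / use the classical $K$-isomorphism to get injectivity. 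Because filtrations are exhaustive and complete, bijectivity on the graded level lifts to bijectivity of $Q_m$.

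The main obstacle I expect is controlling the \emph{completed} sections $\Ga(\XX,\Dcm_{\XX,\Qr})$: one must show that taking the quotient by $Z_{K,+}$ commutes with $p$-adic completion and with taking global sections on $\XX$, i.e. that there are no spurious $p$-adically convergent global operators not coming from $\what{D}^{(m)}(\GG)$. The clean way around this is to use the $\Dcm_{\XX,\Qr}$-affinity of the formal flag variety — that is, the acyclicity/$\DD$-affinity result of \cite{huy-beil_ber} and \cite{Hu1} cited in the introduction — which reduces computing $\Ga(\XX,\Dcm_{\XX,\Qr})$ to the algebra side, together with the coherence of $\what{D}^{(m)}(\GG)_\Qr$ (noetherianity, \ref{grDm}) so that the central reduction is a coherent module whose completion behaves well. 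Concretely: $\Ga(\XX,\Dcm_{\XX,\Qr}) = \varprojlim_i \Ga(X_i,\Dm_{X_i})\otimes\Qr$, each $\Ga(X_i,\Dm_{X_i})$ is computed at finite level by the same triangular-decomposition and symbol arguments reducing it to a quotient of $D^{(m)}(G_i)$, and passing to the limit in $i$ plus inverting $p$ yields the claimed identification. The twisted case (with $\LL(\lambda)$, $\lambda$ dominant regular) is handled verbatim, replacing $Z_{K,+}$ by $\ker\theta_K\, U_K^{op}$ and $Q_m$ by ${}^tQ_m$ from \ref{Qctm}, using that $\gr_\bullet {}^tQ_m = \gr_\bullet Q_m$ by Proposition \ref{grQtw} so the graded computation is literally unchanged.
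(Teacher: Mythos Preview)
Your treatment of (ii) and (iii) is essentially the paper's: (ii) is obtained from (i) by $\varinjlim_m$, and (iii) follows from the density of $U_K$ in $\what{D}^{(m)}(\GG)_K$ together with continuity of the commutator $C_t$ for $t\in Z_{K,+}$. (You do not need the explicit commutator estimates from \ref{prop-flatness}; continuity plus vanishing on the dense subalgebra $U_K$ is enough.)

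For (i), however, your strategy has a genuine gap and diverges from the paper's argument. Your plan is to filter source and target by the order of differential operators, prove bijectivity on the associated graded, and lift. The problem is that the order filtration on the \emph{completed} algebra $\what{D}^{(m)}(\GG)_K$ is \emph{not exhaustive}: a general element $\sum_{\uk} a_{\uk}\uxi^{\la\uk\ra}$ has no finite order, so $\bigcup_n F_n \neq \what{D}^{(m)}(\GG)_K$. Hence the standard ``bijective on $\gr$ implies bijective'' argument does not apply, and your step (b) does not go through. Your proposed workaround via $\Dcm$-affinity of $\XX$ does not resolve this; the acyclicity result of \cite{huy-beil_ber} gives vanishing of higher cohomology of coherent $\Dcm_{\XX,\Qr}$-modules, but it does not by itself identify $\Ga(\XX,\Dcm_{\XX,\Qr})$ with a specific quotient of $\what{D}^{(m)}(\GG)_K$.

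The paper's argument avoids the completion--filtration interaction entirely by a \emph{finiteness} step carried out at the uncompleted, integral level. Using the $\OO_X$-ring $\AA^{(m)}_X=\OO_X\otimes_V D^{(m)}(G)^{op}$ of \ref{prop-Am} and the surjectivity of $Q_{m,X}\colon \AA^{(m)}_X\twoheadrightarrow \Dm_X$ (which \emph{does} follow from the graded surjectivity \ref{prop_grAm}(ii), since here the order filtration is exhaustive), one sees that $\Dm_X$ is a coherent $\AA^{(m)}_X$-module; since $X$ is projective, $\Ga(X,\Dm_X)$ is then a \emph{finite} $D^{(m)}(G)^{op}$-module. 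Now set $M=D^{(m)}(G)^{op}/D^{(m)}(G)^{op}Z_+$ (with $Z_+=Z_{K,+}\cap D^{(m)}(G)$) and $N=\Ga(X,\Dm_X)$: both are finite modules over the noetherian $V$-algebra $D^{(m)}(G)^{op}$, and $Q_m\otimes\Qr\colon M_\Qr\to N_\Qr$ is exactly the classical Beilinson--Bernstein isomorphism of \ref{rappels-Uclass}. An elementary lemma (if $u\colon M\to N$ between finite modules over a noetherian $V$-algebra becomes an isomorphism after $\otimes\Qr$, then so does $\what{u}\otimes\Qr$) finishes: $\what{M}_\Qr=\what{D}^{(m)}(\GG)_K^{op}/Z_{K,+}\what{D}^{(m)}(\GG)_K^{op}$ by exactness of completion on finite modules, and $\what{N}=\Ga(\XX,\Dcm_{\XX})$ by EGA. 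The missing idea in your proposal is precisely this finiteness of $\Ga(X,\Dm_X)$ over $D^{(m)}(G)^{op}$, which lets one complete an algebraic isomorphism rather than attempt a filtration argument directly on completed objects.
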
 
Le (ii) s'obtient par passage à la limite à partir de (i).
L'alinéa (i) sera démontré en ~\ref{dem_thm}.
Montrons (iii). Comme $D^{\dagger}({\cal G})_K$ est limite inductive des algèbres 
$\what{D}^{(m)}({\cal G})_K$, il suffit de montrer que ${Z_{K}}_+$ est dans le centre 
des algèbres $\what{D}^{(m)}({\cal G})_K$. Soient $\xi_1,\ldots,\xi_N$ 
une base de $Lie(G)$. L'algèbre $U_K$ est dense dans l'algèbre 
$\what{D}^{(m)}({\cal G})_K$ pour la topologie $p$-adique. Soit $t\in {Z_{K}}_+$, 
le commutateur $C_t$ est une application continue : $\what{D}^{(m)}({\cal G})_K\rig 
\what{D}^{(m)}({\cal G})_K$, qui s'annule sur $U_K$ et qui est donc nulle sur 
l'algèbre $\what{D}^{(m)}({\cal G})_K$. En particulier, les idéaux 
introduits dans l'énoncé $\what{D}^{(m)}({\cal G})_K{Z_{K}}_+$ et $D^{\dagger}({\cal G})_K{Z_{K}}_+$ sont bilatères.
\subsubsection{Enoncé du théorème dans le cas twisté}
Nous reprenons les notations de la sous-section précédente~\ref{enonce-thm_class}, et l'application 
$^t{}Q_m$ introduite en ~\ref{Qtmc} (resp. ~\ref{Qctm}), 
$^t{}Q_m $ : $\what{D}^{(m)}({\cal G})\rig\Ga(\XX,^t{}\Dcm_{\XX}).$ Dans le cas twisté, on dispose 
de l'analogue du diagramme commutatif précédent 
$$\xymatrix { U_K^{op} \ar@{->}[r]^{^t{}Q_K}\ar@{^{(}->}[d]& \Ga(X_K,^t{}\DD_{X_K})\ar@{^{(}->}[d]\\
              \what{D}^{(m)}({\cal G})_K^{op} \ar@{->}[r]^{^t{}Q_m} & \Ga(\XX,^t{}\Dcm_{\XX,\Qr}), 
}$$
qui montre que $Q_m(Ker\theta_K)=0$ et que $Q_m$ passe au quotient par $Ker\theta_K \,\what{D}^{(m)}({\cal G})_K^{op}$ 
en une application toujours notée $Q_m$. L'énoncé dans le cas twisté est 
\begin{thm}\label{thm-glob_sectionstw} \begin{enumerate} \item[(i)] On a un isomorphisme canonique 
               $$ Q_m\,\colon\, \what{D}^{(m)}({\cal G})_K^{op}/
 Ker\theta_K\,\what{D}^{(m)}({\cal G})_K^{op} \sta{\sim}{\rig} \Ga(\XX,^t{}\Dcm_{\XX,\Qr}).$$
            \item[(ii)] On a un isomorphisme canonique 
               $$ Q\,\colon\, {D}^{\dagger}({\cal G})_K^{op}/
Ker\theta_K \, D^{\dagger}({\cal G})_K^{op} \sta{\sim}{\rig} \Ga(\XX,^t{}\Ddag_{\XX,\Qr}).$$
                     \end{enumerate}
\end{thm}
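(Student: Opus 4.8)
The plan is to reduce the twisted statement, Theorem~\ref{thm-glob_sectionstw}, to the untwisted one, Theorem~\ref{thm-glob_sections}, by exactly the mechanism already used in Proposition~\ref{grQtw}: the twist does not change the associated graded. More precisely, both theorems concern the same source algebra $\what{D}^{(m)}(\GG)_K$; only the targets differ, $\Ga(\XX,\Dcm_{\XX,\Qr})$ versus $\Ga(\XX,{}^t\Dcm_{\XX,\Qr})$, and the central ideals modded out differ, ${Z_K}_+$ versus $\ker\theta_K$. First I would recall from the classical discussion in~\ref{rappels-Uclass} that $\lam$ being dominant regular gives, via Beilinson--Bernstein, the isomorphism ${}^tQ_K\colon U_K^{op}/\ker\theta_K\, U_K^{op}\stackrel{\sim}{\rig}\Ga(X_K,{}^t\DD_{X_K})$, which plays the role for the twisted sheaf that $Q_K$ plays for $\DD_{X_K}$; this is the input that lets the whole finite-level machinery run twisted.

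Next I would set up the filtration argument at fixed level $m$. Both algebras $\what{D}^{(m)}(\GG)_K^{op}$ and $\Ga(\XX,{}^t\Dcm_{\XX,\Qr})$ carry the order filtration, and $Q_m$ (hence its twisted version ${}^tQ_m=r_{\LL}\circ Q_m$ composed with $\Ga(\XX,-)$) is filtered. By Proposition~\ref{grQtw}, $\gr\pg {}^tQ_m=\gr\pg Q_m$ after the canonical identification $\gr\pg{}^t\Dcm_{\XX}\simeq\gr\pg\Dcm_{\XX}$ of~\ref{isomgrDt}. Therefore the surjectivity and the description of the kernel of $\gr\pg Q_m$ established in the proof of Theorem~\ref{thm-glob_sections}(i) transport verbatim to $\gr\pg{}^tQ_m$: the image is $\OO_\XX$-generated by $\Sg^{(m)}(\TT_\XX)$-symbols, and the kernel is generated by the symbol of ${Z_K}_+$, which under the twist corresponds to the symbol of $\ker\theta_K$ because $\theta_K$ differs from the trivial central character precisely by the shift attached to $\lam$ and this shift is invisible on associated graded (the symbol of a central element depends only on its image in $\Sg(Lie(G_K))$, which is $W$-invariant and unchanged). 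Concretely, I would verify that $\gr_1 Q_m$ sends ${Z_K}_+$-symbols and $\ker\theta_K$-symbols to the same elements of $\gr\pg\Ga(\XX,\Dcm_{\XX,\Qr})$, using the Harish-Chandra description of $\gr Z_K$.

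Then I would run the standard two-step completeness argument, exactly as for the untwisted theorem. One checks that $Q_m$ induces an isomorphism on the graded pieces $\gr_n$ for all $n$ by combining: (a) the reduced Beilinson--Bernstein isomorphism ${}^tQ_K$ on the generic fibre in degree $\leq 1$; (b) the surjectivity of $\gr\pg{}^tQ_m$ from Proposition~\ref{grQtw}(ii) using that $X$ is $G$-homogeneous; (c) a dimension/flatness count identifying $\gr\pg(\what{D}^{(m)}(\GG)_K^{op}/\ker\theta_K\,\what{D}^{(m)}(\GG)_K^{op})$ with $\gr\pg\Ga(\XX,{}^t\Dcm_{\XX,\Qr})$, which reduces to the already-known case since the graded objects are literally the same. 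From an isomorphism on $\gr$ one deduces the isomorphism of the $p$-adically separated completed filtered objects; this is where one uses that $\Dcm_{\XX}$ is the $p$-adic completion of a coherent sheaf with noetherian affine sections (Proposition in~\ref{operateur-difft}) and that $\XX$ is quasi-compact. Part (ii) then follows by passing to the inductive limit over $m$ and tensoring with $\Qr$, using Theorem~\ref{thm-coherence} to know the limit is coherent and $D^\dagger(\GG)_\Qr\rig{}^t$-sections is still well-behaved; the twisted coherence of ${}^t\Ddag_{\XX,\Qr}$ is the Proposition in~\ref{operateur-difft}(ii).

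The main obstacle I expect is step (c) at fixed level, i.e.\ pinning down that modding $\what{D}^{(m)}(\GG)_K^{op}$ by the \emph{two-sided} ideal $\ker\theta_K\,\what{D}^{(m)}(\GG)_K^{op}$ behaves on associated graded like modding by $({Z_K}_+)$, including the subtlety flagged in~\ref{subsubsec-triangular} that the toric contribution to $D^{(m)}(G)$ involves binomial coefficients rather than divided powers; one must make sure the symbol calculus is not thrown off by these. I would handle this by working with the triangular decomposition $D^{(m)}(N)\ot D^{(m)}(T)\ot D^{(m)}(\overline N)$ of~\ref{subsubsec-triangular}, checking that the center $Z_K$ meets $\what{D}^{(m)}(\GG)_K$ in the expected way and that its graded image is the Harish-Chandra subalgebra, exactly as in Berthelot's and Huyghe's treatment of the untwisted flag variety; since all of this is insensitive to the twist, once it is done for Theorem~\ref{thm-glob_sections} the twisted case costs only the bookkeeping of Proposition~\ref{grQtw} plus the reduced isomorphism ${}^tQ_K$.
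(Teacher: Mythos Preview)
Your approach is genuinely different from the paper's, and considerably more involved. The paper does \emph{not} establish the untwisted Theorem~\ref{thm-glob_sections}(i) by analysing $\gr\pg Q_m$ and its kernel, so your step that ``the surjectivity and the description of the kernel of $\gr\pg Q_m$ established in the proof of Theorem~\ref{thm-glob_sections}(i) transport verbatim'' rests on nothing actually proved there.

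The paper's argument is instead purely via $p$-adic completion, and runs identically in the twisted and untwisted cases. The graded surjectivity of Proposition~\ref{grQtw} is used only once, to show that ${}^tQ_{m,X}\colon\AA^{(m)}_X\to{}^t\Dm_X$ is surjective (hence ${}^t\Dm_X$ is $\AA^{(m)}_X$-coherent), from which one deduces that $\Ga(X,{}^t\Dm_X)$ is a finitely generated $D^{(m)}(G)^{op}$-module. One then applies the elementary lemma: for a noetherian $V$-algebra $A$ and a map $u\colon M\to N$ of finite $A$-modules, if $u\otimes_V K$ is an isomorphism then so is $\what{u}\otimes_V K$. Taking $A=D^{(m)}(G)^{op}$, $M=D^{(m)}(G)^{op}/D^{(m)}(G)^{op}\,\ker\theta$ (with $\ker\theta=\ker\theta_K\cap D^{(m)}(G)$), $N=\Ga(X,{}^t\Dm_X)$, and $u={}^tQ_m$, the hypothesis on $u\otimes K$ is exactly the classical twisted Beilinson--Bernstein isomorphism ${}^tQ_K$ recalled in~\ref{rappels-Uclass}, and the conclusion is precisely part~(i). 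No kernel analysis on associated gradeds, no Harish-Chandra symbol calculus, no comparison of $\gr({Z_K}_+)$ with $\gr(\ker\theta_K)$ is needed.

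Your route might be salvageable, but you would have to supply several missing ingredients: that $\gr\pg\Ga(\XX,{}^t\Dcm_{\XX,\Qr})\simeq\Ga(\XX,\gr\pg{}^t\Dcm_{\XX,\Qr})$ for the order filtration on the completed sheaf (this needs cohomological vanishing for each graded piece and is not immediate); that the order-graded of the quotient $\what{D}^{(m)}(\GG)_K/\ker\theta_K\,\what{D}^{(m)}(\GG)_K$ is computed by the Harish-Chandra map as you assert; and that an isomorphism on order-gradeds suffices to conclude for the $p$-adically completed objects, which mixes two filtrations. None of these is impossible, but the paper sidesteps all of them with one line of commutative algebra.
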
 
Observons comme précédemment que le (ii) s'obtient par passage à la limite sur $m$ à partir de (i).

\vspace{+2mm}
Nous montrons le (i) de ces théorèmes dans la suite de cet appendice. La démonstration repose,
en plus de toutes les constructions du corps de cet article, 
 sur les techniques utilisées en ~\cite{huy-beil_ber}.

\subsubsection{Démonstration du théorème}
\label{dem_thm}
Considérons, sur la variété de drapeaux de $X$, le faisceau de 
$\OO_X$-anneaux cohérent
$\AA^{(m)}_{X} = \OO_{X}\ot_V D^{(m)}(G)$ introduit en~\ref{prop-Am}. 
Etudions d'abord les propriétés cohomologiques des $\AA^{(m)}_{X}$-modules cohérents sur la variété de drapeaux. 
Soit $\MM$ un $\AA^{(m)}_{X}$-module cohérent. Nous avons la 
\begin{sousprop} \be \item[(i)] $\Ga(X,\AA^{(m)}_{X})=D^{(m)}(G)^{op}$ est une $V$-algèbre noetherienne.
                   \item[(ii)] Le module $\MM$ admet une résolution globale par 
   des modules du type $\AA^{(m)}_{X}(-r)^a$ avec $r\in\Ze$ et $a\in \Ne,$
          \item[(iii)] $\forall k \in \Ne$, $H^k(X,\MM)$ est un
$D^{(m)}(G)^{op}$-module de type fini.
\ee
\end{sousprop}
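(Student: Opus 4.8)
La preuve suit de près \cite{huy-beil_ber} et repose sur le formalisme des bonnes filtrations pour les faisceaux d'anneaux filtrés à sections noethériennes sur les ouverts affines, appliqué à $\AA^{(m)}_{X}$ muni de la filtration par l'ordre $\AA^{(m)}_{X,n}=\OO_X\ot_V D^{(m)}_n(G)$, dont le gradué est $\gr\pg\AA^{(m)}_{X}\simeq \OO_X\ot_V \Sg^{(m)}_V(Lie(G))$ d'après~\ref{prop-Am} et~\ref{prop_grAm}. Je commence par l'observation, utile par la suite, que $\Ga(X,\OO_X)=V$ : en effet, $X$ étant plat et propre sur $S$, $\Ga(X,\OO_X)$ est un $V$-module de type fini, et le changement de base plat $V\rig K$ l'identifie à une $V$-sous-algèbre de $\Ga(X_K,\OO_{X_K})=K$ contenant $V$, donc égale à $V$. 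Comme chaque $D^{(m)}_n(G)$ est un $V$-module libre de rang fini, on en déduit $\Ga(X,\AA^{(m)}_{X})=\Ga(X,\OO_X)\ot_V D^{(m)}(G)^{op}=D^{(m)}(G)^{op}$, muni de sa filtration par l'ordre ; cet anneau est noethérien à gauche et à droite par~\ref{grDm}~(ii), ce qui démontre (i).

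Pour (iii), je munis le $\AA^{(m)}_{X}$-module cohérent $\MM$ d'une bonne filtration $\MM\pg$ par des $\OO_X$-modules cohérents (son existence globale se traite comme pour les $\Dm_X$-modules, $X$ étant projectif sur $S$ et $\AA^{(m)}_{X}$ un faisceau d'anneaux cohérent et filtré à sections noethériennes sur les affines) ; alors $\gr\MM$ est un $\gr\pg\AA^{(m)}_{X}$-module cohérent. Posons $W=\spec \Sg^{(m)}_V(Lie(G))$, qui est un $S$-schéma affine de type fini par~\ref{grDm}~(iii), donc noethérien ; le spectre relatif de $\gr\pg\AA^{(m)}_{X}$ au-dessus de $X$ s'identifie à $X\times_S W$. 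Notons $p\colon X\times_S W\rig W$, qui est projectif (changement de base de $X\rig S$), et $q\colon X\times_S W\rig X$, qui est affine. Tout $\gr\pg\AA^{(m)}_{X}$-module cohérent $\GG$ provient par $q_*$ d'un $\OO_{X\times_S W}$-module cohérent $\tilde\GG$, et, $W$ étant affine, les suites spectrales de Leray pour $p$ et $q$ donnent
\[
H^k(X,\GG)\;\simeq\;H^k(X\times_S W,\tilde\GG)\;\simeq\;\Ga\bigl(W,R^kp_*\tilde\GG\bigr).
\]
Le morphisme $p$ étant projectif, $R^kp_*\tilde\GG$ est un $\OO_W$-module cohérent, donc $H^k(X,\GG)$ est un $\Sg^{(m)}_V(Lie(G))$-module de type fini. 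En particulier $H^k(X,\gr\MM)=\bigoplus_n H^k(X,\gr_n\MM)$ est de type fini sur l'anneau gradué noethérien $\Sg^{(m)}_V(Lie(G))=\gr\pg\bigl(D^{(m)}(G)^{op}\bigr)$. La bonne filtration de $\MM$ induit sur $H^k(X,\MM)=\varinjlim_n H^k(X,\MM_n)$ (la cohomologie sur le schéma noethérien $X$ commute aux limites inductives filtrantes) une filtration exhaustive et bornée inférieurement dont le gradué est un sous-quotient de $H^k(X,\gr\MM)$ ; on conclut, par l'argument standard (p.ex. \cite{MCR}, Thm.~1.6.9), que $H^k(X,\MM)$ est un $D^{(m)}(G)^{op}$-module de type fini.

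Pour (ii), je reprends une bonne filtration $\MM\pg$ de $\MM$ : il existe $n_0$ tel que $\MM_{n_0}$ engendre $\MM$ comme $\AA^{(m)}_{X}$-module. Soit $\OO_X(1)$ un faisceau inversible très ample sur $X$ et, pour $r\in\Ne$, $\MM_{n_0}(r):=\MM_{n_0}\ot_{\OO_X}\OO_X(1)^{\ot r}$. Par le théorème de Serre, pour $r$ assez grand $\MM_{n_0}(r)$ est engendré par ses sections globales, d'où une surjection $\OO_X^a\trig \MM_{n_0}(r)$, puis, après torsion, une surjection $\OO_X(-r)^a\trig\MM_{n_0}$ ; en appliquant $\AA^{(m)}_{X}\ot_{\OO_X}(\cdot)$ et en composant avec la structure de module on obtient une surjection $\AA^{(m)}_{X}(-r)^a\trig\MM$. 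Le noyau est encore un $\AA^{(m)}_{X}$-module cohérent, $\AA^{(m)}_{X}$ étant un faisceau d'anneaux cohérent ; en itérant on obtient la résolution à gauche annoncée par des $\AA^{(m)}_{X}(-r)^a$.

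Le point le plus délicat me paraît être la mise en place correcte du formalisme global des bonnes filtrations sur les $\AA^{(m)}_{X}$-modules cohérents, ainsi que le contrôle du gradué de $H^k(X,\MM)$ via la suite spectrale associée à cette filtration ; mais, une fois acquises la cohérence du faisceau d'anneaux $\AA^{(m)}_{X}$ (\ref{prop-Am}) et la description~\ref{prop_grAm} de son gradué, ces points se traitent exactement comme pour les $\Dm_X$-modules dans \cite{huy-beil_ber} et \cite{Be1}.
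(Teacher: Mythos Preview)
Tes démonstrations de (i) et (ii) sont correctes et suivent la même ligne que l'article (l'article se contente de citer 2.2.1 de \cite{huy-beil_ber} pour (ii), alors que tu en redonnes l'argument via une bonne filtration et le théorème de Serre).

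Pour (iii) en revanche, ton approche diffère réellement de celle de l'article. L'article procède par récurrence descendante sur $k$ en s'appuyant directement sur (ii) : on part de $H^{N+1}(X,\MM)=0$ (où $N=\dim X$), on observe que $H^k(X,\AA^{(m)}_{X}(-r)^a)\simeq H^k(X,\OO_X(-r))^a\ot_V D^{(m)}(G)^{op}$ est de type fini, puis la suite exacte longue associée à $0\to\NN\to\AA^{(m)}_{X}(-r)^a\to\MM\to 0$ coince $H^k(X,\MM)$ entre deux modules de type fini sur l'anneau noethérien $D^{(m)}(G)^{op}$. Toi, tu passes au gradué via une bonne filtration, tu exploites la trivialité du fibré $\spec\gr\pg\AA^{(m)}_{X}\simeq X\times_S W$ et la projectivité de la projection sur $W$ pour obtenir la finitude de $H^k(X,\gr\MM)$, puis tu remontes par la suite spectrale de filtration. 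Les deux arguments sont corrects ; celui de l'article est plus court et élémentaire (pas de suite spectrale, pas de contrôle de convergence de filtration), tandis que le tien met en évidence la géométrie sous-jacente, au prix du point que tu signales toi-même comme délicat (existence globale d'une bonne filtration et contrôle du gradué de la cohomologie).
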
 
%}}
\begin{proof}
 Comme $D^{(m)}(G)^{op}$ est un $V$-module libre, on a 
$\Ga(X,\AA^{(m)}_{X})=\Ga(X,\OO_X)\ot_V D^{(m)}(G)^{op}$ d'où le résultat 
puisque $\Ga(X,\OO_X)=V$ et que $D^{(m)}(G)^{op}$ est noetherien par ~\ref{grDm}. Le
 (ii) provient de 2.2.1 of \cite{huy-beil_ber} en remplaçant $\DD$ par
$\AA^{(m)}_{X}$. La démonstration de (ii) est aussi dans loc. cit. En effet, on montre (ii) 
par récurrence descendante en écrivant le longue suite exacte de cohomologie.
L' assertion est trivialement vraie pour $k=N+1$ car $H^{N+1}(X,\MM)=0$ puisque 
$\MM$ est limite inductive de faisceaux coherents sur $X$. De plus 
$D^{(m)}(G)^{op}=\Ga(X,\AA^{(m)}_{X})$ et
$$H^k(X,\AA^{(m)}_{X}(-r)^a)\simeq H^k(X,\OO_{X}(-r)^a)\ot_V D^{(m)}(G)^{op},$$
qui est un $D^{(m)}(G)^{op}$-module de type fini. Supposons que l'assertion soit vraie 
pour tout $\AA^{(m)}_{X}$-module cohérent et pour tout $l\geq k+1$. Soit $\MM$ un 
 $\AA^{(m)}_{X}$-module-cohérent.
Grâce à (ii), il existe
un $\AA^{(m)}_{X}$-module cohérent $\NN$, $r\in\Ze$, $a\in \Ne$ et une suite exacte courte
$$ 0 \rig \NN \rig \AA^{(m)}_{X}(-r)^a \rig \MM \rig 0.$$ En passant à la cohomologie, 
on trouve un complexe exact
$$ H^k(X,\AA^{(m)}_{X}(-r)^a) \rig H^k(X,\MM) \rig
H^{k+1}(X,\NN).$$
Comme $D^{(m)}(G)^{op}$ is noetherien, cela montre que $H^k(X,\MM)$ est un $D^{(m)}(G)^{op}$-module de type fini.
\end{proof}

On a un
morphisme filtré de $\OO_X$-anneaux $Q_{m,X}$ : $\AA^{(m)}_{X} \rig 
\Dm_X$ (resp. $^t{}Q_{m,X}$ : $\AA^{(m)}_{X} \rig  ^t{}\Dm_X$), qui est surjectif en passant aux gradués associés
 d'après (ii) de ~\ref{prop_grAm} (resp. ~\ref{grQtw}), ce qui montre par un argument classique que $Q_{m,X}$ est
surjectif. On obtient ainsi, grâce à la proposition précédente la 
\begin{sousprop} \be\item[(i)] $\Dm_X$ (resp. $^t{}\Dm_X$) est un $\AA^{(m)}_{X}$-module cohérent,
             \item[(ii)] $\Ga(X,\Dm_X)$ (resp. $\Ga(X,^t{}\Dm_X)$) est un $D^{(m)}(G)^{op}$-module de type fini.
              \ee
\end{sousprop}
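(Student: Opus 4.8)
The statement to prove is the last \texttt{sousprop}: that $\Dm_X$ (resp. ${}^t\Dm_X$) is a coherent $\AA^{(m)}_X$-module, and that its global sections form a $D^{(m)}(G)^{\mathrm{op}}$-module of finite type. The strategy is to combine the surjectivity of $Q_{m,X}$ just established with the cohomological finiteness results of the preceding \texttt{sousprop}. First I would record that $Q_{m,X}\colon \AA^{(m)}_X \rig \Dm_X$ is a surjective homomorphism of $\OO_X$-anneaux: indeed, by \ref{prop_grAm}(ii) (resp. \ref{grQtw}) the associated graded map $\gr\pg Q_{m,X}$ is surjective because $X$, being the flag variety, is a $G$-homogeneous space; then the standard filtered-to-graded argument — a filtered morphism whose graded is surjective is itself surjective, applied to the exhaustive order filtrations on $\AA^{(m)}_X$ and $\Dm_X$ — gives surjectivity of $Q_{m,X}$ itself. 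In the twisted case the same applies with ${}^t\Dm_X$ and ${}^t Q_{m,X}$, using that the order filtration identifications of \ref{isomgrDt} make the graded picture identical to the untwisted one.

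Given surjectivity, $\Dm_X$ (resp. ${}^t\Dm_X$) becomes a quotient of $\AA^{(m)}_X$ as a sheaf of left $\AA^{(m)}_X$-modules via $Q_{m,X}$. Since $\AA^{(m)}_X$ is a coherent sheaf of $\OO_X$-anneaux (this is \ref{prop-Am}(iii)), and a quotient of the structure anneau by a (two-sided, hence in particular left) ideal sheaf which is itself of finite type — the kernel sheaf $\KK$ of $Q_{m,X}$ is coherent as the kernel of a morphism of coherent $\AA^{(m)}_X$-modules over a noetherian scheme — one concludes that $\Dm_X = \AA^{(m)}_X/\KK$ is a coherent $\AA^{(m)}_X$-module. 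This establishes (i). For the twisted case, either repeat verbatim or invoke the local algebra isomorphism $P\mapsto u\ot P\ot u^{-1}$ between $\Dm_X$ and ${}^t\Dm_X$ on trivializing opens of $\LL$, which transports coherence.

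For (ii), apply part (iii) of the first \texttt{sousprop} above (with $\MM := \Dm_X$, resp. ${}^t\Dm_X$, which we now know to be coherent over $\AA^{(m)}_X$): it yields that $H^k(X,\Dm_X)$ is a finitely generated $D^{(m)}(G)^{\mathrm{op}} = \Ga(X,\AA^{(m)}_X)$-module for every $k\geq 0$, and in particular for $k=0$, which is exactly $\Ga(X,\Dm_X)$. The same argument gives the twisted version. No genuine obstacle is expected here — the work has all been done in the two preparatory propositions; the only point demanding slight care is the bookkeeping of the filtered/graded surjectivity argument (checking that the filtrations are exhaustive and compatible, so that surjectivity on the graded pieces does propagate), and, in the twisted case, making sure the identification $\gr\pg {}^t\Dm_X \simeq \gr\pg\Dm_X \simeq \Sg^{(m)}(\TT_X)$ of \ref{isomgrDt} is compatible with $\gr\pg {}^tQ_{m,X}$, which is precisely the content of \ref{grQtw}. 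Everything else is a formal deduction from coherence of $\AA^{(m)}_X$ and noetherianity of $D^{(m)}(G)$.
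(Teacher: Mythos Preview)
Your proposal is correct and follows essentially the same route as the paper: surjectivity of $Q_{m,X}$ (resp. ${}^tQ_{m,X}$) is deduced from surjectivity on the graded pieces via \ref{prop_grAm}(ii) (resp. \ref{grQtw}), whence $\Dm_X$ is a quotient of the coherent $\OO_X$-anneau $\AA^{(m)}_X$ and so coherent, and then (ii) follows from part (iii) of the preceding proposition applied with $\MM=\Dm_X$. The paper's own argument is in fact just the two-line paragraph immediately preceding the statement, and your write-up simply spells out the same steps in more detail.
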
 

Rappelons maintenant le lemme suivant, dont la démonstration est laissée au lecteur.
\begin{surlem} Soient $A$ une $V$-algèbre noetherienne, $M$, $N$ deux $A$-modules de type fini,
  $u$ : $M \rig N$, une application $A$-linéaire, $\what{u}$ : $\what{M}\rig \what{N}$,
l'application induite par $u$ après complétion $\pi$-adique. 
 Supposons que  $u$ induise un isomorphisme après 
tensorisation par $\Qr$, $u\ot 1$ : $M_{\Qr}\rig N_{\Qr}$, alors $\what{u}\ot 1$ induit
un isomorphisme : $\what{M}_{\Qr}\rig \what{N}_{\Qr}$.
\end{surlem}
Considérons $Z_+={Z_{K}}_+\bigcap D^{(m)}(G)$, alors $Z_+\ot_V K={Z_{K}}_+$.
Nous appliquons ce lemme à $M=D^{(m)}(G)^{op}/D^{(m)}(G)^{op}Z_+ $, 
$N=\Ga(\XX,\DD^{(m)}_{X})$ qui est un $D^{(m)}(G)^{op}$-module de type fini par la proposition 
précédente et $Q_m$ : $D^{(m)}(G)^{op}/D^{(m)}(G)^{op}Z_+ \rig \Ga(\XX,\DD^{(m)}_{X})$. 

Le morphisme $Q_m \ot 1$ est le morphisme 
$D^{(0)}(G)_{\Qr}^{op}/ {Z_{K}}_+ D^{(0)}(G)_{\Qr}^{op} \rig \Ga(\XX,\DD_{X,\Qr})$ qui est un isomorphisme 
 grâce au résultat classique pour les $\DD$-modules algébriques en
car. $0$ rappelés en~\ref{rappels-Uclass}. La complétion dans un anneau noetherien 
et un foncteur exact de sorte que 
$\what{M}_{\Qr}\simeq \what{D}^{(m)}({\cal G}_K)^{op} / {Z_{K}}_+\what{D}^{(m)}({\cal G}_K)^{op}$. 
En outre, le complété $\pi$-adique de $N$ est $\Ga(\XX,\Dcm_{\XX})$ d'après 3.2.6 de \cite{EGA1}. 
On déduit alors du lemme précédent que le complété de $Q_m$ : 
$\what{D}^{(m)}({\cal G}_K)^{op} / {Z_{K}}_+\what{D}^{(m)}({\cal G}_K)^{op} \rig 
\Ga(\XX,\Dcm_{\XX,\Qr}),$ est un isomorphisme. Le (ii) du théorème s'obtient à partir du 
(i) par passage à la limite inductive sur $m$, ce qui achève la démonstration du théorème ~\ref{thm-glob_sections} 
dans le cas non twisté.

Pour avoir le théorème dans le cas twisté, on pose $Ker\theta=Ker\theta_K \bigcap D^{(m)}(G)$, 
$M=D^{(m)}(G)^{op}/D^{(m)}(G)^{op}Ker\theta$. Nous appliquons alors le lemme précédent 
à $M$, $N=\Ga(\XX,^t{}\DD^{(m)}_{X})$ et $^t{}Q_m$ : $M \rig
 N$. Grâce au théorème de Beilinson-Bernstein pour le cas twisté rappelé en 
~\ref{rappels-Uclass}, le morphisme $^t{}Q_m\ot 1$ est un isomorphisme. De plus, 
 $\what{M}_{\Qr}\simeq \what{D}^{(m)}({\cal G}_K)^{op} / Ker\theta_K\what{D}^{(m)}({\cal G}_K)^{op}$ et
le complété $\pi$-adique de $N$ est $\Ga(\XX,^t{}\Dcm_{\XX})$. En appliquant le lemme précédent, on voit donc 
que l'on a un isomorphisme $\what{D}^{(m)}({\cal G})_K^{op}/
 Ker\theta_K\,\what{D}^{(m)}({\cal G})_K^{op} \sta{\sim}{\rig} \Ga(\XX,^t{}\Dcm_{\XX,\Qr}),$
ce qui achève la démonstration du théorème.

%\bibliography{biblio}
%\bibliographystyle{alpha}
%
% {{{

\noindent Christine Huyghe \\
\noindent IRMA, Universit\'e de Strasbourg \\
\noindent 7, rue René Descartes \\
\noindent 67084 Strasbourg cedex FRANCE \\
\noindent m\'el huyghe@math.unistra.fr\\
http://www-irma.u-strasbg.fr/\textasciitilde huyghe

\vspace{+8mm}
\noindent Tobias Schmidt \\
\noindent Institut f\"ur Mathematik, Humboldt-Universit\"at zu Berlin \\
\noindent 25, Rudower Chaussee (Adlershof)\\
\noindent 12489 Berlin, DEUTSCHLAND \\
\noindent m\'el Tobias.Schmidt@math.hu-berlin.de\\
http://www2.mathematik.hu-berlin.de/~smidtoby/

% }}}

\end{document}